\newtheorem{theorem}{Theorem}
\newtheorem{proposition}[theorem]{Proposition}
\newtheorem{corollary}[theorem]{Corollary}
\newtheorem{lemma}[theorem]{Lemma}
\newtheorem{conjecture}[theorem]{Conjecture}
\theoremstyle{definition} 
\newtheorem*{definition}{Definition}
\newtheorem*{claim}{Claim}
\theoremstyle{plain}
\providecommand{\norm}[1]{\left\lVert#1\right\rVert}
\title{The Regularity Method\\ for Graphs and Digraphs}
\author{Amelia Taylor}
\date{March 26, 2013}
\begin{document}
\maketitle

\setcounter{tocdepth}{1}
\tableofcontents


\chapter{Introduction}
\label{chap:intro}
Extremal graph theory concerns whether we can embed a given graph into a graph $G$. In this project the subgraph of interest will often be a Hamilton cycle, that is a cycle containing all of the vertices of the graph. The problem of finding a Hamilton cycle in a graph is NP-complete and so it is unlikely that we can find a complete classification of those graphs that do contain a Hamilton cycle. Therefore, we instead try to find sufficient conditions that will ensure a Hamilton cycle.

Initially in this project we will focus on undirected graphs but in later chapters we introduce analogues of many of the definitions and results for directed graphs. We say that a bipartite graph is $\varepsilon$-regular if its edges are fairly evenly distributed and we begin \Cref{chap:regularity} by giving formal definitions of $\varepsilon$-regularity and $(\varepsilon,d)$-superregularity.

Central to this project will be Szem\'eredi's Regularity Lemma, \Cref{reglemma}. Informally, it says that we can partition the vertices of any sufficiently large graph into a bounded number of clusters so that the edges are fairly evenly distributed between these clusters. This lemma will be a powerful tool when combined with the Key Lemma, \Cref{keylem}, and Blow-up Lemma, \Cref{blowup}, allowing us to embed structures into a graph. In \Cref{chap:applications} we give some applications of the Regularity Lemma, including a proof of the well-known Erd\H{o}s-Stone theorem.

In \Cref{chap:digraphs}, we investigate the robust outexpansion property for digraphs. By showing that a sufficiently large digraph satisfying certain degree conditions is a robust outexpander, we are able to prove an approximate version of a conjecture of Nash-Williams.

We conclude the project by considering what conditions are sufficient to ensure any orientation of a Hamilton cycle in a digraph. That is, we no longer require that the edges are oriented consistently as a directed cycle, the edges may change direction along the cycle.

It should be noted that throughout this project we have omitted floors and ceilings where this does not affect the argument given.

\chapter{Notation}
\label{chap:notation}

\section{Notation for Graphs}
Let $G=(V,E)$ be a graph. We will write $|G|$ for the order of $G$, that is, the number of vertices $G$ has, and $e(G)$ for the number of edges. For a vertex $v\in V(G)$, the neighbourhood of $v$ is the set of all vertices adjacent to $v$ in $G$ and is indicated by $N_G(v)$. The degree of $v$ is defined to be $d_G(v)=|N_G(v)|$. Where it is clear which graph we are considering, we may omit the subscript, writing just $N(v)$ and $d(v)$. We write $\delta(G)$ for the minimum degree of any vertex in $G$ and $\Delta(G)$ for the maximum degree.

For a set of vertices $A$, we will write $N_G(A)$ for the neighbourhood of $A$ which we understand to be $\bigcup_{a\in A} N_G(a)$. We write $G[A]$ to indicate that subgraph of $G$ with vertex set $A$ and all edges of $G$ which have both end vertices in $A$ and we call this graph an induced subgraph. We will write $G \setminus A$ to indicate the graph obtained from the graph $G$ by deleting $A$ and any edges incident to $A$.

For disjoint sets of vertices $A,B \subseteq V(G)$, we write $(A,B)_G$ to denote the bipartite graph induced by $G$, that is, the bipartite graph with vertex classes $A$ and $B$ and all edges in $G$ from a vertex in $A$ to a vertex in $B$. We write $e_G(A,B)$ for the number of edges between $A$ and $B$. 

The complete graph on $n$ vertices has all possible edges and will be denoted $K_n$. We say that the graph $P=u_0u_1\ldots u_k$ is a path if $u_i\neq u_j$ for all $i\neq j$ and if $u_iu_{i+1} \in E(G)$ for all $0\leq i < k$. The length of the path $P$, denoted $\ell(P)$, is equal to $|P|-1$. If $u_i, u_j \in V(P)$ then we denote by $u_i P u_j$ the subgraph of $P$ which is a path from $u_i$ to $u_j$. If $|P| \geq 3$ and $u_0u_k\in E(G)$ then $C=u_0u_1\ldots u_ku_0$ is a cycle. We will write $C_n$ for a cycle on $n$ vertices.

We will write $\overline{G}$ for the graph which has the same vertex set as $G$ and $e$ is an edge in $\overline{G}$ if and only if it is not an edge in $G$, we call this graph the complement of $G$.

For any $k \in \mathbb{N}$, we write $[k]$ for the set $\{1, 2, \ldots, k\}$.

\section{Notation for Digraphs}
Suppose that $G$ is a digraph and let $x \in V(G)$. We write $N^+_G(x)$ for the outneighbourhood of $x$, that is, the set $\{ y \in V(G) : xy \in E(G)\}$, where $xy$ denotes the edge directed from $x$ to $y$ in $G$. The outdegree of $x$ is $d^+(x)= |N^+_G(x)|$ and the minimum outdegree is the $\min_{x \in V(G)} d^+_G(x) \eqqcolon \delta^+(G)$. Similarly, we define the inneighbourhood of $x$, $N^-_G(x)$, the indegree of $x$, $d^-_G(x)$, and the minimum indegree, $\delta^-(G)$. Again, we will often omit the subscript $G$ when it is clear to which graph we refer. We define the minimum semidegree, $\delta^0(G)$, to be the minimum of $\delta^+(G)$ and $\delta^-(G)$.

For sets of vertices $A, B\subseteq V(G)$, we write $(A,B)_G$ to denote the oriented bipartite graph with vertex classes $A$ and $B$ and all edges in $G$ from a vertex in $A$ to a vertex in $B$.  We write $e_{G}(A,B)$ for the number of edges directed from a vertex in $A$ to a vertex in $B$.

In \Cref{chap:digraphs}, we assume that all paths and cycles are directed paths and cycles. As in the undirected case, we will write $uPv$ to indicate the section of the path $P$ starting at the vertex $u$ and ending at $v$. If $C$ is a cycle then $uCv$ indicates the path along the cycle $C$ from the vertex $u$ to the vertex $v$. For a vertex $v$ on a path (or cycle) $P$ we will often write $v^-$ and $v^+$ to indicate its predecessor and successor on the path (or cycle).

An \emph{oriented graph} is a graph $G$ which can be obtained by orienting a simple undirected graph. That is, for all vertices $x$ and $y$, if $xy \in E(G)$ then $yx \notin E(G)$. A \emph{tournament} is an oriented complete graph.

\chapter{Regularity}
\label{chap:regularity}

Throughout these first sections we will restrict ourselves to undirected graphs, although later we will introduce analogues of many of the definitions and results covered which can be applied to directed graphs.

\section{Density and Regularity}\label{subsec:reg}

We will require some definitions before introducing the Regularity Lemma; the first of these is that of density. Density is a measure of the proportion of the maximum possible number of edges which are present in a bipartite graph and it takes a value between $0$ and $1$. A graph with density $0$ would have no edges whilst a graph with density $1$ has all possible edges, that is, it is a complete bipartite graph.

\begin{definition}
The \emph{density} of a bipartite graph $G=(A,B)$ with vertex classes $A$ and $B$ is defined to be 
$$d_G(A,B) \coloneqq \frac{e_G(A,B)}{|A||B|}.$$
\end{definition}

We will sometimes omit the subscript $G$, writing instead $d(A,B)$, when it is clear to which graph we refer.

Another important definition is that of $\varepsilon$-regularity. If a bipartite graph $G=(A,B)$ is $\varepsilon$-regular, this indicates that the edges between the vertex classes are fairly evenly distributed. The smaller the value of $\varepsilon$ the more uniform the distribution. We also define what it means for $G$ to be $(\varepsilon,d)$-superregular. Superregularity introduces minimum bounds on the degree of every vertex in $G$ and also on the density of the bipartite subgraphs induced by (sufficiently large) subsets of the vertex classes $A$ and $B$.

\begin{definition}\label{def:reg}
Given $\varepsilon>0$, we say that the bipartite graph $G=(A,B)$ is \emph{$\varepsilon$-regular} if for all sets $X \subseteq A$ and $Y \subseteq B$ with $|X|\geq \varepsilon |A|$ and $|Y| \geq \varepsilon |B|$ we have $$|d_G(A,B) - d_G(X,Y)| < \varepsilon.$$

Given $d \in [0,1)$, we say that $G$ is \emph{$(\varepsilon,d)$-superregular} if all sets $X \subseteq A$ and $Y \subseteq B$ with $|X|\geq \varepsilon |A|$ and $|Y| \geq \varepsilon |B|$ satisfy $d(X,Y)>d$ and, furthermore, if $d_G(a)>d|B|$ for all $a \in A$ and $d_G(b)>d|A|$ for all $b \in B$.
\end{definition}

We will now use these definitions to prove some simple results which will be used in various applications of the Regularity Lemma in \Cref{chap:applications}. The first of these propositions concerns the degrees of the vertices in $A$. We will obtain a bound on the number of vertices in $A$ which have few neighbours in a sufficiently large subset of $B$. This result will be useful when we come to embed structures in graphs, for example, in the proofs of \Cref{keylem} and \Cref{c6packing}.

\begin{proposition}\label{prop:neighbours}
Let (A,B) be an $\varepsilon$-regular pair of density d. Suppose that $Y\subseteq B$ with $|Y| \geq \varepsilon |B|$. Let $X \subseteq A$ be a set of vertices each having at most $(d-\varepsilon)|Y|$ neighbours in Y. Then $|X| < \varepsilon |A|$.
\end{proposition}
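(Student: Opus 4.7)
The plan is to argue by contradiction: suppose that $|X| \geq \varepsilon |A|$ and derive a contradiction with $\varepsilon$-regularity.

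First I would count the edges between $X$ and $Y$ using the degree hypothesis. By assumption each vertex of $X$ has at most $(d-\varepsilon)|Y|$ neighbours in $Y$, so summing over $X$ gives $e_G(X,Y) \leq |X|(d-\varepsilon)|Y|$. Dividing by $|X||Y|$, this yields $d_G(X,Y) \leq d - \varepsilon$.

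Next I would invoke $\varepsilon$-regularity on the pair $(X,Y)$. Since $|X| \geq \varepsilon |A|$ (by the contradiction assumption) and $|Y| \geq \varepsilon |B|$ (by hypothesis), \Cref{def:reg} applies and gives $|d_G(A,B) - d_G(X,Y)| < \varepsilon$, so $d_G(X,Y) > d - \varepsilon$. Comparing with the previous inequality yields the contradiction $d - \varepsilon < d_G(X,Y) \leq d - \varepsilon$, which completes the proof.

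There is no real obstacle here — the statement is essentially a direct translation of the definition of $\varepsilon$-regularity into a statement about vertex degrees, and the only thing to be careful about is whether strict versus non-strict inequalities line up. The regularity condition supplies the strict inequality $d_G(X,Y) > d - \varepsilon$, whereas the degree hypothesis gives the non-strict bound $d_G(X,Y) \leq d - \varepsilon$; these are incompatible, which is exactly what the conclusion $|X| < \varepsilon|A|$ (strict) demands.
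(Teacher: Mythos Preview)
Your proof is correct and essentially identical to the paper's: both compute $d(X,Y) \leq d-\varepsilon$ from the degree hypothesis and conclude from $\varepsilon$-regularity that $|X| < \varepsilon|A|$. The only cosmetic difference is that you frame it explicitly as a contradiction, whereas the paper phrases it as a direct consequence of the regularity definition.
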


\begin{proof}
Suppose that $X \subseteq A$ is a set of vertices each having at most $(d-\varepsilon)|Y|$ neighbours in $Y$. Then $e(X,Y) \leq (d- \varepsilon) |Y| |X|$ which means that $$d(X,Y) = \frac{e(X,Y)}{|X||Y|} \leq d- \varepsilon.$$
Then, since $(A,B)$ is $\varepsilon$-regular, we must have that $|X|< \varepsilon |A|$.
\end{proof}

If a bipartite graph is $\varepsilon$-regular, then we can easily see that its complement must also be $\varepsilon$-regular and we verify this in the following proposition.

\begin{proposition}\label{prop:complement}
Let $G$ be a graph, $A,B\subseteq V(G)$ be disjoint sets and suppose that $(A,B)_G$ is $\varepsilon$-regular. Then $(A,B)_{\overline{G}}$ is also $\varepsilon$-regular.
\end{proposition}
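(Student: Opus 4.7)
The plan is to exploit the fact that, between two disjoint vertex sets, edges of $\overline{G}$ are exactly the non-edges of $G$, so densities in the two bipartite graphs are complementary. Concretely, for any disjoint $X \subseteq A$ and $Y \subseteq B$, I would first observe that $e_{\overline{G}}(X,Y) = |X||Y| - e_G(X,Y)$, which immediately yields $d_{\overline{G}}(X,Y) = 1 - d_G(X,Y)$.

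Applying this identity both to the whole pair $(A,B)$ and to an arbitrary subpair $(X,Y)$, the $1$'s cancel and I get
$$d_{\overline{G}}(A,B) - d_{\overline{G}}(X,Y) = -\bigl(d_G(A,B) - d_G(X,Y)\bigr),$$
so the two quantities have the same absolute value.

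Then I would verify $\varepsilon$-regularity of $(A,B)_{\overline{G}}$ directly from \Cref{def:reg}: pick any $X \subseteq A$, $Y \subseteq B$ with $|X| \geq \varepsilon|A|$ and $|Y| \geq \varepsilon|B|$, and apply the $\varepsilon$-regularity of $(A,B)_G$ to the same sets (which satisfy the same size thresholds) to conclude $|d_G(A,B) - d_G(X,Y)| < \varepsilon$. By the identity above, the corresponding discrepancy in $\overline{G}$ is also strictly less than $\varepsilon$, which is exactly what is required.

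There is essentially no obstacle here; the only thing to be careful about is the book-keeping between the density of a pair and the density of its complement, and checking that the size conditions on $X$ and $Y$ transfer verbatim (they do, since $|A|$ and $|B|$ are unchanged when we pass from $G$ to $\overline{G}$). The proof is therefore a one-line density computation followed by an immediate application of the definition.
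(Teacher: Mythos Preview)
Your proposal is correct and follows essentially the same approach as the paper: both establish the identity $d_{\overline{G}}(X,Y) = 1 - d_G(X,Y)$ and then observe that the density discrepancies in $G$ and $\overline{G}$ have the same absolute value, so $\varepsilon$-regularity transfers directly.
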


\begin{proof}
Suppose that $(A,B)_G$ is $\varepsilon$-regular. Consider sets $X\subseteq A$ and $Y \subseteq B$ with $|X| \geq \varepsilon |A|$ and $|Y| \geq \varepsilon |B|$. 
We observe that $$d_{\overline{G}}(X,Y) = \frac{|X||Y|-d_G(X,Y)|X||Y|}{|X||Y|} = 1-d_G(X,Y).$$
So we see that
\begin{equation*}
\begin{split}
|d_{\overline{G}}(A,B)-d_{\overline{G}}(X,Y)| &= |(1-d_G(A,B))-(1-d_G(X,Y))|\\
&= |d_G(X,Y)-d_G(A,B)|\\
& <\varepsilon.
\end{split}
\end{equation*}
Hence, $(A,B)_{\overline{G}}$ is also $\varepsilon$-regular.
\end{proof}

Another result, again following directly from the definitions, states that if we choose sufficiently large subsets of $A$ and $B$ then the subgraph induced by these sets will be $\varepsilon'$-regular for some $\varepsilon' \geq 2\varepsilon$ and gives a minimum bound for the density. So this tells us that we can preserve regularity when removing a small number of vertices from an $\varepsilon$-regular bipartite graph.

\begin{proposition}\label{prop:subsets}
Suppose that $0< \varepsilon \leq \alpha \leq 1/2$. Let $(A,B)$ be a $\varepsilon$-regular pair of density $d$. If $A' \subseteq A, B' \subseteq B$ with $|A'| \geq \alpha |A|$ and $|B'| \geq \alpha |B|$ then $(A',B')$ is $\varepsilon / \alpha$-regular and has density greater than $d-\varepsilon$.
\end{proposition}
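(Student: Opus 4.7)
The plan is to verify the two claims—density bound and $\varepsilon/\alpha$-regularity—by reducing both to the $\varepsilon$-regularity of the original pair $(A,B)$, using the fact that the hypotheses are strong enough to make $A'$, $B'$ (and suitable subsets of them) count as admissible test sets in $(A,B)$.

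For the density bound, I would observe that since $\alpha \geq \varepsilon$, the sets $A'$ and $B'$ themselves satisfy $|A'|\geq \alpha|A|\geq \varepsilon|A|$ and $|B'|\geq \alpha|B|\geq \varepsilon|B|$. Hence $\varepsilon$-regularity of $(A,B)$ applied with $X=A'$, $Y=B'$ gives $|d(A,B)-d(A',B')|<\varepsilon$, and in particular $d(A',B')>d-\varepsilon$. This disposes of the density claim immediately.

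For the regularity claim, I would take arbitrary $X\subseteq A'$, $Y\subseteq B'$ with $|X|\geq(\varepsilon/\alpha)|A'|$ and $|Y|\geq(\varepsilon/\alpha)|B'|$ and show $|d(X,Y)-d(A',B')|<\varepsilon/\alpha$. The key point is that the size hypotheses on $X,Y$ unfold to $|X|\geq(\varepsilon/\alpha)\cdot\alpha|A|=\varepsilon|A|$ and similarly $|Y|\geq\varepsilon|B|$, so $X$ and $Y$ are legitimate test sets for $(A,B)$. Applying $\varepsilon$-regularity of $(A,B)$ twice—once to the pair $(X,Y)$ and once to the pair $(A',B')$—and using the triangle inequality then yields
\[
|d(X,Y)-d(A',B')|\leq|d(X,Y)-d(A,B)|+|d(A,B)-d(A',B')|<2\varepsilon.
\]
Finally, the assumption $\alpha\leq 1/2$ gives $2\varepsilon\leq\varepsilon/\alpha$, completing the bound.

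There is no real obstacle here; the only thing to keep an eye on is the role of the two hypotheses $\alpha\geq\varepsilon$ (needed so that $A',B'$ themselves qualify as test sets, both for the density claim and for the second application of regularity in the triangle inequality) and $\alpha\leq 1/2$ (needed precisely to convert the $2\varepsilon$ bound into the desired $\varepsilon/\alpha$ bound). Neither step requires any estimate beyond the definition of $\varepsilon$-regularity and the triangle inequality.
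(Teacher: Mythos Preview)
Your proposal is correct and follows essentially the same argument as the paper: both use $\varepsilon$-regularity of $(A,B)$ applied to $(A',B')$ for the density bound, and then the triangle inequality through $d(A,B)$ together with $2\varepsilon\le\varepsilon/\alpha$ for the regularity claim.
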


\begin{proof}
Since $(A,B)$ is $\varepsilon$-regular we have that $|d - d(A',B')| < \varepsilon$ and so $$d(A',B')>d- \varepsilon.$$
\\Now suppose that $X \subseteq A'$ and $Y \subseteq B'$ with $|X|\geq \varepsilon |A'|/\alpha \geq \varepsilon |A|$ and $|Y| \geq \varepsilon |B'|/\alpha \geq \varepsilon |B|$. We have that 
\begin{equation*}
\begin{split}
|d(A',B') - d(X,Y)| &= |d(A',B') - d + d - d(X,Y)| \\
&\leq |d(A',B') - d| + |d - d(X,Y)| \\
&< 2\varepsilon \leq \varepsilon / \alpha
\end{split}
\end{equation*}
since  $(A,B)$ is $\varepsilon$-regular. Hence $(A',B')$ is $\varepsilon / \alpha$-regular.
\end{proof}

We can also add a small number of vertices to a regular bipartite graph and maintain regularity, as demonstrated in the following proposition.

\begin{proposition}\label{prop:supersets}
Suppose that $0< \varepsilon < d \leq 1$ with $2\sqrt{\varepsilon}<d$. Let $(A,B)$ be a $\varepsilon$-regular pair of density $d$. Suppose that at most $\sqrt{\varepsilon}|A|$ vertices are added to $A$ to obtain $A'$ and at most $\sqrt{\varepsilon}|B|$ vertices are added to $B$ to obtain $B'$. Then the graph $(A',B')$ is $5\sqrt[4]{\varepsilon}$-regular with density at least $d-2\sqrt[4]{\varepsilon}$.
\end{proposition}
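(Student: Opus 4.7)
The plan is to reduce any new test sets to subsets of the original $A$ and $B$, apply the hypothesis that $(A,B)$ is $\varepsilon$-regular, and then bundle all the resulting error terms. Write $A' = A \cup A_1$ and $B' = B \cup B_1$ where $|A_1| \leq \sqrt{\varepsilon}|A|$ and $|B_1| \leq \sqrt{\varepsilon}|B|$, so that $|A'| \leq (1+\sqrt{\varepsilon})|A|$ and similarly for $B'$. For the density bound, I would estimate $e(A',B')$ above and below: from below, $e(A',B') \geq e(A,B) = d|A||B|$, and from above, $e(A',B') \leq d|A||B| + |A_1||B'| + |A||B_1|$. Dividing by $|A'||B'|$ and using $|A|/|A'| \geq 1/(1+\sqrt{\varepsilon}) \geq 1-\sqrt{\varepsilon}$ (and similarly for $B$) will produce $|d(A',B') - d| \leq 2\sqrt{\varepsilon}$, which is comfortably inside the allowed slack $2\sqrt[4]{\varepsilon}$.

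For regularity, I would take any $X \subseteq A'$ and $Y \subseteq B'$ with $|X| \geq 5\sqrt[4]{\varepsilon}|A'|$ and $|Y| \geq 5\sqrt[4]{\varepsilon}|B'|$, and set $X_0 = X \cap A$, $Y_0 = Y \cap B$. Since $|X_0| \geq |X| - |A_1| \geq 5\sqrt[4]{\varepsilon}|A| - \sqrt{\varepsilon}|A| \geq 4\sqrt[4]{\varepsilon}|A|$ and analogously for $Y_0$, for $\varepsilon$ small enough these sets are large enough (in particular, of size at least $\varepsilon|A|$ and $\varepsilon|B|$) to invoke $\varepsilon$-regularity of $(A,B)$, giving $|d(X_0,Y_0) - d| < \varepsilon$.

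The remaining step is to compare $d(X,Y)$ with $d(X_0,Y_0)$. I would bound the number of "new" edges by $|X \setminus X_0||Y| + |X_0||Y \setminus Y_0| \leq \sqrt{\varepsilon}|A||Y| + |X|\sqrt{\varepsilon}|B|$, and divide by $|X||Y|$. Using $|X| \geq 5\sqrt[4]{\varepsilon}|A|$, the error $\sqrt{\varepsilon}|A|/|X|$ is at most $\sqrt[4]{\varepsilon}/5$, and similarly for the $Y$ term. Combined with the trivial bound $d(X_0,Y_0) \leq 1$ to handle the factor $|X_0||Y_0|/|X||Y| \in [1 - 2\sqrt[4]{\varepsilon}/5, 1]$, this will give $|d(X,Y) - d(X_0,Y_0)| \leq 2\sqrt[4]{\varepsilon}/5$.

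Putting the three pieces together via the triangle inequality,
\[
|d(A',B') - d(X,Y)| \leq |d(A',B') - d| + |d - d(X_0,Y_0)| + |d(X_0,Y_0) - d(X,Y)| \leq 2\sqrt{\varepsilon} + \varepsilon + \tfrac{2}{5}\sqrt[4]{\varepsilon},
\]
which is strictly less than $5\sqrt[4]{\varepsilon}$ for $\varepsilon \leq 1$, completing the argument. The only real obstacle is bookkeeping: every constant has to be tracked so that the three accumulated errors sum below $5\sqrt[4]{\varepsilon}$, and the choice of threshold $5\sqrt[4]{\varepsilon}$ is exactly what makes the crucial bound $\sqrt{\varepsilon}|A|/|X| \leq \sqrt[4]{\varepsilon}/5$ available. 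The hypothesis $2\sqrt{\varepsilon} < d$ is not strictly needed for the regularity part but keeps the density lower bound $d - 2\sqrt[4]{\varepsilon}$ positive, which is what makes the conclusion useful.
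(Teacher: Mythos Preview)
Your proof is correct and follows essentially the same approach as the paper: intersect the test sets with the original classes, apply $\varepsilon$-regularity to $(X_0,Y_0)$, and control the error coming from the at most $\sqrt{\varepsilon}|A|$, $\sqrt{\varepsilon}|B|$ new vertices via the ratio $\sqrt{\varepsilon}|A|/|X| \leq \sqrt[4]{\varepsilon}/5$. The only cosmetic difference is that the paper packages the computation as a direct lower bound $d(X,Y) \geq d-2\sqrt[4]{\varepsilon}$ and upper bound $d(X,Y) \leq d+3\sqrt[4]{\varepsilon}$ valid for all admissible $X,Y$ (including $X=A'$, $Y=B'$), whereas you separate the estimate into three pieces and combine them by the triangle inequality; the underlying arithmetic is the same.
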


\begin{proof}
Consider subsets $X \subseteq A'$ and $Y \subseteq B'$ with sizes $|X| \geq 5 \sqrt[4]{\varepsilon}|A'| \geq \sqrt[4]{\varepsilon}|A|$ and $|Y| \geq 5 \sqrt[4]{\varepsilon}|B'| \geq \sqrt[4]{\varepsilon}|B|$. Let $X_0=X\cap A, Y_0=Y\cap B$ and $X_1=X\setminus X_0, Y_1=Y\setminus Y_0$, so $X_1$ and $Y_1$ are the new vertices contained in $X$ and $Y$. We will obtain lower and upper bounds on the density of $(X,Y)$.

First we consider a lower bound. The lowest density is obtained if we have no edges between the new vertices and the original graph. We find that 

\begin{equation*}
\begin{split}
d(X,Y) &\geq \frac{e(X_0,Y_0)}{|X||Y|} = \frac{d(X_0,Y_0)|X_0||Y_0|}{|X||Y|}\\
&>\frac{(d-\varepsilon)(|X|-\sqrt{\varepsilon}|A|)(|Y|-\sqrt{\varepsilon}|B|)}{|X||Y|}\\
&= (d-\varepsilon)\left(1-\sqrt{\varepsilon}\frac{|A|}{|X|}\right)\left(1-\sqrt{\varepsilon}\frac{|B|}{|Y|}\right)\\
& \geq (d-\varepsilon)(1-\sqrt[4]{\varepsilon})(1-\sqrt[4]{\varepsilon})\\
& = d-\varepsilon -2\sqrt[4]{\varepsilon}(d-\varepsilon)+ \sqrt{\varepsilon}(d-\varepsilon)\\
& \geq d-\varepsilon-2\sqrt[4]{\varepsilon}+\sqrt{\varepsilon}(2\sqrt{\varepsilon}-\varepsilon)\\
&\geq d-2\sqrt[4]{\varepsilon}.
\end{split}
\end{equation*}

We obtain an upper bound on the density by considering the case where the vertices $X_1, Y_1$ are joined to all vertices in $X,Y$ respectively. We see that

\begin{equation*}
\begin{split}
d(X,Y) &\leq \frac{e(X_0,Y_0)+|X||Y_1| + |X_1||Y|}{|X||Y|} \\
&< d+\varepsilon+ \frac{|X|\sqrt{\varepsilon}|B| + \sqrt{\varepsilon}|A|}{|X||Y|}\\
& \leq d+\varepsilon + \frac{\sqrt{\varepsilon}|B|}{|Y|} + \frac{\sqrt{\varepsilon}|A|}{|X|}\\
& \leq d+\varepsilon + \frac{\sqrt{\varepsilon}|B|}{\sqrt[4]{\varepsilon}|B|} + \frac{\sqrt{\varepsilon}|A|}{\sqrt[4]{\varepsilon}|A|}\\
&= d+\varepsilon+2\sqrt[4]{\varepsilon}\\
&\leq d+3\sqrt[4]{\varepsilon}.
\end{split}
\end{equation*}

As $(d+3\sqrt[4]{\varepsilon})-(d-2\sqrt[4]{\varepsilon})=5\sqrt[4]{\varepsilon}$, we can conclude that $(A',B')$ is $5\sqrt[4]{\varepsilon}$-regular and has density at least $d-2\sqrt[4]{\varepsilon}$.
\end{proof}

We obtain similar results when we consider the superregularity of a graph instead. When we are embedding a structure in a graph, we may wish to alter the clusters by removing some vertices or adding some vertices which satisfy certain degree properties. The following two propositions show that we can maintain the superregularity of a pair when removing vertices and when adding new vertices, provided that the new vertices have sufficiently many neighbours in the pair.

\begin{proposition}\label{removing}
Suppose that $0<\varepsilon\leq 1/9$ and  $\varepsilon^2<d\leq 1$.
Let $G=(A,B)$ be an $(\varepsilon,d)$-superregular pair. If $A' \subseteq A, B' \subseteq B$ with $|A'| \geq (1-\sqrt{\varepsilon}) |A|$ and $|B'| \geq (1-\sqrt{\varepsilon})|B|$ then $H=(A',B')$ is $(\sqrt{\varepsilon}, d-\sqrt{\varepsilon})$-superregular.
\end{proposition}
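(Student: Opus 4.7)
The plan is to verify the two clauses of the definition of $(\sqrt{\varepsilon},\,d-\sqrt{\varepsilon})$-superregularity for $H=(A',B')$ directly from the corresponding clauses for $G=(A,B)$: first, that every pair of sufficiently large subsets $X\subseteq A'$, $Y\subseteq B'$ has density greater than $d-\sqrt{\varepsilon}$ in $H$; and second, that each $a\in A'$ has more than $(d-\sqrt{\varepsilon})|B'|$ neighbours in $B'$, and symmetrically for $B'$. Since the edge set of $H$ is just the edge set of $G$ restricted to $A'\cup B'$, both clauses reduce to bookkeeping on vertex counts.

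For the density clause, I would take any $X\subseteq A'$ and $Y\subseteq B'$ with $|X|\geq\sqrt{\varepsilon}|A'|$ and $|Y|\geq\sqrt{\varepsilon}|B'|$, and translate these thresholds back to $G$. Using $|A'|\geq(1-\sqrt{\varepsilon})|A|$, I get
\[
|X|\;\geq\;\sqrt{\varepsilon}\,(1-\sqrt{\varepsilon})|A|\;\geq\;\varepsilon|A|,
\]
where the last inequality is equivalent to $\sqrt{\varepsilon}\leq 1/2$ and hence uses the hypothesis $\varepsilon\leq 1/9$ (which gives $\sqrt{\varepsilon}\leq 1/3$). The same argument gives $|Y|\geq\varepsilon|B|$. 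Then the $(\varepsilon,d)$-superregularity of $G$ yields $d_H(X,Y)=d_G(X,Y)>d$, which is certainly $>d-\sqrt{\varepsilon}$.

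For the degree clause, I would use the trivial inequality $d_H(a)\geq d_G(a)-|B\setminus B'|$. Since $d_G(a)>d|B|$ and $|B\setminus B'|\leq\sqrt{\varepsilon}|B|$,
\[
d_H(a)\;>\;d|B|-\sqrt{\varepsilon}|B|\;=\;(d-\sqrt{\varepsilon})|B|\;\geq\;(d-\sqrt{\varepsilon})|B'|,
\]
where the final step uses $|B'|\leq|B|$ together with $d-\sqrt{\varepsilon}\geq 0$ (implicit in the statement, since otherwise the target density parameter lies outside $[0,1)$, and in any case the conclusion is automatic from $d_H(a)\geq 0$). The symmetric calculation with the roles of $A,B$ interchanged handles the lower bound on $d_H(b)$. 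There is essentially no obstacle here beyond the threshold translation in the first paragraph: the whole proof is a direct two-step verification, and the only role of the assumption $\varepsilon\leq 1/9$ is to make the inequality $\sqrt{\varepsilon}(1-\sqrt{\varepsilon})\geq\varepsilon$ hold with room to spare.
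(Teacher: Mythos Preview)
Your proposal is correct and follows essentially the same approach as the paper's proof: verify the density clause by translating the $\sqrt{\varepsilon}$-thresholds in $A',B'$ back to $\varepsilon$-thresholds in $A,B$, and verify the degree clause by subtracting at most $\sqrt{\varepsilon}|B|$ (respectively $\sqrt{\varepsilon}|A|$) lost neighbours. You are in fact more careful than the paper in justifying the inequality $\sqrt{\varepsilon}(1-\sqrt{\varepsilon})\geq\varepsilon$ via $\varepsilon\leq 1/9$, which the paper leaves implicit.
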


\begin{proof}
For any two sets $X \subseteq A', Y\subseteq B'$ with $|X| \geq \sqrt{\varepsilon}|A'|$ and $|Y| \geq \sqrt{\varepsilon}|B'|$ we have that $d_G(X,Y) > d$, since $(A,B)$ was $(\varepsilon,d)$-superregular.

We also have that for all $a \in A'$, $$d_H(a) \geq (d-\sqrt{\varepsilon})|A| \geq (d-\sqrt{\varepsilon})|A'|$$ and for all $b \in B'$, $$d_H(b) \geq (d-\sqrt{\varepsilon})|B| \geq (d-\sqrt{\varepsilon})|B'|.$$
Therefore, $H$ is $(\sqrt{\varepsilon}, d-\sqrt{\varepsilon})$-superregular.
\end{proof}


\begin{proposition}\label{adding}
Let $(A,B)$ be an $(\varepsilon,d)$-superregular pair with $|A|=|B|=m$. Suppose that $A'$ and $B'$ are disjoint sets of vertices of size $|A'|,|B'| \leq \sqrt{\varepsilon}m$ satisfying $|N(a) \cap B| \geq dm/3$ for every vertex $a \in A'$ and $|N(b) \cap A| \geq dm/3$ for every vertex $b \in B'$. Then the graph $H=(A \cup A', B \cup B')$ is $(2 \sqrt{\varepsilon}, d/6)$-superregular.
\end{proposition}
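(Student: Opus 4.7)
The plan is to check the two superregularity conditions separately, exploiting the fact that adding at most $\sqrt{\varepsilon}m$ vertices only perturbs the sizes of the classes by a factor of at most $1+\sqrt{\varepsilon}$. Throughout, write $|A \cup A'|, |B \cup B'| \in [m, (1+\sqrt{\varepsilon})m]$, and for any subsets $X \subseteq A\cup A'$, $Y\subseteq B\cup B'$ split $X = X_0 \sqcup X_1$ and $Y = Y_0 \sqcup Y_1$ with $X_0 = X\cap A$, $Y_0 = Y \cap B$.

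For the density condition, suppose $|X| \geq 2\sqrt{\varepsilon}|A \cup A'| \geq 2\sqrt{\varepsilon}m$ and $|Y|\geq 2\sqrt{\varepsilon}|B\cup B'| \geq 2\sqrt{\varepsilon}m$. Since $|X_1| \leq |A'| \leq \sqrt{\varepsilon}m$, I get $|X_0| \geq |X| - \sqrt{\varepsilon}m \geq |X|/2$, and similarly $|Y_0| \geq |Y|/2$. Moreover $|X_0| \geq \sqrt{\varepsilon}m \geq \varepsilon|A|$ and $|Y_0|\geq \varepsilon|B|$, so the $(\varepsilon,d)$-superregularity of $(A,B)$ gives $d_G(X_0,Y_0) > d$, i.e.\ $e_H(X,Y) \geq e_G(X_0,Y_0) > d|X_0||Y_0|$. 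Dividing by $|X||Y|$ yields
\[
d_H(X,Y) > d \cdot \frac{|X_0|}{|X|}\cdot \frac{|Y_0|}{|Y|} \geq \frac{d}{4} > \frac{d}{6}.
\]

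For the degree condition, any $a \in A$ satisfies $d_G(a) > d|B| = dm$ by superregularity of $(A,B)$, so $d_H(a) \geq dm$; any $a \in A'$ satisfies $d_H(a) \geq |N(a)\cap B| \geq dm/3$ by hypothesis. In either case, using $|B \cup B'| \leq (1+\sqrt{\varepsilon})m < 2m$, I obtain $d_H(a) \geq dm/3 > (d/6)|B \cup B'|$. The analogous calculation works on the other side, which completes the verification of the two defining properties.

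There is no real obstacle here; the only mild subtlety is choosing the threshold $2\sqrt{\varepsilon}$ (rather than $\sqrt{\varepsilon}$) so that after discarding the up to $\sqrt{\varepsilon}m$ new vertices from $X$ and $Y$ one still retains at least half of each, which is exactly what is needed to absorb the drop from density $d$ down to $d/6$ while leaving room for the degree lower bound to be strict.
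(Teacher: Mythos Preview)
Your proof is correct and follows essentially the same approach as the paper: intersect $X,Y$ with the original classes $A,B$, use $|X_0|\geq |X|/2$, $|Y_0|\geq |Y|/2$ and the superregularity of $(A,B)$ to get $d_H(X,Y)>d/4>d/6$, and for the degree condition use $|B\cup B'|<2m$ to turn the lower bound $dm/3$ into $(d/6)|B\cup B'|$. The only cosmetic difference is that you spell out the $a\in A$ and $a\in A'$ cases separately and note the strictness more carefully.
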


\begin{proof}
Let $A^*=A \cup A'$ and $B^*=B \cup B'$. For any vertex $a \in A^*$ we have that $$d_H(a) \geq dm/3 \geq d|B^*|/6$$ and for any $b \in B^*$ we have $$d_H(b) \geq dm/3 \geq d|A^*|/6.$$ So $H$ satisfies the minimum degree conditions for superregularity.

Now consider any sets $X \subseteq A^*$ and $Y \subseteq B^*$ with $|X| \geq 2 \sqrt{\varepsilon} |A^*| \geq 2\sqrt{\varepsilon} m$ and $|Y| \geq 2\sqrt{\varepsilon}|B^*| \geq 2 \sqrt{\varepsilon} m$. Then we can find sets $X_0 \subseteq A \cap X, Y_0 \subseteq B \cap Y$ with $|X_0|\geq |X|/2 \geq \varepsilon m, |Y_0| \geq |Y|/2 \geq \varepsilon m$. Then we have
$$d(X,Y) = \frac{e(X,Y)}{|X||Y|} \geq \frac{e(X_0,Y_0)}{2|X_0|2|Y_0|} = \frac{d(X_0,Y_0)}{4} \geq \frac{d}{4} > \frac{d}{6}.$$
Hence, $H$ is $(2 \sqrt{\varepsilon}, d/6)$-superregular.
\end{proof}

Let us now consider what structures we can find in an $\varepsilon$-regular graph. We might be interested in finding a matching, defined below, in $G$.
\begin{definition}
A \emph{matching} in a graph $G$ is an independent set of edges of $G$, that is, a set of edges such that no two edges in the set share an endvertex. We say that a matching is \emph{perfect} if all vertices of $G$ are covered.
\end{definition}
The following proposition shows that we can use $\varepsilon$-regularity to prove that a graph contains a perfect matching. It will be useful to recall Hall's theorem.

\begin{theorem}[Hall]\label{hall}
Let $G$ be a bipartite graph with vertex classes $A$ and $B$. $G$ has a matching that covers all the vertices of $A$ if and only if for all subsets $S \subseteq A, |N(S)| \geq |S|$.
\end{theorem}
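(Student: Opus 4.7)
The plan is to prove Hall's theorem by induction on $|A|$. The easy direction (necessity) is immediate: if $M$ is a matching covering $A$, then for any $S\subseteq A$ the vertices matched to $S$ are $|S|$ distinct elements of $N(S)$, so $|N(S)|\geq|S|$.

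For sufficiency, I would do strong induction on $|A|$. The base case $|A|=1$ is trivial, since Hall's condition applied to $S=A$ gives a neighbour of the unique vertex. For the inductive step, I would split into two cases according to whether Hall's condition is ``tight'' somewhere.

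In the first case, assume that for every non-empty proper subset $S\subsetneq A$ we have $|N(S)|\geq|S|+1$ (so Hall's condition has slack). Pick any $a\in A$ and any $b\in N(a)$, and consider the bipartite graph $G'=G\setminus\{a,b\}$ on $A'=A\setminus\{a\}$ and $B'=B\setminus\{b\}$. For any $S'\subseteq A'$ we have $|N_{G'}(S')|\geq|N_G(S')|-1\geq|S'|$, so Hall's condition holds in $G'$. By the inductive hypothesis, $G'$ contains a matching covering $A'$, and adjoining the edge $ab$ gives a matching covering $A$.

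In the second case, there is a non-empty proper $S\subsetneq A$ with $|N(S)|=|S|$. Let $G_1$ be the bipartite subgraph on $(S,N(S))$ and let $G_2$ be the bipartite subgraph on $(A\setminus S,\,B\setminus N(S))$. Hall's condition for $G_1$ is inherited directly from $G$. For $G_2$, given any $T\subseteq A\setminus S$, observe that $N_{G_2}(T)=N_G(T)\setminus N(S)$, so
\[
|N_{G_2}(T)|\geq|N_G(S\cup T)|-|N(S)|\geq|S\cup T|-|S|=|T|,
\]
using Hall's condition on $S\cup T$ in $G$. By induction both $G_1$ and $G_2$ admit matchings covering their $A$-sides, and since their vertex sets are disjoint, their union is a matching covering $A$ in $G$. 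The main subtlety is verifying Hall's condition in $G_2$, where one has to use Hall's condition on $S\cup T$ (not just $T$) together with the tight equality $|N(S)|=|S|$ to get the required bound; everything else is bookkeeping.
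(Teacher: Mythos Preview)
Your proof is correct and is the standard inductive argument for Hall's theorem. Note, however, that the paper does not actually prove this statement: it merely quotes Hall's theorem as a classical tool and then applies it (for instance, in the proof of \Cref{matchingundir}). So there is no paper proof to compare against; you have supplied a complete proof where the paper gives none.
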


In particular, if $|A|=|B|$ and $G$ is a bipartite graph satisfying Hall's condition then $G$ must have a perfect matching.

\begin{proposition}\label{matchingundir}
Let $0<d\leq 1$ and $0<3\varepsilon \leq d^2$. Suppose that $G=(A,B)$ is an $\varepsilon$-regular bipartite graph with $|A|=|B|=n$, density $d$ and $\delta(G)\geq (d-\varepsilon) n$. Then $G$ contains a perfect matching.
\end{proposition}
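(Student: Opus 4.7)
The plan is to verify Hall's condition and invoke Theorem~\ref{hall}: since $|A|=|B|$, it suffices to show $|N(S)| \ge |S|$ for every $S \subseteq A$, and then we obtain a perfect matching. First I would extract the inequality $d \ge 3\varepsilon$ from the hypothesis $3\varepsilon \le d^2 \le d$ (using $d \le 1$); this is the numerical fact I will lean on repeatedly, and in particular it gives $(d-\varepsilon)n \ge \varepsilon n$.

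I would then split into three cases according to the size of $S$. When $|S| < \varepsilon n$, I pick any $v \in S$ and use the minimum degree condition: $|N(S)| \ge d_G(v) \ge (d-\varepsilon)n \ge \varepsilon n > |S|$, and we are done. When $\varepsilon n \le |S| \le (1-\varepsilon)n$, I argue by contradiction: suppose $|N(S)| < |S|$ and set $T \coloneqq B \setminus N(S)$, so $|T| \ge n - |S| \ge \varepsilon n$. Then both $S$ and $T$ are large enough to apply $\varepsilon$-regularity of $(A,B)$; but $e(S,T) = 0$ by construction, so $d(S,T) = 0$, forcing $|d - 0| < \varepsilon$, i.e.\ $d < \varepsilon$. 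This contradicts $d \ge 3\varepsilon$. Finally, when $|S| > (1-\varepsilon)n$: if $N(S) = B$ there is nothing to prove, so pick any $v \in B \setminus N(S)$; such a $v$ has all its neighbours in $A \setminus S$, giving $d_G(v) \le |A \setminus S| < \varepsilon n$, which contradicts $d_G(v) \ge (d-\varepsilon)n \ge 2\varepsilon n$.

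The main subtlety, and the reason the argument naturally splits into three cases rather than two, is that the regularity hypothesis only gives information when both $S$ and $T = B \setminus N(S)$ have size at least $\varepsilon n$. The small-$S$ regime fails the first requirement and the large-$S$ regime fails the second, so in those two regimes I cannot use regularity directly and must fall back on the minimum degree assumption. Once the case division is set up correctly, each individual step is a short calculation, and the constant $3$ in $3\varepsilon \le d^2$ is exactly what is needed to reconcile the small-set bound $d \ge 2\varepsilon$ with the medium-set bound $d \ge \varepsilon$ coming from regularity.
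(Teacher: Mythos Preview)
Your proof is correct and follows the same overall skeleton as the paper's argument: verify Hall's condition via a three-case split on $|S|$, with the small and large cases handled by the minimum degree hypothesis and the middle case by $\varepsilon$-regularity.

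The genuine difference lies in the middle case. The paper assumes $|N(S)| < |S|$ and computes the density of the pair $(S, N(S))$, showing it \emph{exceeds} $d + \varepsilon$; this calculation is where the quadratic hypothesis $3\varepsilon \le d^2$ is actually used. You instead look at $T = B \setminus N(S)$ and observe that $d(S,T) = 0$, which falls \emph{below} $d - \varepsilon$. Your route is cleaner: it avoids the algebraic manipulation entirely and only needs the linear consequence $d \ge 3\varepsilon$ (indeed $d > \varepsilon$ would suffice for that step), so your argument in fact establishes the proposition under a weaker hypothesis than stated. Your closing remark that the constant $3$ is ``exactly what is needed'' is therefore slightly generous to yourself --- your proof does not need the full strength of $3\varepsilon \le d^2$, whereas the paper's does.
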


\begin{proof} Let $S \subseteq A$. First we suppose $0<|S| \leq (d-\varepsilon) n$. Let $v \in S$. We know that $d(v) \geq (d-\varepsilon) n$ so $$|N(S)| \geq |N(v)| \geq (d-\varepsilon) n \geq |S|.$$

Let us now suppose that $|S| > (1-(d-\varepsilon))n$. Then, since $|A \setminus S| < (d-\varepsilon) n$, we have that for every $v \in B, N(v) \cap S \neq \emptyset$ as $d(v) \geq (d-\varepsilon)n$. So $N(S) = B$ and therefore $N(S) \geq |S|$.

It remains to check that Hall's condition is satisfied for $S$ where $$\varepsilon n \leq (d-\varepsilon) n \leq |S| \leq (1-(d-\varepsilon))n.$$ Note that $|N(S)| \geq (d-\varepsilon) n \geq \varepsilon n$. We will assume, for the sake of contradiction, that $|N(S)| < |S| \leq (1-(d-\varepsilon))n$. Since for every $v \in S$ we have that $d(v) \geq (d-\varepsilon) n$ we get that $$e(S, N(S)) \geq (d-\varepsilon) n|S|.$$ Hence
\begin{equation*}
\begin{split}
d(S, N(S)) &= \frac{e(S, N(S))}{|S||N(S)|} \geq \frac{(d-\varepsilon) n}{|N(S)|} \\&> \frac{(d-\varepsilon) n}{(1-(d-\varepsilon))n} = d + \frac{d^2-\varepsilon d -\varepsilon}{1-(d-\varepsilon)} \\&\geq d+(d^2-2\varepsilon) \geq d+ \varepsilon.
\end{split}
\end{equation*}
But this contradicts the $\varepsilon$-regularity of $G$. Hence $|N(S)| \geq |S|$.

Therefore, $G$ satisfies the condition of Hall's theorem and, since $|A|=|B|$, has a perfect matching. 
\end{proof}


\section{The Regularity Lemma}

Now that we have all of the necessary definitions, we will introduce the Regularity Lemma. Informally, this lemma states that if we have a sufficiently large graph then we can partition its vertices into a bounded number of sets, or clusters, in such a way that most of the pairs of clusters induce $\varepsilon$-regular bipartite graphs. Importantly, the maximum number of clusters needed does not depend on the number of vertices, only on the values $\varepsilon$ and $k_0$.

\begin{lemma}[Regularity Lemma, Szemer\'{e}di \cite{szem}]\label{reglemma}
For all $\varepsilon>0$ and all integers $k_0$ there is an $N = N(\varepsilon, k_0)$ such that for every $G$ on $n \geq N$ vertices there exists a partition of V(G) into $V_0, V_1, \ldots, V_k$ such that the following hold:
	\begin{enumerate}[(i)]
	\item $k_0 \leq k \leq N$ and $|V_0| \leq \varepsilon n$,
	\item $|V_1| = \cdots = |V_k| \eqqcolon m$,
	\item for all but $\varepsilon k^2$ pairs $1 \leq i < j \leq k$ the graph $(V_i,V_j)_G$ is $\varepsilon$-regular.
	\end{enumerate}
\end{lemma}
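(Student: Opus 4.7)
The plan is to prove Szemerédi's Regularity Lemma by iterated refinement of a partition, using a monotone potential (the \emph{index} or \emph{mean-square density}) that is bounded above and must increase by a definite amount whenever the current partition fails to be $\varepsilon$-regular. For an equitable partition $\mathcal{P}=\{V_0, V_1, \ldots, V_k\}$ of $V(G)$ with exceptional set $V_0$, define
$$q(\mathcal{P}) \coloneqq \sum_{1 \leq i < j \leq k} \frac{|V_i||V_j|}{n^2}\, d_G(V_i, V_j)^2.$$
Then $0 \leq q(\mathcal{P}) \leq 1/2$. The strategy is: start from any equitable partition with at least $k_0$ non-exceptional parts, and while the current partition is not $\varepsilon$-regular, produce a refinement that strictly increases $q$ by at least $\varepsilon^5/2$. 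Since $q$ is capped at $1/2$, the procedure terminates after at most $1/\varepsilon^5$ iterations.

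The heart of the argument consists of two facts. First, \emph{refinement never decreases the index}: if $\mathcal{P}'$ refines $\mathcal{P}$, then $q(\mathcal{P}') \geq q(\mathcal{P})$, a consequence of the convexity inequality $\mathbb{E}[X^2] \geq (\mathbb{E}X)^2$ applied to the density of a uniformly chosen sub-pair. Second, \emph{irregularity forces a definite jump}: if the pair $(V_i,V_j)$ is not $\varepsilon$-regular, witnessed by $X\subseteq V_i$ and $Y\subseteq V_j$ with $|X|\geq \varepsilon|V_i|$, $|Y|\geq\varepsilon|V_j|$ and $|d(X,Y)-d(V_i,V_j)|\geq \varepsilon$, then partitioning $V_i$ into $\{X, V_i\setminus X\}$ and $V_j$ into $\{Y, V_j\setminus Y\}$ increases the contribution of this pair to $q$ by at least $\varepsilon^4\,|V_i||V_j|/n^2$. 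This is a direct computation: expand the four new density-squared terms and apply Cauchy--Schwarz to compare against the single old term.

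Suppose now that $\mathcal{P}$ is equitable with $|V_1|=\cdots=|V_k|=m$, but more than $\varepsilon k^2$ pairs $(V_i,V_j)$ are irregular. For each such pair choose the pair of witness sets; for each $i$, form the common refinement of $V_i$ induced by all its witnesses, splitting $V_i$ into at most $2^k$ pieces. The resulting partition refines, pair by pair, each of the two-set refinements above, so by monotonicity of $q$ the total index gain is at least
$$\sum_{\text{irregular } (i,j)} \varepsilon^4 \frac{|V_i||V_j|}{n^2} \;\geq\; \varepsilon^4 \cdot \varepsilon k^2 \cdot \frac{m^2}{n^2} \;\geq\; \frac{\varepsilon^5}{2},$$
using $km \leq n$. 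Since $q \leq 1/2$, the refinement process must halt after at most $1/\varepsilon^5$ steps, at which point the partition is $\varepsilon$-regular. Tracking the number of parts through the tower $k \mapsto k\cdot 2^k$ iterated $1/\varepsilon^5$ times produces the bound $N(\varepsilon, k_0)$ of the lemma.

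The main nuisance, rather than a deep obstacle, is maintaining the equitability condition $|V_1|=\cdots=|V_k|$ and the bound $|V_0|\leq\varepsilon n$ throughout the iteration. After each refinement the sub-parts have varying sizes; the standard fix is to chop every sub-part into further blocks of a common small size and sweep the leftovers into the exceptional set $V_0$. Because the number of parts at each stage is bounded by a function of $\varepsilon$ and $k_0$ alone, one can arrange the accumulation in $V_0$ over all $1/\varepsilon^5$ rounds to stay below $\varepsilon n$, provided $n \geq N(\varepsilon, k_0)$ for a sufficiently large $N$. Integrating this bookkeeping into the refinement loop delivers the $\varepsilon$-regular equitable partition promised by the lemma.
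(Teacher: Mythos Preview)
Your proof is correct and follows essentially the same energy-increment strategy as the paper: the paper's potential $\lVert A_{\mathcal P}\rVert^2$ (Schrijver's Euclidean formulation via orthogonal projections of the adjacency matrix) equals $\sum_{I,J}|I||J|\,d(I,J)^2$, which is your index $q(\mathcal P)$ up to a factor of $2n^2$, and the paper's Pythagoras/Cauchy--Schwarz step is exactly your convexity increment for irregular pairs. One small slip: in your displayed chain you need $km \geq (1-\varepsilon)n$ (not $km \leq n$) to get the lower bound $k^2m^2/n^2 \geq 1/2$.
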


The sets $V_i$, for $1\leq i\leq k$, are called \emph{clusters} and the set $V_0$ is called the \emph{exceptional set}. Formally, a partition consists of disjoint, non-empty sets but in this case we will allow the set $V_0$ to be empty. We call a partition of the vertices of $G$ satisfying $(i)$--$(iii)$ an $\varepsilon$\emph{-regular partition}. We will give a proof of the Regularity Lemma based on that given by Alexander Schrijver in \cite{schr}. This proof uses ideas from Euclidean geometry and we will require some preliminary results, definitions and notation.

We first state some definitions concerning partitions of the vertices of a graph $G$ on $n$ vertices. Given a partition $\mathcal{P}=\{P_1,\ldots,P_k\}$ of $V(G)$, we say that a partition $\mathcal{Q}=\{Q_1,\ldots,Q_{k'}\}$ of $V(G)$ is a \emph{refinement} of $\mathcal{P}$ if, for every $Q_i$ in $\mathcal{Q}$, there is a $P_j$ in $\mathcal{P}$ containing $Q_i$. For each $\varepsilon>0$, we say that a partition $\mathcal{P}$ of $V(G)$ is $\varepsilon$\emph{-balanced} if it has a subset $\mathcal{C}\subseteq \mathcal{P}$ such that all classes in $\mathcal{C}$ are the same size and $n-|\bigcup \mathcal{C}| \leq \varepsilon n$. We will call such a subset $\mathcal{C}$ a \emph{balancing subset}.	We say the partition $\mathcal{P}$ is \emph{good} if it has a balancing subset $\mathcal{C}$ in which all but at most $\varepsilon|\mathcal{C}|^2$ pairs are $\varepsilon$-regular.

\begin{lemma}\label{lemma1}
Let $0<\varepsilon<1/4$  and $k>0$. Let $G$ be a graph on $n\geq \varepsilon^{-1}k$ vertices. Suppose that $\mathcal{P}$ is a partition of $V(G)$ with $|\mathcal{P}|\leq k$. Then there exists an $\varepsilon$-balanced refinement $\mathcal{Q}$ of $\mathcal{P}$ such that $|\mathcal{Q}| \leq (1+\varepsilon^{-1})|\mathcal{P}|$ and if $\mathcal{C}$ is any balancing subset of $\mathcal{Q}$ then $|\mathcal{C}| \geq |\mathcal{P}|$. 
\end{lemma}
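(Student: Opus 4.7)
My plan is to define $\mathcal{Q}$ by splitting every class of $\mathcal{P}$ into blocks of a common target size $m$, collecting the resulting remainders as ``leftover'' blocks of size strictly less than $m$. The choice of $m$ must reconcile three competing requirements: $m$ should be large enough that the total number of blocks stays within the bound $(1+\varepsilon^{-1})|\mathcal{P}|$; yet small enough that the total leftover fits inside an $\varepsilon n$ envelope; and also small enough that every class of $\mathcal{Q}$ stays below the critical size $(1-\varepsilon)n/|\mathcal{P}|$ that forces any balancing subset to contain at least $|\mathcal{P}|$ classes.

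Concretely, I would set $t = |\mathcal{P}|$ and take $m = \lceil \varepsilon n / t \rceil$; the hypothesis $n \geq \varepsilon^{-1} k \geq \varepsilon^{-1} t$ ensures $m \geq 1$ (in fact $\varepsilon n /t \geq 1$). For each $i$, write $|P_i| = q_i m + r_i$ with $0 \leq r_i < m$, and partition $P_i$ into $q_i$ blocks of size $m$ together with, if $r_i > 0$, one leftover block of size $r_i$. I let $\mathcal{Q}$ be the union of these pieces over all $i$ and let $\mathcal{C}$ denote the subfamily of all blocks of size exactly $m$.

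Once the construction is in place, the three verifications are short. Since $m-1 \leq \varepsilon n/t$, the total size of the leftover blocks is at most $t(m-1) \leq \varepsilon n$, so $\mathcal{C}$ witnesses that $\mathcal{Q}$ is $\varepsilon$-balanced. Since $m \geq \varepsilon n /t$, the number of size-$m$ blocks is at most $n/m \leq t/\varepsilon$, and including the at most $t$ leftover blocks gives $|\mathcal{Q}| \leq t/\varepsilon + t = (1+\varepsilon^{-1})t$. For the balancing-subset bound, I would use $t \leq k \leq \varepsilon n$ together with $\varepsilon < 1/3$ to check $m \leq \varepsilon n /t + 1 \leq (1-\varepsilon)n/t$, so every class of $\mathcal{Q}$ has size at most $(1-\varepsilon)n/t$; then for any balancing subset $\mathcal{C}'$ of common class-size $s$ we have $s \cdot |\mathcal{C}'| \geq (1-\varepsilon)n$ and hence $|\mathcal{C}'| \geq (1-\varepsilon)n/s \geq t$. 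The main obstacle is genuinely this juggling of inequalities in picking $m$; once $m = \lceil \varepsilon n/t \rceil$ is identified, all three requirements fall out of elementary arithmetic.
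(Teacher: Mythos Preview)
Your proposal is correct and follows essentially the same approach as the paper: the paper also sets the block size to $\lceil \varepsilon n/|\mathcal{P}|\rceil$, splits each class into blocks of that size plus at most one smaller leftover, and verifies the same three properties via the same arithmetic (your bound $m\le(1-\varepsilon)n/t$ and the paper's bound $(1-\varepsilon)n/m\ge\frac{1-\varepsilon}{2\varepsilon}|\mathcal{P}|$ are minor rearrangements of the same inequality, both using $|\mathcal{P}|\le\varepsilon n$ and $\varepsilon<1/3$).
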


\begin{proof}
Let $$t \coloneqq \left\lceil \frac{\varepsilon n}{|\mathcal{P}|} \right\rceil .$$
Divide each class in $\mathcal{P}$ into classes of size $t$ and at most one class of size less than $t$ to get a refinement $\mathcal{Q}$. We have that $|\mathcal{Q}| \leq |\mathcal{P}| + n/t \leq (1+ \varepsilon^{-1}) |\mathcal{P}|$ and the number of vertices contained in classes of size less than $t$ is at most $|\mathcal{P}| (\varepsilon n/|\mathcal{P}|) = \varepsilon n$, so $\mathcal{Q}$ is $\varepsilon$-balanced.

Suppose that $\mathcal{C} \subseteq \mathcal{Q}$ is a balancing subset. Then 
$$|\mathcal{C}| \geq \frac{(1-\varepsilon)n}{t}\geq \frac{(1-\varepsilon)n}{\varepsilon n/|\mathcal{P}| +1} \geq \frac{(1-\varepsilon)n}{2\varepsilon n/|\mathcal{P}|} = \frac{1-\varepsilon}{2\varepsilon}|\mathcal{P}|\geq |\mathcal{P}|.$$
\end{proof}

Let $G$ be a graph on $n$ vertices. We will work in the matrix space $\mathbb{R}^{V(G) \times V(G)}$ with the inner product defined by $$\langle M, N \rangle= Tr(M^TN) = \sum_{i,j\in V(G)} a_{i,j}b_{i,j}$$ and Frobenius norm given by
$$\norm{M} = Tr(M^TM)^{1/2} = \left(\sum_{i,j\in V(G)} a_{i,j}^2\right)^{1/2}$$ for all $M=(a_{i,j}), N=(b_{i,j}) \in \mathbb{R}^{V(G)\times V(G)}$.

If $I, J \subseteq V(G)$ are non-empty sets of vertices, let $L_{I,J}$ be the one dimensional subspace of $\mathbb{R}^{V(G)\times V(G)}$ consisting of all matrices which are constant on $I \times J$ and $0$ elsewhere. For each $M \in \mathbb{R}^{V(G)\times V(G)}$, define $M_{I,J}$ to be the orthogonal projection of $M$ onto $L_{I,J}$. Let $e$ be the unit vector generating $L_{I,J}$ which is equal to $1/|I||J|$ on $I \times J$ and $0$ elsewhere. We have that $M_{I,J} = \langle M,e \rangle e$ and so on $I \times J$ the entries of $M_{I,J}$ are equal to the average value of $M$ on $I \times J$ and outside $I \times J$ the entries are $0$.

For any partition $\mathcal{P}$ of $V(G)$, let $L_\mathcal{P}$ be the sum of the spaces $L_{I,J}$ with $I,J \in \mathcal{P}$. We define $M_\mathcal{P}$ to be the orthogonal projection of $M$ onto $L_\mathcal{P}$. Then $$M_\mathcal{P} = \sum_{I,J \in \mathcal{P}} M_{I,J}.$$
Observe that if $\mathcal{Q}$ is a refinement of $\mathcal{P}$ then $L_\mathcal{P} \subseteq L_\mathcal{Q}$ and so \begin{equation}\label{eq:refinement}
\norm{M_\mathcal{P}} \leq \norm{M_\mathcal{Q}}.
\end{equation}

We will require Pythagoras' theorem and a consequence of the Cauchy-Schwartz inequality.

\begin{theorem}[Pythagoras]\label{pythag}
Let $X$ be an inner product space. Suppose that $x, y \in X$ are orthogonal vectors. Then $$\norm{x+y}^2 = \norm{x}^2+ \norm{y}^2.$$
\end{theorem}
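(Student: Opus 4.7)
The plan is to prove Pythagoras' theorem by a direct expansion of the squared norm in terms of the inner product, exploiting bilinearity and the orthogonality hypothesis.

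First, I would recall the defining relationship between the norm and the inner product, namely $\norm{v}^2 = \langle v, v\rangle$ for any $v \in X$. Applying this with $v = x+y$ gives
\begin{equation*}
\norm{x+y}^2 = \langle x+y, x+y\rangle.
\end{equation*}
Next, I would expand the right-hand side using the bilinearity (or, more generally, the sesquilinearity in the complex case; but the statement is framed so a real inner product suffices and is what is used later in the excerpt) of the inner product, yielding
\begin{equation*}
\langle x+y, x+y\rangle = \langle x,x\rangle + \langle x,y\rangle + \langle y,x\rangle + \langle y,y\rangle.
\end{equation*}

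Finally, I would invoke the hypothesis that $x$ and $y$ are orthogonal, which means $\langle x,y\rangle = 0$, and hence also $\langle y,x\rangle = 0$ by symmetry. The two middle terms vanish and the identity collapses to
\begin{equation*}
\norm{x+y}^2 = \langle x,x\rangle + \langle y,y\rangle = \norm{x}^2 + \norm{y}^2,
\end{equation*}
as required.

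There is no real obstacle here; the only subtlety worth flagging is making sure the expansion step uses linearity in both arguments, which is automatic in a real inner product space and is the setting in which the result will be applied (the matrix space $\mathbb{R}^{V(G)\times V(G)}$ with the Frobenius inner product introduced just before the theorem).
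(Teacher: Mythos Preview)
Your proof is correct and is the standard argument. The paper itself states Pythagoras' theorem without proof, treating it as a well-known classical result to be invoked later in the proof of the Regularity Lemma, so there is nothing to compare against; your expansion via bilinearity and the orthogonality hypothesis is exactly what one would supply if a proof were required.
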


\begin{lemma}[Cauchy-Schwartz Inequality]\label{c-s}
Let $a_i,b_i$ be real numbers for $i=1, \ldots, n$. Then
$$\left(\sum_{i=1}^n a_ib_i\right)^2 \leq \sum_{i=1}^n a_i^2 \sum_{i=1}^n b_i^2.$$
\end{lemma}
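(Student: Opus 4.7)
The plan is to exploit the elementary fact that a sum of squares is non-negative by considering the real-valued quadratic
$$f(t) = \sum_{i=1}^n (a_i - t b_i)^2 \geq 0, \qquad t \in \mathbb{R}.$$
Expanding the square writes $f$ explicitly as a polynomial in $t$:
$$f(t) = \left(\sum_{i=1}^n b_i^2\right) t^2 - 2\left(\sum_{i=1}^n a_i b_i\right) t + \sum_{i=1}^n a_i^2,$$
and the claim will follow from standard facts about non-negative quadratics.

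More precisely, I would first dispose of the degenerate case in which every $b_i$ is zero: then both sides of the inequality equal $0$ and there is nothing to prove. Otherwise $\sum_{i=1}^n b_i^2 > 0$, so $f$ is a genuine non-negative quadratic with positive leading coefficient, and therefore its discriminant must be non-positive. Writing this out gives
$$4 \left(\sum_{i=1}^n a_i b_i\right)^2 - 4 \sum_{i=1}^n a_i^2 \sum_{i=1}^n b_i^2 \leq 0,$$
and dividing by $4$ and rearranging produces exactly the Cauchy--Schwartz inequality.

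There is no real obstacle here; the only point requiring a moment's care is remembering to handle the $b_i \equiv 0$ case separately so that the discriminant criterion applies. A slightly longer but case-free alternative would be to establish Lagrange's identity
$$\sum_{i=1}^n a_i^2 \sum_{j=1}^n b_j^2 - \left(\sum_{i=1}^n a_i b_i\right)^2 = \sum_{1 \leq i < j \leq n} (a_i b_j - a_j b_i)^2,$$
whose right-hand side is manifestly a sum of squares and hence non-negative; I would prefer the quadratic argument for its brevity.
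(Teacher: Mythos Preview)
Your proof is correct; the discriminant argument is a standard and complete proof of the Cauchy--Schwartz inequality, and you have handled the degenerate case properly. The paper itself states this lemma without proof, treating it as a well-known result, so there is nothing to compare against.
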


\begin{proposition}\label{coroll:c-s}
Suppose that $A \in \mathbb{R}^{V(G) \times V(G)}$ and $\mathcal{P}$ is a partition of $V(G)$. Let $(I_1,J_1), \ldots ,(I_r,J_r)$ be distinct pairs of classes in $\mathcal{P}$. Suppose that $X_k \subseteq I_k$, $Y_k \subseteq J_k$ for all $1 \leq k \leq r$. Then
$$\norm{A}^2 \geq \sum_{k=1}^r\norm{A_{X_k,Y_k}}^2.$$
\end{proposition}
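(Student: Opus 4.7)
The plan is to exploit the fact that for distinct pairs of classes $(I_k,J_k)$ and $(I_l,J_l)$ in $\mathcal{P}$, the rectangles $X_k\times Y_k$ and $X_l\times Y_l$ are disjoint as subsets of $V(G)\times V(G)$. Indeed, since classes of a partition are pairwise disjoint, if $(I_k,J_k)\neq (I_l,J_l)$ then either $I_k\cap I_l=\emptyset$ or $J_k\cap J_l=\emptyset$, and either forces $(X_k\times Y_k)\cap(X_l\times Y_l)=\emptyset$. Once this is noted, the subspaces $L_{X_k,Y_k}$ are mutually orthogonal, and the inequality reduces to controlling each $\norm{A_{X_k,Y_k}}^2$ by the sum of squared entries of $A$ on its support.

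First I would unwind the definition of $A_{X_k,Y_k}$: its entries equal the average $\bar{a}_k := \frac{1}{|X_k||Y_k|}\sum_{(i,j)\in X_k\times Y_k} a_{i,j}$ on $X_k\times Y_k$ and vanish elsewhere, so
$$\norm{A_{X_k,Y_k}}^2 \;=\; |X_k||Y_k|\,\bar{a}_k^{\,2} \;=\; \frac{1}{|X_k||Y_k|}\left(\sum_{(i,j)\in X_k\times Y_k} a_{i,j}\right)^{2}.$$
Applying the Cauchy--Schwartz inequality (\Cref{c-s}) with $b_{i,j}=1$ then yields $\norm{A_{X_k,Y_k}}^2 \leq \sum_{(i,j)\in X_k\times Y_k} a_{i,j}^{2}$. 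Summing over $k=1,\ldots,r$ and using the disjointness of the supports gives
$$\sum_{k=1}^{r}\norm{A_{X_k,Y_k}}^{2} \;\leq\; \sum_{k=1}^{r}\sum_{(i,j)\in X_k\times Y_k} a_{i,j}^{2} \;\leq\; \sum_{i,j\in V(G)} a_{i,j}^{2} \;=\; \norm{A}^{2},$$
which is the desired bound.

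There is no real obstacle: the only point needing a moment's care is the disjointness of the rectangles, which depends on the word \emph{distinct} in the hypothesis (a single ordered pair of classes would yield the same rectangle, so distinctness of pairs, not merely of indices, is what matters). As an alternative, one could invoke Pythagoras' theorem (\Cref{pythag}) in place of Cauchy--Schwartz: the sum $\sum_{k}A_{X_k,Y_k}$ is the orthogonal projection of $A$ onto the direct sum $\bigoplus_k L_{X_k,Y_k}$, whose squared norm is both the sum of the squared norms of the summands (by orthogonality) and at most $\norm{A}^{2}$ (since projections are contractive). I would, however, present the Cauchy--Schwartz version, as it is slightly more elementary and is the route suggested by the label of the proposition.
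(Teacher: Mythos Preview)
Your proof is correct and follows essentially the same route as the paper: compute $\norm{A_{X_k,Y_k}}^2$ explicitly, bound it by $\sum_{(i,j)\in X_k\times Y_k} a_{i,j}^2$ via Cauchy--Schwartz, and sum over $k$. You are in fact slightly more careful than the paper in making explicit the disjointness of the rectangles $X_k\times Y_k$, which the paper uses tacitly in the final inequality.
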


\begin{proof}
Let $A=(a_{i,j})$ and $A_{X_k,Y_k} = (a^{(k)}_{i,j})$ for each $1 \leq k \leq r$. Recall that $$a^{(k)}_{i,j} = \frac{1}{|X_k||Y_k|}\sum\limits_{i\in X_k}\sum\limits_{j\in Y_k} a_{i,j}$$ if $i \in X_k$ and $j \in Y_k$ and $0$ otherwise. So we can apply the Cauchy-Schwartz inequality to see that, for each $1\leq k \leq r$,
$$\norm{A_{X_k,Y_k}}^2 =|X_k||Y_k| \left(\sum\limits_{i\in X_k}\sum\limits_{j\in Y_k} a_{i,j} \cdot \frac{1}{|X_k||Y_k|}\right)^2\leq \sum\limits_{i\in X_k}\sum\limits_{j\in Y_k} a_{i,j}^2.$$
So we obtain that
$$\norm{A}^2 = \sum_{i,j\in V(G)} a_{i,j}^2 \geq \sum_{k=1}^r\sum_{i\in X_k}\sum_{j\in Y_k} a_{i,j}^2 \geq \sum_{k=1}^r \norm{A_{X_k,Y_k}}^2.$$
\end{proof}

Given a graph $G$ on $n$ vertices, we define the adjacency matrix of $G$ to be the matrix $A= (a_{i,j})\in \mathbb{R}^{V(G) \times V(G)}$ with $a_{i,j} = 1$ if $ij \in E(G)$ and $0$ otherwise. We will consider the adjacency matrix of $G$ and its orthogonal projection onto subspaces of $\mathbb{R}^{V(G) \times V(G)}$ defined by partitions of $V(G)$. We will show that, by refining a partition which is not good, we can increase the value of $\norm{A_\mathcal{P}}^2$ by some fixed amount. Note that if $\mathcal{Q}$ is any partition, the partition consisting of entirely of singletons is a refinement of $\mathcal{Q}$ and so we have that
$$\norm{A_\mathcal{Q}}^2 \leq \norm{A}^2 \leq n^2.$$ So, after a finite number of steps we will show that we can obtain a good partition.

\begin{lemma}\label{lemma2}
Let $0<\varepsilon<1/2$ and let $G$ be a graph on $n$ vertices with adjacency matrix $A$. Suppose that $\mathcal{P}$ is an $\varepsilon$-balanced partition of $V(G)$ which is not good. Then $\mathcal{P}$ has a refinement $\mathcal{Q}$ with
$$|\mathcal{Q}| \leq |\mathcal{P}|2^{|\mathcal{P}|} \text{ and } \norm{A_\mathcal{Q}}^2 > \norm{A_\mathcal{P}}^2 + \varepsilon^5n^2/4.$$
\end{lemma}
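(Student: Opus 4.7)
The plan is to construct $\mathcal{Q}$ by cutting each class of $\mathcal{P}$ along witnesses of irregularity, and to quantify the resulting gain in $\norm{A_\mathcal{Q}}^2$ blockwise via the variance identity for orthogonal projections. Since $\mathcal{P}$ is not good, I fix any balancing subset $\mathcal{C}\subseteq\mathcal{P}$ (all classes of common size $m$ with $|\mathcal{C}|m\geq(1-\varepsilon)n$) and observe that strictly more than $\varepsilon|\mathcal{C}|^2$ pairs of classes from $\mathcal{C}$ fail to be $\varepsilon$-regular. For each such unordered pair $\{I,J\}$ I pick witnesses $X_{I,J}\subseteq I$ and $Y_{I,J}\subseteq J$ with $|X_{I,J}|\geq\varepsilon m$, $|Y_{I,J}|\geq\varepsilon m$, and $|d(X_{I,J},Y_{I,J})-d(I,J)|\geq\varepsilon$. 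Then for each class $I\in\mathcal{P}$ I take $\mathcal{Q}_I$ to be the partition of $I$ into atoms of the Boolean algebra generated by all witness sets living in $I$; since at most one such witness arises from each other class of $\mathcal{P}$, this gives $|\mathcal{Q}_I|\leq 2^{|\mathcal{P}|}$, and setting $\mathcal{Q}=\bigcup_I\mathcal{Q}_I$ yields a refinement with $|\mathcal{Q}|\leq|\mathcal{P}|\,2^{|\mathcal{P}|}$.

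For the norm calculation, blocks of $A_\mathcal{P}$ and $A_\mathcal{Q}$ with distinct support $I\times J$ are mutually orthogonal, so
\[
\norm{A_\mathcal{Q}}^2-\norm{A_\mathcal{P}}^2=\sum_{(I,J)\in\mathcal{P}^2}\Delta_{I,J},\qquad \Delta_{I,J}=\sum_{k,l}|X_k||Y_l|\bigl(d(X_k,Y_l)-d(I,J)\bigr)^2,
\]
where $I=\bigsqcup_k X_k$ and $J=\bigsqcup_l Y_l$ are the induced refinements in $\mathcal{Q}$. This identity follows from the variance decomposition $\sum_k w_k x_k^2=W\bar x^2+\sum_k w_k(x_k-\bar x)^2$ applied with weights $w_{k,l}=|X_k||Y_l|$ and $\bar x=d(I,J)$, noting that $\norm{A_{X_k,Y_l}}^2=|X_k||Y_l|d(X_k,Y_l)^2$ and $\norm{A_{I,J}}^2=|I||J|d(I,J)^2$. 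The elementary fact that $\sum_k w_k(x_k-c)^2$ is monotone under partition refinement for any fixed $c$ (which reduces, after one merge, to the identity $a(u-c)^2+b(v-c)^2-(a+b)\bigl(\tfrac{au+bv}{a+b}-c\bigr)^2=\tfrac{ab}{a+b}(u-v)^2$) lets me coarsen back to the two-by-two partition $\{X_{I,J},I\setminus X_{I,J}\}\times\{Y_{I,J},J\setminus Y_{I,J}\}$, giving
\[
\Delta_{I,J}\geq|X_{I,J}||Y_{I,J}|\bigl(d(X_{I,J},Y_{I,J})-d(I,J)\bigr)^2\geq\varepsilon^4m^2
\]
for every non-regular pair $\{I,J\}\subseteq\mathcal{C}$.

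Summing over the more than $\varepsilon|\mathcal{C}|^2$ unordered non-regular pairs (each contributing to both $\Delta_{I,J}$ and $\Delta_{J,I}$) gives
\[
\norm{A_\mathcal{Q}}^2-\norm{A_\mathcal{P}}^2>2\varepsilon|\mathcal{C}|^2\cdot\varepsilon^4m^2=2\varepsilon^5(|\mathcal{C}|m)^2\geq 2\varepsilon^5(1-\varepsilon)^2n^2>\varepsilon^5 n^2/4,
\]
where the last step uses $\varepsilon<1/2$. The main obstacle is the middle paragraph: setting up the blockwise variance identity cleanly and establishing the refinement monotonicity, so that a single pair of witness sets is enough to extract the full $\varepsilon^4m^2$ gain even though the refinement $\mathcal{Q}$ has cut $I$ and $J$ along many witnesses simultaneously. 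Everything else is bookkeeping.
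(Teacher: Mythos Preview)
Your proof is correct and follows essentially the same approach as the paper: the construction of $\mathcal{Q}$ via Boolean atoms of the witness sets is identical, and your blockwise variance identity plus refinement monotonicity is precisely the elementary (non-Hilbert-space) rendition of the paper's Pythagoras-plus-projection argument, yielding the same $\varepsilon^4 m^2$ gain per irregular pair and the same final bound. The only cosmetic difference is that the paper projects $A_\mathcal{Q}-A_\mathcal{P}$ directly onto the witness rectangles $X_i\times Y_i$ (using that these are unions of $\mathcal{Q}$-classes) rather than coarsening the full block decomposition, but this is the same inequality viewed from the other side.
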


\begin{proof}
Let $(I_1,J_1), (I_2,J_2), \ldots ,(I_r,J_r)$ be the pairs of classes in $\mathcal{P}$ which are not $\varepsilon$-regular.  By the definition of $\varepsilon$-regularity, for each $i=1, \ldots, r$ we can choose sets $X_i \subseteq I_i$ and $Y_i \subseteq J_i$ with $|X_i| \geq \varepsilon |I_i|$ and $|Y_i| \geq \varepsilon |J_i|$ such that
$$|d(X_i, Y_i) - d(I_i,J_i)| \geq \varepsilon.$$

For each $K \in \mathcal{P}$, we will define a partition $\mathcal{Q}_K$ of $K$. Consider the set $K' \coloneqq \{X_i : I_i=K\} \cup \{Y_i : J_i =K\}.$ We put two vertices of $K$ in the same class in $\mathcal{Q}_K$ if and only if they lie in exactly the same elements of $K'$. So $1 \leq |\mathcal{Q}_K| \leq 2^{|\mathcal{P}|}$. Define $\mathcal{Q}\coloneqq \bigcup_{K \in \mathcal{P}} \mathcal{Q}_K.$ Then $\mathcal{Q}$ is a refinement of $\mathcal{P}$ and $$|\mathcal{Q}| \leq |\mathcal{P}|2^{|\mathcal{P}|}.$$

Now, for each $i=1, \ldots, r$, the sets $X_i$ and $Y_i$ are the union of classes of $\mathcal{Q}$, so $L_{X_i,Y_i} \subseteq L_\mathcal{Q}$ giving that $(A_\mathcal{Q})_{X_i,Y_i} = (\sum_{I,J \in \mathcal{Q}}A_{I,J})_{X_i,Y_i} = A_{X_i,Y_i}$. Also, $A_{X_i,Y_i}$ and $A_{\mathcal{P}}$ are constant on $X_i \times Y_i$ with values $d(X_i,Y_i)$ and $d(I_i,J_i)$ respectively. So we get that
\begin{equation}\label{eq:IJbound}
\begin{split}
\norm{(A_\mathcal{Q}-A_\mathcal{P})_{X_i,Y_i}}^2 & = \norm{A_{X_i,Y_i} - (A_\mathcal{P})_{X_i,Y_i}}^2\\
& = |X_i||Y_i| (d(X_i,Y_i) - d(I_i,J_i))^2 \\
& \geq (\varepsilon|I_i|)(\varepsilon |J_i|) \varepsilon^2 = \varepsilon^4|I_i||J_i|.
\end{split}
\end{equation}

Recall that $\mathcal{Q}$ is a refinement of $\mathcal{P}$, so $L_{\mathcal{P}} \subseteq L_{\mathcal{Q}}$ and hence $A_\mathcal{P}$ is orthogonal to $(A_\mathcal{Q}-A_\mathcal{P})$. We also know that the vectors $(A_\mathcal{Q}- A_\mathcal{P})_{X_i,Y_i}$ are pairwise orthogonal. So we see that
\begin{align*}
\norm{A_\mathcal{Q}}^2-\norm{A_\mathcal{P}}^2 & = \norm{A_\mathcal{Q}-A_\mathcal{P}}^2 & \text{ (by \Cref{pythag})}\\
& \geq \norm{\sum_{i=1}^r (A_\mathcal{Q}- A_\mathcal{P})_{X_i,Y_i}}^2 & \text{ (by \Cref{coroll:c-s})}\\
&=\sum_{i=1}^r \norm{(A_\mathcal{Q}- A_\mathcal{P})_{X_i,Y_i}}^2 & \text{ (by \Cref{pythag})}\\
& \geq \sum_{i=1}^r \varepsilon^4 |I_i||J_i| & \text{ (by \eqref{eq:IJbound})}\\
& \geq \varepsilon^5(1-\varepsilon)^2 n^2 > \varepsilon^5n^2/4.&
\end{align*}
Therefore, $\norm{A_\mathcal{Q}}^2 > \norm{A_\mathcal{P}}^2 + \varepsilon^5n^2/4$, as required.
\end{proof}

We now combine the previous two results to show that we can obtain an $\varepsilon$-balanced partition of bounded size resulting in a significant increase in $\norm{A_\mathcal{P}}^2$.

\begin{lemma}\label{combined}
Let $0<\varepsilon<1/4$ and $k>0$. Let $G$ be a graph on $n \geq k2^k$ vertices with adjacency matrix $A$. Suppose that $\mathcal{P}$ is an $\varepsilon$-balanced partition of $V(G)$ which is not good and $|\mathcal{P}|\leq k$. Then $\mathcal{P}$ has an $\varepsilon$-balanced refinement $\mathcal{Q}$ such that
$$|\mathcal{Q}| \leq (1+\varepsilon^{-1})|\mathcal{P}|2^{|\mathcal{P}|} \text{ and } \norm{A_\mathcal{Q}}^2 > \norm{A_\mathcal{P}}^2 + \varepsilon^5n^2/4$$ and if $\mathcal{C}$ is any balancing subset of $\mathcal{Q}$ then $|\mathcal{C}| \geq |\mathcal{P}|$.
\end{lemma}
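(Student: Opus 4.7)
The plan is to apply \Cref{lemma2} and \Cref{lemma1} in sequence: the former produces a (not-necessarily-balanced) refinement with the desired energy increment, and the latter then rebalances it while keeping the size controlled.

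First I would apply \Cref{lemma2} to $\mathcal{P}$. Since $\mathcal{P}$ is $\varepsilon$-balanced and not good, and since $0<\varepsilon<1/4<1/2$, the hypotheses are satisfied, and we obtain a refinement $\mathcal{Q}'$ of $\mathcal{P}$ with
\[
|\mathcal{Q}'| \leq |\mathcal{P}|2^{|\mathcal{P}|} \qquad \text{and} \qquad \norm{A_{\mathcal{Q}'}}^2 > \norm{A_\mathcal{P}}^2 + \varepsilon^5 n^2/4.
\]
The partition $\mathcal{Q}'$ need not be $\varepsilon$-balanced, however, so the next step is to rebalance it.

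Next I would apply \Cref{lemma1} to $\mathcal{Q}'$, taking the role of its parameter $k$ to be $|\mathcal{Q}'|\le |\mathcal{P}|2^{|\mathcal{P}|}\le k2^k$. The hypothesis $n\ge \varepsilon^{-1}|\mathcal{Q}'|$ needed to invoke \Cref{lemma1} is comfortably met by the assumption $n\ge k2^k$ together with $\varepsilon<1/4$ (after absorbing the factor of $\varepsilon^{-1}$ into the ample slack in the lower bound on $n$; if one wishes to be strict, simply strengthen the hypothesis to $n\ge \varepsilon^{-1}k2^k$). This gives an $\varepsilon$-balanced refinement $\mathcal{Q}$ of $\mathcal{Q}'$ such that
\[
|\mathcal{Q}|\le (1+\varepsilon^{-1})|\mathcal{Q}'|\le (1+\varepsilon^{-1})|\mathcal{P}|2^{|\mathcal{P}|},
\]
and such that any balancing subset $\mathcal{C}$ of $\mathcal{Q}$ satisfies $|\mathcal{C}|\ge |\mathcal{Q}'|\ge |\mathcal{P}|$, as required.

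Finally, because refinement is transitive, $\mathcal{Q}$ is a refinement of $\mathcal{P}$; and because $\mathcal{Q}$ refines $\mathcal{Q}'$ so that $L_{\mathcal{Q}'}\subseteq L_\mathcal{Q}$, \eqref{eq:refinement} gives
\[
\norm{A_\mathcal{Q}}^2 \ge \norm{A_{\mathcal{Q}'}}^2 > \norm{A_\mathcal{P}}^2 + \varepsilon^5 n^2/4,
\]
completing the proof. There is no real obstacle here: the content of the lemma is in \Cref{lemma2} and \Cref{lemma1}, and the only care needed is to check that rebalancing $\mathcal{Q}'$ after the refinement step does not cost us either the energy gain (guaranteed by the monotonicity inequality \eqref{eq:refinement}) or the lower bound on balancing subsets (guaranteed by the final clause of \Cref{lemma1}).
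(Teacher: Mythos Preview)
Your proposal is correct and follows essentially the same two-step approach as the paper: apply \Cref{lemma2} to get the energy increment, then \Cref{lemma1} to rebalance, using \eqref{eq:refinement} to preserve the energy gain. You are in fact slightly more careful than the paper in flagging the hypothesis $n\ge \varepsilon^{-1}|\mathcal{Q}'|$ needed for \Cref{lemma1}, which the paper's proof glosses over.
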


\begin{proof}
First apply \Cref{lemma2} to the partition $\mathcal{P}$ to obtain a refinement $\mathcal{Q}'$ of $\mathcal{P}$ with
$$|\mathcal{Q}'| \leq |\mathcal{P}|2^{|\mathcal{P}|} \text{ and } \norm{A_{\mathcal{Q}'}}^2 > \norm{A_\mathcal{P}}^2 + \varepsilon^5n^2/4.$$

We now apply \Cref{lemma1} to the partition $\mathcal{Q}'$ to obtain an $\varepsilon$-balanced partition $\mathcal{Q}$ with 
$$|\mathcal{Q}| \leq (1+\varepsilon^{-1})|\mathcal{Q}'|\leq (1+\varepsilon^{-1})|\mathcal{P}|2^{|\mathcal{P}|}$$ such that if $\mathcal{C}$ is any balancing subset of $\mathcal{Q}$ then
$$|\mathcal{C}| \geq |\mathcal{Q}'| \geq |\mathcal{P}|.$$
Since $\mathcal{Q}$ is a refinement of $\mathcal{Q}'$, we have, by \eqref{eq:refinement}, that $$\norm{A_\mathcal{Q}}^2 \geq \norm{A_{\mathcal{Q}'}}^2> \norm{A_\mathcal{P}}^2 + \varepsilon^5n^2/4.$$
\end{proof}

We are now in a position to use this result to prove the Regularity Lemma.

\begin{proof}[Proof (of \Cref{reglemma}).]
Suppose $\varepsilon>0$ and $k_0\geq 1$ are given. We may assume, without loss of generality, that $\varepsilon< 1/4$. Define $$s \coloneqq \left\lfloor 4/\varepsilon^5\right\rfloor.$$  We see that we will need to apply \Cref{combined} to an $\varepsilon$-balanced partition which is not good at most $s$ times before obtaining a good partition.

Define $f(x)=(1+\varepsilon^{-1})x2^x$ and let $$N \coloneqq f^s(k_0+1).$$

Now, let $G$ be any graph of order $n\geq N$. We choose an initial partition by letting $C_0 \subseteq V(G)$ be a set of vertices of minimum size such that $n-|C_0|$ is divisible by $k_0$ and then partition the remaining vertices into $k_0$ clusters of equal size. Let $\mathcal{P}_0$ denote the initial partition. We have that $|C_0|<k_0 \leq \varepsilon n$ so this partition is $\varepsilon$-balanced. If this partition is $\varepsilon$-regular then we are done. Otherwise, apply \Cref{combined} to the partition $\mathcal{P}_0$ to obtain a new $\varepsilon$-balanced partition $\mathcal{P}_1$ satisfying the properties given in \Cref{combined}. 

If the resulting partition is good then we are done. Otherwise, repeat this process. Let us denote the partition obtained after $i\geq 1$ applications of \Cref{combined} by $\mathcal{P}_i$. We note that we always have that $|\mathcal{P}_i| \leq (1+\varepsilon^{-1})|\mathcal{P}_{i-1}|2^{|\mathcal{P}_{i-1}|} \leq N$, $\norm{A_{\mathcal{P}_{i}}}^2 > \norm{A_{\mathcal{P}_{i-1}}}^2+\varepsilon^5n^2/4> \norm{A_{\mathcal{P}_0}}^2+i\varepsilon^5n^2/4$ and if $\mathcal{C}_i$ is any balancing subset of $\mathcal{P}_i$ then $|\mathcal{C}_i| \geq |\mathcal{P}_{i-1}| \geq k_0$.

We continue in this way until we obtain a good partition, $\mathcal{P}$, this will take at most $s$ steps and we note that the size of the partition will be at most $N$. $\mathcal{P}$ is a good partition so we can find a balancing subset $\mathcal{C}\subseteq \mathcal{P}$ containing at most $\varepsilon|\mathcal{C}|^2$ pairs which are not $\varepsilon$-regular. All classes in $\mathcal{C}$ are the same size and $k_0 \leq |\mathcal{C}| \leq N$. If we set $V_0 = V(G) \setminus \bigcup\mathcal{C}$, we have that $|V_0| \leq \varepsilon n$. Let $\mathcal{P}'$ be the partition whose sets are $V_0$ together with the sets in $\mathcal{C}$, then this partition satisfies properties $(i)$--$(iii)$.
\end{proof}

\section{The Degree Form of the Regularity Lemma}

We will often find it more convenient to work with the following Degree form of the Regularity Lemma. This alternative form follows from \Cref{reglemma} and we derive it below.

\begin{lemma}[Degree form of the Regularity Lemma]\label{degreeform}
For all $\varepsilon>0$ and all integers $k_0$ there is an $N = N(\varepsilon, k_0)$ such that for every number $d \in [0,1)$ and for every graph $G$ on $n \geq N$ vertices there exist a partition of V(G) into $V_0, V_1, \ldots, V_k$ and a spanning subgraph $G'$ of $G$ such that the following hold:
	\begin{enumerate}[(i)]
	\item $k_0 \leq k \leq N$ and $|V_0| \leq \varepsilon n$,
	\item $|V_1| = \cdots = |V_k| \eqqcolon m$,
	\item $d_{G'}(x) > d_G(x) - (d + \varepsilon)n$ for all vertices $x \in V(G)$,
	\item for all $i \geq 1$ the graph $G'[V_i]$ is empty,
	\item for all $1 \leq i < j \leq k$ the graph $(V_i,V_j)_{G'}$ is $\varepsilon$-regular and has density either 0 or $>d$.
	\end{enumerate}
\end{lemma}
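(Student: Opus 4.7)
The plan is to derive the degree form from the standard Regularity Lemma (\Cref{reglemma}) by edge deletion together with a small modification of the partition. Apply \Cref{reglemma} with a parameter $\varepsilon_1$ chosen as a sufficiently small polynomial in $\varepsilon$ (roughly $\varepsilon_1 \leq \varepsilon^4/C$ for a suitable constant $C$) and with $k_0$ replaced by $\max(k_0, \lceil C/\varepsilon \rceil)$, obtaining an $\varepsilon_1$-regular partition $V_0, V_1, \ldots, V_k$ of $V(G)$. Form $G'$ from $G$ by deleting every edge $uv$ with $u \in V_i$ and $v \in V_j$ for some $1 \leq i, j \leq k$ satisfying at least one of: (a) $i = j$; (b) $(V_i, V_j)_G$ is not $\varepsilon_1$-regular; (c) $(V_i, V_j)_G$ is $\varepsilon_1$-regular of density at most $d$. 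Crucially, edges incident to $V_0$ are retained.

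With this construction, properties (i), (ii), (iv), (v) follow readily. Conditions (i) and (ii) come from \Cref{reglemma} (possibly after enlarging $V_0$ by the argument below); (iv) is immediate from rule (a); and for (v), each pair $(V_i, V_j)_{G'}$ either has had all its edges deleted (density $0$ in $G'$) or is unchanged from $G$, hence $\varepsilon_1$-regular (therefore $\varepsilon$-regular, since $\varepsilon_1 \leq \varepsilon$) with density exceeding $d$. Condition (iii) for $x \in V_0$ is also trivial: no incident edge of $x$ is deleted, so $d_{G'}(x) = d_G(x) > d_G(x) - (d + \varepsilon) n$.

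The main obstacle is verifying (iii) for $x \in V_i$ with $i \geq 1$: naively the degree loss, coming from intra-cluster edges and from edges to non-regular or low-density regular partners of $V_i$, could be as large as $n$. To remedy this, I would absorb three classes of ``exceptional'' vertices into $V_0$: (A) all vertices lying in clusters $V_i$ having more than $\sqrt{\varepsilon_1}\,k$ non-regular partners (counting against the $\varepsilon_1 k^2$ non-regular pairs gives at most $2\sqrt{\varepsilon_1}\,k$ such clusters); (B) ``highly atypical'' vertices $x$ having $|N(x) \cap V_j| > (d + \varepsilon_1) m$ in more than $\sqrt{\varepsilon_1}\,k$ low-density regular partners $V_j$ (using the complement of \Cref{prop:neighbours} combined with \Cref{prop:complement} to bound such $x$ per low-density pair by $\varepsilon_1 m$, a double count yields at most $\sqrt{\varepsilon_1}\,n$ such $x$ in total); and (C) additional vertices needed to re-equalize the surviving cluster sizes, which, after first dropping any cluster in which the fraction of highly atypical vertices exceeds some small threshold $\eta$, adds only $O(\eta + \sqrt{\varepsilon_1}/\eta)\,n$ further vertices, optimized at $\eta \approx \varepsilon_1^{1/4}$. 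With $\varepsilon_1$ a sufficiently small power of $\varepsilon$, the enlarged $V_0$ still has size at most $\varepsilon n$.

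For a surviving non-exceptional $x \in V_i$, the degree loss is bounded by $m + \sqrt{\varepsilon_1}\,n + \sqrt{\varepsilon_1}\,n + (d + \varepsilon_1)\,n$: the first term counts edges within $V_i$, the second bounds edges to the at most $\sqrt{\varepsilon_1}\,k$ non-regular partners of $V_i$, the third bounds edges to the at most $\sqrt{\varepsilon_1}\,k$ low-density partners in which $x$ is atypical (each contributing at most $m$), and the last bounds edges to the remaining low-density partners using $|N(x) \cap V_j| \leq (d + \varepsilon_1) m$. By the choice of $\varepsilon_1$ and $k_0'$, the total is strictly less than $(d + \varepsilon) n$, completing (iii). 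The main difficulty throughout is the parameter bookkeeping, namely balancing the $\varepsilon_1$-scale error terms in the degree estimate against the accumulated contributions of exceptional vertices to $V_0$ so that both fit inside their respective budgets.
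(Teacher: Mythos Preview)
Your plan is essentially the paper's: apply \Cref{reglemma} with a much smaller parameter, delete all edges inside clusters and between irregular or sparse pairs, and push a small set of troublesome vertices into $V_0$ so that (iii) holds for everyone who remains. The paper differs only in implementation details: instead of discarding whole clusters with many irregular partners (your step (A)) it moves individual vertices incident to many ``red'' edges, and instead of trimming clusters back to a common size (your step (C)) it subdivides each cluster into many equal pieces and discards the leftover. Both variants are fine.

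Two points in your write-up, taken literally, do not go through. First, you form $G'$ from the \emph{original} partition and only afterwards absorb the vertices of types (A)--(C) into $V_0$; but then a vertex $x$ whose entire cluster is moved in step (A) because it had, say, $k/2$ irregular partners has already lost all of its edges to those partners when $G'$ was built, so (iii) fails badly for $x$. You must enlarge $V_0$ \emph{before} deleting edges, so that every edge touching the final $V_0$ survives; the paper is explicit about this ordering (``move $v$ to the exceptional set \ldots\ then delete all red edges that do not have an endvertex in $V'_0$''). Second, once you trim vertices in step (C), a surviving pair $(V_i',V_j')$ is only guaranteed density $>d-\varepsilon_1$ by \Cref{prop:subsets}, not $>d$, so your claim that such pairs are ``unchanged from $G$'' is not accurate and (v) may fail at the boundary. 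The paper handles this by deleting pairs of density at most $d+\varepsilon'$ rather than $d$; you need the same cushion in rule (c).
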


In the proof of this lemma we will make use of the following notation: $$a \ll b.$$ This means that we can find an increasing function $f$ for which all of the conditions in the proof are satisfied whenever $a \leq f(b)$. It is equivalent to setting $a = \min \{f_1(b), f_2(b), \ldots, f_k(b)\}$ where each $f_i(b)$ corresponds to the maximum value of $a$ allowed in order that the corresponding argument in the proof holds. However, in order to simplify the presentation, we will not determine these functions explicitly.

\begin{proof}
Let $\varepsilon>0$, $k_0 \in \mathbb{N}$ and $d \in [0,1)$. We may assume that $\varepsilon\leq 1$. We choose further positive constants $\varepsilon'$ and $k'_0 \in \mathbb{N}$ satisfying $$\frac{1}{k'_0}, \varepsilon' \ll \varepsilon, d, \frac{1}{k_0}.$$

By \Cref{reglemma}, there exists $N'=N'(\varepsilon', k'_0)$ such that, if we let $N\coloneqq \lceil 4N'/\varepsilon\rceil \geq N'$ and $G$ is a graph on $n \geq N$ vertices, $G$ has a partition of its vertices into clusters $$V'_0, V'_1, \ldots, V'_{k'}$$ such that:
\begin{enumerate}[(a)]
\item $k'_0 \leq k' \leq N'$ and $|V'_0| \leq \varepsilon' n$;
\item $|V'_1|= \ldots = |V'_{k'}| \eqqcolon m'$ and
\item for all but $\varepsilon' k'^2$ pairs $1 \leq i < j \leq k'$ the graph $(V'_i,V'_j)_G$ is $\varepsilon'$-regular.
\end{enumerate}

We will remove some edges from the graph $G$ to obtain a graph $G'$ and a partition of its vertices, $V_0,V_1, \ldots, V_k$, which satisfies properties $(i)$--$(v)$ of \Cref{degreeform} by carrying out the following steps:

\begin{enumerate}
\item[1.] For each pair of clusters $V'_i, V'_j$ where $1 \leq i<j \leq k'$, if $(V'_i, V'_j)_G$ is not $\varepsilon'$-regular, then colour all edges between $V'_i$ and $V'_j$ red.
For any $v \in V(G)$, if $v$ is incident to at least $\varepsilon n/10$ red edges then move $v$ to the exceptional set, $V'_0$. Then delete all red edges that do not have an endvertex in $V'_0$
\end{enumerate}
After deleting these edges, we observe that the degree of any vertex $v\in V(G)$ is greater than $d_G(v)-\varepsilon n/10$.

We have at most $$\varepsilon' k'^2 m'^2 \leq \varepsilon'n^2$$ red edges, by (c), and so we have moved at most $$\frac{2\varepsilon' n^2}{\varepsilon n/10}= \frac{20\varepsilon'n}{\varepsilon} \leq \frac{\varepsilon n}{4}$$ vertices to the exceptional set $V'_0$.

\begin{enumerate}
\item[2.] Next, consider each pair of clusters $V'_i, V'_j$ where $1 \leq i<j \leq k'$, and $d_G(V'_i, V'_j)\leq d+\varepsilon'$. Colour all remaining edges between these clusters blue. For each $v \in V'_i$ such that $v$ sends more than $(d+2\varepsilon')m'$ edges to $V'_j$, mark all but $(d+2\varepsilon')m'$ of these edges. Similarly, for each $v \in V'_j$, if $v$ sends more than $(d+2\varepsilon')m'$ edges to $V'_i$, then mark all but $(d+2\varepsilon')m'$ of these edges.
\end{enumerate}
Since $(V'_i,V'_j)_G$ is $\varepsilon'$-regular, we observe that, if $X$ is the set of vertices in $V'_i$ having more that $(d+2\varepsilon')m'$ neighbours in $V'_j$ then, as $$d_G(X, V'_j) > \frac{(d+2\varepsilon')m'|X|}{m'|X|} = d+2\varepsilon',$$ we have that $|X|<\varepsilon'm'$. Similarly, $V'_j$ contains at most $\varepsilon'm'$ vertices having more than $(d+2\varepsilon')m'$ neighbours in $V'_i$. So we mark at most $2\varepsilon'm'^2$ edges between the clusters $V'_i$ and $V'_j$.

We carry out this process for all $\varepsilon'$-regular pairs of clusters with density at most $d+\varepsilon'$. There are at most $\binom{k'}{2}$ such pairs, so in total we mark at most $$\binom{k'}{2} \varepsilon'm'^2 \leq \varepsilon'n^2$$ edges.

\begin{enumerate}
\item[3.] Move any vertex that was adjacent to at least $\varepsilon n/10$ marked edges to the exceptional set $V'_0$ and delete all blue edges that do not have an endvertex in $V'_0$. 
\end{enumerate}
Every vertex loses fewer than $(d+2\varepsilon')m'k'+\varepsilon n/10$ incident edges in this step.
We marked at most $\varepsilon'n^2$ edges, so, in total, we move at most $$\frac{2 \varepsilon'n^2}{\varepsilon n/10} = \frac{20 \varepsilon'n}{\varepsilon}\leq \frac{\varepsilon n}{4}$$ vertices to $V'_0$.

\begin{enumerate}
\item[4.] Delete any edges inside clusters $V'_i, 1\leq i \leq k'$.
\end{enumerate}
So each vertex may lose a further, at most, $m'\leq n/k'\leq n/k'_0 \leq \varepsilon n/5$ incident edges.

\begin{enumerate}
\item[5.] Finally, we ensure that all clusters have same size by splitting each cluster into smaller subclusters of size $\lceil\varepsilon n/(4k')\rceil$. Move the vertices that are leftover in each cluster after this process into the set $V'_0$. Call the new  exceptional set $V_0$ and the new clusters $V_1, V_2, \ldots V_k$.
\end{enumerate}

We have at most $\varepsilon n/(4k')$ vertices left in each of the clusters $V'_i$, $1\leq i \leq k'$, after splitting them and so add at most $$\frac{\varepsilon n}{4k'}k' = \frac{\varepsilon n}{4}$$ further vertices to the exceptional set in this step.

We now check that the graph, $G'$ obtained, together with the vertex partition, satisfies the properties of the lemma; $(ii)$ and $(iv)$ are clear. Let us consider property $(i)$. We have that $$k_0 \leq k'_0 \leq k' \leq k$$ and also $$k \leq \frac{m'}{\varepsilon n/(4k')}k' \leq \frac{4k'}{\varepsilon} \leq \frac{4}{\varepsilon}N' \leq N.$$ So we see that $k_0\leq k\leq N$. Using (c) and that we have added at most $\varepsilon n/4$ vertices to the exceptional set in each of steps 1, 3 and 5, we have that $$|V_0| \leq \varepsilon'n+3\varepsilon n/4 \leq \varepsilon n.$$ So property $(i)$ is satisfied.

For property $(iii)$ we combine our previous observations to see that we have removed fewer than $$\varepsilon n/10 + ((d+2\varepsilon')m'k'+\varepsilon n/10) + \varepsilon n/5 \leq (d+2\varepsilon'+2\varepsilon/5)n \leq (d+\varepsilon)n$$ edges incident at any vertex (in steps 1, 3 and 4). Hence, for every $v \in V(G)$ we have that $$d_{G'}(v) > d_G(v)-(d+\varepsilon)n.$$

Finally we check that property $(v)$ is satisfied. If $V_r, V_s$ are clusters of $G'$, then either $(V_r,V_s)_{G'}$ has density $0$, or it is the subgraph of an $\varepsilon'$-regular pair of clusters $(V'_i,V'_j)_G$ of density at least $d+\varepsilon'$. Let us assume that $d(V_r,V_s)_{G'}\neq 0$. Then, we can apply \Cref{prop:subsets}, since $|V_r|=|V_s| \geq \varepsilon m'/4$, to see that $(V_i,V_j)_{G'}$ is $\varepsilon'/(\varepsilon/4)$-regular with density $>(d+\varepsilon')-\varepsilon'=d$. Hence, we see that, since $\varepsilon'$ is sufficiently small, $(V_r,V_s)_{G'}$ is $\varepsilon$-regular and has density greater than $d$, as required.
\end{proof}
	
The graph $G'$ is referred to as the \emph{pure graph}. We define another graph,  the \emph{reduced graph} $R$, as follows. $R$ has vertices $V(R) = \{V_1,\ldots,V_k\}$ and, for each $V_i,V_j \in V(R)$, $V_iV_j$ is an edge of $R$ if the subgraph $(V_i,V_j)_{G'}$ is $\varepsilon$-regular and has density greater than $d$. The following proposition shows that there is a close relationship between the minimum degree of $G$ and the minimum degree of $R$.

\begin{proposition}\label{mindeg}
Suppose that $0<2\varepsilon \leq d \leq c/2$ and let $G$ be a graph with $\delta(G) \geq cn$. Let $R$ be the reduced graph of $G$ with parameters $\varepsilon,d$. Then $$\delta(R) \geq (c-2d)|R|.$$
\end{proposition}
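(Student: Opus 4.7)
The plan is to bound the degree of an arbitrary cluster $V_i \in V(R)$ by counting the $G'$-neighbours of a single vertex $v \in V_i$ and then pushing that count from the vertex level of $G'$ up to the cluster level of $R$. The main mechanism is property (v) of the Degree form: if $V_j$ is not a neighbour of $V_i$ in $R$, then $(V_i,V_j)_{G'}$ has density $0$, hence contributes no edges at all to $v$.

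Concretely, pick any $v \in V_i$. By property (iii) of \Cref{degreeform} we have $d_{G'}(v) > d_G(v) - (d+\varepsilon)n \geq (c-d-\varepsilon)n$. The neighbours of $v$ in $G'$ are distributed among $V_0, V_1,\ldots,V_k$: at most $|V_0| \leq \varepsilon n$ lie in the exceptional set, and by property (iv) none lie in $V_i$. Therefore at least $(c-d-2\varepsilon)n$ of them lie in the clusters $V_j$ with $j \neq i$. By property (v) each such $V_j$ is either non-adjacent to $V_i$ in $R$ (in which case $(V_i,V_j)_{G'}$ has density $0$ and $v$ has no neighbours there) or adjacent to $V_i$ in $R$, in which case $v$ has at most $|V_j| = m$ neighbours in $V_j$. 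So the $(c-d-2\varepsilon)n$ neighbours are packed into the $d_R(V_i)$ clusters adjacent to $V_i$, giving
\[
d_R(V_i)\cdot m \;\geq\; (c-d-2\varepsilon)n.
\]

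Finally, since $n \geq km$ we have $n/m \geq k = |R|$, so $d_R(V_i) \geq (c-d-2\varepsilon)|R|$, and the hypothesis $2\varepsilon \leq d$ gives $c-d-2\varepsilon \geq c-2d$, yielding the claimed bound. As $V_i$ was arbitrary, $\delta(R) \geq (c-2d)|R|$. There is essentially no obstacle beyond remembering to use property (v) to rule out the ``density $0$'' pairs; the only subtlety is keeping track of the $\varepsilon n$ loss to $V_0$ and the $(d+\varepsilon)n$ loss from passing to the pure graph, which combine via $d \geq 2\varepsilon$ into the single $2d$ loss stated in the proposition.
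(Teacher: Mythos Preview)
Your proof is correct and follows essentially the same route as the paper's: pick a vertex in $V_i$, use property~(iii) to lower-bound its $G'$-degree, subtract the contribution of $V_0$, invoke property~(v) to see that every remaining neighbour lies in a cluster adjacent to $V_i$ in $R$, and divide by $m$ using $n/m \geq k$. Your write-up is in fact slightly more careful, explicitly invoking property~(iv) to rule out neighbours inside $V_i$ itself.
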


\begin{proof}
Consider any $V_i \in V(R)$ and let $x \in V_i$ in $G$. We observe that $x$ has neighbours in at least $(d_{G'}(x)-|V_0|)/m$ different clusters $V_j$ in $G'$. By part $(v)$ of \Cref{degreeform} and the definition of $R$, $V_i$ is a neighbour of each of these clusters $V_j$ in $R$ so we have $$d_R(V_i) \geq (d_{G'}(x)-|V_0|)/m \geq (d_{G'}(x)-\varepsilon n)/m.$$ From part $(iii)$ of \Cref{degreeform}, we also have that $$d_{G'}(x) > d_G(x)-(d+\varepsilon)n \geq (c-(d+\varepsilon))n.$$ 
Combining these inequalities, we obtain that $$d_R(V_i) \geq (c-(d+2\varepsilon))n/m \geq (c-2d)|R|$$ and hence $\delta(R) \geq (c-2d)|R|$.
\end{proof}

The next result shows that if we have a Hamilton path in the reduced graph $R$ then we are able to find large subclusters of each of the $V_i$ so that the graphs induced by the pairs of subclusters corresponding to edges in the path are superregular. In fact, we could obtain a similar result for any subgraph of $R$ with bounded maximum degree.

\begin{proposition}\label{superreg}
Suppose that $4\varepsilon < d \leq 1$ and that P is a Hamilton path in R. Then every cluster $V_i$ contains a subcluster $V'_i \subseteq V_i$ of size $m-2\varepsilon m$ such that $(V'_i,V'_j)_{G'}$ is $(2\varepsilon, d-3 \varepsilon)$-superregular for every edge $V_iV_j \in E(P)$.
\end{proposition}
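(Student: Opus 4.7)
The plan is to build $V'_i$ by removing from each $V_i$ the vertices that have too few neighbours in its (at most two) neighbouring clusters along $P$, and then to check the two parts of the definition of superregularity directly from the $\varepsilon$-regularity supplied by the Degree form of the Regularity Lemma.

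First, fix any cluster $V_i$ and let $V_{i-1}, V_{i+1}$ denote its (at most two) neighbours on the Hamilton path $P$. Each pair $(V_i, V_{i+1})_{G'}$ is $\varepsilon$-regular with density $d(V_i, V_{i+1}) > d$, so applying \Cref{prop:neighbours} with $A = V_i$, $B = Y = V_{i+1}$ (note $|Y| = m \geq \varepsilon m$) shows that the set $B_i^+ \subseteq V_i$ of vertices having at most $(d-\varepsilon)m$ neighbours in $V_{i+1}$ satisfies $|B_i^+| < \varepsilon m$, and similarly $|B_i^-| < \varepsilon m$. Thus $|V_i \setminus (B_i^+ \cup B_i^-)| > m - 2\varepsilon m$, so I can choose $V'_i \subseteq V_i \setminus (B_i^+ \cup B_i^-)$ of size exactly $m - 2\varepsilon m$. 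Do this for every cluster.

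Now fix an edge $V_iV_j \in E(P)$ and set $H = (V'_i, V'_j)_{G'}$. For the minimum-degree requirement, every $a \in V'_i$ lies outside the corresponding bad set, so it has more than $(d-\varepsilon)m$ neighbours in $V_j$. Since $|V_j \setminus V'_j| = 2\varepsilon m$,
$$d_H(a) > (d-\varepsilon)m - 2\varepsilon m = (d-3\varepsilon)m \geq (d-3\varepsilon)|V'_j|,$$
and symmetrically $d_H(b) > (d-3\varepsilon)|V'_i|$ for every $b \in V'_j$. For the density condition, take any $X \subseteq V'_i, Y \subseteq V'_j$ with $|X| \geq 2\varepsilon |V'_i|$ and $|Y| \geq 2\varepsilon |V'_j|$. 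Because $4\varepsilon < d \leq 1$ forces $\varepsilon < 1/4$, we get
$$|X| \geq 2\varepsilon(1-2\varepsilon)m \geq \varepsilon m, \qquad |Y| \geq 2\varepsilon(1-2\varepsilon)m \geq \varepsilon m,$$
so the $\varepsilon$-regularity of $(V_i,V_j)_{G'}$ applies and gives $d_{G'}(X,Y) > d(V_i,V_j) - \varepsilon > d - \varepsilon > d - 3\varepsilon$. This yields the required lower bound on $d_H(X,Y)$, completing superregularity.

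There is no real obstacle here: every step reduces to an application of \Cref{prop:neighbours} together with bookkeeping. The only point that needs attention is verifying that $2\varepsilon|V'_i| \geq \varepsilon m$, which is exactly where the hypothesis $4\varepsilon < d$ is used, and making sure the $2\varepsilon m$ vertices removed from each cluster (whether via being "bad" or just to hit the target size $m-2\varepsilon m$) are enough to cover the at most $2\varepsilon m$ bad vertices associated with the (at most two) path-neighbours of each cluster.
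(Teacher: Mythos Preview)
Your proof is correct and follows essentially the same approach as the paper: remove from each cluster the at most $2\varepsilon m$ vertices with too few neighbours in the adjacent path-clusters (via \Cref{prop:neighbours}), then verify the degree and density conditions directly. The only cosmetic difference is that the paper invokes \Cref{prop:subsets} to first record that $(V'_i,V'_j)$ is $2\varepsilon$-regular with density $>d-\varepsilon$ before checking the density bound, whereas you bypass this and appeal directly to the $\varepsilon$-regularity of the original pair $(V_i,V_j)$ once you have observed $|X|,|Y|\geq \varepsilon m$; both routes yield the same conclusion.
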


\begin{proof}
By relabelling if necessary, we may assume that $$P=V_1V_2 \ldots V_k.$$ Consider any $i<k$. Then, since $(V_i,V_{i+1})_{G'}$ is $\varepsilon$-regular, we may apply \Cref{prop:neighbours} to see that $V_i$ contains at most $\varepsilon m$ vertices $x$ such that $|N_{G'}(x) \cap V_{i+1}| \leq (d-\varepsilon)m$. Similarly, for all $i>1$, since $(V_{i-1},V_{i})_{G'}$ is $\varepsilon$-regular, we have that there are at most $\varepsilon m$ vertices $x \in V_{i}$ with $|N_{G'}(x) \cap V_{i-1}| \leq (d-\varepsilon)m$.
So, for each $i=1,\ldots,k$, we may choose a set of vertices $V'_i \subseteq V_i$ of size $m-2 \varepsilon m \eqqcolon m'$ which does not contain any of these vertices.

Now we need to check the conditions for $(2\varepsilon, d-3 \varepsilon)$-superregularity. Let $i<k$ and consider $X \subseteq V'_i, Y \subseteq V'_{i+1}$ with $|X|,|Y| \geq 2 \varepsilon m'$. By \Cref{prop:subsets}, we have that $H_i \coloneqq (V'_i,V'_{i+1})_{G'}$ is $2\varepsilon$-regular and has density greater than $d- \varepsilon$, and so $$d(X,Y) > d(V'_i, V'_{i+1}) - 2\varepsilon > d- 3 \varepsilon.$$
Also, $$d_{H_i}(a) > (d-\varepsilon)m - 2 \varepsilon m \geq (d - 3 \varepsilon) m', \hspace{6pt} \forall a \in V'_{i}$$ and 
$$d_{H_i}(b) > (d-\varepsilon)m - 2 \varepsilon m \geq (d - 3 \varepsilon) m', \hspace{6pt} \forall b \in V'_{i+1}.$$
So we have that $H_i=(V'_i,V'_{i+1})_{G'}$ is $(2\varepsilon, d-3 \varepsilon)$-superregular, as required.
\end{proof}

\chapter{Applications of the Regularity Lemma}
\label{chap:applications}

\section{How to Apply the Regularity Lemma}

In this section we will show how we can use the Regularity Lemma to embed a structure into a graph $G$. Our procedure will often take the following form. First we obtain an $\varepsilon$-regular partition of the graph $G$ using the Regularity Lemma and from this we obtain the reduced graph. We then look to embed a simpler structure into the reduced graph. If we are able to do this, we can then apply a result such as the Key Lemma (\Cref{keylem}) or the Blow-up Lemma (\Cref{blowup}) to embed the desired structure in the graph $G$. This process is made more complicated when the structure we wish to find is spanning in $G$, for example a Hamilton cycle, as we must then ensure that all of the exceptional vertices are also incorporated. 

Given a graph $G$ which admits an $\varepsilon$-regular partition,	we define the \emph{regularity graph}, $R$, with parameters $\varepsilon$ and $d$ to be the graph with vertex set $\{V_1, \ldots V_k\}$ and an edge from $V_i$ to $V_j$ if $(V_i,V_j)_G$ is $\varepsilon$-regular with density at least $d$. The regularity graph is almost identical to the reduced graph we defined previously, the only difference being that the regularity graph also has an edge between $V_i$ and $V_j$ if $d_G(V_i,V_j)=d$.

Given any graph $R$, we define the graph $R^s$ to be the graph formed by replacing each vertex of $R$ by a set of $s$ vertices and replacing the edges of $R$ by complete bipartite graphs. We illustrate this in \Cref{fig:R2}.
\begin{figure}[h]
\centering
\includegraphics[scale=0.45]{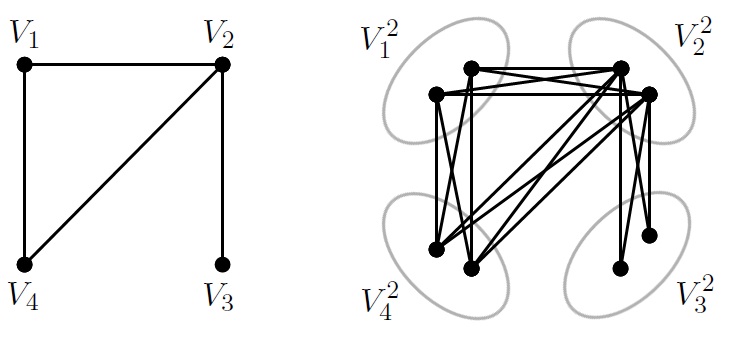}
\caption{The graph $R$ (left) and $R^s$ (right).}\label{fig:R2}
\end{figure} 
The Key Lemma will be an important tool as it allows us to conclude that if $R$ is the regularity graph of $G$ and we can find a structure in the graph $R^s$ then we are also able to embed it in the graph $G$.

\begin{lemma}[Key Lemma] \label{keylem}
Let $d\in (0,1]$, $\Delta\geq 1$. Then there exists an $\varepsilon_0>0$ such that, given graphs $G$ and $H$, with $\Delta(H)\leq \Delta$, and $s \in \mathbb{N}$, if $R$ is a regularity graph of $G$ with parameters $\varepsilon \leq \varepsilon_0$ and $d$ and each vertex of $R$ is a cluster of size $m\geq 2s/d^\Delta$ in $G$, then
$$H \subseteq R^s \Rightarrow H \subseteq G.$$
\end{lemma}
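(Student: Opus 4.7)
The plan is to prove the Key Lemma by a greedy embedding argument. Since $H \subseteq R^s$, fix an embedding $\psi:V(H) \to V(R^s)$; for each $x \in V(H)$ let $V_x$ denote the cluster of $R$ that $\psi(x)$ lies in, so each cluster contains at most $s$ images and $xy \in E(H)$ implies $V_xV_y \in E(R)$, hence $(V_x,V_y)_G$ is $\varepsilon$-regular of density $\geq d$. I then embed $V(H)$ into $V(G)$ vertex by vertex, in any fixed order $x_1, x_2, \ldots, x_{|H|}$, producing $\phi(x_i) \in V_{x_i}$ so that $\phi(x)\phi(y) \in E(G)$ whenever $xy \in E(H)$.

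The main device is a family of candidate sets. For each $x \in V(H)$ still to be embedded, maintain $C_x \subseteq V_x$, initially $C_x = V_x$, containing exactly those vertices of $V_x$ that are $G$-adjacent to $\phi(y)$ for every already-embedded neighbour $y$ of $x$. The key invariant to preserve throughout is
\[
|C_x| \;\geq\; (d-\varepsilon)^{t_x} m,
\]
where $t_x$ is the number of already-embedded neighbours of $x$; since $\Delta(H) \leq \Delta$ this always gives $|C_x| \geq (d-\varepsilon)^{\Delta} m$.

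To embed the next vertex $x = x_i$, I must choose a vertex $v \in C_x$ that (a) is not already in the image of $\phi$, and (b) for every not-yet-embedded neighbour $y$ of $x$, satisfies $|C_y \cap N_G(v)| \geq (d-\varepsilon)|C_y|$, so that updating $C_y \leftarrow C_y \cap N_G(v)$ preserves the invariant. By the invariant, $|C_y| \geq (d-\varepsilon)^{\Delta} m \geq \varepsilon m$ once $\varepsilon_0$ is small enough, so \Cref{prop:neighbours} applied to the $\varepsilon$-regular pair $(V_x,V_y)_G$ (with $Y = C_y$) shows that fewer than $\varepsilon m$ vertices of $V_x$ fail condition (b) for this particular $y$. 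Summing over the at most $\Delta$ unembedded neighbours of $x$ and discarding the at most $s-1$ vertices of $V_x$ already used, the number of admissible choices for $v$ is at least
\[
|C_x| - \Delta\varepsilon m - (s-1) \;\geq\; \bigl((d-\varepsilon)^{\Delta} - \Delta\varepsilon\bigr)m - s.
\]

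The last step is to choose $\varepsilon_0 = \varepsilon_0(d,\Delta)$ so small that $(d-\varepsilon_0)^{\Delta} - \Delta\varepsilon_0 \geq d^{\Delta}/2$; then the hypothesis $m \geq 2s/d^{\Delta}$ forces the displayed quantity to be positive, so an admissible $v$ always exists and the greedy process terminates with the desired embedding $\phi$. The only real technical point is balancing the exponential decay $(d-\varepsilon)^{\Delta}$ of the candidate sets against the losses $\Delta\varepsilon m$ per step and the $s$ already-used vertices per cluster; everything else is bookkeeping on the invariant.
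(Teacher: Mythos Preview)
Your proof is correct and follows essentially the same greedy embedding argument as the paper: maintain candidate sets that shrink by a factor $(d-\varepsilon)$ each time a neighbour is embedded, use \Cref{prop:neighbours} to bound the bad vertices at each step, and choose $\varepsilon_0$ so that $(d-\varepsilon_0)^\Delta - \Delta\varepsilon_0 \geq d^\Delta/2$, which together with $m \geq 2s/d^\Delta$ guarantees an admissible choice at every step. The only cosmetic difference is notation for the candidate sets and that the paper subtracts $s$ rather than $s-1$ used vertices per cluster.
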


\begin{proof}
Choose $0<\varepsilon_0<d$ satisfying
\begin{equation}\label{epsilon0}
(d-\varepsilon_0)^\Delta-\Delta\varepsilon_0\geq d^\Delta/2.
\end{equation}
We are able to do this since $(d-\varepsilon)^\Delta-\Delta\varepsilon \rightarrow d^\Delta$ as $\varepsilon \rightarrow 0$.

Suppose that we have a graph $G$ which admits an $\varepsilon$-regular	partition, with parameters $\varepsilon\leq \varepsilon_0$, $m>2s/d^\Delta$ and $d$, into an exceptional set $V_0$ and clusters $\{V_1, \ldots V_k\}$  satisfying those properties in \Cref{degreeform}. Let $R$ be its regularity graph.  Suppose that $H \subseteq R^s$, with $$V(H)=\{u_1,u_2,\ldots , u_h\}.$$
Each of the vertices $u_i$ of $H$ is contained in one of the sets $V^s_j$ of $R^s$. This defines a mapping $\sigma$, where $\sigma(u_i)=j$ if $u_i \in V^s_j$. Our aim is to embed $H$ in $G$ by defining a mapping which takes each $u_i$ to a distinct $v_i$ in $V_{\sigma(u_i)}$ such that the edge $v_iv_j \in E(G)$ if $u_iu_j \in E(H)$. We will select these vertices $v_i$ one at a time, starting with $v_1$.

For each $1\leq i\leq h$ let \begin{itemize}
\item $Y^0(u_i)=V_{\sigma(u_i)}$
\item $Y^\ell(u_i)$ be the set of candidates for $v_i$ at the $\ell^{th}$ step, where $1\leq \ell \leq i$.
\end{itemize}
At the $j^{th}$ step, we select the vertex $v_j$, so we have that $Y^j(u_j)=\{v_j\}$. For each $i>j$, if $u_iu_j \in E(H)$, then we remove any vertices from $Y^{j-1}(u_i)$ that are not adjacent to $v_j$, that is, $$Y^j(u_i)=Y^{j-1}(u_i) \cap N_G(v_i).$$

We want to select each vertex $v_j$ so that, for all $i>j$ with $u_iu_j \in E(H)$, the sets $Y^j(u_i)$ are not too small so as to ensure that we can find a copy of $H$ in $G$. For each such $u_i$ we recall that the graph $(V_{\sigma(u_j)}, V_{\sigma(u_i)})$ is $\varepsilon$-regular and so, by \Cref{prop:neighbours}, all but at most $\varepsilon m$ vertices in $Y^{j-1}(u_j) \subseteq V_{\sigma(u_j)}$ have at least $(d-\varepsilon)|Y^{j-1}(u_i)|$ neighbours in $Y^{j-1}(u_i)$, provided that $|Y^{j-1}(u_i)|\geq \varepsilon m$. We must consider at most $\Delta$ neighbours of $u_j$ and so we find that, by avoiding at most $\Delta\varepsilon m$ vertices in $Y^{j-1}(u_j)$, we can ensure that
\begin{equation}\label{size}
|Y^j(u_i)| \geq (d-\varepsilon)|Y^{j-1}(u_i)|, \; \forall i>j.
\end{equation}
Since at most $s$ vertices of $H$ can lie in each set $V^s_{\sigma(u_j)}$ of $R^s$, as long as we have that $$|Y^j(u_i)|\geq s+\Delta\varepsilon m$$ we will be able to find a vertex $v_j$ which satisfies \eqref{size}.

Now, for each $i>j$ we know that
$$|Y^j(u_i)|\geq (d-\varepsilon)^\Delta m \geq (d-\varepsilon_0)^\Delta m$$
by repeatedly applying \eqref{size}, since we delete vertices from the set $Y^{\ell}(u_i)$ only when $u_iu_\ell \in E(H)$ and this is the case for at most $\Delta$ vertices $u_\ell$ with $\ell \leq j$. By our choice of $\varepsilon_0$ in \eqref{epsilon0} we have that $(d-\varepsilon_0)^\Delta\geq d^\Delta/2+\Delta\varepsilon$. Then, recalling that $m\geq 2s/d^\Delta$, we obtain that $$|Y^j(u_i)|\geq (d-\varepsilon)^\Delta m \geq (d^\Delta/2+\Delta\varepsilon)m \geq s+\Delta\varepsilon m.$$ So we can choose suitable, distinct vertices for each $u_i$. Therefore, we are able to embed $H$ in $G$. 
\end{proof}

It is worth noting that if the reduced graph of $G$ with parameters $\varepsilon$, $d$ satisfies the conditions of the lemma, then so does the regularity graph with the same parameters. In particular, we may also apply the lemma if we know that $H$ is a subgraph of the graph $R^s$, where $R$ denotes the reduced graph, to conclude that $H$ is a subgraph of $G$.

Sometimes we might require a stronger result when we wish to embed a structure in a graph, in this case we will apply Koml\'os, S\'ark\"ozy and Szemer\'edi's Blow-up Lemma,  \cite{ksosz}.  If we compare this lemma to the Key Lemma (\Cref{keylem}), we find that the Blow-up lemma is actually much more powerful than the Key Lemma. Whilst the Key Lemma allows us to embed a graph $H$ whose order is small relative to $G$, the Blow-up Lemma will let us embed any spanning subgraph $H$ of $G$ with bounded maximum degree. Informally, the Blow-up Lemma tells us that superregular graphs behave like complete bipartite graphs if we want to embed a bipartite subgraph of bounded maximum degree. The proof of \Cref{c6packing} will use a special case of the Blow-up Lemma for bipartite graphs.

\begin{lemma}[Blow-up Lemma (bipartite form), Koml\'os, S\'ark\"ozy and Szemer\'edi, \cite{ksosz}]\label{blowup}
Given $d>0$ and $\Delta \in \mathbb{N}$, there is a positive constant $\varepsilon_0 = \varepsilon_0(d,\Delta)$ such that the following holds for every $\varepsilon < \varepsilon_0$. Given $m \in \mathbb{N}$, let $G^*$ be an $(\varepsilon,d)$-superregular bipartite graph with vertex classes of size m. Then $G^*$ contains a copy of every subgraph H of $K_{m,m}$ with $\Delta(H) \leq \Delta$. 
\end{lemma}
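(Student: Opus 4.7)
The plan is to prove the Blow-up Lemma by a randomized greedy embedding algorithm with a matching-based correction phase, following the original strategy of Komlós, Sárközy and Szemerédi. Write $(A,B)$ for the vertex classes of $G^*$ and $(X,Y)$ for those of $H$, with $|A|=|B|=|X|=|Y|=m$. Set aside a small \emph{buffer} set $X_0\subseteq X$ and $Y_0\subseteq Y$ with $|X_0|,|Y_0|\leq \sqrt{\varepsilon}\,m$, chosen so that the vertices in the buffer are spread out (no two buffer vertices share a neighbour in $H$); such a set exists because $\Delta(H)\leq \Delta$, so a greedy/independent-set argument inside the ``square'' of $H$ produces one. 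The plan is to embed $X\setminus X_0$ and $Y\setminus Y_0$ greedily, then finish the buffer by finding a perfect matching.

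Order the non-buffer vertices $v_1,v_2,\ldots$ arbitrarily and embed them one at a time. For each unembedded vertex $u$, maintain a candidate set $C(u)\subseteq A$ or $B$ (according to the side of $u$) initialised to the full class, and update $C(u)\leftarrow C(u)\cap N_{G^*}(\phi(v_j))$ whenever a neighbour $v_j$ of $u$ in $H$ is embedded. The core observation, justified by \Cref{prop:neighbours} applied to $(A,B)_{G^*}$ and its subpairs, is that as long as $|C(u)|\geq \varepsilon m$, then for every $\varepsilon$-regular neighbouring pair all but at most $\varepsilon m$ vertices of the current image class shrink $C(u)$ by at most a factor of $(d-\varepsilon)$. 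Since each $u$ has at most $\Delta$ neighbours, a straightforward induction shows $|C(u)|\geq (d-\varepsilon)^{\Delta}m$ throughout, provided we always choose $\phi(v_j)$ from the ``good'' part of its candidate set. Choosing a valid $\phi(v_j)$ is possible because the set of ``bad'' choices (those that ruin $C(u)$ for some neighbour $u$, or those already used) has size at most $\Delta \varepsilon m + m \ll |C(v_j)|$. This is essentially the argument of the Key Lemma (\Cref{keylem}), but carried out for a spanning subgraph.

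At the end of the greedy phase the buffer vertices remain, with candidate sets $C(x)\subseteq A$ of size at least $(d-\varepsilon)^{\Delta}m$ for $x\in X_0$, and similarly $C(y)\subseteq B$ for $y\in Y_0$. Let $A_0=A\setminus\phi(X\setminus X_0)$, $B_0=B\setminus\phi(Y\setminus Y_0)$; these are precisely the unused images. The task reduces to finding a system of distinct representatives: an injection $\phi\colon X_0\to A_0$ with $\phi(x)\in C(x)$, and similarly on the other side, such that additionally $\phi(x)\phi(y)\in E(G^*)$ for every edge $xy$ of $H$ with $x\in X_0$ or $y\in Y_0$. The second constraint couples the two sides, but since each buffer vertex has at most $\Delta$ neighbours in $H$ (and buffer vertices do not share neighbours by construction), we may prescribe $\phi$ for $Y_0$ first; this further restricts each $C(x)$ by intersection with at most $\Delta$ neighbourhoods in $G^*$, so superregularity (and \Cref{removing} for the restricted pair) still guarantees $|C(x)|\geq (d/2)^{\Delta+1}m$ after restriction. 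Now Hall's condition (\Cref{hall}) for the bipartite graph between $X_0$ and $A_0$ with edges $\{x\alpha : \alpha\in C(x)\}$ follows from $(\varepsilon,d)$-superregularity: any set $S\subseteq X_0$ with $|N(S)|<|S|$ would force a small set $N(S)\subseteq A$ with many common neighbours in $B$, contradicting the density guarantee for subsets of size $\geq \varepsilon m$. The matching produced by Hall finishes the embedding.

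The main obstacle will be the correction phase: ensuring simultaneously that (a) the candidate sets of buffer vertices remain large after the greedy phase, and (b) the bipartite matching instance on each side satisfies Hall's condition even after conditioning on the choices made on the other side. Controlling (a) is where the independence of buffer vertices (no shared $H$-neighbours) is crucial, since otherwise two buffer vertices could compete for the same scarce images; controlling (b) is where the superregularity hypothesis does real work over mere regularity, because the minimum degree condition on every single vertex of $G^*$ prevents the pathological case of a few ``low-degree'' images being claimed by many buffer vertices. A technically cleaner route, and one I would adopt if the accounting above becomes delicate, is the randomised version: choose the greedy images uniformly from their candidate sets and apply a Chernoff/Azuma concentration argument to show that with positive probability the buffer candidate sets are close to their expected sizes, so that Hall's condition holds automatically.
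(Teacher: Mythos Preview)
The paper does not prove this lemma; it is stated as a black box cited from \cite{ksosz} and invoked in the proof of \Cref{c6packing}. So there is no paper proof to compare against. That said, your sketch follows the correct high-level architecture of the original Koml\'os--S\'ark\"ozy--Szemer\'edi argument (buffer set, greedy embedding, matching correction), but the greedy phase as you have written it does not work, and this is not a detail but the whole difficulty of the Blow-up Lemma over the Key Lemma.

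Concretely, you write that a valid image for $v_j$ exists because the bad set has size at most $\Delta\varepsilon m + m \ll |C(v_j)|$. But $|C(v_j)|\leq m$, so this inequality is false on its face. The ``$+m$'' for already-used vertices is the whole problem: when you are embedding a \emph{spanning} subgraph, late in the process almost all of $A$ (or $B$) has been used, and a candidate set of size $(d-\varepsilon)^{\Delta}m$ can easily be entirely occupied. This is exactly why the Key Lemma argument (\Cref{keylem}) only embeds graphs on $\leq sm$ vertices with $s\ll d^{\Delta}m$. The same failure propagates to your matching phase: you need each buffer vertex $x\in X_0$ to have many candidates \emph{inside} $A_0$, but $|A_0|=|X_0|\leq \sqrt{\varepsilon}\,m$ while you have only shown $|C(x)|\geq (d-\varepsilon)^{\Delta}m$ inside all of $A$; a deterministic greedy phase gives no control on $|C(x)\cap A_0|$, which could be zero.

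The fix is not the ``cleaner route'' you mention at the end---it is the \emph{only} route. Randomisation (choosing each $\phi(v_j)$ uniformly from its current candidate set, possibly with a queue to prioritise vertices whose candidate sets are becoming dangerously small) is what ensures, via concentration, that every buffer vertex retains a proportional share of its candidates among the unused vertices. The KSS proof is genuinely delicate: one needs martingale concentration, a careful ordering that interleaves the two sides, and a defect-correction step before the final matching. Your sketch identifies the right pieces but the deterministic presentation of the main phase is a genuine gap, not a stylistic choice.
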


\noindent In \Cref{chap:arbitrary}, we will require the following, more general, $r$-partite version of the lemma.

\begin{lemma}[Blow-up Lemma (r-partite form), Koml\'os, S\'ark\"ozy and Szemer\'edi, \cite{ksosz}]\label{rpartblowup}
Suppose that $F$ is a graph on $[k]$, let $d>0$ and let $\Delta$ be a positive integer. Then there exists a positive constant $\varepsilon_0 = \varepsilon_0(d,\Delta,k)$ such that the following holds for all positive integers $\ell_1, \ldots, \ell_k$ and all $0< \varepsilon\leq \varepsilon_0$.

Let $F'$ be the graph obtained from $F$ by replacing each vertex $i \in V(F)$ by a set $V_i$ of $\ell_i$ vertices and adding all $V_i$-$V_j$ edges whenever $ij \in E(F)$. Let $G'$ be a spanning subgraph of $F'$ such that for every edge $ij \in E(F)$, the graph $(V_i,V_j)_{G'}$ is $(\varepsilon, d)$-superregular.  Then $G'$ contains a copy of $H$ for every $H \subseteq F'$ with $\Delta(H) \leq \Delta$ such that, for each vertex $v \in V(H)$, if $v \in V_i$ in $F'$ then $v$ is also mapped to $V_i$ by the copy of $H$ in $G'$. 
\end{lemma}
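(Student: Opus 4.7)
The plan is to follow the same randomized-greedy template used for the Key Lemma (\Cref{keylem}), but with three essential strengthenings needed to handle spanning subgraphs: the use of $(\varepsilon,d)$-superregularity throughout (rather than mere $\varepsilon$-regularity), a split of the embedding into a greedy phase and a matching phase, and careful bookkeeping of ``exceptional'' vertices of $H$. A direct reduction to the bipartite version \Cref{blowup} does not seem to work in one step because the cluster structure of $F$ forces simultaneous consistency across many superregular pairs, so I would reprove the result in this more general setting along the lines of \cite{ksosz}.

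First I would fix an ordering $u_1,\ldots,u_h$ of $V(H)$ (for instance a BFS ordering with respect to $F$, so that each $u_t$ has at most $\Delta$ earlier neighbours) and, as in the proof of \Cref{keylem}, maintain candidate sets $Y^t(u_i)\subseteq V_{\sigma(u_i)}$ that shrink each time a neighbour of $u_i$ gets embedded. The new difficulty, relative to \Cref{keylem}, is that $H$ is (potentially) spanning each $V_i$, so we cannot afford a constant-fraction loss per cluster: we must eventually use every vertex of every $V_i$. I would therefore reserve at the outset a small random ``buffer'' $B_i\subseteq V_i$ in each cluster, hold back their preimages in $H$, and embed everything else first.

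For the greedy phase I would process the non-buffer vertices $u_t$ in order and, for each, pick an image uniformly at random from the current candidate set $Y^{t-1}(u_t)$. Using \Cref{prop:neighbours} as in the proof of \Cref{keylem}, a random choice preserves with high probability the invariant that $|Y^t(u_i)|\geq (d-\varepsilon)|Y^{t-1}(u_i)|$ for all still-to-be-embedded neighbours $u_i$ of $u_t$. Superregularity (rather than mere regularity) of each pair $(V_i,V_j)_{G'}$ is what controls the candidate sets even after many deletions: \Cref{removing} guarantees that the restricted bipartite subgraph stays $(\sqrt{\varepsilon},d-\sqrt{\varepsilon})$-superregular as long as at most a $\sqrt{\varepsilon}$-fraction has been removed from each side. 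Any vertex whose candidate set ever threatens to drop below the required threshold is declared \emph{exceptional} and deferred to the matching phase by absorbing it (and its still-unembedded neighbours) into the buffer; the random nature of the choices ensures only $o(m)$ such defects arise in each cluster.

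Finally, in the matching phase, the unembedded vertices in each cluster $V_i$ form a small set $U_i\subseteq V(H)$ and the unused targets a small set $W_i\subseteq V_i$ of the same size. I would find a single embedding extending the partial one by solving one big system of distinct representatives: for each vertex $u\in U_i$, its allowed image lies in the intersection of $W_i$ with the candidate set dictated by its already-embedded neighbours. A Hall-type argument analogous to \Cref{matchingundir}, using the minimum degree condition of superregularity (via \Cref{removing} again to ensure it survives the deletions), produces the required system of representatives. The main obstacle, and the reason this lemma is substantially deeper than \Cref{keylem}, is coordinating the matching step across all edges of $F$ simultaneously while respecting the constraints induced by the already-placed neighbours: one has to argue that the buffer is large enough to soak up all exceptional vertices yet small enough that the minimum-degree condition on the buffer side is preserved. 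Following \cite{ksosz}, I would make this quantitative by choosing $\varepsilon_0$ so that $\varepsilon_0\ll d, 1/\Delta, 1/k$ and iterating the absorbing/matching argument one $F$-edge at a time.
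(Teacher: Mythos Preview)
The paper does not prove this lemma at all: it is stated as a black box and attributed to Koml\'os, S\'ark\"ozy and Szemer\'edi \cite{ksosz}, just like the bipartite version \Cref{blowup}. So there is no ``paper's own proof'' to compare against; the intended answer here is simply to cite \cite{ksosz}.

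That said, your sketch is broadly faithful to the actual argument in \cite{ksosz}: a randomized greedy embedding maintaining large candidate sets, a reserved buffer to absorb exceptional vertices, and a final Hall-type matching phase exploiting the minimum-degree part of superregularity. Two caveats if you were to pursue this as a standalone proof. First, the buffer is not chosen as arbitrary vertices of $H$ but as a carefully selected independent set (or set with controlled neighbourhood structure) so that the final matching step really is a union of bipartite matching problems rather than a tangled hypergraph matching; your phrase ``iterating the absorbing/matching argument one $F$-edge at a time'' glosses over the real difficulty here. Second, the concentration argument showing that only $o(m)$ vertices become exceptional requires a martingale or Talagrand-type inequality, not just \Cref{prop:neighbours}, since the random choices are sequential and dependent. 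These are exactly the points where the full proof in \cite{ksosz} is substantially harder than the Key Lemma, and your outline, while pointing in the right direction, does not yet bridge them.
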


The three applications of the Regularity Lemma which we will consider in this section are: a proof of the Erd\H{o}s-Stone theorem, a result in Ramsey theory and a very specific use of the lemma to find a perfect $C_6$-packing in a graph. In the final application, we will have to confront the problem, mentioned earlier, of incorporating the exceptional vertices since a perfect $C_6$-packing is a spanning subgraph of $G$.

\section{The Erd\H{o}s-Stone Theorem}\label{subsec:erdos-stone}

Given a graph $H$, a natural question to ask is how many edges can a graph $G$ on $n$ vertices have without containing $H$ as a subgraph. An important corollary of the Erd\H{o}s-Stone Theorem, \Cref{coroll:e-s} stated later in this section, will help us to go some way towards answering this question.
\begin{definition}
Let $H$ be a graph and $n\in \mathbb{N}$. Then $$ex(n,H)=\max\{e(G):G \text{ is a graph on }n\text{ vertices and } H \nsubseteq G\}.$$ 
\end{definition}
Another way to think about this is that if $G$ is any graph on $n$ vertices with more than $ex(n,H)$ edges then we know that $H$ must be a subgraph of $G$. If $G$ is a graph on $n$ vertices, $H\nsubseteq G$ and $e(G)=ex(n,H)$ then we say that $G$ is \emph{extremal}.

An important graph is the Tur\'an graph, $T_{r-1}(n)$, where $r,n$ are positive integers and $r\geq 2$. This graph is formed by partitioning $n$ vertices into $r-1$ sets, or vertex classes, which have size as equal as possible, differing by at most $1$. So we have that the sets have size either $\lfloor\frac{n}{r-1}\rfloor$ or $\lceil\frac{n}{r-1}\rceil$. We add all possible edges between these sets. We illustrate this for the graph $T_5(9)$ in \Cref{fig:t59}.
\begin{figure}[h]
\centering
\includegraphics[scale=0.45]{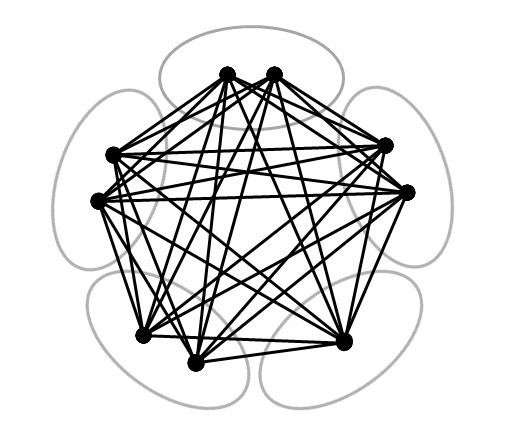}
\caption{The graph $T_5(9)$.}\label{fig:t59}
\end{figure}
Some of the sets may be empty and if $n\leq r-1$ then we simply have that $T_{r-1}(n)=K_n$.
We see that this graph cannot possibly contain a copy of $K_r$ as a subgraph. Suppose that it did. Then two vertices of the $K_r$ subgraph would have to lie in the same vertex class but these sets are independent.

We write $t_{r-1}(n)$ for the number of edges of $T_{r-1}(n)$. If we write $a$ for the number of vertex classes of size $\lceil\frac{n}{r-1}\rceil$ then we find that
$$t_{r-1}(n) = \frac{1}{2}\left( \frac{r-2}{r-1}n^2-\frac{(r-1)-a}{r-1}a \right)$$
which is maximised when $a=r-1$, that is, when $r-1$ divides $n$. So we see that
\begin{equation}\label{eq:turan}
t_{r-1}(n) \leq \left(\frac{r-2}{r-1}\right)\frac{n^2}{2}.
\end{equation}

The following theorem states that the Tur\'an graph, $T_{r-1}(n)$, contains the maximum number of edges without having a $K_r$ subgraph, that is, $t_{r-1}(n)=ex(n,K_r)$. Further, if $G$ is any graph on $n$ vertices with $ex(n,K_r)$ edges and $K_r \nsubseteq G$ we have that $G=T_{r-1}(n)$, so the Tur\'an graph is the unique extremal graph. 

\begin{theorem}[Tur\'an, 1941]\label{turan}
Let $r,n$ be integers, $r>1$. Suppose $G$ is a graph on $n$ vertices which does not contain $K_r$ as a subgraph. If $e(G)=ex(n,K_r)$, then $G=T_{r-1}(n)$.
\end{theorem}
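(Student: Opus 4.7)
The plan is to prove Tur\'an's theorem by Zykov symmetrisation, showing that any extremal $K_r$-free graph is complete multipartite with part sizes differing by at most $1$, which forces $G = T_{r-1}(n)$.

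\textbf{First step: $G$ is complete multipartite.} I would define a relation on $V(G)$ by $u \sim v$ iff $u = v$ or $uv \notin E(G)$. Reflexivity and symmetry are immediate; the substantive task is transitivity, which I would establish by contradiction. Assume there exist distinct vertices $u,v,w$ with $uv, vw \notin E(G)$ but $uw \in E(G)$. If $d(u) > d(v)$ (the case $d(w) > d(v)$ is symmetric), I would delete $v$ and insert a fresh vertex $u'$ with the same neighbourhood as $u$ (and $uu' \notin E$); the resulting graph is still $K_r$-free, because any $K_r$ containing $u'$ cannot contain $u$ and hence yields a $K_r$ in $G$ after swapping $u' \leftrightarrow u$, yet it has $e(G) + d(u) - d(v) > e(G)$ edges, contradicting extremality. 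So $d(u), d(w) \leq d(v)$, and I would instead delete both $u$ and $w$ and insert two new non-adjacent twins $v_1, v_2$ of $v$; the same swap argument gives $K_r$-freeness, and a short count shows the net change in edges is $2d(v) - (d(u) + d(w) - 1) \geq 1$, again contradicting extremality. Hence $\sim$ is an equivalence relation, its classes are independent sets, and every cross-pair is an edge, so $G$ is complete multipartite.

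\textbf{Second step: reducing to $r-1$ parts.} Writing the vertex classes as $A_1, \ldots, A_s$ and picking one vertex from each nonempty class yields a $K_s \subseteq G$, so $s \leq r-1$. If $s < r-1$ and some $A_i$ has at least two vertices, I would split $A_i$ into two non-empty subclasses; the resulting complete $(s+1)$-partite graph is still $K_r$-free but has strictly more edges, contradicting extremality. Thus either $s = r-1$, or every class is a singleton, the latter giving $G = K_n$ with $n \leq r-1$, which coincides with $T_{r-1}(n)$.

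\textbf{Third step: balancing the parts.} Assuming $s = r-1$, suppose two classes satisfy $|A_i| \geq |A_j| + 2$. Moving a single vertex from $A_i$ to $A_j$ preserves the complete $(r-1)$-partite structure (hence $K_r$-freeness) and changes the edge count by $(|A_i|-1)(|A_j|+1) - |A_i||A_j| = |A_i| - |A_j| - 1 \geq 1$, contradicting extremality. So all class sizes differ by at most $1$, which uniquely determines $T_{r-1}(n)$ and completes the argument.

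\textbf{Main obstacle.} The delicate step is the second case of transitivity in the symmetrisation: one must simultaneously delete two vertices and introduce two non-adjacent twins of $v$, and the strict gain in edges is only guaranteed after correctly accounting for the single lost edge $uw$ (producing the extra $+1$ that tips the inequality). Once complete multipartiteness is in hand, bounding the number of parts and balancing their sizes are routine counting exercises.
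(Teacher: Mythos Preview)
Your proof is correct: the Zykov symmetrisation argument is carried out cleanly, the edge-count in the two-vertex replacement step correctly picks up the $+1$ from the deleted edge $uw$, and the subsequent reduction to $r-1$ balanced parts is handled properly (including the degenerate case $n \leq r-1$, which the paper explicitly identifies with $T_{r-1}(n) = K_n$).

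There is nothing to compare against, however: the paper states Tur\'an's theorem without proof and only invokes it as a tool (for instance, inside the proof of the Erd\H{o}s--Stone theorem and in \Cref{ramseyresult}). Your Zykov argument is one of the standard self-contained proofs and would slot in perfectly well if the paper wanted to include one.
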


The following proposition gives us that the value $t_{r-1}(n)\binom{n}{2}^{-1}$ converges to

\begin{proposition}\label{turanlimit}
$$\lim_{n\rightarrow\infty} t_{r-1}(n) \binom{n}{2}^{-1} = \frac{r-2}{r-1}.$$
\end{proposition}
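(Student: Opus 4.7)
The plan is to read off the limit directly from the explicit formula for $t_{r-1}(n)$ given in the paragraph preceding \eqref{eq:turan}, namely
$$t_{r-1}(n) = \frac{1}{2}\left( \frac{r-2}{r-1}n^2-\frac{(r-1)-a}{r-1}a \right),$$
where $a$ is the number of vertex classes of $T_{r-1}(n)$ of size $\lceil n/(r-1) \rceil$. First I would observe that, since $0 \leq a \leq r-1$, the quantity $\frac{((r-1)-a)a}{r-1}$ is bounded above by $(r-1)/4$ (it is maximised at $a = (r-1)/2$), so in particular the second term inside the bracket is a constant $c_r$ depending only on $r$ and independent of $n$.

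Next I would divide the formula by $\binom{n}{2} = n(n-1)/2$ to obtain
$$\frac{t_{r-1}(n)}{\binom{n}{2}} = \frac{r-2}{r-1}\cdot\frac{n^2}{n(n-1)} - \frac{c_r(a)}{n(n-1)} = \frac{r-2}{r-1}\cdot\frac{n}{n-1} - \frac{c_r(a)}{n(n-1)},$$
where $c_r(a) = \frac{((r-1)-a)a}{r-1}$ is uniformly bounded in $n$. Taking $n \to \infty$, the factor $n/(n-1)$ tends to $1$ while the error term tends to $0$, yielding
$$\lim_{n\to\infty} \frac{t_{r-1}(n)}{\binom{n}{2}} = \frac{r-2}{r-1},$$
as required.

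There is no real obstacle here: once the exact formula for $t_{r-1}(n)$ from the preceding paragraph is in hand, the proposition reduces to observing that the discrepancy caused by unequal class sizes contributes only a bounded number of edges, which is negligible compared to $\binom{n}{2} = \Theta(n^2)$. The only mild care needed is to justify that $c_r(a)$ is bounded independently of $n$, which follows immediately from the fact that $a$ lies in the finite set $\{0,1,\ldots,r-1\}$.
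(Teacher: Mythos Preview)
Your argument is correct. The paper actually states this proposition without proof, so there is no approach to compare against; your derivation from the explicit formula for $t_{r-1}(n)$ given just before \eqref{eq:turan} is exactly the natural way to fill the gap, and the observation that $a \in \{0,1,\ldots,r-1\}$ makes the correction term $O(1)$ is all that is needed.
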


The graph $K^s_r$ is the complete $r$-partite graph where every vertex class has $s$ vertices. By requesting that $G$ has only $\gamma n^2$ more edges than the Tur\'an graph $T_{r-1}(n)$, for given $\gamma$, $r$ and $s$ and sufficiently large $n$, the Erd\H{o}s-Stone Theorem states that we can guarantee, not only that $K_r$ is contained in $G$ as a subgraph, but something even stronger: $G$ contains a copy of the graph $K^s_r$. We will use the Regularity Lemma together with the Key Lemma to prove this theorem.

\begin{theorem}[Erd\H{o}s and Stone, 1946]\label{erdos-stone}
Suppose that $r \geq 2$ and $s\geq 1$ are integers and let $\gamma >0$, then there exists an integer $n_0$ such that every graph with $n\geq n_0$ vertices and at least $t_{r-1}(n)+\gamma n^2$ edges contains $K^s_r$ as a subgraph.
\end{theorem}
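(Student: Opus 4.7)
The plan is to combine the Degree form of the Regularity Lemma (\Cref{degreeform}) with Tur\'an's theorem (\Cref{turan}) and the Key Lemma (\Cref{keylem}), treating the reduced graph $R$ as a host in which we first locate a $K_r$ and then transfer a blown-up copy $K_r^s$ into $G$. Given $\gamma$, $r$ and $s$, set $\Delta \coloneqq (r-1)s$, which is the maximum degree of $K_r^s$, and let $\varepsilon_0 = \varepsilon_0(d, \Delta)$ be the constant supplied by \Cref{keylem}. First I would fix $d>0$ small, then $0<\varepsilon\leq\varepsilon_0$ small enough that $d+3\varepsilon<\gamma$, and finally pick $k_0$ large. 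Applying \Cref{degreeform} to $G$ with these parameters yields an exceptional set $V_0$ with $|V_0|\leq\varepsilon n$, equal-sized clusters $V_1,\dots,V_k$ of size $m$, a pure graph $G'$, and an associated reduced graph $R$ on $k$ vertices.

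The next step is to show that $R$ contains a $K_r$ by an edge count. By property $(iii)$ of \Cref{degreeform}, summing degrees gives $e(G') > e(G)-(d+\varepsilon)n^2/2 \geq t_{r-1}(n)+\gamma n^2 - (d+\varepsilon)n^2/2$. Every edge of $G'$ either has an endpoint in $V_0$ (at most $|V_0|n\leq\varepsilon n^2$ such edges, with none inside a cluster by $(iv)$) or lies in a pair $(V_i,V_j)_{G'}$ which by $(v)$ has density either $0$ or $>d$; in the latter case $V_iV_j$ is an edge of $R$ and contributes at most $m^2$ edges. Hence
\[ e(R)\cdot m^2 \geq t_{r-1}(n)+\gamma n^2 - (d+3\varepsilon)n^2/2. \]
Since $km\leq n$, dividing through yields $e(R) \geq k^2\bigl(t_{r-1}(n)/n^2 + \gamma - (d+3\varepsilon)/2\bigr)$. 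By \Cref{turanlimit}, $t_{r-1}(n)/n^2 \to (r-2)/(2(r-1))$, so for $n$ large and $d+3\varepsilon<\gamma$ we obtain $e(R) > \frac{r-2}{2(r-1)} k^2 \geq t_{r-1}(k)$, and \Cref{turan} gives a copy of $K_r$ in $R$.

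Blowing each vertex of this $K_r$ up to a full $s$-tuple of its cluster shows $K_r^s\subseteq R^s$. I would then apply \Cref{keylem} with $H=K_r^s$: the hypothesis $\varepsilon\leq\varepsilon_0(d,\Delta)$ holds by construction, and the cluster-size requirement $m\geq 2s/d^\Delta$ holds once $n$ is sufficiently large, since $m\geq(1-\varepsilon)n/k\geq(1-\varepsilon)n/N$ with $N=N(\varepsilon,k_0)$ a fixed constant. The Key Lemma then produces the desired copy of $K_r^s$ in $G$, completing the proof.

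The main obstacle, as I see it, is the nesting of parameters in the first step: $d$ must be small relative to $\gamma$ (to control the edge loss when passing to $G'$), $\varepsilon$ small relative to both $d$ and $\Delta$ (so that the Key Lemma is applicable and the $(d+3\varepsilon)/2$ slack above is strictly less than $\gamma$), and finally $k_0$ and $n_0$ chosen large enough that $m\geq 2s/d^\Delta$ and $t_{r-1}(n)/n^2$ is close to its limit. Once these dependencies are correctly ordered, the proof is a clean two-step reduction: find $K_r$ in $R$ by Tur\'an, then blow it up into $G$ by the Key Lemma.
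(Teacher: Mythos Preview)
Your proposal is correct and follows essentially the same route as the paper: apply the degree form of the Regularity Lemma, count edges in the reduced graph $R$ to force $e(R)>t_{r-1}(k)$, invoke Tur\'an to find $K_r\subseteq R$, and finish with the Key Lemma. The only cosmetic difference is that the paper sets $d=\gamma$ directly rather than choosing a separate small $d$ with $d+3\varepsilon<\gamma$, but the edge-counting and parameter dependencies are handled the same way.
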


\begin{proof}
Suppose that $r\geq 2$, $s \geq 1$ and $\gamma >0$ are given and let $G$ be a graph on $n$ vertices with $$e(G) \geq t_{r-1}(n)+\gamma n^2.$$ We see that we must have $\gamma<1$ for such a graph to exist. 

We apply \Cref{keylem} with $d=\gamma$ and $\Delta=\Delta(K^s_r)$ to obtain an $\varepsilon_0>0$ and (since the result holds for all $\varepsilon \leq \varepsilon_0$) we may assume that $\varepsilon_0 <\gamma/4$. Choose a positive constant $\varepsilon\leq \varepsilon_0$ and let $n_0$ be a positive integer satisfying
$$1/n_0 \ll \varepsilon, 1/r, 1/s.$$

Suppose that $G$ is a graph on $n \geq n_0$ vertices and apply the degree form of the Regularity Lemma, \Cref{degreeform}, with the parameters $\varepsilon$, $d = \gamma$ and $k_0\coloneqq \lceil 1/\gamma\rceil$. We obtain clusters $V_1, \ldots V_k$ with $|V_1|=\ldots=|V_k| \eqqcolon m$, an exceptional set $V_0$, a pure graph $G'$ and a reduced graph $R$. We check that $$m=\frac{n-|V_0|}{k} \geq \frac{n_0(1-\varepsilon)}{k}\geq \frac{2s}{\gamma^\Delta}.$$

We will proceed to show that $K_r \subseteq R$ implying that $K^s_r \subseteq R^s$. Then we will be able to apply \Cref{keylem} to show that $K^s_r \subseteq G$. In order to do this, we will estimate the number of edges in $R$. Recall that we have an edge in $G'$ between a pair of clusters only if they are $\varepsilon$-regular with density greater than $\gamma$. These edges in $G'$ all contribute to $e(R)$. We must remember to subtract from the edges in $G'$ any edges which have an endvertex in $V_0$ since these do not contribute to $e(R)$. Also, each of the edges in $R$ can correspond to at most $m^2$ such edges in $G'$. We recall that $d_{G'}(v) > d_G(v) - (\gamma + \varepsilon)n$ for all vertices $v \in V(G)$ and so we see that
\begin{equation*}
\begin{split}
e(R) &\geq \frac{1}{m^2}\left(\frac{1}{2}\sum_{v\in V(G)}(d_{G'}(v)-|V_0|)-|V_0|n\right)\\
& > \frac{1}{2m^2}\left(\sum_{v\in V(G)}(d_{G}(v)-(\gamma+\varepsilon)n-\varepsilon n)-2\varepsilon	n^2\right)\\
& = \frac{1}{2m^2}\left(2e(G) - \gamma n^2- 4\varepsilon n^2\right)\\
& \geq \frac{k^2}{2}\left(\frac{2t_{r-1}(n)}{n^2}+2\gamma - \gamma- 4\varepsilon \right)\\
& = \frac{k^2}{2}\left( t_{r-1}(n)\binom{n}{2}^{-1}\frac{n-1}{n}+\gamma- 4\varepsilon\right).
\end{split}
\end{equation*}

Now we know, by our choice of $\varepsilon$, that $\gamma-4\varepsilon>0$. So we can apply \Cref{turanlimit} and \eqref{eq:turan} to see that, for sufficiently large $n$, we have 
$$e(R) >\frac{k^2}{2}\left(\frac{r-2}{r-1}\right) \geq t_{r-1}(k).$$
We conclude that $K_r \subseteq R$ by \Cref{turan} and hence $K^s_r \subseteq R^s$. Therefore, we can apply \Cref{keylem} to see that $K^s_r \subseteq G$.
\end{proof}

We can now return to our original question of finding a copy of any graph $H$ in our graph $G$. We must first introduce the concept of a vertex colouring as well as the chromatic number of a graph.

\begin{definition}
A \emph{vertex colouring} of a graph $G$ assigns a colour to each vertex in such a way that no pair of adjacent vertices receive the same colour. We call a vertex colouring which uses $k$ colours a \emph{$k$-colouring}.
The \emph{chromatic number}, $\chi(G)$, is the smallest $k$ such that $G$ has a $k$-colouring.
\end{definition}
It is easy to find an upper bound for the chromatic number of a graph $G$ by colouring the vertices of $G$ greedily. Order the vertices of $G$ arbitrarily as $v_1,v_2, \ldots, v_n$. Assign to each vertex in turn a colour that has not already been used amongst its neighbours of lower index. Since each vertex has at most $\Delta(G)$ neighbours, it will always be able to do this using at most $\Delta(G)+1$ colours. Therefore,
$$\chi(G) \leq \Delta(G)+1.$$

The chromatic number is central to an interesting corollary of the Erd\H{o}s-Stone theorem. This corollary determines, asymptotically, for any non-bipartite graph $H$, the number of edges required to force a copy of $H$ in $G$.

\begin{corollary}\label{coroll:e-s}
Let $H$ be a graph with $\chi(H) \geq 2$. Then
$$\lim_{n \rightarrow \infty} ex(n,H)\binom{n}{2}^{-1}= \frac{\chi(H)-2}{\chi(H)-1}.$$
\end{corollary}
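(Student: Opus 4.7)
The plan is to set $r\coloneqq\chi(H)$ (so $r\geq 2$) and sandwich $ex(n,H)\binom{n}{2}^{-1}$ between two quantities whose limits both equal $(r-2)/(r-1)$. The two key observations are: (a) the Turán graph $T_{r-1}(n)$ cannot contain $H$, giving a lower bound on $ex(n,H)$; and (b) $H$ embeds into $K_r^s$ for $s\coloneqq|H|$, so a direct application of the Erd\H{o}s-Stone Theorem (\Cref{erdos-stone}) gives a nearly matching upper bound.

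For the lower bound, I would note that $T_{r-1}(n)$ is properly $(r-1)$-colourable: simply colour each of its $r-1$ vertex classes with a distinct colour. If $H\subseteq T_{r-1}(n)$, restricting this colouring to $V(H)$ would give a proper $(r-1)$-colouring of $H$, contradicting $\chi(H)=r$. Hence $H\nsubseteq T_{r-1}(n)$, so $ex(n,H)\geq t_{r-1}(n)$. Dividing by $\binom{n}{2}$ and invoking \Cref{turanlimit} yields
$$\liminf_{n\to\infty}ex(n,H)\binom{n}{2}^{-1}\;\geq\;\frac{r-2}{r-1}.$$

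For the upper bound, I would fix $\gamma>0$ and use the fact that any proper $r$-colouring of $H$ partitions $V(H)$ into $r$ independent sets each of size at most $s=|H|$, so $H$ embeds into $K_r^s$. Applying \Cref{erdos-stone} with this $\gamma$, $r$ and $s$, I obtain $n_0$ such that every graph on $n\geq n_0$ vertices with at least $t_{r-1}(n)+\gamma n^2$ edges contains $K_r^s$ and hence $H$. Consequently $ex(n,H)<t_{r-1}(n)+\gamma n^2$ for all $n\geq n_0$. Dividing by $\binom{n}{2}$, using \Cref{turanlimit} together with $n^2\binom{n}{2}^{-1}\to 2$, gives
$$\limsup_{n\to\infty}ex(n,H)\binom{n}{2}^{-1}\;\leq\;\frac{r-2}{r-1}+2\gamma.$$
Letting $\gamma\to 0$ completes the upper bound, and combining with the lower bound finishes the proof.

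I do not expect any real obstacle: the heart of the argument is the pair of elementary observations above, which together reduce the corollary to \Cref{erdos-stone} and the Turán-density limit of \Cref{turanlimit}. The only care needed is in the order of quantifiers, namely choosing $\gamma$ first (so that the threshold $n_0$ provided by the Erd\H{o}s-Stone Theorem is allowed to depend on it) and then sending $\gamma\to 0$ only after taking the $\limsup$.
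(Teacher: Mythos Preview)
Your proposal is correct and follows essentially the same approach as the paper: set $r=\chi(H)$, $s=|H|$, use $H\nsubseteq T_{r-1}(n)$ for the lower bound and $H\subseteq K_r^s$ together with \Cref{erdos-stone} for the upper bound, then invoke \Cref{turanlimit} on both sides. Your bookkeeping is in fact slightly more careful than the paper's, which writes $+\varepsilon$ rather than $+2\varepsilon$ in the $\limsup$ estimate (a harmless slip since $\varepsilon$ is arbitrary).
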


\begin{proof}
Let $\varepsilon>0$ and define $r\coloneqq \chi(H)$, $s \coloneqq |H|$.

We have that $H \nsubseteq T_{r-1}(n)$ since $\chi(H)=r > \chi(T_{r-1}(n))$. This gives that
$$ex(n,H) \geq t_{r-1}(n),$$
and thus 
\begin{equation}\label{eq:cor1}
\liminf_n ex(n,H) \binom{n}{2}^{-1} \geq \lim_{n \rightarrow \infty} t_{r-1}(n) \binom{n}{2}^{-1}=\frac{r-2}{r-1}
\end{equation}
by \Cref{turanlimit}.

By \Cref{erdos-stone}, there exists an $n_0$ such that every graph on $n \geq n_0$ vertices with $$e(G) \geq t_{r-1}(n) + \varepsilon n^2$$ has $K^s_r$ as a subgraph. By the definitions of $r$ and $s$, we observe that $$H \subseteq K^s_r \subseteq G.$$
Hence, we see that whenever $n \geq n_0$,
$$ex(n,H) < t_{r-1}(n) + \varepsilon n^2.$$ We again apply \Cref{turanlimit} to see that
\begin{equation}\label{eq:cor2}
\limsup_n ex(n,H) \binom{n}{2}^{-1} \leq \lim_{n \rightarrow \infty} (t_{r-1}(n) + \varepsilon n^2) \binom{n}{2}^{-1}=\frac{r-2}{r-1}+\varepsilon.
\end{equation}
Now, this equation holds for all $\varepsilon>0$ and so, together, \eqref{eq:cor1} and \eqref{eq:cor2} give that
$$\lim_{n \rightarrow \infty} ex(n,H)\binom{n}{2}^{-1}= \frac{\chi(H)-2}{\chi(H)-1}.$$
\end{proof}

This corollary means that for any non-bipartite graph $H$ and any $\varepsilon>0$ there exists an integer $n_0$ such that if $G$ is a graph on $n \geq n_0$ vertices and $$e(G) \geq \left( \frac{\chi(H)-2}{\chi(H)-1}+ \varepsilon \right) \binom{n}{2} $$ then $H \subseteq G$.

\section{Ramsey Theory}

Ramsey Theory focusses on finding structure in large graphs. A well known result, which is easily verified, is that in any group of six people there will be three acquaintances or three strangers. More generally, the theory roughly states that whenever we partition a large graph into a small number of subsets, in one of those subsets there will be a large substructure, for example a large complete graph or a large independent set. Ramsey's theorem tells us that, given any sufficiently large graph, we are guaranteed to find a large complete graph or a large independent set.

\begin{theorem}[Ramsey, 1930]\label{thm:ramsey}
For every $k\in \mathbb{N}$, there exists an $n\in \mathbb{N}$ such that every graph on at least $n$ vertices contains $K_k$ or $\overline{K_k}$ as an induced subgraph. 
\end{theorem}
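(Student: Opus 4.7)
The plan is to prove the slightly stronger \emph{off-diagonal} statement by induction, from which Ramsey's theorem follows immediately. For positive integers $s,t$, define $R(s,t)$ to be the least $n$ (if it exists) such that every graph on at least $n$ vertices contains $K_s$ as a subgraph or $\overline{K_t}$ as an induced subgraph (equivalently, an independent set of size $t$). Since $K_s$ is a subgraph of $G$ if and only if it is an induced subgraph of $G$, taking $s=t=k$ in the existence of $R(s,t)$ gives the theorem with $n=R(k,k)$.

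I would prove by induction on $s+t$ that $R(s,t)$ exists and satisfies
$$R(s,t) \leq R(s-1,t) + R(s,t-1).$$
The base cases $R(1,t)=R(s,1)=1$ are immediate since a single vertex is simultaneously a $K_1$ and a $\overline{K_1}$. For the inductive step, suppose $R(s-1,t)$ and $R(s,t-1)$ exist, and let $G$ be a graph on $n \coloneqq R(s-1,t)+R(s,t-1)$ vertices. Pick any vertex $v \in V(G)$ and partition the remaining $n-1$ vertices into $N(v)$ and its complement $M \coloneqq V(G) \setminus (N(v) \cup \{v\})$. Since $|N(v)| + |M| = n-1$, by pigeonhole either $|N(v)| \geq R(s-1,t)$ or $|M| \geq R(s,t-1)$.

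In the first case, the induction hypothesis applied to $G[N(v)]$ yields either a $K_{s-1}$, which extends via $v$ to a $K_s$ in $G$, or an independent set of size $t$, which is already what we want. In the second case, $G[M]$ contains either a $K_s$ (done) or an independent set of size $t-1$, which extends via $v$ (since $v$ is non-adjacent to every vertex of $M$) to an independent set of size $t$ in $G$. Either way we find the desired structure, proving the recursive bound.

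There is no real obstacle here; the only subtlety is recognising that the diagonal version is not well suited to a clean induction because one has to simultaneously shrink the clique side and the independent set side, so the off-diagonal strengthening is what makes the induction go through. The resulting bound $R(k,k) \leq \binom{2k-2}{k-1}$ (via the standard unfolding of the recursion with $R(1,t)=1$) is of course far from tight, but it suffices for the qualitative existence claim required by the theorem.
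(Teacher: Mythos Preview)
Your proof is correct. It is the classical Erd\H{o}s--Szekeres induction on the off-diagonal number $R(s,t)$, and every step goes through as written.

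The paper takes a different route: rather than introducing off-diagonal Ramsey numbers and inducting on $s+t$, it argues directly for the diagonal case by building a nested sequence of sets $V_1 \supset V_2 \supset \cdots \supset V_{2k-2}$ and vertices $v_i \in V_i$, where at each step $V_{i}$ is chosen to lie entirely inside or entirely outside $N(v_{i-1})$, halving the set size. Pigeonhole on the $2k-3$ vertices $v_1,\ldots,v_{2k-3}$ then yields $k-1$ of them with the same ``colour'', which together with $v_{2k-2}$ form the required $K_k$ or $\overline{K_k}$. This gives $R(k) \leq 2^{2k-3}$. Your approach has the advantage of yielding the more general off-diagonal bound $R(s,t) \leq \binom{s+t-2}{s-1}$, which is also asymptotically a factor $\sqrt{k}$ sharper on the diagonal; the paper's approach is perhaps marginally more self-contained since it never leaves the diagonal setting. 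The two arguments are of course closely related --- the paper's iterated halving is essentially your induction unrolled --- but the presentations are genuinely distinct.
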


We might also think of this to mean that if we colour the edges of a $K_n$ with two colours: red and blue, then any such colouring yields a monochromatic $K_k$. We define the Ramsey number as follows.

\begin{definition}
For any $k \in \mathbb{N}$, we define the \emph{Ramsey number}, $R(k)$, to be the smallest positive integer $n$ such that any colouring of the edges of $K_n$ using two colours yields a monochromatic $K_k$.

Given any graph $H$, we define $R(H)$ to be the smallest positive integer such that any colouring of the edges of $K_n$ using two colours yields a monochromatic copy of $H$.
\end{definition}

\begin{proof}[Proof of \Cref{thm:ramsey}]
The result is clear for $k =1$ so let us assume that $k \geq 2$.  Let $n \coloneqq 2^{2k-3}$ and suppose that $G$ is a graph of order at least $n$. Choose $V_1\subseteq V(G)$ be any set of $n$ vertices and let $v_1 \in V_1$ be any vertex. We will define a sequence of sets of vertices $V_1 \supset V_2 \supset \ldots \supset V_{2k-2}$, and vertices $v_i \in V_i$, such that for all $2 \leq i \leq 2k-2$:
\begin{enumerate}[$(i)$]
\item $|V_i| =|V_{i-1}|/2^{i-1}$;
\item $V_i \not\ni v_{i-1}$;
\item $V_i \cap N(v_{i-1})= V_i$ or $\emptyset$.
\end{enumerate} 

Let $1<j \leq 2k-2$ and suppose that we have already chosen sets $V_i$ and vertices $v_i$ for $1 \leq i \leq j-1$ satisfying $(i)$--$(iii)$. We note that $|V_{j-1} \setminus \{v_{j-1}\}| = n/2^{j-2}-1 >0$ is odd, so we can find a subset $V_j$ which satisfies $(i)$--$(iii)$. Choose any $v_j \in V_j$.

Now, amongst the $2k-3$ vertices $v_1, v_2, \ldots, v_{2k-3}$, we can find a set of $k-1$ vertices, $V$, such that either: $N(v_{i-1}) \cap V_i = V_i$ for all $v_{i-1} \in V$ or $N(v_{i-1}) \cap V_i = \emptyset$ for all $v_{i-1} \in V$. In the first case, the vertices $V \cup \{v_{2k-2}\}$ induce a $K_k$ in $G$ and in the second the vertices $V \cup \{v_{2k-2}\}$ induce a $\overline{K_k}$.
\end{proof}

Ramsey numbers are very difficult to calculate, in general, and very few are known. We have shown, in the proof of \Cref{thm:ramsey}, that $R(k) \leq 2^{2k-3}$ for all $k \geq 2$, giving us an exponential bound on the Ramsey number for complete graphs. We will now show that, by considering only the Ramsey numbers of graphs $H$ of bounded maximum degree we can greatly improve on this bound. In fact, we are able to obtain a bound which is linear in $|H|$.

\begin{theorem}[Chv\'atal, R\"odl, Szemer\'edi and Trotter, 1983]\label{ramseyresult}
Suppose that $\Delta$ is a positive integer. Then there exists a constant $c$ such that
$$R(H) \leq c|H|$$ for every graph $H$ with $\Delta(H) \leq \Delta$.
\end{theorem}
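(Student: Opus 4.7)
The plan is to two-colour the edges of $K_n$ with $n=c|H|$, apply the Regularity Lemma to the red colour class, classify each $\varepsilon$-regular pair of clusters by its majority colour, and then use classical Ramsey (\Cref{thm:ramsey}) on the resulting reduced graph to locate a large monochromatic clique of clusters; the Key Lemma (\Cref{keylem}) will embed $H$ into the corresponding monochromatic subgraph of $K_n$.

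To set up the parameters, I would first set $r:=\Delta+1$, so that $\chi(H)\leq r$ and hence $H\subseteq K_r^{|H|}$ (group each vertex colour class of $H$ into one part). Fix $d:=1/3$ and let $\varepsilon_0=\varepsilon_0(d,\Delta)$ be the constant supplied by the Key Lemma. Choose $\varepsilon\leq\varepsilon_0$ small enough that $\varepsilon<1/(2\binom{R(r)}{2})$, where $R(r)$ is finite by \Cref{thm:ramsey}; take $k_0\geq R(r)$, and let $N=N(\varepsilon,k_0)$ be the output of the Regularity Lemma. I claim $c:=3N/d^\Delta$ then works.

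Given any 2-edge-colouring of $K_n$ with $n\geq c|H|$, let $G$ be the red graph. Applying \Cref{reglemma} to $G$ yields clusters $V_1,\ldots,V_k$ with $k_0\leq k\leq N$, common size $m\geq(1-\varepsilon)n/k\geq 2|H|/d^\Delta$, and at most $\varepsilon k^2$ non-regular pairs. By \Cref{prop:complement}, every $\varepsilon$-regular pair in $G$ is also $\varepsilon$-regular in its complement (the blue graph), so I would colour each regular pair $\{i,j\}$ red if $d_G(V_i,V_j)\geq 1/2$ and blue otherwise; in either case the majority-coloured bipartite subgraph on $(V_i,V_j)$ has density at least $1/2>d$. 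The number of $R(r)$-subsets of $[k]$ that contain at least one non-regular pair is bounded by $\varepsilon k^2\binom{k-2}{R(r)-2}<\binom{k}{R(r)}$ by the choice of $\varepsilon$, so some $R(r)$-subset has all pairs regular (and hence coloured). Within it, \Cref{thm:ramsey} produces a monochromatic $K_r$, say red. The corresponding $r$ clusters form a $K_r$ in the regularity graph of $G$ with parameters $\varepsilon,d$, so $H\subseteq K_r^{|H|}\subseteq R^{|H|}$, and the Key Lemma, applicable because $m\geq 2|H|/d^\Delta$, delivers a red copy of $H$ in $G\subseteq K_n$.

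The main obstacle will be the Ramsey-cleaning step: $\varepsilon$ must simultaneously satisfy the Key Lemma's threshold $\varepsilon_0(d,\Delta)$ and be small enough (relative to the exponentially large Ramsey number $R(\Delta+1)$) for the counting argument to yield a clean $R(r)$-subset of clusters. Fortunately both constraints depend only on $\Delta$, so such an $\varepsilon$ exists, and then $N=N(\varepsilon,k_0)$ and hence $c=c(\Delta)$ depend only on $\Delta$, as required. The linear dependence on $|H|$ is exactly because $N$ and $d$ are absolute constants once $\Delta$ is fixed, so the only place the order of $H$ enters is through the requirement $m\geq 2|H|/d^\Delta$ on the cluster size.
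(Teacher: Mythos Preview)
Your proposal is correct and follows essentially the same route as the paper: apply the Regularity Lemma to one colour class, classify each $\varepsilon$-regular pair by its majority colour, find a monochromatic $K_{\Delta+1}$ among the regular pairs via the classical Ramsey number $R(\Delta+1)$, and finish with the Key Lemma using $\chi(H)\leq\Delta+1$. The only real difference is the device used to locate a set of $R(\Delta+1)$ clusters with all pairs regular: the paper applies Tur\'an's theorem to the graph of regular pairs (since at most $\varepsilon k^2$ pairs are irregular, this graph has more than $t_{k_0-1}(k)$ edges and hence contains $K_{k_0}$), whereas you use a direct counting argument on $R(r)$-subsets. Both work; the paper also takes $d=1/2$ rather than your $d=1/3$, which is immaterial. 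One small technicality: your inequality $\varepsilon k^2\binom{k-2}{R(r)-2}<\binom{k}{R(r)}$ unwinds to $\varepsilon<\frac{k-1}{k}\cdot\frac{1}{R(r)(R(r)-1)}$, so $\varepsilon<1/(2\binom{R(r)}{2})$ is not quite enough when $k$ is small --- taking $\varepsilon<1/(4\binom{R(r)}{2})$ (or simply appealing to Tur\'an as the paper does) fixes this without changing anything else.
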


\begin{proof}
Apply \Cref{keylem} with inputs $d=1/2$ and $\Delta$ to obtain $\varepsilon_0$, as in the statement of the lemma. Let $k_0 = R(\Delta+1)$ and choose a positive constant $\varepsilon \leq \varepsilon_0$ satisfying $\varepsilon \ll 1/k_0.$ Let $c$ be a positive integer satisfying $1/c\ll \varepsilon,1/\Delta$.

Now, let $H$ be a graph with $\Delta(H) \leq \Delta$ and let $|H|\eqqcolon h$. Suppose that $G$ is a graph on $n \geq ch$ vertices. Apply the Regularity Lemma, \Cref{reglemma}, with the parameters $\varepsilon$ and $k_0$ to obtain an $\varepsilon$-regular partition into clusters $V_1, \ldots, V_k$, with $|V_1| = \ldots = |V_k| \eqqcolon m$, and exceptional set $V_0$. We aim to prove that $G$ has $H$ or $\overline{H}$ as a subgraph. Equivalently, we will show that $H\subseteq G$ or $H \subseteq \overline{G}$.

We check that
$$m=\frac{n-|V_0|}{k} \geq \frac{ch(1-\varepsilon)}{k}\geq \frac{2h}{d^\Delta},$$ so we will be able to use \Cref{keylem}.

Let $R$ be the graph with vertices $\{V_1, \ldots V_k\}$ and an edge between two vertices if the corresponding pair of clusters is $\varepsilon$-regular. We have that $|R|=k$ and there are at most $\varepsilon k^2$ pairs which are not $\varepsilon$-regular so
\begin{eqnarray*}
e(R) &\geq &\frac{k(k-1)}{2}-\varepsilon k^2 = \frac{k^2}{2} \left(1-\frac{1}{k}-2\varepsilon \right)\\
& \geq &\frac{k^2}{2} \left(1-\frac{1}{k_0}-\frac{1}{k_0(k_0-1)} \right) = \frac{k^2}{2}\frac{k_0-2}{k_0-1}\\
& \stackrel{\eqref{eq:turan}}{\geq}& t_{k_0-1}(k).
\end{eqnarray*}
Then we have that $K=K_{k_0} \subseteq R$, by \Cref{turan}.

Let us now colour the edges of $R$ as follows:
\begin{itemize}
\item Colour the edge $V_iV_j$ red if $d_G(V_i,V_j)\geq 1/2$;
\item Colour the edge $V_iV_j$ blue if $d_G(V_i,V_j)< 1/2$.
\end{itemize}

We define graphs $R'$ and $R''$ both having vertex sets $V(R)$. The graph $R'$ has all red edges and the graph $R''$ has all blue edges. We see that $R'$ is in fact the regularity graph corresponding to this partition of $G$ with parameters $\varepsilon$ and $d=1/2$. By recalling \Cref{prop:complement}, we also see that the graph $R''$ is the reduced graph of $\overline{G}$ with the same parameters.

Recall that we defined $k_0=R(\Delta+1)$. So $K$ must contain a red $K_{\Delta+1}$ or a blue $K_{\Delta+1}$. Then, since $\chi(H)\leq \Delta(H)+1 \leq \Delta+1$, we have that $H \subseteq K_{\Delta+1}^h$ and so $H \subseteq (R')^h$ or $H \subseteq (R'')^h$. We can apply \Cref{keylem} to see that, in the first case, $H \subseteq G$ and, in the second, $H \subseteq \overline{G}$. Therefore, $R(H) \leq c|H|$.
\end{proof}

\section{Finding a Perfect $C_6$-Packing}

Let us now consider a particular example, where the Regularity Lemma is used to find a spanning subgraph of a graph $G$ consisting entirely of disjoint copies cycles of length $6$. Such a subgraph is called a perfect $C_6$-packing and we formally define an $F$-packing below.

\begin{definition}
Given two graphs $F$ and $G$, an \emph{$F$-packing} in $G$ is a collection of vertex-disjoint copies of $F$ in $G$. An $F$-packing is said to be \emph{perfect} if it covers all of the vertices of $G$.
\end{definition}

We will prove the following theorem.

\begin{theorem}\label{c6packing}
For every $0<\eta<1/2$ there exists an integer $n_0$ such that every graph $G$ with order $n\geq n_0$ divisible by $6$ and $\delta(G) \geq n(1/2+\eta)$ contains a perfect $C_6$-packing.
\end{theorem}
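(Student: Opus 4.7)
The plan is to use the degree form of the Regularity Lemma to reduce the problem to finding a $C_6$-factor in each edge of a perfect matching of the reduced graph, and then to produce each such factor via the bipartite Blow-up Lemma. First I apply \Cref{degreeform} with parameters $\varepsilon \ll d \ll \eta$ and a sufficiently large even integer $k_0$, obtaining clusters $V_1,\ldots,V_k$ of common size $m$, an exceptional set $V_0$ with $|V_0|\le\varepsilon n$, the pure graph $G'$ and the reduced graph $R$. By \Cref{mindeg}, $\delta(R)\ge (1/2+\eta-2d)|R|>|R|/2$, so (after moving one cluster into $V_0$ if necessary to force $k$ to be even) $R$ has a perfect matching $M$. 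For each edge $V_iV_j\in M$ I run the argument of \Cref{superreg}, adapted to a matching instead of a Hamilton path, discarding at most $2\varepsilon m$ vertices from each of $V_i,V_j$ to obtain equal-sized subclusters $W_i,W_j$ such that $(W_i,W_j)_{G'}$ is $(2\varepsilon,d-3\varepsilon)$-superregular. The discarded vertices are added to $V_0$ to form an enlarged exceptional set $V_0^*$ of size $|V_0^*|\le 3\varepsilon n$.

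The crux of the proof is to absorb the vertices of $V_0^*$ into pairwise vertex-disjoint copies of $C_6$, each using its remaining five vertices inside a single superregular matched pair, in such a way that the residual cluster sizes stay equal and are divisible by $3$. For any $v\in V_0^*$ the condition $d_G(v)\ge (1/2+\eta)n$, combined with an averaging argument, guarantees that in a positive fraction of matched pairs $(W_i,W_j)$ the vertex $v$ has at least $\eta m/4$ neighbours in both $W_i$ and $W_j$. Any $C_6$ through $v$ using five vertices of $W_i\cup W_j$ is necessarily of one of two ``types'': three vertices from $W_i$ and two from $W_j$, or three from $W_j$ and two from $W_i$. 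I greedily assign each $v$ to a suitable pair and a suitable type, and locate the desired $C_6$ by choosing two neighbours of $v$ (of the correct sides) and joining them by a $P_5$ of the required bipartite pattern, using superregularity of $(W_i,W_j)_{G'}$ and \Cref{prop:neighbours} to guarantee that enough such paths exist. The type choices are balanced so that each pair loses equally many vertices from $W_i$ and from $W_j$; a final trim of at most five vertices per side makes the remaining cluster sizes divisible by $3$, which is consistent since $6\mid n$ and every vertex has been accounted for. As only $O(\varepsilon m)$ vertices are removed from any cluster, \Cref{removing} ensures that each residual pair $(W'_i,W'_j)_{G'}$ is $(\sqrt\varepsilon,d/2)$-superregular.

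Finally, for each matched pair I invoke the bipartite Blow-up Lemma (\Cref{blowup}) with density $d/2$ and $\Delta=2$: a $C_6$-factor of $K_{m',m'}$ (where $m'=|W'_i|=|W'_j|$ is divisible by $3$) is a spanning $2$-regular bipartite graph, so one embeds into $(W'_i,W'_j)_{G'}$. Combining these $C_6$-factors with the $C_6$'s absorbing $V_0^*$ yields a perfect $C_6$-packing of $G$. The main obstacle is the absorption step, which must simultaneously preserve vertex-disjointness of the absorbing $C_6$'s, bound the number of vertices removed from each cluster so that superregularity persists, balance the two sides of every matched pair, and leave cluster sizes divisible by $3$ so that the Blow-up Lemma can be applied.
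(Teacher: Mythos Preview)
Your overall architecture --- Regularity Lemma, perfect matching in $R$, absorb $V_0^*$, Blow-up Lemma on each matched pair --- is the same as the paper's, but the balance/divisibility step contains a genuine gap. After absorption you need each residual pair $(W'_i,W'_j)$ to satisfy $|W'_i|=|W'_j|$ \emph{and} $3\mid |W'_i|$. A single absorbing $C_6$ through $v\in V_0^*$ necessarily removes $(3,2)$ or $(2,3)$ vertices from the two sides, so equalising the sides forces the number of absorptions into a given pair to be even; getting $3$ to divide the residual size then pins that number to a fixed residue class modulo $6$. You give no argument that the exceptional vertices can be distributed among pairs so that all these local congruence conditions are met simultaneously. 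Your ``final trim of at most five vertices per side'' does not rescue this: trimmed vertices must still be covered by some $C_6$, and the fact that $6\mid n$ is only a \emph{global} constraint --- it does not force each pair's residual total to be divisible by $6$.

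The paper handles exactly this issue by taking a Hamilton \emph{path} $V_1V_2\cdots V_k$ in $R$ (via Dirac) rather than a bare matching, and then letting the matching be $\{V_1V_2,V_3V_4,\ldots\}$. The path edges $V_{2}V_{3},V_{4}V_{5},\ldots$ link adjacent matched pairs, so one can peel off copies of $C_6$ using one vertex in $V^*_{2i}$, three in $V^*_{2i+1}$ and two in $V^*_{2i+2}$; each such copy shifts the residue of $|V^*_{2i-1}\cup V^*_{2i}|$ modulo $6$ without disturbing earlier pairs, and iterating along the path pushes the defect to the last pair, where $6\mid n$ makes it vanish. For the side-balance within a pair, the paper does not rely on parity of the number of absorptions: instead it sets aside in advance a small reservoir $X_i$ of $11$ vertices from a common $R$-neighbour of $V_{2i-1}$ and $V_{2i}$, each with many neighbours in both clusters, and redistributes these at the end to equalise the two sides exactly. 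Without the Hamilton-path connectivity (or some equivalent device for moving vertices between matched pairs) your argument cannot be closed; the ``trim'' is precisely the place where such a device is needed but not supplied.
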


Note that the bound on the minimum degree in \Cref{c6packing} is close to best possible. Indeed, suppose that $n$ is divisible by $6$  and consider the graph $G$ on $n$ vertices consisting of disjoint copies of $K_{n/2+1}$ and $K_{n/2-1}$. We have that $\delta(G) = n/2-2$. In order to contain a perfect $C_6$-packing, the two components must have perfect $C_6$-packings but this is not possible since their orders are not divisible by $6$.

\begin{proof}
Choose positive constants $\varepsilon$ and $d$ and $n_0 \in \mathbb{N}$ such that
$$1/n_0 \ll \varepsilon \ll d \ll \eta < 1/2.$$ 
Let $k_0 \coloneqq 1/ \varepsilon$ and let $G$ be a graph on $n \geq n_0$ vertices. The first step is to apply the degree form of the Regularity Lemma (\Cref{degreeform}) with  parameters $\varepsilon, d$ and $k_0$ to the graph $G$. We obtain: clusters $V_1, V_2, \ldots, V_k$; an exceptional set, $V_0$; a pure graph, $G'$ and a reduced graph, $R$.

\noindent Note that $|R|=k$. We are given that $\delta(G) \geq n(1/2+\eta)$ and so we may apply \Cref{mindeg} to see that
\begin{itemize}
\item[(a)]$\delta(R) \geq (1/2+\eta-2d)k \geq (1+\eta)k/2 > k/2.$
\end{itemize}
Then Dirac's theorem implies that $R$ contains a Hamilton path, $P$, and we may assume that $P=V_1V_2 \ldots V_k$ by relabelling if necessary.

We use that $(V_i,V_{i+1})_{G'}$ is $\varepsilon$-regular and has density $>d$ for each $1\leq i\leq k-1$ and apply \Cref{superreg} in order to obtain subclusters $V'_i \subseteq V_i$ of size $m' \coloneqq m-2\varepsilon m$ such that
\begin{enumerate}
\item[(b)]$(V'_i,V'_{i+1})_{G'}$ is $(2\varepsilon, d/2)$-superregular for every edge $V_iV_{i+1} \in E(P)$.
\end{enumerate}
For each $i=1,\ldots,k$ we add the vertices in $V_i\setminus V'_i$ to the exceptional set $V_0$, in total we add $k(2\varepsilon m) \leq 2\varepsilon n$ vertices to $V_0$. We also add the vertices in $V_k$ to the exceptional set if $k$ is odd, adding at most $m'\leq n/k\leq n/k_0=\varepsilon n$ vertices. We continue to refer to the reduced graph as R, its number of vertices as $k$ and to call the exceptional set $V_0$ and we now have
$$|V_0| \leq 4 \varepsilon n.$$
Now that $k$ is even, we can find a perfect matching $M=\{V_1V_2,V_3V_4,...V_{k-1}V_k\}$ in $P$.

We will now set aside some vertices from the graph - these will be put back at a later stage. Consider any odd $i$. By (a), we know that there exists a vertex $V_j \in (N_R(V_i) \cap N_R(V_{i+1}))$. Recall that $(V_i,V_j)_{G'}$ and $(V_{i+1}, V_j)_{G'}$ are $\varepsilon$-regular. So by \Cref{prop:subsets}, $(V'_i,V'_j)_{G'}$ and $(V'_{i+1}, V'_j)_{G'}$ are $2\varepsilon$-regular and have density at least $d-\varepsilon\geq d/2$. Then, by \Cref{prop:neighbours}, we have that there are at least $(1-4\varepsilon)m'$ vertices in $V'_j$ having at least $(d/2-2\varepsilon)m'$ neighbours in both $V'_i$ and $V'_{i+1}$. Let $X_i \in V'_j$ be a set of $11$ of these vertices. For each odd $i$ we choose a set $X_i$ and we choose these in such a way that the $X_i$s are disjoint. We are able to do this since we have $n$ large enough such that $11k/2 \leq (1-4\varepsilon)m'$.

Let $X \coloneqq X_1 \cup X_3 \cup \cdots \cup X_{k-1}$. We remove the vertices in $X$ from their  clusters but do not add them to $V_0$. We have $|X| <11k$ and so if we remove at most $|X|(k-1) < 11k^2 \leq \varepsilon n$ additional vertices (and add these to the exceptional set) we may assume that the resulting subclusters $V''_i \subseteq V'_i$ all have the same size, which we shall define to be $m''$. The new exceptional set has size $$|V_0| \leq 5\varepsilon n.$$

We want to assign each element $x \in V_0$ to a cluster and in order to this we will define an odd index $i$ to be \emph{good} for $x$ if $|N_{G'}(x) \cap V''_i| \geq \eta^2 m''$ and $|N_{G'}(x) \cap V''_i| \geq \eta^2 m''$. Denote the number of good indices by $g_x$. We find that the number of neighbours of $x$ in $G$ belonging to clusters $V''_i$ is $$d_{G'}(x) -|V_0|-|X| \leq 2g_xm''+(\eta^2+1)(k/2-g_x)m''$$ since if $i$ is good all $2m''$ vertices in $V_i$ and $V_{i+1}$ may be neighbours and if $i$ is not good then $x$ has fewer than $\eta^2m''$ neighbours in at least one of $V_i, V_{i+1}$.
Now, $|V_0|+|X| \leq 6\varepsilon n \leq \eta n/2$ and $\delta(G) \geq n(1/2+\eta)$ so we have that $(1+\eta)n/2 \leq d_{G'}(x) -|V_0|-|X|$. We also note that $(k/2-g_x)m''\leq km''/2 \leq n/2$. So, combining these observations, we see that $$(1+\eta)n/2 \leq 2g_xm''+(\eta^2+1)n/2$$ and hence $$g_x \geq \frac{\eta n}{4m''}(1-\eta) \geq \frac{\eta k}{4}\frac{1}{2} = \frac{\eta|M|}{4}.$$
We also have that $$\frac{|V_0|}{\sqrt\varepsilon m''} \leq \frac{5\sqrt{\varepsilon}n}{m''} \leq \frac{\eta|M|}{4}.$$ This means that we can choose a good odd index $i$ for each vertex in the exceptional set so that no index is assigned more than $\sqrt\varepsilon m''$ vertices.

For each odd index $i$, consider those vertices which have been assigned to $i$. Distribute these as evenly as possible between the sets $V''_i$ and $V''_{i+1}$ forming new sets $V^*_i$ and $V^*_{i+1}$ which differ in size by at most $1$.

We claim that the graph $(V^*_i,V^*_{i+1})_{G'}$ is $(4\sqrt[4]\varepsilon, d/18)$-superregular for each odd $i$. This follows from (b) and applying first \Cref{removing} to see that the graph $(V''_i, V''_{i+1})_{G'}$ is $(\sqrt{2\varepsilon}, d/2-\sqrt{2\varepsilon})$-superregular. We then apply \Cref{adding} to see that after adding the exceptional vertices assigned to each cluster, at most $\sqrt\varepsilon m''$, 
\begin{enumerate}
\item[(c)]$(V^*_i,V^*_{i+1})_{G'}$ is $(4\sqrt[4]{\varepsilon}, d/18)$-superregular for each odd $i$.
\end{enumerate}

We must now show that we can make $|V^*_i \cup V^*_{i+1} \cup X_i|$ divisible by $6$ for every odd $i$. First let's consider $i=1$. Suppose that $|V^*_1 \cup V^*_2 \cup X_1| \equiv a$ mod $6$ for some $0 \leq a < 6$. We can apply \Cref{prop:supersets} and \Cref{prop:subsets} to see that the graphs $(V^*_2,V^*_3)_{G'}$ and $(V^*_3,V^*_4)_{G'}$ are $\sqrt[5]{\varepsilon}\geq 10\sqrt[4]{\varepsilon}$-regular and has density at least $d/4\geq d-7\sqrt[4]{\varepsilon}$ to choose $a$ disjoint copies of $C_6$, each having $1$ vertex in $V^*_2$, $3$ vertices in $V^*_3$ and $2$ vertices in $V^*_4$. First we pick a vertex $x \in V^*_2$ which has at least $(d/4-\sqrt[5]{\varepsilon})|V^*_3|$ neighbours in $V^*_3$ and a vertex $y \in V^*_3$ which has at least $(d/4-\sqrt[5]{\varepsilon})|V^*_4|$ neighbours in $V^*_4$. There are many such vertices we could choose by \Cref{prop:neighbours}. Now, by \Cref{prop:subsets}, $((N_{G'}(x)\setminus \{y\}) \cap V^*_3 , N_{G'}(y) \cap V^*_4)_{G'}$ is $2 \sqrt[5]{\varepsilon}$-regular and has density at least $d/4-\sqrt[5]{\varepsilon}$. So we can choose $2$ vertices $y_1$ and $y_2$ in $N_{G'}(y) \cap V^*_4$ with (distinct) neighbours $z_1$ and $z_2$ respectively, in $N_{G'}(x) \cap V^*_3$. Together, these vertices form a copy of $C_6$. 
\begin{figure}[h]
\centering
\includegraphics[scale=0.4]{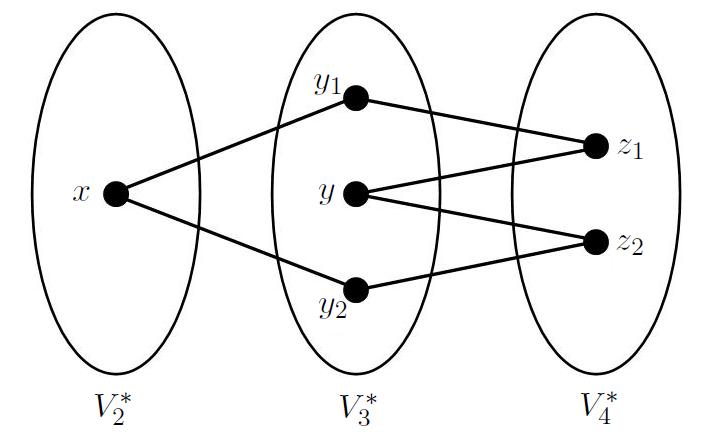}
\caption{A copy of $C_6$.}\label{fig:c6}
\end{figure}
Continue in this way until we have removed $a$ copies of $C_6$ and then $|V^*_1 \cup V^*_2 \cup X_1|$ is divisible by $6$. We are able to do this since we only remove a small number of vertices in each copy of $C_6$ and so we can apply \Cref{prop:subsets} to see that the graph is still regular. We repeat this process for each odd $i$ in turn and since $n$ is divisible by $6$ we can ensure that $|V^*_i \cup V^*_{i+1} \cup X_i|$ is divisible by $6$ for every odd $i$.

Before we removed the copies of $C_6$, $|V^*_i|$ and $|V^*_{i+1}|$ differed by at most $1$. Now they can differ by at most $1+5+5=11$. We return to the sets $X_i$ which we set aside earlier. For each odd $i$, add each $x \in X_i$ to either $V^*_i$ or $V^*_{i+1}$ so that the new sets $V^\diamond_i \supseteq V^*_i$ and $V^\diamond_{i+1} \supseteq V^*_{i+1}$ are equal in size. 
Recall these clusters were formed after removing at most 15 vertices from $V^*_i$ and $V^*_{i+1}$ in copies of $C_6$ (we have removed at most $5$ sets of $3$ vertices from each $V^*_i$ if $i$ is odd and at most $5$ single vertices and then $5$ sets of $2$ vertices if $i$ is even). We have now added the vertices from $X_i$ which were originally chosen so that they had at least $(d/2-2\varepsilon)m'$ neighbours in both $V'_i \supseteq V^*_i$ and $V'_{i+1} \supseteq V^*_{i+1}$. Then, using (c), we can apply \Cref{adding} and \Cref{removing} to see that $(V^\diamond_i,V^\diamond_{i+1})_{G'}$ is $(2 \varepsilon^{16}, d/150)$-superregular. Finally, since $|V^*_i \cup V^*_{i+1} \cup X_i|$ is divisible by $6$, we have that $V^\diamond_i$ and $V^\diamond_{i+1}$ are divisible by $3$ and we can apply \Cref{blowup} to the graph $H^\diamond_i$ to find a perfect $C_6$-packing. We can do this for each odd $i$. Together with the copies of $C_6$ we removed earlier, these $C_6$-packings combine to form a perfect $C_6$-packing in $G$.
\end{proof}

\chapter{Hamilton Cycles}
\label{chap:hamilton}

We now turn our attention to Hamilton cycles. The decision problem of whether a graph contains a Hamilton cycle in NP-complete, so it is unlikely that it is possible to completely characterise those graphs which are Hamiltonian. Instead, we look for sufficient conditions which will guarantee a Hamilton cycle.

One of the most well-known results is Dirac's theorem \cite{dirac} which states that if $G$ is a graph on $n \geq 3$ vertices with $\delta(G) \geq n/2$ then $G$ contains a Hamilton cycle. Dirac's theorem can be strengthened by allowing some vertices in $G$ to have a degree much smaller than $n/2$ and we will look at some \textquoteleft degree sequence' conditions in the next section. Ghouila-Houri proved an analogue of Dirac's theorem for digraphs in \cite{ghouila} and we will consider some other conditions which ensure that a digraph is Hamiltonian.

\section{Degree Sequence Conditions}

We define the \emph{degree sequence} of $G$ to be $d_1, d_2, \ldots, d_n$, where $d_i$ are the degrees of the vertices in $G$ and $d_1\leq d_2\leq \ldots\leq d_n$.  P\'osa's theorem \cite{posa} states that if $d_i \geq i+1$ for all $i<(n-1)/2$ and, if $n$ is odd, $d_{\lceil n/2 \rceil}\geq \lceil n/2 \rceil$, then $G$ contains a Hamilton cycle. Chv\'atal's theorem generalises P\'osa's theorem still further describing those degree sequences which ensure that a graph is Hamiltonian.

\begin{theorem}[Chv\'atal, 1972]\label{chvatal}
Let $G$ be a graph on $n \geq 3$ vertices with degree sequence $d_1\leq d_2\leq \ldots \leq d_n$ satisfying
$$d_i \geq i+1 \text{ or } d_{n-i} \geq n-i$$
for all $i<n/2$. Then $G$ has a Hamilton cycle.
\end{theorem}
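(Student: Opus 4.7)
The plan is to argue by contradiction: suppose $G$ satisfies Chvátal's condition yet contains no Hamilton cycle. The first step is to pass to an edge-maximal non-Hamiltonian supergraph $G^*$ on the same vertex set. Adding edges only weakly increases the sorted degree sequence, so $G^*$ still satisfies the hypothesis; on the other hand, for every non-adjacent pair $u, v$ in $G^*$ the graph $G^* + uv$ is Hamiltonian, which is equivalent to saying $G^*$ contains a Hamilton path between $u$ and $v$.

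Next, I would select a non-adjacent pair $u, v$ in $G^*$ maximizing $d(u) + d(v)$, arranged so that $d(u) \leq d(v)$, and fix a Hamilton path $u = x_1, x_2, \ldots, x_n = v$. The classical Pósa rotation argument then applies: if $u x_{i+1} \in E(G^*)$ and $x_i v \in E(G^*)$ for some $i$, then $x_1 x_{i+1} x_{i+2} \ldots x_n x_i x_{i-1} \ldots x_2 x_1$ is a Hamilton cycle, contradicting the construction. Hence the sets $\{i : u x_{i+1} \in E\}$ and $\{i : x_i v \in E\}$ are disjoint subsets of $\{1, \ldots, n-1\}$ of sizes $d(u)$ and $d(v)$, so $d(u) + d(v) \leq n - 1$. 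Setting $h := d(u)$, this gives in particular $h < n/2$, which is the regime in which Chvátal's hypothesis has bite.

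I would then exploit the maximality of the pair $(u, v)$: any vertex $w \neq v$ non-adjacent to $v$ satisfies $d(w) + d(v) \leq d(u) + d(v)$, so $d(w) \leq h$. Since $v$ has at least $n - 1 - d(v) \geq h$ non-neighbors (and $u$ is one of them), the sorted degree sequence of $G^*$ satisfies $d_h \leq h$. Chvátal's hypothesis applied at $i = h$ therefore forces $d_{n-h} \geq n-h$, producing at least $h + 1$ vertices of degree $\geq n - h$.

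The endgame is then a clean case split. If $v$ itself has degree $\geq n - h$, then $d(u) + d(v) \geq h + (n - h) = n$, contradicting the Pósa bound. Otherwise, each of the $h + 1$ high-degree vertices lies in $V \setminus \{u, v\}$ (since $d(u) = h < n - h$), and cannot be a non-neighbor of $v$ (whose non-neighbors have degree at most $h$), so it lies in $N(v)$. For any such high-degree $w$: if $w$ were non-adjacent to $u$, maximality of $(u, v)$ would give $d(w) \leq d(v) < n - h$, contradicting $d(w) \geq n - h$; hence $w \in N(u)$. This forces all $h + 1$ high-degree vertices into $N(u)$, giving $|N(u)| \geq h + 1 > h = d(u)$, the desired contradiction. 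The main delicacy I expect is keeping the maximality argument symmetric enough that it constrains both endpoints of non-adjacent pairs, while being careful about the size bookkeeping so that the final counting step $h + 1 > h$ actually closes.
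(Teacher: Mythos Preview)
Your proposal is correct and follows essentially the same argument as the paper: pass to an edge-maximal counterexample, take a non-adjacent pair of maximum degree sum, use the Hamilton path and the crossing-edge argument to get $d(u)+d(v)\le n-1$, deduce $d_h\le h$ for $h=d(u)<n/2$, and then use the hypothesis to produce $h+1$ vertices of degree at least $n-h$.

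The only cosmetic difference is in the endgame. The paper simply notes that among these $h+1$ high-degree vertices at least one, say $w$, is not adjacent to $u$ (since $|N(u)|=h$), and then $d(u)+d(w)\ge n$ contradicts maximality directly; this absorbs your Case~1 (where $w$ could be $v$) without needing to single it out. Your version argues the contrapositive --- all $h+1$ high-degree vertices must lie in $N(u)$ --- after an unnecessary case split, but it is the same argument.
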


\begin{proof}
Suppose that the theorem is not true. Then we can choose a graph $G$ on $n\geq 3$ vertices with degree sequence satisfying the condition of the theorem and the maximum number of edges such that $G$ does not contain a Hamilton cycle. Label the vertices $v_1, v_2, \ldots, v_n$ so that $d_G(v_i)=d_i$ for all $i=1, \ldots, n$.

Let $v_j,v_k \in V(G)$ be non-adjacent vertices with $j<k$ such that $d_j+d_k$ is maximal. Consider the graph $$G' \coloneqq G \cup \{v_jv_k\}.$$ Now $d_{G'}(v_i) \geq d_G(v_i)$ for all $v_i \in V(G)$, so the degree sequence of $G'$ satisfies the condition of the theorem. Since $G$ was edge maximal, we have that $v_jv_k$ lies on a Hamilton cycle $C$ in $G'$. Then $C \setminus \{v_jv_k\}$ is a Hamilton path in $G$. Let us denote this path by $$P=x_1x_2 \ldots x_n$$ where $x_1=v_j$ and $x_n=v_k$. Let
$$S \coloneqq \{ x_{i-1} : x_1x_i \in E(G)\} \text{ and } T \coloneqq N_G(x_n).$$ Observe that $S \cup T \subseteq \{x_1, x_2, \ldots, x_{n-1}\}$, $|S|=d_j$ and $|T|=d_k$.

If there exists $x_{i-1} \in S\cap T$ then $x_1Px_{i-1}x_nPx_ix$ forms a Hamilton cycle in $G$. Hence, the sets $S$ and $T$ are disjoint. Therefore
$$d_j+d_k = |S|+|T| \leq n-1.$$
Recall that $d_j \leq d_k$ and so $d_j < n/2$.

Since $S \cap T = \emptyset$, we know that all vertices in $S$ are not adjacent to $x_n=v_k$. Since we chose $v_j$ to maximise $d_j+d_k$, we know  that $d_G(x_i) \leq d_j$ for all $x_i \in S$ which implies that $d_{d_j} \leq d_j$. Then, by the condition in the theorem, we see that $d_{n-d_j} \geq n-d_j$ which means that the vertices $v_{n-d_j}, \ldots, v_n$ must all have degree at least $n-d_j$. Since this list contains $d_j+1$ vertices and $d(v_j)=d_j$, we know that at least one of these vertices is not adjacent to $v_j$ in $G$, say $u$. Now,
$$d_G(v_j)+d_G(x) \geq d_j + (n-d_j) = n > d_j+d_k$$
which contradicts the choice of $v_j$ and $v_k$. So the assumption that $G$ does not contain a Hamilton cycle was false.
\end{proof}

The condition on the degree sequence in this theorem is best possible, that is, we can always find a graph $G$ with degree sequence $d_1, \ldots, d_n$ and $d_r = r$ and $d_{n-r} = n-r-1$ for some $1 \leq r < n/2$ such that $G$ does not contain a Hamilton cycle. Fix $n$ and $1 \leq r <n/2$, we will define the graph $G$ on $n$ vertices as follows. Label the vertices of $G$ by $v_1, \ldots, v_n$ and join two vertices $v_i$ and $v_j$ if:
\begin{itemize}
\item $i,j \geq r+1$ or
\item $i\leq r$ and $j \geq n-r+1$.
\end{itemize}  
We check that $G$ has $r$ vertices of degree $r$, $n-2r$ vertices of degree $n-r-1 \geq r$ and $r$ vertices of degree $n-1 \geq n-r-1$. So we do indeed have $d_r= r$ and $d_{n-r}=n-r-1$. We illustrate this in \Cref{fig:chvatal} for the case $n=8$, $r=3$.

\begin{figure}[h]
\centering
\includegraphics[scale=0.4]{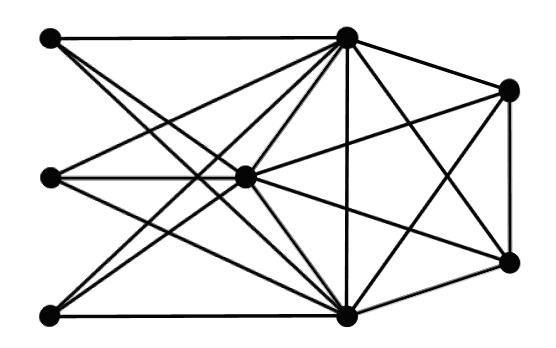}
\caption{The graph $G$ for $n=8$, $r=3$. This graph is the union of a $K_{3,3}$ and a $K_5$. Its degree sequence is $3,3,3,4,4,7,7,7$.}\label{fig:chvatal}
\end{figure}

Now the graph $G$ consists of a $K_{r,r}$ on the vertices $\{v_1, \ldots, v_r, v_{n-r+1} \ldots, v_n\}$ and a $K_r$ on $\{v_{r+1} \ldots, v_n\}$. A Hamilton cycle would have to visit each of the vertices in the $K_{r,r}$ but the only way to do this is with a $C_{2r}$ leaving the rest of the vertices in the graph unvisited. So $G$ does not contain a Hamilton cycle.

Shortly after the proof of Chv\'atal's theorem, \Cref{chvatal}, Nash-Williams conjectured a digraph analogue of the theorem. If $G$ is a digraph on $n$ vertices then we can define its degree sequences. The \emph{outdegree sequence} of $G$ is $d^+_1, d^+_2, \ldots, d^+_n$, where $d^+_i$ are the outdegrees of the vertices in $G$ and $d^+_1\leq d^+_2\leq \ldots\leq d^+_n$. In a similar way, we define the \emph{indegree sequence} $d^-_1, d^-_2, \ldots, d^-_n$ with $d^-_1\leq d^-_2\leq \ldots\leq d^-_n$. Note that $d^+_i$ and $d^-_i$ may not refer to the degrees of the same vertex. 
 
\begin{conjecture}[Nash-Williams, \cite{nash}]\label{nashconjecture}
Suppose that $G$ is a strongly connected digraph on $n \geq 3$ vertices such that 
\begin{enumerate}[(i)]
	\item $d^+_i \geq i+1$ or $d^-_{n-i} \geq n-i$ and
	\item $d^-_i \geq i+1$ or $d^+_{n-i} \geq n-i$
	\end{enumerate}
for all $i<n/2$. Then $G$ contains a Hamilton cycle.
\end{conjecture}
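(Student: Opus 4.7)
The natural starting point is to try to mimic the extremal argument that worked so cleanly for Chv\'atal's theorem. Suppose for contradiction that the conjecture fails, and among counterexamples choose $G$ so that conditions $(i)$ and $(ii)$ hold, $G$ is strongly connected, and $G$ has the \emph{maximum} number of arcs subject to having no Hamilton cycle. As in the undirected case, adding any missing arc $xy$ must create a Hamilton cycle containing $xy$, so $G$ itself contains a Hamilton path from $y$ to $x$ for every non-arc $(x,y)$. Fix a non-arc $(u,v)$ chosen to maximise some appropriate weighted sum of $d^+(u)$ and $d^-(v)$ (the right choice of weighting, reflecting the mixed nature of conditions $(i)$ and $(ii)$, will only become clear once the rotation step is set up). Relabel the resulting Hamilton path as $P = x_1 x_2 \ldots x_n$ with $x_1 = v$ and $x_n = u$; note that $(u,v) \notin E(G)$ is precisely the arc that would close $P$ directly.

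Next, I would set up a directed analogue of the Chv\'atal $S$--$T$ dichotomy. Define
$$S \coloneqq \{\, x_{i-1} : (x_n,x_i) \in E(G)\,\}, \qquad T \coloneqq N^-_G(x_1),$$
so that if some $x_{i-1} \in S \cap T$ then $x_1 P x_{i-1} \, x_n \, \text{(reverse of }P\text{)}\, x_i$ \emph{would} be a Hamilton cycle — except that directed paths cannot be traversed backwards. This is the first genuine obstruction: unlike the undirected setting, there is no direct closing lemma, and one must instead perform a directed rotation. The plan is to use strong connectivity to locate a short alternating structure (a pair of arcs going ``the wrong way'' on $P$) that allows a genuine rotation of the Hamilton path while preserving its endpoints' membership in $S$ and $T$. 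Each rotation replaces $P$ by a new Hamilton path $x_1 \ldots x_n'$ with different rightmost endpoint, enlarging the pool of candidate pairs $(S',T')$ one can attempt to close. The goal is to show that after enough rotations, $|S| + |T|$ must exceed $n-1$, forcing an intersection and hence a Hamilton cycle.

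To derive the final contradiction, one bounds degrees along the lines of the Chv\'atal proof: if $S \cap T = \emptyset$ throughout the entire family of rotated paths, then the maximality choice of $(u,v)$ forces $d^-$ of every vertex of $S$ (respectively $d^+$ of every vertex of $T$) to be at most $d^-(v)$ (respectively $d^+(u)$). Writing $j \coloneqq d^+(u)$ and $k \coloneqq d^-(v)$, one would like to conclude that $d^+_j \leq j$ and $d^-_k \leq k$ and then invoke both $(i)$ and $(ii)$ at indices $j$ and $k$ to produce a vertex adjacent to $u$ or $v$ that contradicts the maximality of $d^+(u)+d^-(v)$. The main obstacle is precisely this last step: the Nash-Williams conditions are \emph{disjunctive} (each is an ``or''), and the directed rotation may not cooperate in using the correct branch of the disjunction at both $u$ and $v$ simultaneously. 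This asymmetry between in- and out-degrees, combined with the inability to reverse directed paths, is why Chv\'atal's slick closing argument does not transfer and why the conjecture has resisted a direct attack since 1975; indeed, the very paper above only proves an approximate version in \Cref{chap:digraphs}, replacing $i+1$ by $i + \eta n$ so that robust outexpansion becomes available as a much more flexible substitute for the fragile rotation argument sketched here.
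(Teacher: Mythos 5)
You have not produced a proof, and you should not be expected to: the statement you were asked about is \Cref{nashconjecture}, which is stated in the paper as an open conjecture of Nash-Williams and is nowhere proved there. The paper only establishes an approximate version, by strengthening the hypotheses from $d^\pm_i \geq i+1$ to $d^\pm_i \geq i+\eta n$ so that \Cref{robdegseq} yields robust outexpansion, after which \Cref{expander} supplies the Hamilton cycle. So there is no ``paper's own proof'' to compare against, and your sketch cannot be judged correct as a proof of the conjecture itself.

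As an attempted argument, the concrete gap is exactly where you flag it, and it is worth being explicit that the flagged steps are not minor technicalities but the entire content. First, the edge-maximality trick from \Cref{chvatal} already breaks: adding an arc $xy$ and using maximality only gives a Hamilton path in a \emph{larger} digraph, and one must also check the augmented digraph still satisfies $(i)$, $(ii)$ and strong connectivity, which is fine, but the subsequent closing step fails because the reversed traversal $x_1Px_{i-1}x_n\overleftarrow{P}x_i$ is not a directed cycle, as you note. Your proposed fix --- a ``directed rotation'' preserving membership of the endpoints in $S$ and $T$ --- is never constructed, and no known rotation--extension scheme achieves this under the purely disjunctive hypotheses $(i)$ and $(ii)$; the final degree-counting contradiction also requires invoking a specific branch of each disjunction at both $u$ and $v$ simultaneously, which the hypotheses do not license. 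So the proposal is an accurate diagnosis of why Chv\'atal's argument does not transfer, not a proof; the honest conclusion is that the statement remains open, and the strongest result in this direction available in the paper is the approximate version in \Cref{chap:digraphs}.
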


In \Cref{chap:digraphs}, we will prove an approximate version of this conjecture for large digraphs.

\section{Hamilton Cycles in Oriented Graphs}

We define an oriented graph to be a digraph which can be obtained by orienting an undirected simple graph. So an oriented graph does not contain any cycles of length two. In \cite{keevko}, Keevash, K\"uhn and Osthus give a bound on the minimum semidegree which ensures a Hamilton cycle of standard orientation in any sufficiently large oriented graph.

\begin{theorem}[Keevash, K\"uhn and Osthus, \cite{keevko}]\label{thm:stdhamcycle}
There exists $n_0$ such that every oriented graph $G$ on $n \geq n_0$ vertices with $\delta^0(G) \geq (3n-4)/8$ contains a directed Hamilton cycle.
\end{theorem}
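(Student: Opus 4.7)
The plan is to follow the regularity method, in much the same spirit as the proof of Theorem \ref{c6packing}. First I would apply the digraph analogue of the degree form of the Regularity Lemma (\Cref{degreeform}) with parameters $1/n_0 \ll \varepsilon \ll d \ll 1$ to obtain clusters $V_1, \ldots, V_k$, an exceptional set $V_0$ with $|V_0| \leq \varepsilon n$, a pure spanning subdigraph $G'$, and a reduced digraph $R$. The reduced digraph inherits the semidegree condition via a directed analogue of \Cref{mindeg}: we obtain $\delta^0(R) \geq (3/8 - 3d)|R|$, so $R$ is itself an oriented graph with minimum semidegree just above $3|R|/8$.

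The central combinatorial step is to find a directed Hamilton cycle in $R$. Since $3/8 < 1/2$, Ghouila-Houri's theorem does not apply, so a dedicated Hamiltonicity result for oriented graphs is needed in this regime; such a result is available in the literature (for instance, work of H\"aggkvist and Thomason gives Hamiltonicity of large oriented graphs once the minimum semidegree passes an appropriate threshold). By choosing $k_0$ sufficiently large we may assume that $|R|$ lies in the range to which this result applies, yielding a Hamilton cycle in $R$ which, after relabelling, we write as $C_R = V_1 V_2 \ldots V_k V_1$.

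Next I would pass to superregular subpairs: applying the digraph version of \Cref{superreg}, choose subclusters $V'_i \subseteq V_i$ of size $(1-2\varepsilon)m$ such that each $(V'_i, V'_{i+1})_{G'}$ is $(2\varepsilon, d - 3\varepsilon)$-superregular with the correct orientation. The remaining work splits into two pieces, mirroring the closing stages of the proof of \Cref{c6packing}. First, distribute the exceptional vertices of $V_0$ among the clusters: using the minimum semidegree of $G$, each $v \in V_0$ can be assigned to some edge $V_iV_{i+1}$ of $C_R$ in which $v$ has many inneighbours in $V'_i$ and many outneighbours in $V'_{i+1}$, so that $v$ can later be spliced into the Hamilton cycle at that position. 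Propositions \ref{removing} and \ref{adding} then allow us to maintain superregularity of the modified pairs $(V^*_i, V^*_{i+1})_{G'}$, with slightly weakened parameters. Second, one applies the $r$-partite Blow-up Lemma (\Cref{rpartblowup}) cluster-by-cluster to find a spanning directed path from $V^*_i$ to $V^*_{i+1}$ in each superregular pair; concatenating these $k$ paths gives a directed Hamilton cycle in $G$.

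The principal obstacle is establishing a Hamilton cycle in the reduced digraph $R$: the semidegree bound $3|R|/8$ sits strictly below the Ghouila-Houri threshold, and an honest proof requires a non-trivial oriented-graph Hamiltonicity statement rather than a direct degree argument. A secondary technical difficulty is incorporating exceptional vertices while keeping each modified pair superregular enough for the Blow-up Lemma; care is needed to cap the number of exceptional vertices sent to any one cluster (as done in the $C_6$-packing proof) so that the parameter losses in Propositions \ref{removing} and \ref{adding} remain controlled.
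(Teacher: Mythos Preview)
The paper does not prove \Cref{thm:stdhamcycle}; it is merely quoted from \cite{keevko} and then used as a black box elsewhere (e.g.\ to find Hamilton cycles in the reduced digraphs $R_A$, $R_B$ in \Cref{chap:arbitrary}). So there is no proof in the paper to compare against.

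That said, your proposed argument has a genuine gap that would prevent it from reaching the \emph{exact} bound $(3n-4)/8$. The regularity method always loses an additive linear term: after applying the Diregularity Lemma with parameters $\varepsilon,d$, the reduced digraph satisfies only $\delta^0(R) \geq (3/8 - 2d)|R|$, which is \emph{below} $3|R|/8$, not ``just above'' it as you write. So the Hamiltonicity result you need for $R$ would have to apply at semidegree strictly less than $3|R|/8$, and no such result exists --- indeed, \Cref{prop:bestforham} shows the threshold is sharp. Even if you start from $\delta^0(G)\geq (3/8+\alpha)n$ for some fixed $\alpha>0$, your argument is essentially circular: you are invoking for $R$ exactly the kind of oriented-graph Hamiltonicity statement you are trying to establish for $G$.

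What your outline \emph{would} yield, once the circularity is broken (for instance by proving Hamiltonicity of $R$ via robust outexpansion, as in \Cref{lem:3/8} and \Cref{expander}, or by the route in \cite{kko}), is the approximate version $\delta^0(G)\geq(3/8+\alpha)n$. The exact theorem in \cite{keevko} requires an additional stability analysis: one shows that any near-extremal oriented graph (one that is close to the construction in \Cref{prop:bestforham}) still contains a Hamilton cycle by a direct structural argument, and then the regularity/expansion machinery handles the non-extremal case. Your proposal omits this stability step entirely, and without it the exact bound cannot be reached.
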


This result is actually best possible.

\begin{proposition}\label{prop:bestforham}
For any $n \geq 3$ there is an oriented graph on $n$ vertices with $\delta^0(G) = \lceil (3n-4)/8 \rceil -1$ which does not contain a directed Hamilton cycle.
\end{proposition}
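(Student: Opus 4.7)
This is a construction claim, so the plan is to exhibit, for each $n\ge 3$, an explicit oriented graph $G_n$ on $n$ vertices realising $\delta^0(G_n)=\lceil(3n-4)/8\rceil-1$ with no directed Hamilton cycle, and then to verify both properties. The bound $(3n-4)/8$ behaves slightly differently across the residue classes of $n$ modulo~$8$, so I would treat $n\equiv 0\pmod 8$ as the main case and obtain the other residues by small perturbations.

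A purely bipartite one-way obstruction is too weak: if $V(G_n)=A\cup B$ with every $A$-$B$ arc oriented from $A$ to $B$, then $G_n$ is trivially non-Hamiltonian, but the in-degree of any $a\in A$ comes entirely from a tournament on $A$, giving $\delta^0(G_n)\le(|A|-1)/2\le(n-2)/4$, well short of the required $\approx 3n/8$. To push the semidegree up while keeping a Hamilton obstruction, I would partition $V(G_n)$ into four classes $V_1,V_2,V_3,V_4$ of size close to $n/4$, place a near-regular tournament inside each, and orient the six bipartite pieces between them so that most arcs flow cyclically $V_1\to V_2\to V_3\to V_4\to V_1$, with the orientations of the diagonals $V_1V_3$ and $V_2V_4$, and the precise number of ``return'' arcs $V_4\to V_1$, chosen so that every vertex has in- and out-degree exactly $\lceil(3n-4)/8\rceil-1$.

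Non-Hamiltonicity should follow from a short cut-counting argument: for the right orientation choices, one of the classes (say $V_1$) will receive essentially all of its incoming arcs from a small ``return'' set, so any Hamilton cycle is forced to use at most a controlled number of entries into that class, which combined with the requirement that the cycle cover all $|V_1|$ vertices of $V_1$ yields a contradiction. This is the same flavour of argument as the $K_{n/2+1}\cup K_{n/2-1}$ example given immediately after \Cref{c6packing}.

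The main obstacle I anticipate is the calibration step: arranging the internal tournaments and the diagonal orientations so that \emph{every} vertex attains semidegree exactly $\lceil(3n-4)/8\rceil-1$ is parity-sensitive, and each residue $n\pmod 8$ requires its own small adjustment. Once the construction is in place, both the semidegree computation and the non-Hamiltonicity argument reduce to routine counting.
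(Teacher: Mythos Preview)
Your four-class cyclic framework is on the right track---the paper also uses H\"aggkvist's construction with four parts $A,B,C,D$ and the cyclic flow $A\to B\to C\to D\to A$---but two of your concrete choices would break the argument. First, the paper places regular tournaments only inside the two ``opposite'' parts $A$ and $C$; the parts $B$ and $D$ are left \emph{independent}. If you insert near-regular tournaments inside all four classes, a putative Hamilton cycle can wander freely inside each class, and your proposed cut-counting obstruction (``$V_1$ receives all its in-arcs from a small return set'') no longer holds, since $V_1$ would receive roughly $|V_1|/2$ in-arcs per vertex from inside $V_1$ itself. Second, only the single diagonal $B$--$D$ is present (oriented as regularly as possible), not both diagonals; there are no $A$--$C$ edges at all.

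The actual obstruction is cleaner than a cut count: with $B$ independent, every time a cycle visits a vertex of $B$ it must leave along $B\to C$ or $B\to D$ and re-enter via $A\to B$ or $D\to B$; in particular, any two $B$-vertices on a cycle are separated by a vertex of $D$. Taking $|B|=m+2>|D|=m+1$ (with $|A|=|C|=m$ and $n=4m+3$, $m$ odd) therefore rules out even a $1$-factor. One then checks $\delta^0(G)=(m-1)/2+(m+1)=(3n-5)/8$. Note that the paper only carries this out for $n\equiv 7\pmod 8$ and defers the remaining residues to \cite{ckko}, so your instinct that the other residue classes require separate tinkering is correct---but the base construction needs the asymmetry $|B|>|D|$ and the independence of $B$ and $D$, neither of which your proposal currently has.
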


We will prove this proposition using a construction given by H\"aggkvist for the special case where $n=8k-1$ for some $k$. (A proof covering all cases is given in \cite{ckko}.)

\begin{proof}
Suppose $n=4m+3$ for some odd $m$. We will define an oriented graph $G$ on $n$ vertices with $\delta^0(G) = (3n-5)/8$ which has no $1$-factor and hence no Hamilton cycle. We illustrate this graph in \Cref{fig:noantidir}. Let $A$ and $C$ be regular tournaments on $m$ vertices and let $B$ and $D$ be sets of vertices of size $m+2$ and $m+1$ respectively. Then $G$ is the disjoint union of $A$, $B$, $C$ and $D$ together with:
\begin{itemize}
\item all edges from $A$ to $B$, $B$ to $C$, $C$ to $D$ and $D$ to $A$;
\item all edges between $B$ and $D$, oriented to form a bipartite graph which is as regular as possible, so that the indegree and outdegree of each vertex differ by at most one.
\end{itemize} 
We can check that $\delta^0(G) = (m-1)/2+(m+1) = (3n-5)/8$. We will show that this graph does not contain a $1$-factor.

Now, every path connecting two vertices in $B$ must use a vertex from $D$. So any cycle in $G$ will use at least one vertex from $D$ for each vertex it visits in $B$. Since $|B|>|D|$, we see that $G$ cannot contain a $1$-factor. Therefore, $G$ has no Hamilton cycle. 
\end{proof}

\begin{figure}[h]
\centering
\includegraphics[scale=0.4]{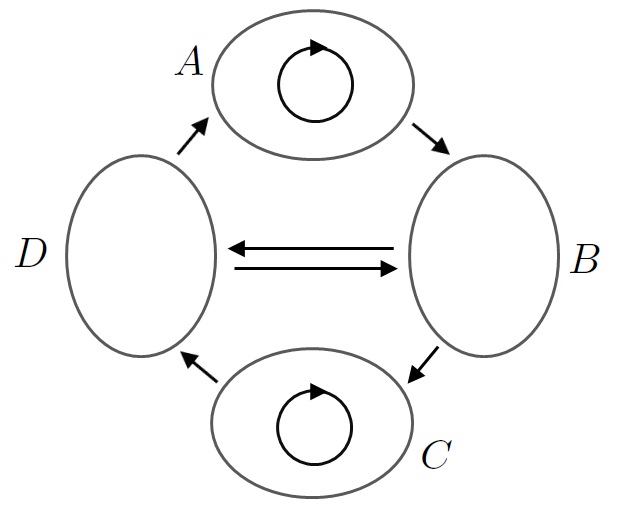}
\caption{The oriented graph constructed in \Cref{prop:bestforham} and \Cref{prop:noantidir}.}\label{fig:noantidir}
\end{figure}


\chapter{Digraphs}
\label{chap:digraphs}

We have seen the ideas of regularity and superregularity and have stated and applied the Regularity Lemma for undirected graphs. From now on, we will consider directed graphs, or digraphs, and many of the definitions and results we have met so far will follow through with little change. We will see an analogue of the Regularity Lemma for digraphs but first we will define a new concept, that of robust outexpansion.

\section{Robust Outexpansion}
Robust outexpansion has formed a key feature in many recent results involving Hamilton cycles. The concept was introduced by K\"uhn, Osthus and Treglown in \cite{kot}.

We say that a graph $G$ is a robust $(\nu, \tau)$-outexpander if, when we consider any subset $S$ of the vertices of $G$ which is neither too small or too large, the set of vertices having at least $\nu n$ inneighbours in $S$ has size at least $|S|+\nu n$. The precise definitions of a robust outexpander and the, weaker, outexpander are given below. We also include here definitions for a robust $(\nu,\tau)$-inexpander and a robust $(\nu,\tau)$-diexpander which we will require in \Cref{chap:arbitrary}.

\begin{definition}
Let $0< \nu \leq \tau <1$. Given any digraph $G$ on $n$ vertices and $S \subseteq V(G)$, the \emph{$\nu$-robust outneighbourhood} $RN_{\nu,G}^+(S)$ of $S$ is the set of all those vertices $x \in V(G)$ which have at least $\nu n$ inneighbours in $S$. We define the \emph{$\nu$-robust inneighbourhood} $RN_{\nu,G}^-(S)$ of $S$ is the set of all those vertices $x \in V(G)$ which have at least $\nu n$ outneighbours in $S$.

$G$ is called a \emph{robust $(\nu,\tau)$-outexpander} if $|RN_{\nu,G}^+(S)| \geq |S|+\nu n$ for all $S \subseteq V(G)$ with $\tau n < |S| < (1- \tau)n$. We define a \emph{robust $(\nu,\tau)$-inexpander} similarly. If $G$ is both a robust $(\nu,\tau)$-outexpander and a robust $(\nu,\tau)$-inexpander, we will say that $G$ is a \emph{robust $(\nu,\tau)$-diexpander}.

$G$ is called a \emph{$(\nu,\tau)$-outexpander} if $|N^+(S)| \geq |S|+\nu n$ for all $S \subseteq V(G)$ with $\tau n < |S| < (1- \tau)n$.
\end{definition}

Such graphs are interesting because they occur frequently, for example, any sufficiently large oriented graph $G$ with $\delta^0(G) \geq (3/8+\alpha) n$ is a robust outexpander. 

\begin{lemma}[\cite{kellyexact78}]\label{lem:3/8}
Let $0<1/n \ll \nu \ll \tau \leq \alpha/2 \leq 1$ and suppose that $G$ is an oriented graph on $n$ vertices with $\delta^0(G) \geq (3/8+\alpha) n$. Then $G$ is a robust $(\nu,\tau)$-outexpander.
\end{lemma}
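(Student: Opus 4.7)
The plan is to argue by contradiction. Suppose there exists $S \subseteq V(G)$ with $\tau n < |S| < (1-\tau)n$ for which $|RN^+_{\nu,G}(S)| < |S| + \nu n$. Write $R := RN^+_{\nu,G}(S)$, $T := V(G) \setminus R$, $U := V(G) \setminus S$ and $s := |S|$; by assumption $|T| > n - s - \nu n$, and by definition every $v \in T$ satisfies $|N^-(v) \cap S| < \nu n$.

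First I would dispose of the large-$s$ regime. For any $v \in T$, since $d^-(v) \geq (3/8+\alpha)n$ and only $< \nu n$ of its inneighbours lie in $S$, we have $|N^-(v) \cap U| \geq (3/8+\alpha-\nu)n$, forcing $|U| \geq (3/8+\alpha-\nu)n$. Hence if $s > (5/8 - \alpha + \nu)n$ then $T = \emptyset$, whence $|R| = n \geq s + \nu n$ (using $s < (1-\tau)n$ and $\nu \ll \tau$), contradicting the assumption. So we may assume $s \leq (5/8-\alpha+\nu)n$.

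The main tool in the remaining range is a double-counting argument that exploits the oriented structure. Since $\delta^+(G) \geq (3/8+\alpha)n$, we have $s(3/8+\alpha)n \leq e_G(S, V) = e_G(S, R) + e_G(S, T)$, and $e_G(S, T) = \sum_{v \in T} |N^-(v) \cap S| < \nu n|T|$. Splitting $e_G(S, R) = e_G(S, R \cap S) + e_G(S, R \setminus S)$ and using that $G[S]$ is oriented (so $e_G(S, S) \leq \binom{s}{2}$), together with the trivial bound $e_G(S, R \setminus S) \leq s|R \setminus S|$, gives
$$|R| \;\geq\; |R \setminus S| \;\geq\; \bigl(3/8 + \alpha\bigr)n - s/2 - \nu n/\tau$$
after dividing by $s$ and applying $|T| \leq n$ and $s \geq \tau n$. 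Whenever this right-hand side is at least $s + \nu n$, i.e., roughly when $s \leq (1/4 + 2\alpha/3)n$, we already obtain a contradiction. This handles the small-$s$ regime.

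The main obstacle is the intermediate range $(1/4 + 2\alpha/3)n < s \leq (5/8 - \alpha)n$, which can be non-empty for small $\alpha$ (less than $9/40$). For this regime my plan is to complement the out-degree estimate above with the analogous in-degree estimate on $T$: summing $d^-(v) \geq (3/8+\alpha)n$ over $v \in T$ and using $e_G(S,T) < \nu n|T|$ gives $e_G(U,T) \geq |T|(3/8+\alpha-\nu)n$. Applying the oriented bound $e_G(U,U) \leq \binom{|U|}{2}$ and the splitting $e_G(U,T) \leq |U|\cdot|S\cap T| + \binom{|U|}{2}$ yields an upper bound on $|U \cap T|$. Together with the lower bound on $|U \cap R| = |R \setminus S|$ from the out-degree computation and the contradiction assumption $|T| > |U| - \nu n$, these constraints force $|S \cap T|$ to be simultaneously too small (bounded by $s$) and too large (to make up the deficit in $|T|$). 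I expect this algebraic combination, carried out with the input hierarchy $\nu \ll \tau \leq \alpha/2$, to be the most delicate step; the hypothesis $\tau \leq \alpha/2$ should come in precisely when converting the error term $\nu n/\tau$ into something absorbable by $\alpha n$ terms, so that the simultaneous bounds on $|S \cap T|$ become incompatible.
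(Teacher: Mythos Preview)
The paper does not prove this lemma; it is quoted from~\cite{kellyexact78} and used as a black box. So there is no ``paper's proof'' to compare against, and your attempt has to stand on its own.

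Your treatment of the two extreme ranges is fine. For $s>(5/8-\alpha+\nu)n$ the indegree bound forces $T=\emptyset$ exactly as you say, and for $s\le(1/4+2\alpha/3)n$ the out-degree count on $S$ together with the oriented bound $e(S,S)\le\binom{s}{2}$ does give $|R\setminus S|\ge s+\nu n$.

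The genuine gap is the intermediate range. Your plan there---combine the lower bound on $|R\setminus S|$ from out-edges of $S$ with a lower bound on $|S\cap T|$ from in-edges to $T$, and then play these off against $|S\cap T|\le s$ and $|T|>|U|-\nu n$---does not close. Concretely, take the near-extremal picture at $\alpha$ tiny and $s=n/2$: the H\"aggkvist-type construction (four parts $A,B,C,D$ of size $\approx n/4$, cf.\ Proposition~\ref{prop:bestforham}) with $S=A\cup D$ has $R=A\cup B$, $T=C\cup D$, so all four pieces $R\cap S,\,R\setminus S,\,T\cap S,\,T\cap U$ are $\approx n/4$. Both of your inequalities then read $n/4\ge n/8$ and are satisfied with slack; the remaining constraints $|T\cap S|\le s$ and $|T|>|U|-\nu n$ are also comfortably met. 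So the ``simultaneous bounds on $|S\cap T|$'' you hope will be incompatible simply are not, from these two counts alone. An extra ingredient is needed---for instance, exploiting that vertices of $T\cap S$ have few inneighbours in $S$ to bound $e(T\cap S,B)$ from below and then invoke the oriented constraint between $T\cap S$ and $R\setminus S$, or running a further edge count on one of the four cells. As written, the sketch for the middle range is a plan that does not yet contain the decisive step; you should consult the argument in~\cite{kellyexact78} (their Lemma~13.1) for the missing piece.
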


The following lemma shows that if we have a sufficiently large graph whose degrees sequences satisfy the given conditions then this graph is also robust outexpander. Notice that these degree sequence conditions closely resemble those of Conjecture \ref{nashconjecture}.

\begin{lemma}\label{robdegseq}
Let $n_0$ be a positive integer and $\tau, \eta$ be constants such that
$$1/n_0 \ll \tau \ll \eta <1.$$ Suppose that $G$ is a digraph on $n \geq n_0$ vertices satisfying
	\begin{enumerate}[(i)]
	\item $d^+_i \geq i+ \eta n$ or $d^-_{n-i-\eta n} \geq n-i$ and
	\item $d^-_i \geq i+ \eta n$ or $d^+_{n-i-\eta n} \geq n-i$
	\end{enumerate}
for all $i<n/2$.
Then $\delta^0(G) \geq \eta n$ and $G$ is a robust $(\tau^2,\tau)$-outexpander.
\end{lemma}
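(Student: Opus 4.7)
The statement has two parts, which I handle separately.

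\emph{The semidegree bound $\delta^0(G)\geq\eta n$.} I would apply condition (i) at $i=1$. Either $d^+_1\geq 1+\eta n$ and $\delta^+(G)\geq\eta n$ directly; or $d^-_{n-1-\eta n}\geq n-1$, in which case at least $\eta n+2$ vertices attain the maximum possible indegree $n-1$, so every other vertex sends an edge to each of them and thus has outdegree at least $\eta n+1$. A symmetric application of condition (ii) at $i=1$ controls $\delta^-(G)$.

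\emph{Robust outexpansion.} I argue by contradiction: suppose some $S\subseteq V(G)$ with $\tau n<|S|<(1-\tau)n$ satisfies $|T|<|S|+\tau^2n$, where $T:=RN^+_{\tau^2,G}(S)$, and set $U:=V(G)\setminus T$. Then $|U|>n-|S|-\tau^2n$ and every $u\in U$ has $|N^-(u)\cap S|<\tau^2n$, whence $d^-(u)<\tau^2n+(n-|S|)$. A first edge count, $|S|\eta n\leq e(S,V)=e(S,T)+e(S,U)\leq |S||T|+|U|\tau^2n$, combined with $|T|<|S|+\tau^2n$, already yields a contradiction whenever $|S|$ is of order less than $\eta n$. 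So I may assume $|S|\geq\eta n/2$ and then split on whether $|S|\leq n/2$ or $|S|>n/2$.

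In the case $|S|>n/2$ one has $|U|<n/2$, so condition (ii) is applicable at $i=|U|$. The alternative $d^-_{|U|}\geq |U|+\eta n$ is incompatible with $d^-(u)<\tau^2n+n-|S|$ for $u\in U$ (it would force $\eta n<2\tau^2 n$, contradicting $\tau\ll\eta$), so the other alternative of (ii) must hold: at least $|U|+\eta n+1$ vertices have outdegree $\geq|T|=n-|U|$. Call this set $H$; since $|T|=n-|U|$ we obtain $|H\cap T|\geq\eta n+1$. The case $|S|\leq n/2$ is handled symmetrically by applying condition (i) at $i=|S|$, producing a corresponding abundance of large outdegree or large indegree vertices.

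\emph{Main obstacle.} Turning the existence of $H$ into the actual contradiction is the delicate step. The naive observation that each $h\in H\cap T$ sends at least one edge into $U$ yields only $e(H,U)\geq\eta n$, which is far too weak against the available upper bound $e(V,U)<|U|(\tau^2n+n-|S|)$ that can be of order $n^2$. To close the argument I expect one must apply both conditions (i) and (ii) at (or near) the same value of $i$, so as to force many vertices to have simultaneously large in- and out-degree, and then deduce either that $|T|\geq |S|+\tau^2 n$ holds after all or that the in-neighbourhood of $U$ inside $V\setminus S$ is forced to exceed the trivial bound $|U|(n-|S|)$. Throughout, the hierarchy $1/n\ll\tau\ll\eta$ ensures that error terms of size $O(\tau^2 n)$ remain negligible compared to $\eta n$.
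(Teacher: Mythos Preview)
Your argument for $\delta^0(G)\geq\eta n$ is correct and matches the paper's.

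For the robust outexpansion, you have correctly diagnosed your own gap: you cannot close the contradiction from the set $H$, and your hoped-for fix (``apply both (i) and (ii) at the same value of $i$'') is not what is needed. There is also an unjustified step earlier: from $|S|>n/2$ you claim $|U|<n/2$, but $|U|=n-|T|$ and you only have an \emph{upper} bound on $|T|$, not a lower one exceeding $n/2$.

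The paper's proof avoids the contradiction framework entirely and works directly. The key move is to apply condition (i) at $i=|S|-\lfloor\tau n\rfloor$ (the shift by $\tau n$ is essential). If the first alternative $d^+_{|S|-\lfloor\tau n\rfloor}\geq |S|-\lfloor\tau n\rfloor+\eta n$ holds, then at least $\lfloor\tau n\rfloor$ vertices of $S$ have outdegree at least $|S|+\eta n/2$; a double count of the edges leaving this set $X\subseteq S$ (much like your $e(S,V)$ count, but restricted to $X$) shows directly that $|RN^+_{\tau^2,G}(S)|\geq |S|+\tau^2 n$. If the first alternative fails, the second alternative gives $d^-_{n-|S|+\lfloor\tau n\rfloor-\eta n}\geq n-|S|+\lfloor\tau n\rfloor$, so at least $|S|-\lfloor\tau n\rfloor+\eta n\geq |S|+\eta n/2$ vertices have indegree at least $n-|S|+\tau^2 n$. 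Here is the idea you are missing: any vertex with indegree at least $n-|S|+\tau^2 n$ automatically lies in $RN^+_{\tau^2,G}(S)$, since it can have at most $n-|S|$ inneighbours outside $S$. This immediately gives $|RN^+_{\tau^2,G}(S)|\geq |S|+\eta n/2$, with no further edge counting required. (Condition (ii) is only invoked when $|S|-\lfloor\tau n\rfloor\geq n/2$, in order to reach the same conclusion about $d^-_{n-|S|+\lfloor\tau n\rfloor-\eta n}$; the boundary value $|S|=n/2+\lfloor\tau n\rfloor$ is handled by passing to a subset of size $|S|-1$.)
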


\begin{proof}
First we will show that $\delta^0(G) \geq \eta n$. Notice that $\delta^+(G) = d^+_1$, so if $d^+_1 \geq \eta n$ then $\delta^+(G) \geq \eta n$. So we may assume that $d^+_1 < \eta n<1+\eta n$. Then by condition $(i)$, we have that $d^-_{n-1-\eta n} \geq n-1$. This means that $G$ contains at least $\eta n+1$ vertices with indegree at least $n-1$. Now, every vertex in $G$ must send an edge to at least $\eta n$ of these and so $\delta^+(G) \geq \eta n$.

By considering $d^-_1$ and proceeding in a similar fashion, we show that $\delta^-(G) \geq \eta n$. Therefore, $\delta^0(G) \geq \eta n$.

Now suppose that $S \subseteq V(G)$ with $\tau n<|S|<(1-\tau)n$. We consider the following cases:

\vspace{6pt}
\noindent\textbf{Case 1:} $d^+_{|S|-\lfloor\tau n\rfloor} \geq |S|- \lfloor\tau n\rfloor+\eta n \geq |S|+\eta n/2$.

\noindent We know that the degrees of least $\lfloor\tau n\rfloor$ vertices in $S$ appear after $d^+_{|S|-\lfloor\tau n\rfloor}$ in the outdegree sequence of $G$. So we can consider a set $X\subseteq S$ of size $\lfloor\tau n\rfloor$ such that each vertex in $X$ has outdegree at least $|S|+\eta n/2$. Consider the set $Y=\{ y \in V(G): |N^-(y) \cap X| \geq \tau^2n\}\subseteq |RN^+_{\tau^2,G}(S)|$. We see that
$$|X|(|S|+\eta n/2) \leq \sum_{x\in X} d^+(x) \leq |Y||X| + (n-|Y|)\tau^2n \leq |Y||X|+\tau^2n^2.$$
This implies that $|Y| \geq |S|+\eta n/2-\tau^2n^2/|X|\geq |S|+2\tau^2n$ and so
$$|RN^+_{\tau^2,G}(S)| \geq |Y| \geq |S|+2\tau^2n.$$

\vspace{6pt}
\noindent\textbf{Case 2:} $|S| \neq n/2+ \lfloor\tau n\rfloor$ and $d^+_{|S|-\lfloor\tau n \rfloor}<|S| - \lfloor\tau n\rfloor + \eta n$.

\noindent If $|S| > (1- \eta+ \tau^2)n$ then $|G\setminus S| < \eta n- \tau^2n $. We have shown that $\delta^-(G) \geq \eta n$. Then, for all $x \in V(G)$ we have that $|N^-(x) \cap S| \geq \tau^2n$ and so $x \in RN^+_{\tau^2,G}(S)$ giving $RN^+_{\tau^2,G}(S) = V(G)$ and we are done. So we may assume that $|S| \leq (1- \eta+ \tau^2)n$.

If $|S|- \lfloor\tau n \rfloor < n/2$ then by $(i)$ we see that $d^-_{n-|S|+\lfloor\tau n\rfloor - \eta n} \geq n-|S|+\lfloor\tau n\rfloor$ Otherwise, we have $n-(|S|- \lfloor\tau n \rfloor) < n/2$ and, applying $(ii)$, we again see that $d^-_{n-|S|+\lfloor\tau n\rfloor - \eta n}\geq n-|S|+\lfloor\tau n\rfloor$. So 
$$d^-_{n-|S|+\lfloor\tau n\rfloor - \eta n} \geq n-|S|+\lfloor\tau n\rfloor \geq n-|S|+\tau^2n.$$
Then $G$ must contain at least $|S|-\lfloor\tau n\rfloor+\eta n \geq |S| + \eta n/2$ vertices each having indegree at least $n-|S|+\tau^2 n$, let $U$ denote the set of all such vertices. Observe that any vertex $x \in U$ has at least $\tau^2n$ inneighbours in $S$ and so $U \subseteq RN^+_{\tau^2,G}(S)$. So $$|RN^+_{\tau^2,G}(S)| \geq |U| \geq |S| +\eta n/2 \geq |S|+2\tau^2n.$$

\vspace{6pt}
\noindent\textbf{Case 3:} $|S| = n/2+ \lfloor\tau n\rfloor$

\noindent Consider a subset $S' \subset S$ with $|S'|=|S|-1$. Then, by the previous arguments, we see that $|RN^+_{\tau^2,G}(S')| \geq |S'|+2\tau^2n = |S| -1 +2\tau^2 \geq |S| +\tau^2$. Hence, $$|RN^+_{\tau^2,G}(S)| \geq |RN^+_{\tau^2,G}(S')| \geq |S|+\tau^2n.$$

Together, these cases show that $|RN^+_{\tau^2,G}(S)| \geq |S|+\tau^2n$ for all $S \subseteq V(G)$ with $\tau n<|S|<(1-\tau)n$. Therefore, $G$ is a robust $(\tau^2,\tau)$-outexpander.
\end{proof}

The property of robust outexpansion is resilient, by this we mean that it can not be destroyed by removing just a small number of vertices or edges. Likewise, we can also add a small number of vertices.

\begin{proposition}\label{prop:removingrobexp}
Let $0 \leq \nu \leq \tau \ll 1$ and suppose that $G$ is a robust $(\nu, \tau)$-outexpander on $n$ vertices. Let $V_0\subseteq V(G)$ be any set of at most $\nu n/4$ vertices. Then the graph $G'\coloneqq G\setminus V_0$ is a robust $(\nu/2, 2\tau)$-outexpander.
\end{proposition}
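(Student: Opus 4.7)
The plan is to verify the definition directly for $G'$ by reducing to the robust outexpansion property of $G$. Set $n' \coloneqq |V(G')| = n - |V_0|$ and fix an arbitrary $S \subseteq V(G')$ with $2\tau n' < |S| < (1-2\tau)n'$. The first step is to check that $S$ also satisfies the size hypothesis required to invoke the robust outexpansion of $G$, namely $\tau n < |S| < (1-\tau)n$. The upper bound is immediate since $n' \leq n$. For the lower bound,
\[
|S| > 2\tau n' \geq 2\tau\bigl(n - \nu n/4\bigr) \geq 2\tau n - \tau n/2 > \tau n,
\]
using $\nu \leq \tau \leq 1$.

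Next I would exploit the inclusion $RN^+_{\nu,G}(S) \setminus V_0 \subseteq RN^+_{\nu/2, G'}(S)$. Any $x \in RN^+_{\nu,G}(S)$ has at least $\nu n$ inneighbours in $S$ in the graph $G$; since we only delete vertices of $V_0$ and $S \subseteq V(G')$, none of these inneighbours lie outside $G'$, so $x$ still has at least $\nu n \geq \nu n' \geq (\nu/2)n'$ inneighbours in $S$ in the graph $G'$. Provided $x$ itself survives (i.e.\ $x \notin V_0$), this shows $x \in RN^+_{\nu/2, G'}(S)$.

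Combining the two steps, and using the robust outexpansion of $G$ together with $|V_0| \leq \nu n/4$, gives
\[
|RN^+_{\nu/2, G'}(S)| \geq |RN^+_{\nu,G}(S)| - |V_0| \geq |S| + \nu n - \nu n/4 = |S| + 3\nu n/4.
\]
Finally, since $n' \leq n$ we have $(\nu/2)n' \leq \nu n/2 \leq 3\nu n/4$, so $|RN^+_{\nu/2,G'}(S)| \geq |S| + (\nu/2)n'$, as required.

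There is no real obstacle here — the proof is bookkeeping with the parameters. The only thing one has to be slightly careful with is that the lower bound $2\tau n'$ on $|S|$ in $G'$ translates into something at least $\tau n$ in $G$ (so that $G$'s robust outexpansion applies), and that the slack $\nu n - |V_0| \geq 3\nu n/4$ left after discarding $V_0$ still dominates the required excess $(\nu/2)n'$. Both use $\nu \leq \tau$ and $|V_0| \leq \nu n/4$ in essentially the only way they can be used.
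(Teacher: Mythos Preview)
Your proof is correct and follows essentially the same approach as the paper's: verify the definition directly by checking that $S$ satisfies the size hypothesis for $G$, then use $RN^+_{\nu,G}(S)\setminus V_0 \subseteq RN^+_{\nu/2,G'}(S)$ to transfer the robust outexpansion bound. Your observation that the inneighbours of $x$ in $S$ all survive (since $S\subseteq V(G')$) is in fact slightly sharper than the paper's bookkeeping, which unnecessarily subtracts $\nu n/4$ at that step, but the overall argument is the same.
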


\begin{proof}
Let $n' \coloneqq |G'|$. Then $(1-\nu/4)n \leq n' \leq n$. Consider a set $S \subseteq V(G')$ of size $\tau n \leq 2\tau n' \leq |S| \leq (1-2\tau)n' \leq (1-\tau)n$.
We have lost at most $\nu n/4$ vertices so $|N^-_{G'}(x) \cap S| \geq \nu n - \nu n/4 \geq \nu n'/2$ for all $x \in RN^+_{\nu, G}(S)\setminus V_0$. Hence $|RN^+_{\nu/2, G'}(S)| \geq |RN^+_{\nu, G}(S)|- \nu n/4$. Then, since $G$ is a robust $(\nu,\tau)$-outexpander, we have that
$$|RN^+_{\nu/2, G'}(S)| \geq |RN^+_{\nu, G}(S)|- \nu n/4 \geq |S| + \nu n - \nu n/4\geq |S| + \nu n'/2.$$
Therefore, $G'$ is a robust $(\nu/2, 2\tau)$-outexpander.
\end{proof}

\begin{proposition}\label{prop:addingrobexp}
Let $0 \leq \nu \leq \tau \ll 1$ and suppose that $G$ is a robust $(\nu, \tau)$-outexpander on $n$ vertices. Let $V_0$ be a set of at most $\nu^2n$ vertices. Then the graph $G'\coloneqq G \cup V_0$ is a robust $(\nu/2, 2\tau)$-outexpander.
\end{proposition}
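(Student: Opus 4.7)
The plan is to mirror the strategy of \Cref{prop:removingrobexp}, only in reverse: any test set in $G'$ will be truncated to a suitable test set in $G$, where the hypothesis already delivers what we want. Write $n' \coloneqq |V(G')|$, so $n \leq n' \leq (1+\nu^2)n$. Given any $S \subseteq V(G')$ with $2\tau n' \leq |S| \leq (1-2\tau)n'$, I would set $S_0 \coloneqq S \cap V(G)$ and first check that the sizes line up: since $|V_0| \leq \nu^2 n \ll \tau n$, both bounds $\tau n < |S_0| < (1-\tau)n$ hold, so the robust $(\nu,\tau)$-outexpansion of $G$ applies to $S_0$ to give $|RN^+_{\nu,G}(S_0)| \geq |S_0| + \nu n$.

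The second step is to convert this robust outneighbourhood in $G$ into a robust outneighbourhood in $G'$. Any vertex $x \in RN^+_{\nu,G}(S_0)$ already has at least $\nu n$ inneighbours in $S_0 \subseteq S$, and since $n' \leq 2n$ we have $\nu n \geq (\nu/2)n'$, which certifies that $x \in RN^+_{\nu/2,G'}(S)$. Chaining the inequalities gives
\[
|RN^+_{\nu/2,G'}(S)| \;\geq\; |RN^+_{\nu,G}(S_0)| \;\geq\; |S_0| + \nu n \;\geq\; |S| - \nu^2 n + \nu n,
\]
and the hypothesis $\nu \ll 1$ is enough to push the final quantity above $|S| + (\nu/2)n'$, which is precisely what robust $(\nu/2, 2\tau)$-outexpansion of $G'$ demands.

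The main difficulty here is really just bookkeeping rather than a conceptual obstacle: the proof must never use any structure of the edges incident to $V_0$, since no such structure is assumed in the hypothesis. This is automatic in the approach above, because any vertex of $V_0$ lying in $S$ is discarded when passing to $S_0$ and plays no role in the neighbourhood count. It is exactly this forced discarding that costs us the two parameter concessions: the factor $2$ in $\nu/2$ absorbs the jump from $n$ to $n'$ in the density threshold, and the factor $2$ in $2\tau$ absorbs the potential loss of up to $\nu^2 n$ vertices when restricting $S$ to $S_0$, mirroring the degradation seen in \Cref{prop:removingrobexp}.
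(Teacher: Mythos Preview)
Your proof is correct and is essentially identical to the paper's argument: both restrict the test set $S$ to $S\cap V(G)$, verify the size window $\tau n < |S\cap V(G)| < (1-\tau)n$, invoke robust $(\nu,\tau)$-outexpansion of $G$, use the containment $RN^+_{\nu,G}(S\cap V(G)) \subseteq RN^+_{\nu/2,G'}(S)$, and conclude with the same chain $|S\cap V(G)| + \nu n \geq |S| - \nu^2 n + \nu n \geq |S| + \nu n'/2$. Your write-up is slightly more explicit about why the size bounds and the final inequality hold, but there is no substantive difference in approach.
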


\begin{proof}
Let $n' \coloneqq |G'|$, so $n \leq n' \leq (1+\nu^2)n$. Consider a set $S \subseteq V(G')$ of size $2\tau n' \leq |S| \leq (1-2\tau)n'$. We know that the set $S$ can contain at most $\nu^2n$ new vertices so
$$\tau n \leq |S \cap V(G)| \leq (1-\tau)n.$$
Observe that $RN^+_{\nu, G}(S) \subseteq RN^+_{\nu/2, G'}(S)$ and so we can use that $G$ is a robust $(\nu,\tau)$-outexpander to see that
$$|RN^+_{\nu/2, G'}(S)| \geq |S \cap V(G)| + \nu n \geq |S| - \nu^2n + \nu n \geq |S| + \nu n'/2.$$
Therefore, $G'$ is a robust $(\nu/2, 2\tau)$-outexpander.
\end{proof}

It is clear from the proofs of these results that we can replace \textquoteleft outexpander' by \textquoteleft inexpander' or \textquoteleft diexpander' in the statements of \Cref{prop:removingrobexp} and \Cref{prop:addingrobexp}.

\section{Regularity and the Diregularity Lemma}

We will now define what it means for a digraph, $G$, (which is not necessarily bipartite) to be $\varepsilon$-regular. The density of the pair $(X,Y)_G$ is defined as in the undirected case, recall that $d_G(X,Y)=\frac{e_G(X,Y)}{|X||Y|}$. Note that the order of $X$ and $Y$ matters now, in general it will not be the case that $d_G(X,Y)=d_G(Y,X)$.

\begin{definition}\label{def:Diregularity}
Let $\varepsilon>0, d\in[0,1]$ and suppose that $G$ is a graph on $n$ vertices. We say that $G$ is \emph{$\varepsilon$-regular} with density $d$ if for all sets $X, Y \subseteq V(G)$ with $|X|, |Y| \geq \varepsilon n$ we have that $$|d_G(X,Y)-d| < \varepsilon.$$
We say that $G$ is \emph{$[\varepsilon,d]$-superregular} if it is $\varepsilon$-regular and $\delta^0(G) \geq dn$.
\end{definition}

Our definition of superregularity for digraphs differs slightly from our earlier definition for undirected graphs. We now require that the graph is $\varepsilon$-regular in order to be $[\varepsilon,d]$-superregular as this definition will be more convenient in subsequent statements of lemmas and proofs. For clarity, we will always write $(\varepsilon,d)$-superregular when we wish to apply the definition to the undirected graph and $[\varepsilon,d]$-superregular when we wish to refer to the directed graph.

In \Cref{matchingundir} we showed that a regular undirected graph meeting minimum degree conditions contains a perfect matching. We will now prove a similar result for superregular digraphs.

\begin{definition}
A $k$-factor in a digraph $G$ is a $k$-regular spanning subgraph of $G$.
\end{definition}

\Cref{superhalls} shows that we can use superregularity to guarantee that $G$ contains a $1$-factor, that is, a set of vertex disjoint cycles covering all of the vertices of $G$. 

\begin{proposition}\label{superhalls}
Suppose that $0<\varepsilon \ll d < 1$ and $G$ is an $[\varepsilon,d-\varepsilon]$-superregular digraph on $n$ vertices with density $d$. Then $G$ contains a $1$-factor.
\end{proposition}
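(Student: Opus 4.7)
The plan is to reduce this to the undirected matching result \Cref{matchingundir} via a standard bipartite double cover construction. Construct an auxiliary bipartite graph $H$ with vertex classes $A$ and $B$, each a disjoint copy of $V(G)$, and place an edge between $a \in A$ and $b \in B$ precisely when the corresponding directed edge $ab$ belongs to $E(G)$. A perfect matching $M$ in $H$ then yields a spanning subgraph of $G$ in which every vertex has outdegree exactly one (from its appearance in $A$) and indegree exactly one (from its appearance in $B$); such a subgraph decomposes into vertex-disjoint directed cycles and is therefore a $1$-factor.

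To apply \Cref{matchingundir} to $H$, I need to verify its hypotheses with parameters $\varepsilon$ and $d$. Clearly $|A|=|B|=n$ and the density of $H$ equals the digraph density of $G$, which is $d$. For $\varepsilon$-regularity of $H$: given $X \subseteq A$ and $Y \subseteq B$ with $|X|, |Y| \geq \varepsilon n$, the corresponding sets $X', Y' \subseteq V(G)$ satisfy $|X'|, |Y'| \geq \varepsilon n$, so by \Cref{def:Diregularity} we have $|d_G(X',Y') - d| < \varepsilon$; since $d_H(X,Y) = d_G(X',Y')$, this gives the required $|d_H(X,Y) - d_H(A,B)| < \varepsilon$. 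For the minimum-degree condition: for any $a \in A$, $d_H(a) = d^+_G(a) \geq \delta^0(G) \geq (d-\varepsilon)n$, and similarly for $b \in B$ via $d^-_G(b)$, so $\delta(H) \geq (d-\varepsilon)n$. Finally, since $\varepsilon \ll d$ we have $3\varepsilon \leq d^2$, so the hypothesis $0 < 3\varepsilon \leq d^2$ of \Cref{matchingundir} holds.

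Applying \Cref{matchingundir} therefore produces a perfect matching $M$ in $H$. Translating $M$ back to $G$ via the correspondence above gives the desired $1$-factor, completing the proof. I expect no real obstacle here — the proof is essentially a translation exercise, with the only minor point being the careful check that $\varepsilon$-regularity of the digraph $G$ (in the sense of \Cref{def:Diregularity}) transfers directly to $\varepsilon$-regularity of the bipartite graph $H$ (in the sense of \Cref{def:reg}), which works precisely because $|A|=|B|=n$ so the two threshold conditions $|X| \geq \varepsilon n$ and $|X| \geq \varepsilon |A|$ coincide.
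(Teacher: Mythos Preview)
Your proof is correct and takes essentially the same approach as the paper: both construct the bipartite double cover $H$ (called $G^*$ in the paper) and find a perfect matching there, which translates back to a $1$-factor in $G$. The only difference is that you invoke \Cref{matchingundir} as a black box, whereas the paper reproves it inline by verifying Hall's condition directly in the same three cases; your version is cleaner and avoids the redundancy.
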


\begin{proof} Let us define an auxiliary bipartite graph $G^*$ with vertex classes $A=V(G)$ and $B=V(G)$. For every $a \in A$ and $b \in B$, $ab \in E(G^*)$ if and only if the directed edge $ab \in E(G)$.

We will first show that $G^*$ contains a perfect matching. Let $S \subseteq A$.

Suppose $0<|S|\leq (d-\varepsilon) n$. Let $v \in S$ and note that $d^+(v) \geq (d-\varepsilon)n$ since $G$ is $[\varepsilon, d-\varepsilon]$-superregular. Then $$|N_{G^*}(S)| \geq |N_{G^*}(v)| = |N^+_G(v)| \geq (d-\varepsilon)n \geq |S|.$$

Let us now suppose that $|S| > (1-(d-\varepsilon))n$. Then, since $|A \setminus S| < (d-\varepsilon)n$, we have that for every $v \in B, N_{G^*}(v) \cap S \neq \emptyset$ as $d_{G^*}(v) = d^-_G(v) \geq (d-\varepsilon)n$. So $N_{G^*}(S) = B$ and therefore $|S| \leq |N_{G^*}(S)|$.

It remains to show that Hall's condition is satisfied for $\varepsilon n \leq (d-\varepsilon)n < |S| \leq (1-(d-\varepsilon))n$, we assume that $3\varepsilon \leq d^2$. Note that $|N_{G^*}(S)| \geq (d-\varepsilon)n \geq \varepsilon n$. We will assume, for the sake of contradiction, that $|N_{G^*}(S)| < |S| \leq (1-(d-\varepsilon))n$. Since for every $v \in S$ we have that $d_{G^*}(v) \geq (d-\varepsilon)n$ we get that $e_{G^*}(S, N_{G^*}(S)) \geq (d-\varepsilon)n|S|$. Hence 
\begin{equation*}
\begin{split}
d_{G^*}(S, N_{G}(S)) &= \frac{e(S, N_{G^*}(S))}{|S||N_{G^*}(S)|} \geq \frac{(d-\varepsilon)n}{|N_{G^*}(S)|} > \frac{(d-\varepsilon)n}{(1-(d-\varepsilon))n} \\&= d + \frac{d^2-\varepsilon d-\varepsilon}{1-(d-\varepsilon)} \geq d+ (d^2-\varepsilon d- \varepsilon)\\ &\geq d+\varepsilon.
\end{split}
\end{equation*}
But this contradicts the $\varepsilon$-regularity of $G$. Hence $|N_{G^*}(S)| \geq |S|$.

Therefore, $G^*$ satisfies the condition of Hall's theorem and, since $|A|=|B|$, has a perfect matching. This matching corresponds to a $1$-factor in $G$.
\end{proof}

If $G$ is an $\varepsilon$-regular digraph and we define the graph $G^*$ as above then we can see a correspondence between our definitions of regularity. That is, $G^*$ is also $\varepsilon$-regular.

We will use \Cref{superhalls} to find a Hamilton cycle in the following lemma which is a special case of a result of Frieze and Krivelevich, see \cite{frieze}.


\begin{lemma}\label{superham}
Suppose that $1/n_0 \ll \varepsilon \ll d \ll 1$ and $G$ is an $[\varepsilon,d-\varepsilon]$-superregular digraph on $n\geq n_0$ vertices with density $d$. Then $G$ contains a Hamilton cycle.
\end{lemma}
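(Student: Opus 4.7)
The plan is to apply Proposition~\ref{superhalls} to obtain a $1$-factor $F$ of $G$, and then to repeatedly modify $F$ to reduce its number of cycles until a single cycle---the desired Hamilton cycle---remains.

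By Proposition~\ref{superhalls}, $G$ contains a $1$-factor $F$, which is a disjoint union of directed cycles $C_1, \ldots, C_t$. If $t=1$ then $F$ is already a Hamilton cycle, so suppose $t \geq 2$. The bulk of the proof is then the following \emph{merging step}: given any two cycles $C_i, C_j$ of $F$, construct a new $1$-factor $F'$ of $G$ agreeing with $F$ outside $C_i \cup C_j$ but with $C_i$ and $C_j$ fused into a single cycle. Iterating the merging step $t-1$ times then produces a Hamilton cycle. The basic merging move itself reads: find $F$-edges $(x, x^+) \in E(C_i)$ and $(v^-, v) \in E(C_j)$ such that both $xv$ and $v^- x^+$ lie in $E(G)$; deleting these two $F$-edges and inserting the two new ones gives a $1$-factor in which $C_i$ and $C_j$ have become a single cycle, while every other $C_k$ is untouched.

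To locate a merging move we exploit both the $\varepsilon$-regularity of $G$ and the minimum semidegree $\delta^0(G) \geq (d-\varepsilon)n$. When both $|C_i|$ and $|C_j|$ are at least $\varepsilon n$, the directed analogue of Proposition~\ref{prop:neighbours} applied to the ordered pairs $(C_i, C_j)$ and $(C_j, C_i)$ shows that all but at most $2\varepsilon n$ vertices $x \in C_i$ have at least $(d-\varepsilon)|C_j|$ out-neighbours in $C_j$, and symmetrically that all but at most $2\varepsilon n$ vertices of $C_i$, considered as successors $x^+$, have at least $(d-\varepsilon)|C_j|$ in-neighbours from $C_j$. When one of the cycles, say $C_i$, has fewer than $\varepsilon n$ vertices, we instead directly insert $C_i$ into a large cycle $C_j$: the semidegree condition then easily yields a vertex $y \in C_j$ and a consecutive pair $(x^-, x)$ of $C_i$ with $yx \in E(G)$ and $x^- y^+ \in E(G)$, which permits the insertion by the same swap.

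The main obstacle is the large-cycle case when $d$ is small. The naive intersection argument, which seeks $(x, v)$ with $xv \in E(G)$ and $v^- x^+ \in E(G)$ simultaneously, yields only $(2(d-\varepsilon) - 1)|C_i||C_j|$ such pairs, which is non-positive when $d \leq 1/2$. To get past this, the plan is to first delete an $F$-edge of $C_i$ to obtain a directed path, and then perform a sequence of endpoint-shifting rotations along this path, each enabled by the high semidegree and the $\varepsilon$-regularity of $G$. The reachable endpoint configurations accumulate enough flexibility that, by a counting/quasi-randomness argument over the resulting set of possibilities, at least one such configuration pairs correctly with an edge of $C_j$ to give a valid merging swap. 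This rotation-and-merging analysis is the technical heart of the argument; once it is in place, the iteration immediately produces the Hamilton cycle claimed by the lemma.
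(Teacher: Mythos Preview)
Your cycle-merging strategy differs structurally from the paper's argument, and the difference matters. The paper opens one cycle of the $1$-factor to a path $P$, then \emph{greedily extends} $P$ at either endpoint by absorbing an entire cycle of $F$ whenever an endpoint sees a vertex off $P$ (this needs only a \emph{single} edge, supplied immediately by $\delta^0(G)\geq(d-\varepsilon)n$). Once no further extension is possible, all of $N^-(u_0)$ and $N^+(u_k)$ lie on $P$, and the paper then proves---via a concrete two-case analysis on the relative order of the $\ell/2$-th inneighbour of $u_0$ and the $\ell/2$-th outneighbour of $u_k$ along $P$, using $\varepsilon$-regularity on carefully chosen successor/predecessor sets---that $P$ can be re-closed to a cycle on the same vertex set. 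This decomposition (extend with one edge; close with a rotation argument on a single long path) is exactly what makes the small-$d$ regime tractable.

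Your pairwise-merge approach, by contrast, requires a \emph{simultaneous} pair of crossing edges at every step, and both of your cases have gaps. In the small-cycle case you assert that ``the semidegree condition easily yields'' $y\in C_j$ and $x\in C_i$ with $yx\in E(G)$ and $x^-y^+\in E(G)$; but this asks $N^-(x)\cap C_j$ to meet the $C_j$-predecessor set of $N^+(x^-)\cap C_j$, and each of these has size only about $d|C_j|$, so for $d<1/2$ no intersection is guaranteed---exactly the obstruction you already flagged in the large-cycle case. In the large-cycle case you correctly diagnose that the naive count fails and invoke ``endpoint-shifting rotations'' as the fix, but you give no mechanism: which edge of $C_i$ do you cut, what sets of reachable endpoints do you build, and how does regularity force one of them to pair with an edge of $C_j$? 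This is precisely the content of the lemma, and the paper's Case~1/Case~2 argument (with its auxiliary sets $X_1,Y_1,X_2^+,Y_2^-,X_3,Y_3$ and two nested applications of regularity in Case~2) shows that it is not a one-line matter. As written, the ``technical heart'' of your proof is an assertion rather than an argument.
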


\begin{proof}
By \Cref{superhalls} we can consider a $1$-factor, $F$, in $G$. Choose any cycle in $F$, remove an edge from this cycle and call the resulting path $P = u_0u_1\ldots u_k$. If the final vertex, $u_k$, of $P$ has an outneighbour $x$ which does not lie on $P$ then extend the path $P$ by removing the edge $x^- x$ from the cycle in $F$ on which $x$ lies (where $x^-$ denotes the predecessor of $x$ on this cycle) and joining the two paths by the edge $u_k x$.

Similarly if the initial vertex, $u_0$, of $P$ has an inneighbour $x$ that does not lie on $P$ then we can extend $P$ to contain the vertex $x$ and all other vertices on the cycle in $F$ containing $x$.

By repeating this process as required, we may assume that all of the inneighbours of $u_0$ and the outneighbours of $u_k$ lie on $P$. Note that this implies that $|P| \geq \delta^0(G)+1 \geq (d-\varepsilon)n+1 > \varepsilon n$.


\begin{claim}
There exists a cycle $C$ with $V(C)=V(P)$.
\end{claim}

Let $\ell$ be the largest possible integer such that $\ell \leq (d-\varepsilon)n \leq \delta^0(G)$ and $\ell$ is divisible by $4$. We have $\ell \geq dn/2 \geq 8\varepsilon n$. Write
$$N^-(u_0) = \{x_1, x_2, \ldots, x_\ell, \ldots \} \text{ and } N^+(u_k) = \{y_1, y_2, \ldots, y_\ell, \ldots \}$$
where the vertices are listed according to their order of appearance on the path.
We will consider two cases:
\begin{enumerate}
\item $x_{\ell/2}$ appears after $y_{\ell/2}$ on $P$, or $x_{\ell/2}=y_{\ell/2}$.
\item $x_{\ell/2}$ appears before $y_{\ell/2}$ on $P$.
\end{enumerate}

\noindent\textbf{Case 1:} $x_{\ell/2}$ appears after $y_{\ell/2}$ on $P$, or $x_{\ell/2}=y_{\ell/2}$.

\noindent Consider the disjoint sets:
$$X_1=\{x_{\ell/2+1}, \ldots, x_\ell\} \subseteq N^-(u_0) \text{ and } Y_1=\{y_1, \ldots, y_{\ell/2}\} \subseteq N^+(u_k).$$

Note that $|X_1|,|Y_1| = \ell/2 \geq dn/4 \geq \varepsilon n$. If $u_ku_0 \in E(G)$ then we are done, so we assume that $u_ku_0 \notin E(G)$. We define two further sets: $X^+_1$ and $Y^-_1$ which are the sets of successors of $X_1$ and the predecessors of $Y_1$ on $P$ respectively (for each $v \in V(P)$ we will also write $v^+$ and $v^-$ for the successor and predecessor of $v$ on $P$). It follows that $|X^+_1|, |Y^-_1| \geq \varepsilon n$. Then, since $G$ is $\varepsilon$-regular, there is an edge $y^-_ix^+_j \in E(G)$ for some $y^-_i \in Y^-_1$ and $x^+_j \in X^+_j$.

Then $$C= u_0 P y^-_i x^+_j P u_k y_i P x_j u_0$$ is a cycle in $G$ with $V(C)=V(P)$, see \Cref{fig:case1} for an illustration.

\begin{figure}[h]
\centering
\includegraphics[scale=0.55]{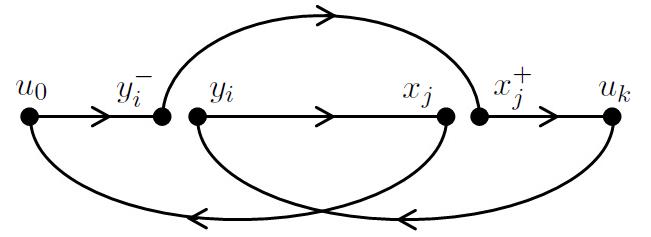}
\caption{The cycle obtained in Case 1}\label{fig:case1}
\end{figure}

\vspace{11pt}
\noindent\textbf{Case 2:} $x_{\ell/2}$ appears before $y_{\ell/2}$ on $P$. 

\noindent In this case, we will consider the disjoint sets:
$$X_1=\{x_1, \ldots, x_{\ell/2}\} \subseteq N^-(u_0) \text{ and } Y_1=\{y_{\ell/2}, \ldots, y_\ell\} \subseteq N^+(u_k).$$
Let $X^+_1$ and $Y^-_1$ be defined as previously. We will consider the subsets
$$X^+_2=\{x^+_{\ell/4+1}, \ldots, x^+_{\ell/2}\} \subseteq X^+_1 \text{ and } Y^-_2=\{y^-_{\ell/2}, \ldots, y^-_{3\ell/4-1}\} \subseteq Y^-_1.$$
We have that $|X^+_2|=|Y^-_2| \geq \ell/4 \geq dn/8$.

We let $X_3 = V(u_0 P x_{\ell/4})$ and $Y_3 = V(y^+_{3\ell/4} P u_{k})$. We have $|X_3|,|Y_3| \geq \ell/4 \geq dn/8$. Then we can apply \Cref{prop:subsets}, with $\alpha=d/8$, to the $\varepsilon$-regular bipartite graph $(V(G),V(G))_G$, to see that $(X_3, X^+_2)_G$ and $(Y^-_2, Y_3)_G$ are $\sqrt{\varepsilon}$-regular pairs. Now, by \Cref{prop:neighbours}, we can find subsets $X'_3 \subseteq X_3$ and $Y'_3 \subseteq Y_3$ of size at least $(1-\sqrt{\varepsilon}) \ell/4 = \ell/8 \geq \varepsilon n$ so that each vertex in $X'_3, Y'_3,$ has at least one neighbour in $X^+_2, Y^-_2$.

Now the sets $X'^+_3$ and $Y'^-_3$ have size at least $\varepsilon n$ so the $\varepsilon$-regularity of $G$ implies that there exist $u_{i+1} \in X'^+_3$ and $u_{j-1} \in Y'^-_3$ such that $u_{j-1}u_{i+1} \in E(G)$. We defined the sets $X'_3$ and $Y'_3$ in such a way that we can now find $x^+_i \in X^+_2$ such that $u_i x^+_i \in E(G)$ and $y^-_j \in Y^-_2$ such that $y^-_j u_j \in E(G)$.

We obtain a cycle $$C= u_0 P u_i x^+_i P y^-_j u_j P u_k y_j P u^-_j u^+_i P x_i u_0$$ with $V(C)=V(P)$ as shown in \Cref{fig:case2}. This proves the claim.

\begin{figure}[h]
\centering
\includegraphics[scale=0.55]{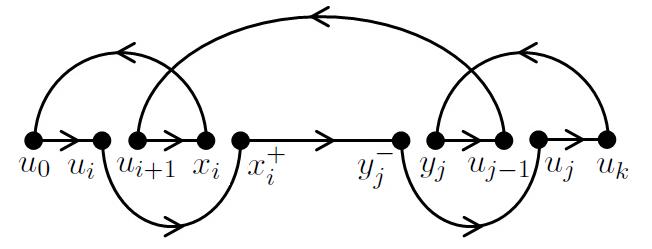}
\caption{The cycle obtained in Case 2}\label{fig:case2}
\end{figure}


We will now show that this cycle can be extended to form a Hamilton cycle. So let us suppose that $V(C) \neq V(G)$. Then we may consider a vertex $x \in V(G)\setminus V(C)$ lying on the cycle $C_x$ in the $1$-factor $F$. Note that $V(C) \cap V(C_x) = \emptyset$ since each time we extended the path $P$ to include a vertex $v$ we also added all other vertices lying on the same cycle in $F$.

Suppose that $N^+(x) \cap V(C) \neq \emptyset$, that is, there exists a vertex $y \in N^+(x) \cap V(C)$. Then we may consider the new, longer, path $P'=x^+C_x xyCy^-$. We can then carry out the same extension process we performed earlier in the proof so that all inneighbours of the initial vertex and outneighbours of the final vertex of $P'$ lie on $P'$. We then find a cycle $C'$ with $V(C')=V(P')$. 

Similarly, if $N^-(x) \cap V(C) \neq \emptyset$ then we may obtain a longer path and a cycle which has the same vertex set as this new path.

So we may assume that for all vertices $x \in V(G) \setminus V(C)$ we have $N^+(x) \cup N^-(x) \subseteq V(G) \setminus V(C)$. Then $$|V(G) \setminus V(C)| \geq \delta^0(G) \geq (d-\varepsilon) n \geq \varepsilon n.$$ But we have that $d_G(V(C),V(G)\setminus V(C)) = 0 < d-\varepsilon$ contradicting the $\varepsilon$-regularity of $G$. Therefore, $C$ is in fact a Hamilton cycle.
\end{proof}

We have seen an undirected form of the Regularity Lemma and there is also a directed form, the Diregularity Lemma, due to Alon and Shapira, see \cite{alon}. We state the degree form of the Diregularity Lemma below. It follows from the Diregularity Lemma in a similar way to its undirected counterpart.

\begin{lemma}[Degree form of the Diregularity Lemma]\label{direg}
For all $\varepsilon>0$ and all integers $k_0$ there is an $N = N(\varepsilon, k_0)$ such that for every number $d \in [0,1)$ and for every digraph $G$ on $n \geq N$ vertices there exist a partition of V(G) into $V_0, V_1, \ldots, V_k$ and a spanning subdigraph $G'$ of $G$ such that the following hold:
	\begin{enumerate}[(i)]
	\item $k_0 \leq k \leq N$ and $|V_0| \leq \varepsilon n$,
	\item $|V_1| = \cdots = |V_k| \eqqcolon m$,
	\item $d^+_{G'}(x) > d^+_G(x) - (d + \varepsilon)n$ for all vertices $x \in V(G)$,
	\item $d^-_{G'}(x) > d^-_G(x) - (d + \varepsilon)n$ for all vertices $x \in V(G)$,
	\item for all $i \geq 1$ the digraph $G'[V_i]$ is empty,
	\item for all $1 \leq i,j \leq k$ with $i \neq j$ the graph $(V_i,V_j)_{G'}$ is $\varepsilon$-regular and has density either 0 or $>d$.
	\end{enumerate}
\end{lemma}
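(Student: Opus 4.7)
The plan is to derive this statement from the non-degree form of the Diregularity Lemma due to Alon and Shapira \cite{alon}, following the template of the proof of \Cref{degreeform} but with the additional bookkeeping needed to handle directed edges. I would first choose auxiliary constants $\varepsilon' \ll \varepsilon^2, d, 1/k_0$ and an integer $k'_0 \geq k_0$ with $1/k'_0 \ll \varepsilon$, and apply the non-degree Diregularity Lemma with parameters $\varepsilon'$ and $k'_0$. This yields an initial partition $V'_0, V'_1, \ldots, V'_{k'}$ of $V(G)$ with $|V'_0| \leq \varepsilon' n$, $|V'_1| = \cdots = |V'_{k'}| =: m'$, $k'_0 \leq k' \leq N'$, and for all but $\varepsilon' k'^2$ ordered pairs $(i,j)$ with $i \neq j$, the directed bipartite graph $(V'_i, V'_j)_G$ is $\varepsilon'$-regular.

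I would then modify $G$ to a spanning subdigraph $G'$ in five steps. (1) Colour red every directed edge lying inside a non-$\varepsilon'$-regular ordered pair, move to $V'_0$ every vertex that is the tail of at least $\varepsilon n/20$ red edges or the head of at least $\varepsilon n/20$ red edges, then delete all remaining red edges. (2) For each remaining $\varepsilon'$-regular ordered pair $(V'_i, V'_j)$ of density at most $d + \varepsilon'$, colour its edges blue, and for each $v \in V'_i$ that sends more than $(d+2\varepsilon')m'$ blue edges into $V'_j$, mark all but $(d+2\varepsilon')m'$ of them; by \Cref{prop:neighbours} at most $\varepsilon' m'$ vertices of $V'_i$ are affected per ordered pair, so at most $2\varepsilon' n^2$ edges are marked in total. (3) Move to $V'_0$ every vertex that is the tail of at least $\varepsilon n/20$ marked edges or the head of at least $\varepsilon n/20$ marked edges, then delete all remaining blue edges not incident to $V'_0$. (4) Delete all edges within clusters. (5) Split each surviving cluster into subclusters of common size $\lceil \varepsilon n/(4k') \rceil$, dumping any leftover vertices into the exceptional set, which I rename $V_0$ together with the new clusters $V_1, \ldots, V_k$.

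Verification of (i)--(vi) closely follows the undirected case. Properties (ii) and (v) are immediate by construction. For (i), each of Steps 1, 3 and 5 contributes at most $\varepsilon n/4$ vertices to $V_0$ (the halved threshold $\varepsilon n/20$ merely doubles the counts coming from red and marked edges, which is harmless since $\varepsilon' \ll \varepsilon^2$), and $k \leq 4k'/\varepsilon \leq N$ by construction. For (iii) and (iv), the out- and in-degree of every surviving vertex drop by less than $\varepsilon n/20$ in Step 1, less than $(d+2\varepsilon')m'k' + \varepsilon n/20$ in Step 3, and less than $m' \leq \varepsilon n/5$ in Step 4; summing these gives a total loss below $(d + \varepsilon)n$. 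Property (vi) follows from \Cref{prop:subsets}: any surviving ordered bipartite graph $(V_r, V_s)_{G'}$ sits inside an $\varepsilon'$-regular ordered pair of density $> d + \varepsilon'$ in $G$, and $|V_r|, |V_s| \geq \varepsilon m'/4$, so the induced pair is $\varepsilon$-regular with density $> d$.

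The only substantive change compared to the undirected proof is bookkeeping: because each directed edge contributes simultaneously to one out-degree and one in-degree, the thresholds for exiling a vertex to $V'_0$ must be applied separately to its tail-count and its head-count, so that both the in-degree loss and the out-degree loss of every remaining vertex are controlled. Halving the undirected threshold to $\varepsilon n/20$ achieves this while keeping the number of moved vertices within budget; no genuinely new idea is required.
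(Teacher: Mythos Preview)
Your approach matches the paper's: the paper does not give a standalone proof of \Cref{direg}, stating only that it follows from the Diregularity Lemma of Alon and Shapira in the same way that \Cref{degreeform} follows from \Cref{reglemma}, and your proposal is precisely that adaptation. One small slip to fix: in Step~2 you describe marking blue edges only from the tail side, but to control the \emph{in}-degree loss in Step~3 you must also mark, for each $v \in V'_j$ receiving more than $(d+2\varepsilon')m'$ blue edges from $V'_i$, all but $(d+2\varepsilon')m'$ of them---otherwise a surviving vertex could lose an uncontrolled number of unmarked blue in-edges. You evidently intended this, since your total of $2\varepsilon' n^2$ marked edges and your symmetric conclusion for (iii) and (iv) already presuppose it.
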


We refer to $G'$ as the \emph{pure digraph}. We define a \emph{reduced digraph} $R$, as in the undirected case, that is, the vertices of $R$ are $\{V_1, \ldots, V_k\}$ and we have an edge from $V_i$ to $V_j$ in $R$ if the graph $(V_i,V_j)_{G'}$ is $\varepsilon$-regular and $d_{G'}(V_i,V_j)\geq d$. $R$ inherits some of the properties of $G$, for instance, if $G$ is a robust outexpander then we can show that $R$ is as well.

\begin{lemma}\label{Routexpander}
Let $k_0,n_0$ be positive integers and $\varepsilon, d, \eta, \nu, \tau$ be positive constants such that $$1/n_0 \ll 1/k_0,\varepsilon \ll d \ll \nu, \tau, \eta < 1.$$ Suppose that $G$ is a digraph on $n \geq n_0$ vertices with $\delta^0(G) \geq \eta n$ such that $G$ is a robust $(\nu, \tau)$-outexpander. Let $R$ be the reduced digraph of $G$ with parameters $\varepsilon, d$ and $k_0$. Then $\delta^0(R) \geq \eta |R| /2$ and $R$ is a robust $(\nu/2, 2\tau)$-outexpander.
\end{lemma}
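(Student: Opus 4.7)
The plan is to prove the two conclusions separately. The minimum semidegree bound is essentially the directed analogue of Proposition \ref{mindeg}, and follows immediately from parts (iii), (iv), (vi) of Lemma \ref{direg}. The robust outexpansion of $R$ will be deduced from that of $G$ by the natural lifting: given $S \subseteq V(R)$, consider $S^{*} := \bigcup_{V_{i} \in S} V_{i} \subseteq V(G)$, apply robust $(\nu,\tau)$-outexpansion of $G$ to $S^{*}$, and then transfer the conclusion back up to $R$.

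For the degree bound, fix $V_{i} \in V(R)$ and pick any $x \in V_{i}$. By part (iii) of Lemma \ref{direg}, $d^{+}_{G'}(x) > d^{+}_{G}(x) - (d+\varepsilon)n \geq (\eta - d - \varepsilon)n$. By part (vi), the $G'$-outneighbours of $x$ lie either in $V_{0}$ or in clusters $V_{j}$ with $V_{i}V_{j} \in E(R)$, so $d^{+}_{R}(V_{i}) \geq (d^{+}_{G'}(x) - |V_{0}|)/m \geq (\eta - d - 2\varepsilon)\,n/m \geq (\eta - d - 2\varepsilon)|R|$, which exceeds $\eta|R|/2$ by the hierarchy of constants. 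The symmetric argument using part (iv) gives the same bound for $d^{-}_{R}(V_{i})$, so $\delta^{0}(R) \geq \eta|R|/2$.

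For the robust outexpansion, let $S \subseteq V(R)$ satisfy $2\tau|R| < |S| < (1-2\tau)|R|$ and set $S^{*} := \bigcup_{V_{i} \in S} V_{i}$, so $|S^{*}| = |S|m$. Using $km \geq (1-\varepsilon)n$, a direct calculation shows $\tau n < |S^{*}| < (1-\tau)n$, so robust outexpansion of $G$ gives $|RN^{+}_{\nu,G}(S^{*})| \geq |S^{*}| + \nu n$. Set $T := RN^{+}_{\nu/2, R}(S)$. The key claim is that $RN^{+}_{\nu,G}(S^{*}) \subseteq V_{0} \cup \bigcup_{V_{j} \in T} V_{j}$: indeed, if $V_{j} \notin T$, then for any $x \in V_{j}$ the $G'$-inneighbours of $x$ inside $S^{*}$ can only come from clusters in $N^{-}_{R}(V_{j}) \cap S$, contributing fewer than $(\nu/2)|R|\cdot m \leq \nu n/2$ vertices, while the gap between $G$ and $G'$ adds at most $(d+\varepsilon)n \leq \nu n/2$ further inneighbours, so $x \notin RN^{+}_{\nu,G}(S^{*})$. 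Therefore $|T|m \geq |RN^{+}_{\nu,G}(S^{*})| - |V_{0}| \geq |S|m + (\nu - \varepsilon)n$, giving $|T| \geq |S| + (\nu - \varepsilon)|R| \geq |S| + \nu|R|/2$ as required.

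The main obstacle is simply the bookkeeping: tracking the loss from the exceptional set $V_{0}$ (of size $\leq \varepsilon n$) and the loss of at most $(d+\varepsilon)n$ in-edges per vertex when passing from $G$ to the pure digraph $G'$. Both losses must be absorbed into the slack $\nu - \nu/2 = \nu/2$, which is comfortable because of the hierarchy $\varepsilon \ll d \ll \nu$. No part of the argument is conceptually deep; the care lies entirely in the accounting.
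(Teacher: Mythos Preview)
Your proof is correct and follows essentially the same approach as the paper: the degree bound is the directed analogue of Proposition~\ref{mindeg}, and for the outexpansion you lift $S$ to $S^{*}$, apply robust outexpansion in $G$, and transfer back to $R$ by absorbing the losses from $V_{0}$ and from passing to $G'$ into the $\nu/2$ slack. The only cosmetic difference is that the paper routes the transfer through the intermediate set $RN^{+}_{\nu/2,G'}(S^{*})$, whereas you argue the inclusion $RN^{+}_{\nu,G}(S^{*}) \subseteq V_{0} \cup \bigcup_{V_{j} \in T} V_{j}$ directly by contrapositive; the underlying accounting is identical.
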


\begin{proof}
Apply the Diregularity Lemma (\Cref{direg}) to the digraph $G$ with parameters $\varepsilon, d$ and $k_0$. We obtain a partition of $V(G)$ into clusters $V_1, \ldots, V_k$ with $|V_1|=\ldots=|V_k|=m$ and an exceptional set $V_0$. $G'$ denotes the pure digraph, $R$ the reduced digraph and we note that $|R|=k$.

Let $V_i \in V(R)$ and consider any $x \in V_i$. Observe that $x$ has outneighbours in at least $(\delta^+(G')-|V_0|)/m$ clusters $V_j$ in $G'$. Similarly, $x$ has inneighbours in at least $(\delta^-(G')-|V_0|)/m$ clusters in $G'$. Then, using part $(vi)$ of \Cref{direg} and the definition of $R$, we see that $$ \delta^0(R) \geq (\delta^0(G')-|V_0|)/m \geq ((\delta^0(G)-(d+\varepsilon)n)-\varepsilon  n)/m$$ and so $$\delta^0(R) \geq (\eta-(d+2\varepsilon))n/m \geq \eta k/2.$$

Now suppose that $S \subseteq V(R)=\{V_1, \ldots, V_k\}$ with $2\tau k \leq |S| \leq (1-2\tau)k$ and let $S'$ be the set of vertices inside clusters in $S$, that is, $S'= \{x\in V_i: V_i \in S\}$. Then $$\tau n \leq 2\tau k m \leq |S'| \leq (1-2 \tau)km \leq (1-2\tau)n.$$

Recall that $RN^+_{\nu,G}(S')$ is the set of vertices having at least $\nu n$ inneighbours in $S'$ in the digraph $G$. For any $x \in RN^+_{\nu,G}(S')$, we have that
$$|N^-_{G'}(x) \cap S'| \geq |N^-_G(x) \cap S'| - (d+\varepsilon)n \geq \nu n/2.$$
In the graph $G'$ every vertex $x \in R^+_{\nu/2,G'}(S') \setminus V_0$ is an outneighbour of vertices from at least $\nu k/2$ different clusters $V_i\in S$. This is because
$$|N^-_{G'}(x) \cap S'|/m \geq(\nu n /2)/m \geq \nu k/2.$$ 
Then, by part $(vi)$ of \Cref{direg}, if $V_j$ is the cluster containing $x$, $V_j$ is an outneighbour of the vertices of $R$ corresponding to each of these $\nu k/2$ clusters. Therefore, $V_j \in RN^+_{\nu/2,R}(S)$.

Clearly, $|RN^+_{\nu/2,G'}(S')| \geq |RN^+_{\nu,G'}(S')|$. As $G$ is a robust $(\nu, \tau)$-outexpander, we have that $$|RN^+_{\nu/2,G'}(S')| \geq |S'|+\nu n \geq |S|m+ \nu m k.$$
Then we find that $$|RN^+_{\nu/2,R}(S)| \geq (|RN^+_{\nu/2,G'}(S')|-|V_0|)/m \geq |S|+\nu k -\varepsilon n/m \geq |S|+ \nu k/2.$$
Therefore, $R$ is a robust $(\nu/2, 2\tau)$-outexpander.
\end{proof}

\section{Hamilton Cycles in Robust Outexpanders}

We will now prove that a sufficiently large robust outexpander of linear minimum degree contains a Hamilton cycle. The proof will require the concept of a \emph{shifted walk}.

\begin{definition}\label{def:shifted}
Suppose that $G$ is a digraph and $F$ is a $1$-factor in the reduced digraph $R$. We define a \emph{shifted walk} in $R$ from a cluster $A$ to a cluster $B$, $W(A,B)$, to be a walk of the form
$$W(A,B)=X_1 C_1 X^-_1 X_2 C_2 X^-_2 \ldots X_t C_t X^-_t X_{t+1}$$
where $X_1=A$ and $X_{t+1}=B$ and for each $1 \leq i \leq t$:
\begin{itemize}
	\item $C_i$ is the cycle of $F$ containing $X_i$;
	\item $X^-_i$ is the predecessor of $X_i$ on $C_i$ and
	\item the edge $X^-_iX_{i+1}$ lies in $E(R)$.
\end{itemize}
We say that $W(A,B)$ \emph{traverses} $t$ cycles, even if some cycles are used more than once.
We say that the clusters $\{X_2, X^-_2, \ldots , X_t, X^-_t\}$ are used \emph{internally} by $W(A,B)$.
The clusters $\{X_2, X_3, \ldots, X_{t+1}\}$ are referred to as the \emph{entry clusters} and the clusters $\{X^-_1, X^-_2, \ldots, X^-_t\}$ are the \emph{exit clusters} of $W(A,B)$.
\end{definition}

\begin{figure}[h]
\centering
\includegraphics[scale=0.4]{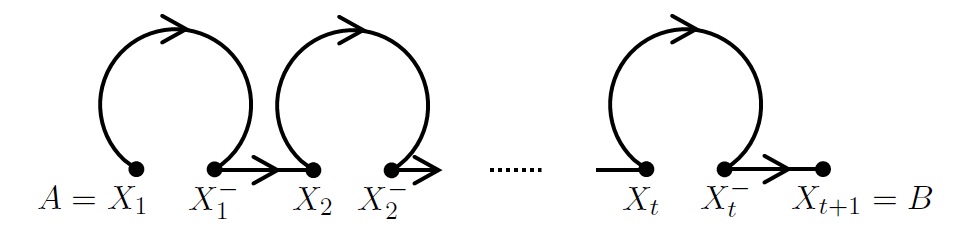}
\caption{A shifted walk, $W(A,B)$, from $A$ to $B$.}\label{fig:shiftedwalk}
\end{figure}

Each time $W(A,B)\setminus B$ visits a cycle it uses all of the clusters on that cycle so we observe that, for any cycle in $F$, $W(A,B)\setminus B$ visits each of its clusters the same number of times. We also note that if we have a closed shifted walk, $W(A,A)$, then this implies that $W(A,A)$  again visits all clusters lying on the same cycle the same number of times.

We may also assume that $W(A,B)$ uses every cluster at most once as an entry cluster since if a cluster $X$ is used multiple times as an entry then we can remove the section of the walk between the first and last appearances of $X$ as an entry cluster to obtain a shorter shifted walk from $A$ to $B$ which only uses $X$ once as an entry. Similarly, we can assume that each cluster is used at most once as an exit cluster. The following result will be used to find short shifted walks in the proof of \Cref{expander}.

\begin{proposition}\label{shortwalk}
Let $0< \nu \leq \tau\ll \eta <1$. Suppose that $R$ is a $(\nu/2, 2\tau)$-outexpander on $k$ vertices with $\delta^0(R) \geq \eta k/2$ and suppose that $R$ has a $1$-factor $F$. Let $Q\subseteq V(R)$ with $|Q| \leq \nu k/8$ and suppose $A,B \in V(R)$. Then there is a shifted walk $W(A,B)$ avoiding $Q$ internally which traverses at most $4/\nu$ cycles.
\end{proposition}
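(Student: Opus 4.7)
The plan is to build the walk greedily by tracking $S_j$, defined as the set of clusters $Y$ such that some shifted walk $W(A,Y)$ traverses exactly $j$ cycles with every internal cluster lying outside $Q$. The target is to show $B \in S_j$ for some $j \leq 4/\nu$. A one-cycle shifted walk $A\,C_1\,A^-\,X_2$ has no internal clusters by Definition~4.15, so $S_1 = N^+(A^-)$, and the minimum-degree assumption gives $|S_1| \geq \delta^+(R) \geq \eta k/2$.

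For the inductive step, any $X \in S_j$ gives rise to an extended walk ending at some $Y \in N^+(X^-)$, and the new internal clusters added by the extension are exactly $X$ and $X^-$. Setting $E_j \coloneqq \{X^- : X \in S_j \setminus Q \text{ and } X^- \notin Q\}$, we therefore have $S_{j+1} \supseteq N^+(E_j)$. Since the predecessor map $X \mapsto X^-$ is a bijection on $V(R)$ and $|Q| \leq \nu k/8$, at most $2|Q| \leq \nu k/4$ elements of $S_j$ fail to contribute to $E_j$, so $|E_j| \geq |S_j| - \nu k/4$.

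The core argument is a case analysis on $|E_j|$. Whenever $2\tau k < |E_j| \leq (1-2\tau)k$, the $(\nu/2, 2\tau)$-outexpansion yields $|S_{j+1}| \geq |N^+(E_j)| \geq |E_j| + \nu k/2 \geq |S_j| + \nu k/4$. The lower-range failure $|E_j| \leq 2\tau k$ is ruled out for all $j \geq 1$ by the initial bound $|S_1| \geq \eta k/2$ together with $\nu \leq \tau \ll \eta$. Let $J$ denote the first index at which the upper-range failure $|E_J| > (1-2\tau)k$ occurs; this must happen since $|S_j|$ cannot exceed $k$. Because $\tau \ll \eta$ gives $1 - 2\tau > 1 - \eta/2$, we obtain $|E_J| > (1-\eta/2)k$, and combining with $|N^-(B)| \geq \delta^-(R) \geq \eta k/2$ gives $|E_J| + |N^-(B)| > k$. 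Pigeonhole then supplies some $X^- \in E_J$ adjacent to $B$, proving $B \in N^+(E_J) \subseteq S_{J+1}$.

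Finally one bounds $J + 1 \leq 4/\nu$. The $J-1$ growth steps give $|S_J| \geq \eta k/2 + (J-1)\nu k/4$, while the $(J-1)$-th step was still in the growth regime, so $|S_{J-1}| \leq |E_{J-1}| + \nu k/4 \leq (1-2\tau)k + \nu k/4$. Plugging $|S_{J-1}| \geq \eta k/2 + (J-2)\nu k/4$ into this inequality and simplifying yields $J + 1 \leq 4/\nu$ once the hierarchy $\nu \leq \tau \ll \eta$ is invoked. The main obstacle is keeping both $|Q|$-losses (one from $X \in Q$ and one from $X^- \in Q$) under control so that the outexpander's growth of $\nu k/2$ still delivers an effective increment of $\nu k/4$ per step; once this is in hand, the hierarchy $\nu \leq \tau \ll \eta$ simultaneously absorbs these losses and forces the upper-range outexpansion failure to place $B$ inside $S_{J+1}$ within the promised budget of $4/\nu$ cycles.
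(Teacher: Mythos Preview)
Your proof is correct and follows essentially the same approach as the paper. The paper defines $U_i$ as the set of clusters reachable by a shifted walk traversing $i-1$ cycles avoiding $Q$ internally (so your $S_j$ is the paper's $U_{j+1}$), sets $U_i^- = \{X^- : X \in U_i \setminus Q\}$ and works with $U_i^- \setminus Q$ (your $E_j$), applies outexpansion to gain $\nu k/2$ against a $Q$-loss of at most $\nu k/4$ per step, and finishes by intersecting the large set $U_\ell^- \setminus Q$ with $N^-(B)$ via the minimum-degree bound---exactly as you do.
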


\begin{proof}
Let $A=U_1$ and for each $i > 1$ let $U_i$ be the set of clusters that can be reached from $A$ by a shifted walk traversing $i-1$ cycles which avoids $Q$ internally. We denote by $U^-_i$ the set of predecessors of the clusters in $U_i\setminus Q$, that is, $U^-_i=\bigcup_{A\in U_i\setminus Q} A^-$. Note that $|U^-_i|\geq |U_i|-\nu k/8$ for all $i>1$.

We first note that $|U_2| \geq d_R(A^-) \geq \delta^0(R) \geq \eta k/2$. If $|U^-_2\setminus Q|\leq (1-2\tau)k$ then we can use that $R$ is a $(\nu/2, 2\tau)$-outexpander to see that $$|U_3| \geq |U^-_2\setminus Q|+\nu k/2 \geq (|U_2| - \nu k/4)+ \nu k/2 \geq (2\eta+\nu)k/4.$$

Continuing in this way we see that, as long as $|U^-_t\setminus Q| \leq (1-2\tau)k$, by traversing $t$ cycles we can reach $$|U_{t+1}| \geq (2\eta + (t-1)\nu)k/4 \geq t\nu k/4$$ clusters.

Let $\ell$ be the smallest positive integer such that $|U^-_\ell\setminus Q|>(1-\eta/2)k$. Observe that $\ell \leq 4/\nu$. Now, we know that $d^-_R(B) \geq \delta^0(R) \geq \eta k/2 > |R|-|U^-_\ell\setminus Q|$. Therefore $N^-_R(B) \cap (U^-_\ell\setminus Q) \neq \emptyset$. Hence there is a shifted walk from $A$ to $B$ which traverses at most $4/\nu$ cycles and avoids $Q$ internally.
\end{proof}

We will use the results that we have gathered so far to prove that we can find a Hamilton cycle in a robust outexpander. First we will apply the Diregularity Lemma to the graph $G$ and then we will find a $1$-factor in the reduced graph $R$. Using shifted walks, we will incorporate the exceptional vertices and obtain a closed walk, made up of shifted walks, which visits all of the clusters. Finally, we will show that we can use this walk to construct a $1$-factor in $G$ in such a way that every vertex lies on the same cycle in $G$ - a Hamilton cycle.

\begin{theorem}[K\"uhn, Osthus and Treglown \cite{kot}]\label{expander}
Let $n_0$ be a positive integer and $\nu, \tau, \eta$ be positive constants such that $1/n_0 \ll \nu \leq \tau \ll \eta <1$. Let $G$ be a digraph on $n \geq n_0$ vertices with $\delta^0(G) \geq \eta n$ which is a robust $(\nu, \tau)$-outexpander. Then $G$ contains a Hamilton cycle.
\end{theorem}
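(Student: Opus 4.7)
The plan is to apply the Diregularity Lemma to $G$, find a $1$-factor $F$ in the reduced digraph $R$ using robust outexpansion, use shifted walks to incorporate the exceptional vertices into a closed tour of $R$ that visits every cluster, and finally realise this tour as a Hamilton cycle of $G$ via \Cref{superham}.

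First I would choose constants $1/n_0 \ll 1/k_0, \varepsilon \ll d \ll \nu, \tau, \eta$ and apply \Cref{direg} with parameters $\varepsilon$, $d$, $k_0$ to obtain clusters $V_1, \ldots, V_k$ of common size $m$, exceptional set $V_0$ with $|V_0| \leq \varepsilon n$, the pure digraph $G'$ and the reduced digraph $R$. By \Cref{Routexpander}, $R$ satisfies $\delta^0(R) \geq \eta k/2$ and is a robust $(\nu/2, 2\tau)$-outexpander. To find a $1$-factor in $R$, I would form the auxiliary bipartite graph $R^*$ with both vertex classes equal to a copy of $V(R)$ and an edge $ab$ whenever $ab$ is a directed edge of $R$, and verify Hall's condition (\Cref{hall}) case-by-case: for $0<|S| \leq 2\tau k$ use $|N_{R^*}(S)| \geq \delta^+(R) \geq \eta k/2 \geq |S|$ (since $\tau \ll \eta$); for $2\tau k < |S| < (1-2\tau)k$ use $|N_{R^*}(S)| \geq |RN^+_{\nu/2, R}(S)| \geq |S| + \nu k/2$; and for $|S| \geq (1-2\tau)k$ use $\delta^-(R) \geq \eta k/2 > k-|S|$ so that every vertex of $R$ has an in-neighbour in $S$, giving $N_{R^*}(S) = V(R)$. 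A perfect matching in $R^*$ then corresponds to the desired $1$-factor $F = D_1 \cup \cdots \cup D_r$ in $R$.

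Next, the exceptional vertices are absorbed via shifted walks. For each $x \in V_0$, the bound $\delta^0(G) \geq \eta n$ together with an averaging argument over clusters yields a cluster $A_x$ with $|N^-_G(x) \cap A_x| \geq dm/2$ and a cluster $B_x$ with $|N^+_G(x) \cap B_x| \geq dm/2$. Fix an enumeration $V_0 = \{x_1, \ldots, x_s\}$ and for each $i$ (cyclically) apply \Cref{shortwalk} to choose a shifted walk in $R$ from $B_{x_i}$ to $A_{x_{i+1}}$ that traverses at most $4/\nu$ cycles of $F$, avoiding a forbidden set of already-overused clusters of size at most $\nu k/8$. Inserting each $x_i$ between $A_{x_i}$ and $B_{x_i}$ and concatenating these shifted walks gives a closed walk $W$ in $R$ that absorbs every exceptional vertex; after adding a few further shifted walks if necessary, $W$ also traverses every cycle $D_j$ of $F$ at least once. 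Since $|V_0| \leq \varepsilon n$ and each shifted walk is short, the walks can be distributed so that no cluster is used more than $\sqrt{\varepsilon}\, m$ times in total.

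Finally, I would lift $W$ to a Hamilton cycle in $G$. Partition each cluster $V_i$ into pieces of approximately equal size, one per visit of $W$. For each edge $V_j V_\ell$ traversed by $W$ the bipartite subgraph of $G'$ restricted to the assigned pieces remains $\varepsilon'$-regular of density close to $d$ by \Cref{prop:subsets}; after assigning each exceptional vertex to the appropriate piece, the digraph analogues of \Cref{removing} and \Cref{adding} show that every maximal sub-tour of $W$ that follows a single cycle $D_j$ blows up to an $[\varepsilon', d']$-superregular digraph. \Cref{superham} then produces a Hamilton cycle through each such block, and the edges corresponding to shifted-walk transitions and to the insertions $A_{x_i} \to x_i \to B_{x_i}$ can be realised by choosing specific vertex representatives, since each such edge is backed by many actual $G$-edges between the relevant pieces; stitching everything together yields a single Hamilton cycle of $G$. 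The main obstacle is this last bookkeeping step: the repeated cluster splittings and vertex absorptions must not destroy superregularity, which is possible only because $|V_0|$ is small and each shifted walk is short by \Cref{shortwalk}, so that the cumulative perturbation of each cluster remains at most a $\sqrt{\varepsilon}$-fraction of its size.
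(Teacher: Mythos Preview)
Your first three stages---applying \Cref{direg}, verifying Hall's condition in the auxiliary bipartite graph to obtain a $1$-factor $F$ in $R$, and building a closed walk $W$ via shifted walks that absorbs $V_0$ and visits every cycle of $F$---match the paper's proof closely.

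The gap is in your lifting step. You propose to split each cluster into pieces, one per visit of $W$, and then apply \Cref{superham} to the blow-up of each maximal sub-tour of $W$ along a cycle $D_j$. But such a blow-up is \emph{not} an $[\varepsilon',d']$-superregular digraph in the sense of \Cref{def:Diregularity}: that definition requires \emph{every} pair $X,Y$ of size at least $\varepsilon' N$ to have density within $\varepsilon'$ of $d'$, whereas in a blown-up cycle the pairs of pieces coming from non-adjacent clusters have density zero. So \Cref{superham} simply does not apply to these blocks. There is a second problem: nothing in your construction forces the number of visits of $W$ to each cluster to equal $m$ (your $W$ has total length far smaller than $n$), so partitioning ``one piece per visit'' cannot exhaust the clusters, and no amount of bookkeeping fixes this without a separate balancing argument.

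The paper's lift is quite different and avoids both issues. It does not partition clusters. After making every $F$-edge $(2\varepsilon,d-3\varepsilon)$-superregular via \Cref{superreg}, it fixes, for each non-$F$ edge of $W$ (the edges through exceptional vertices and the exit-to-entry edges of the shifted walks), a single actual edge of $G$; this designates at most $\sqrt{\varepsilon}\,m'$ vertices in each cluster as ``entry'' or ``exit'' vertices. For every $F$-edge $AA^+$ it then uses \Cref{matchingundir} to find a perfect matching $M_A$ between $A\setminus A_{\mathrm{exit}}$ and $A^+\setminus A^+_{\mathrm{entry}}$. The fixed edges together with all the $M_A$ form a $1$-factor $\mathcal{C}$ of $G$. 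The key trick is then to merge the cycles of $\mathcal{C}$: for each $A$ one defines an auxiliary digraph $J$ on $A^+\setminus A^+_{\mathrm{entry}}$ by setting $N^+_J(a)\coloneqq N_{G_A}(f(a))$, where $f(a)$ is the first vertex of $A\setminus A_{\mathrm{exit}}$ reached when following $\mathcal{C}$ from $a$. This $J$ \emph{is} $[\sqrt[3]{\varepsilon},d'_A-\sqrt[3]{\varepsilon}]$-superregular (it inherits regularity from the single bipartite pair $G_A$), so \Cref{superham} yields a Hamilton cycle in $J$, which corresponds to a new matching $M'_A$ putting all vertices of $G_A$ on a common cycle of $\mathcal{C}$. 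Doing this for every cluster produces a single Hamilton cycle in $G$.
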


\begin{proof}
Choose constants $\varepsilon$ and $d$ satisfying $$1/n_0 \ll \varepsilon \ll d \ll \nu$$ and apply the degree form of the Diregularity Lemma (\Cref{direg}) with parameters $\varepsilon, d$ and $k_0=1/\varepsilon$. We obtain a partition $V_1, \ldots V_k$ with $|V_1|=\ldots=|V_k|\eqqcolon m$; an exceptional set $V_0$ with $|V_0| \leq \varepsilon n$ and a reduced digraph $R$. By \Cref{Routexpander}, we have that $R$ is a $(\nu/2, 2\tau)$-outexpander with $\delta^0(R) \geq \eta k/2$.

\begin{claim}
$R$ contains a 1-factor.
\end{claim}

Define an auxiliary bipartite graph, $R^*$, as in the proof of \Cref{superhalls}, with vertex classes $A=V(R)$ and $B=V(R)$. So, for every $a \in A$ and $b \in B$, $ab \in E(R^*)$ if and only if the directed edge $ab$ lies in $R$. If $S \subseteq A$ with $2\tau k < |S| < (1- 2\tau)k$ then $|N_{R^*}(S)| = |N^+_R(S)| \geq |S|+\nu k/2$. So Hall's condition holds in this case. Suppose now that $|S| < 2\tau k$. Then $|N_{R^*}(S)| \geq \delta^0(R) \geq \eta k/2 > |S|$. If we have that $|S| > (1-2\tau)k$ then we note that  for all $u \in B$, $N_{R^*}(u) \cap S \neq \emptyset$ so $N_{R^*}(S)=B$ which implies that $|N_{R^*}(S)| \geq |S|$. We find that $R^*$ satisfies Hall's condition and so contains a perfect matching. This matching corresponds to a $1$-factor in $R$ which we shall call $F$. 

\vspace{6pt}
For each $A \in V(R)$ we will write $A^+$ and $A^-$ to denote the successor and predecessor of $A$, respectively, on the cycle of $F$ containing $A$. Suppose that $(A, A^+)_G$ has density $d_A>d$. By \Cref{superreg}, we see that by removing $2\varepsilon m$ vertices from each cluster (and adding these to the exceptional set), we can assume that for each cluster $A$ the bipartite graph $(A, A^+)_G$ is $(2\varepsilon,d_A-3\varepsilon)$-superregular. Note that here we ignore the orientations of the edges and consider the underlying undirected graph together with the definition of superregularity given in \Cref{subsec:reg}. We also have, by \Cref{prop:subsets}, that $(A, A^+)_G$ is $2\varepsilon$-regular. So we have that $(A, A^+)_G$ is:
\begin{itemize}
\item[(a)] $(2\varepsilon,d_A-3\varepsilon)$-superregular and
\item[(b)] $2\varepsilon$-regular.
\end{itemize}
Let $m'\coloneqq m-2\varepsilon m$. We will continue to refer to the clusters as $V_1, V_2, \ldots V_k$ and the exceptional set as $V_0$. We have added $2\varepsilon mk \leq 2\varepsilon n$ vertices to the exceptional set and so we now have that $|V_0| \leq 3\varepsilon n$.


We would like to find a closed walk which visits all of the exceptional vertices and all of the clusters. We will begin by assigning each vertex in the exceptional set $V_0=\{a_1, a_2, \ldots, a_s\}$ to clusters in $R$ as follows.

For each $a_i \in V_0$, we say that a cluster $V_j$ is a \emph{good outcluster} for $a_i$ if $a_i$ has many outneighbours in $V_j$. More precisely, if $$|N^+_G(a_i) \cap V_j| \geq \eta m'/2.$$ We want to assign each exceptional vertex $a_i$ to a good outcluster $T_i$.

Let $q$ be the number of good outclusters. We have that $d^+_G(a_i) \geq \eta n$ and so
$$qm'+(k-q)\eta m'/2 + |V_0| \geq d^+_G(a_i) \geq \eta n.$$
This implies that $$q \geq \frac{\eta n -|V_0|}{m'} - \frac{\eta k}{2} \geq \frac{(\eta-3\varepsilon)n}{m'}-\frac{\eta k}{2} \geq \frac{\eta k}{3}.$$
When choosing a cluster to which to assign the vertex $a_i$, we say that a cluster $V_j$ is \emph{full} if it has been chosen for at least $4|V_0|/(\eta k)$ of the vertices $a_1, a_2, \ldots a_{i-1}$. Then we have at most $$|V_0|/(4|V_0|/(\eta k)) = \eta k/4$$ full clusters. Since the  number of full clusters is less than the number of good outclusters, this means that we can assign each exceptional vertex $a_i$ to a good outcluster, $T_i$, in such a way that each cluster is used at most $\sqrt{\varepsilon}m'/4 > 4|V_0|/(\eta k)$ times.

Similarly, for each $a_i \in V_0$ we say that $V_j$ is a \emph{good incluster} if $$|N^-_G(a_i) \cap V_j| \geq \eta m'/2.$$ Then, by similar reasoning, we can assign a good incluster $U_i$ to each $a_i$ so that no cluster is used more than $\sqrt{\varepsilon}m'/4 \geq |V_0|/(\eta^2k)$ times.

\begin{claim}
There exists a closed spanning walk $W$ on $V_0 \cup V(R)$ which visits all clusters on the same cycle in $F$ the same number of times and which does not use any cluster more than $\sqrt{\varepsilon}m'$ times as an entry cluster or more than $\sqrt{\varepsilon}m'$ times as an exit cluster.
\end{claim}

We define $W$ by a series of shifted walks. Starting at $a_1$ we move to $T_1$ and then follow a shifted walk $W(T_1,U^+_2)$ in $R$. The walk then continues along the cycle to $U_2$ from which it can reach the vertex $a_2$. Continuing in this way, $W$ visits all of the exceptional vertices. For convenience, we extend our definition of an entry cluster so as to include the clusters $T_i$ where we \textquoteleft enter' the first cycle on the walk $W(T_i,U^+_{i+1})$. Similarly, we will also consider the cluster $U_{i+1}$ to be an exit cluster. Finally, we add at most $k$ further shifted walks between any clusters that have not already been covered and to return to $a_1$. We will show that we can choose these shifted walks greedily so that each traverses at most $4/\nu$ cycles and no cluster is used more than $\sqrt{\varepsilon}m'$ times as an entry cluster or more than $\sqrt{\varepsilon}m'$ times as an exit cluster.

Suppose that we have already found $i<|V_0|+k$ such walks. Let $Q$ be the set of clusters that have been used at least $\sqrt{\varepsilon}m'/5$ times internally. Now each of these walks uses at most $8/\nu$ clusters internally so
$$|Q| \leq (|V_0|+k)(8/\nu)/(\sqrt{\varepsilon}m'/5)\leq 160\sqrt{\varepsilon}k/\nu \leq \nu k/8.$$ Then, by \Cref{shortwalk}, we can find the next required shifted walk, traversing at most $4/\nu$ cycles and avoiding $Q$ internally. Since we can assume that a cluster is used at most two times internally by a shifted walk (once as an entry and once as an exit), we can ensure that we find the walks so that each cluster is used at most $\sqrt{\varepsilon}m'/5+2 \leq \sqrt{\varepsilon}m'/4$ times internally.

Together, these shifted walks form a closed spanning walk $W$ on $V_0 \cup V(R)$. We have that each cluster $V$ has been used at most $\sqrt{\varepsilon}m'/4$ times internally. $V$ may also appear up to $\sqrt{\varepsilon}m'/4$ times as $T_i$ and up to $\sqrt{\varepsilon}m'/4$ times as $U^+_{i+1}$ in shifted walks of the form $W(T_i,U^+_{i+1})$. Finally, we suppose that the cluster $V$ was not visited in the initial series of shifted walks between exceptional vertices. Then we have added a shifted walk from some cluster $V'$ to $V$, $W(V',V)$, and another shifted walk, $W(V,V'')$, from $V$ to some cluster $V''$. Together these walks form a longer shifted walk from $V'$ to $V''$. We must add another occurrence of the vertex $V$ as an entry cluster here. So in total, each cluster has been used at most $$\sqrt{\varepsilon}m'/4+\sqrt{\varepsilon}m'/4+\sqrt{\varepsilon}m'/4+1 \leq \sqrt{\varepsilon}m'$$ times as an entry cluster. Similarly, we find that each vertex appears at most $\sqrt{\varepsilon}m'$ times as an exit cluster. Since we have constructed $W$ using shifted walks, $W$ uses all clusters lying on the same cycle an equal number of times, as required.

\vspace{6pt}

We now employ a \textquoteleft short-cutting' technique. We will fix edges in $G$ corresponding to those edges in $W$ which are not contained in a cycles of $F$:
\begin{itemize}
\item for each exceptional vertex $a_i$ we fix an edge of the form $a_it_i$ where $t_i \in T_i$ and an edge of the form $u_ia_i$ where $u_i \in U_i$;
\item for each edge $XY$ in $W$ where $X$ is an exit cluster and $Y$ is an entry cluster, we fix an edge of the form $xy$ where $x\in X$ and $y \in Y$.
\end{itemize}
Since no cluster appears more than $\sqrt{\varepsilon}m'$ times as an entry cluster or more than $\sqrt{\varepsilon}m'$ times as an exit cluster, we can choose these edges to be disjoint outside $V_0$.

For each cluster $A$, let $A_{exit}$ be the set of all vertices in $A$ which are the initial vertex of a fixed edge and $A_{entry}$ be the set of all vertices in $A$ which are the final vertex of a fixed edge. Observe that $A_{entry}$ and $A_{exit}$ are disjoint. We define a bipartite graph $G_A=(A_1,A_2)_G$ where $A_1 = A\setminus A_{exit}$ and $A_2=A^+\setminus A^+_{entry}$ and we consider $G_A$ as an undirected graph. As $W$ is made up of shifted walks, we see that $|A_1|=|A_2|$. Also $|A^+_{entry}|=|A_{exit}| \leq \sqrt{\varepsilon}m'$. So, using (a), we can apply \Cref{prop:subsets} to see that $G_A$ is $4\varepsilon$-regular with density $d'_A \in (d_A-4\varepsilon, d_A+ 4\varepsilon)$. We also have, by (b) and \Cref{removing}, that $G_A$ is $(\sqrt{2\varepsilon}, d_A-3\varepsilon-\sqrt{2\varepsilon})$-superregular. We conclude that $G_A$ is:
\begin{itemize}
\item[(c)]$(\sqrt{2\varepsilon}, d'_A-\sqrt[3]{\varepsilon})$-superregular and
\item[(d)]$\sqrt[3]{\varepsilon}$-regular.
\end{itemize}
Then we can apply \Cref{matchingundir} to see that $G_A$ contains a perfect matching. We will denote this perfect matching $M_A$.

\begin{figure}
\centering
\includegraphics[scale=0.55]{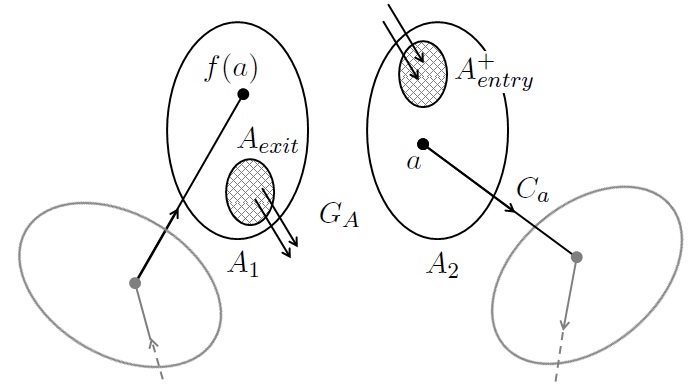}
\caption{The bipartite graph $G_A$ and the vertex $f(a)$ associated with the vertex $a$ in the construction of the digraph $J$.}\label{fig:GA}
\end{figure}

If we consider the set of edges in $M_A$ together with all of the fixed edges, we see that these form a $1$-factor $\mathcal{C}$ in $G$. We will show that we can modify $\mathcal{C}$ so that it becomes a Hamilton cycle.

\begin{claim}
For every cluster $A$, we can find a perfect matching $M'_A$ in $G_A$ such that, if we replace $M_A$ by $M'_A$ in $\mathcal{C}$, then all vertices of $G_A$ lie on a common cycle in the new $1$-factor.
\end{claim}

For each vertex $a \in A_2$ let $C_a$ be the cycle on which $a$ lies in $\mathcal{C}$ and let $f(a)$ be the first vertex encountered in $A_2$ when following the cycle $C_a$, starting from $a$. We define an auxiliary digraph $J$ with $V(J)=A_2$ and $E(J)=\{ab: a,b \in A_2 \text{ and } f(a)b\in E(G_A)\}$. In other words, we have an edge from $a$ to each of the outneighbours of $f(a)$ in $G_A$, that is, $N^+_J(a) \coloneqq N^+_{G_A}(f(A))$. We have that $e(J)=e(G_A)$ and so $J$ has density $d'_A$, the same as $J$.

By (c), we know that $\delta^+(J)>(d'_A-\sqrt[3]{\varepsilon})|A_2|=(d'_A-\sqrt[3]{\varepsilon})|J|$. By identifying each vertex $a$ with the vertex $f(a)$, we see that we also have  $\delta^-(J)>(d'_A-\sqrt[3]{\varepsilon})|A_2|=(d'_A-\sqrt[3]{\varepsilon})|J|$. So $\delta^0(J)>(d'_A-\sqrt[3]{\varepsilon})|J|$. If we choose subsets $X,Y \subseteq V(J)$ of size at least $\sqrt[3]{\varepsilon} |J|$, then, by considering the subsets $X'=\{f(a):a\in X\} \subseteq A_1$ and $Y \subseteq A_2$, we see that the regularity of $G_A$, (d), implies that $J$ is also $\sqrt[3]{\varepsilon}$-regular. So we have that $J$ is $[\sqrt[3]{\varepsilon}, d'_A-\sqrt[3]{\varepsilon}]$-superregular. Then, by \Cref{superham}, $J$ has a Hamilton cycle. This Hamilton cycle corresponds to the required $1$-factor $M'_A$ in $G_A$.

\vspace{6pt}

We apply the claim to every cluster and we will denote the resulting $1$-factor again by $\mathcal{C}$. Now, for each cluster $A$, we have that $A_{entry} \cap A_{exit} = \emptyset$ so we know that every vertex $x \in A$ is contained in at least one of $V(G_A)$ and $V(G_{A^-})$. Then, using that $V(G_A)\cap V(G_{A^-}) \neq \emptyset$, by the claim, all vertices contained in clusters that lie on the same cycle in $F$ will lie on the same cycle in $\mathcal{C}$. We also know that, since $W$ visits every cluster, all non-exceptional vertices must lie on the same cycle in $\mathcal{C}$. Finally, we observe that, since $V_0$ is an independent set in $\mathcal{C}$, each exceptional vertex must lie on cycle in $\mathcal{C}$ which also contains non-exceptional vertices. Therefore, $\mathcal{C}$ is a Hamilton cycle.
\end{proof}

Recall that in \Cref{robdegseq} we stated a degree sequence condition which implies a graph is a robust outexpander. So we can obtain an approximate proof of the conjecture of Nash-Williams, Conjecture \ref{nashconjecture}, as a corollary to \Cref{expander}.

\begin{corollary}
Let $n_0$ be a positive integer and $\tau, \eta$ be constants such that
$$1/n_0 \ll \tau \ll \eta <1.$$ Suppose that $G$ is a digraph on $n \geq n_0$ vertices satisfying
	\begin{enumerate}[(i)]
	\item $d^+_i \geq i+ \eta n$ or $d^-_{n-i-\eta n} \geq n-i$ and
	\item $d^-_i \geq i+ \eta n$ or $d^+_{n-i-\eta n} \geq n-i$
	\end{enumerate}
for all $i<n/2$.
Then $G$ contains a Hamilton cycle.
\end{corollary}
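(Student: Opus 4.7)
The plan is to obtain this corollary as an essentially immediate combination of the two preceding results from this chapter, namely \Cref{robdegseq} and \Cref{expander}. Since the hypotheses on the degree sequences are precisely those appearing in \Cref{robdegseq}, my first step is to apply that lemma directly to $G$ with the parameters $\tau$ and $\eta$ provided in the statement. This immediately gives me two conclusions: that $\delta^0(G) \geq \eta n$, and that $G$ is a robust $(\tau^2, \tau)$-outexpander.

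The second step is to feed these conclusions into \Cref{expander}. That theorem requires a hierarchy of the form $1/n_0 \ll \nu \leq \tau \ll \eta < 1$; setting $\nu \coloneqq \tau^2$, the inequality $\nu \leq \tau$ follows from $\tau < 1$, and the requirement $1/n_0 \ll \nu$ is equivalent to $1/n_0 \ll \tau^2$, which follows from the assumed hierarchy $1/n_0 \ll \tau \ll \eta$ (by adjusting $n_0$ if necessary, or absorbing constants into the $\ll$ notation). Thus the hypotheses of \Cref{expander} are met, and it produces a Hamilton cycle in $G$.

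There is no real obstacle to this proof: the work has all been done in \Cref{robdegseq}, which converts the degree-sequence condition into robust outexpansion, and in \Cref{expander}, which is the main theorem of the section. The only thing to be careful about is the chain of constants, i.e.\ verifying that the $\nu$ produced by \Cref{robdegseq} is small enough (and the $\tau$ unchanged) to satisfy the hypotheses of \Cref{expander}. Since the conclusion of \Cref{robdegseq} gives $\nu = \tau^2$, and $\tau$ can be taken as small as needed relative to $\eta$ by shrinking it if necessary, no additional adjustment of parameters is required and the corollary follows in one or two lines.
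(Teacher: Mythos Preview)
Your proposal is correct and matches the paper's approach exactly: the paper presents this corollary as an immediate consequence of \Cref{robdegseq} (to obtain $\delta^0(G)\geq \eta n$ and robust $(\tau^2,\tau)$-outexpansion) followed by \Cref{expander}, without even writing out a formal proof. Your check on the hierarchy of constants with $\nu\coloneqq\tau^2$ is the only detail to verify, and you have handled it correctly.
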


\chapter{Arbitrary Orientations of Hamilton Cycles}
\label{chap:arbitrary}

In the previous sections, we have looked for Hamilton cycles in digraphs and always assumed that these cycles are oriented in the standard way. In this chapter, we will instead consider what minimum semidegree will guarantee that we have, not only a standard Hamilton cycle, but \emph{any} orientation of a Hamilton cycle. For example, we could also require that a graph (on an even number of vertices) contains an \emph{anti-directed} Hamilton cycle, a Hamilton cycle in which the orientations of the edges alternate. In \cite{kelly}, Kelly gave a condition on the minimum semidegree which will ensure every orientation of a Hamilton cycle in any sufficiently large oriented graph.

\begin{theorem}[Kelly, \cite{kelly}]\label{anyorient}
For every $\alpha>0$ there exists an integer $n_0=n_0(\alpha)$ such that every oriented graph on $n\geq n_0$ vertices with $\delta^0(G) \geq (3/8+\alpha)n$ contains every orientation of a Hamilton cycle.
\end{theorem}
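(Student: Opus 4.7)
The plan is to build on the framework of \Cref{expander} (Hamilton cycles in robust outexpanders), combined with the Blow-up Lemma to embed the prescribed orientation. First, since $\delta^0(G) \geq (3/8+\alpha)n$, \Cref{lem:3/8} gives that $G$ is a robust $(\nu,\tau)$-outexpander for $0<\nu\ll\tau\ll\alpha$. Apply the degree form of the Diregularity Lemma (\Cref{direg}) with parameters $\varepsilon\ll d\ll \nu$ to obtain clusters $V_1,\ldots,V_k$, exceptional set $V_0$, pure digraph $G'$ and reduced digraph $R$; by \Cref{Routexpander}, $R$ is a robust $(\nu/2,2\tau)$-outexpander with $\delta^0(R)\geq (3/8+\alpha/2)k$. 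A symmetric argument also gives robust inexpansion of $R$, which will matter for orientations with many reversals.

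Fix the target oriented Hamilton cycle $C$ and classify its vertices: a vertex is a \emph{sink} if both incident $C$-edges point in, a \emph{source} if both point out, and \emph{smooth} otherwise. Let $s$ denote the number of sinks (equivalently, sources) in $C$, and split $C$ at these into maximal monotone \emph{runs}. The proof splits into two regimes. When $s$ is sublinear, $C$ consists of a bounded number of long monotone runs and is handled almost exactly as in the proof of \Cref{expander}: find a $1$-factor in $R$ (using $\delta^0(R)>k/2$ and Hall's theorem), absorb exceptional vertices via shifted walks, make the cluster pairs along the $1$-factor superregular using \Cref{superreg}, and embed each run by the bipartite Blow-up Lemma (\Cref{blowup}); the few sinks and sources are placed at cluster boundaries whose in- and out-neighbourhoods in $R$ can support the orientation reversal. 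When $s$ is linear in $n$, the cycle is closer to an anti-directed cycle, and a single $1$-factor of $R$ is not the right skeleton. Here we instead use that $\delta^0(R)>3k/8$ to find an \emph{oriented skeleton} in $R$ — essentially a $2$-factor in the underlying undirected reduced graph together with an orientation of its edges matching a long stretch of $C$'s pattern. Each run of $C$ is assigned to an edge of this skeleton, and each run is embedded by the $r$-partite Blow-up Lemma (\Cref{rpartblowup}) into the correspondingly refined superregular blocks.

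The main obstacle will be incorporating the exceptional vertices of $V_0$ compatibly with an arbitrary orientation pattern. Each $v\in V_0$ must be spliced into $C$ at a position whose local orientation demands agree with both $N^+_G(v)$ and $N^-_G(v)$ in the clusters on either side. In \Cref{expander} this is handled for the all-forward orientation by assigning $v$ a \emph{good outcluster} and a \emph{good incluster}; in the general case, one needs a joint assignment that also respects the orientation of the two $C$-edges at the insertion point. The semidegree bound $\delta^0(G)\geq (3/8+\alpha)n$ guarantees that each $v$ has linearly many candidate insertion positions of each orientation type, so a counting/greedy argument analogous to the one preceding the shifted-walk construction in \Cref{expander} succeeds, provided no cluster is overused; one then builds shifted walks on $R$ (possibly of two types, one for each local orientation pattern) that traverse all clusters and pass through the prescribed insertion clusters. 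A final short-cutting step, exactly as in \Cref{expander}, converts a $1$-factor of the modified digraph into a single Hamilton cycle of $G$ realising the prescribed orientation of $C$.
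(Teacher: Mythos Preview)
Your proposal has the right opening move (robust outexpansion via \Cref{lem:3/8}, then Diregularity) and the right case split on the number of direction changes, but the core embedding idea is off in both cases.

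The paper does \emph{not} change the skeleton between the two regimes. In both cases it takes a single directed Hamilton cycle $F=V_1V_2\cdots V_k$ in the reduced digraph and \emph{winds} $C$ around it: traverse $C$, and at each edge move to $V_{i+1}$ if the edge is forward and to $V_{i-1}$ if it is backward. Every edge of $C$ is thus assigned to an edge of $F$ (used in one direction or the other), and \Cref{lemma16} together with \Cref{lemma10} then realise this assignment via the Blow-up Lemma. Your ``oriented skeleton'' for the many-sinks case---a $2$-factor of the underlying undirected reduced graph whose orientation matches a stretch of $C$'s pattern---is neither defined precisely nor shown to exist, and is in fact unnecessary.

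What actually distinguishes the two regimes is the \emph{balancing mechanism}, not the skeleton. After winding, the cluster loads $a(i)$ are only approximately equal to $m$. When $C$ has many neutral pairs (your many-sinks case), each neutral pair assigned as $V_{i-1}V_iV_{i-1}$ can be rerouted to $V_{i-1}V_jV_{i-1}$ along a \emph{skewed traverse} $T(V_{i-1},V_j)$, shifting one unit of load from $V_i$ to $V_j$ without touching any other cluster. When $C$ has few neutral pairs, it contains many long monotone runs of length a multiple of $k$; each such run currently traces out copies of $F$, and can be replaced by $W(V_{i-1},V_j)W(V_j,V_{i+1})F\cdots F$, again shifting a single unit of load. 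Your sketch has the cases reversed in difficulty and omits this balancing step entirely.

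Finally, you are missing a structural ingredient: the paper first splits $V(G)$ randomly into two halves $A$ and $B$ and applies the Diregularity Lemma to each with very different parameters, so that the exceptional set $V'_0$ produced on the $A$-side is tiny compared to the cluster size $m_B$ on the $B$-side. This is what makes the greedy incorporation of exceptional vertices and the greedy embedding of the short linking paths $Q_i$ possible without overusing any cluster; a single application of \Cref{direg} would leave $|V_0|$ comparable to $m$, and your counting argument for inserting exceptional vertices would not go through.
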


Recall from \Cref{thm:stdhamcycle} that any sufficiently large oriented graph with minimum semidegree at least $(3n-4)/8$ has a directed Hamilton cycle. We might then expect to be able to replace the bound in \Cref{anyorient} by $(3n-4)/8$. However, this minimum semidegree does not suffice when we look for any orientation of a Hamilton cycle. We show this in \Cref{prop:bestforham} when we construct an oriented graph which does not contain an anti-directed Hamilton cycle. Again, we refer to \Cref{fig:noantidir} for an illustration.

\begin{proposition}\label{prop:noantidir}
There are infinitely many oriented graphs $G$ with $\delta^0(G) = (3|G|-4)/8$ which do not contain an anti-directed Hamilton cycle.
\end{proposition}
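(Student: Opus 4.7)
The plan is to adapt the construction of \Cref{prop:bestforham}: for each odd $m \geq 1$, let $G$ be the oriented graph on $n = 4m$ vertices whose vertex set is partitioned into four blocks $A, B, C, D$ of size $m$, where $A$ and $C$ are regular tournaments, $B$ and $D$ are independent sets, and all edges from $A$ to $B$, from $B$ to $C$, from $C$ to $D$, and from $D$ to $A$ are present; in addition, orient the $B$--$D$ edges as an almost-regular bipartite tournament so that each vertex of $B$ (respectively $D$) has $(m-1)/2$ or $(m+1)/2$ out-neighbours in $D$ (respectively $B$). This is exactly the structure depicted in \Cref{fig:noantidir}.

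A quick degree check shows $\delta^0(G) = (3m-1)/2$: vertices in $A$ and $C$ have both in- and out-degree $(m-1)/2 + m$, while vertices in $B$ and $D$ have in- and out-degrees at least $m + (m-1)/2$ by the near-regularity of the $B$--$D$ orientation. Since $m$ is odd, $(3m-1)/2$ is an integer equal to $(3 \cdot 4m - 4)/8 = (3n-4)/8$, and varying $m$ over odd positive integers yields infinitely many such graphs.

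To rule out an anti-directed Hamilton cycle, suppose for contradiction that $H$ is one in $G$; recall that the vertices of $H$ then alternate between \emph{sources} (both incident cycle edges pointing out) and \emph{sinks} (both pointing in). Decompose $H$ into maximal runs of consecutive vertices lying in $B \cup D$ (call these \emph{$BD$-runs}) and in $A \cup C$ (\emph{$AC$-runs}), which alternate around $H$. Because $B$ and $D$ are independent every $BD$-run alternates between $B$ and $D$, and because there are no $A$--$C$ edges every $AC$-run lies wholly in $A$ or wholly in $C$. The key claim is that the $AC$-runs immediately preceding and following any given $BD$-run lie in the same class; I would prove this by a short case analysis on the initial and terminal vertices of the $BD$-run. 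For example, if the $BD$-run begins with a source in $B$, the preceding cycle edge leaves this vertex and its out-neighbours outside $D$ all lie in $C$, so the preceding $AC$-run lies in $C$; tracing the forced source/sink alternation through the run, the terminal vertex is either a source in $B$ or a sink in $D$, and in either case the following cycle edge goes to a vertex of $C$. The other three starting types give analogous conclusions (with $A$ in place of $C$ when the starting vertex is a sink in $B$ or a source in $D$). Propagating the claim around $H$ then forces every $AC$-run to lie in the same class, contradicting $|A| = |C| = m \geq 1$. The main obstacle is the boundary case analysis at the four possible endpoint configurations; once it is in hand, the propagation conclusion is immediate.
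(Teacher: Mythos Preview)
Your construction and argument are correct and coincide with the paper's: your parametrisation $n=4m$ with $m$ odd yields exactly the family $n=8k+4$ that the paper uses (writing $n=8m+4$ with block sizes $2m+1$), and your run-based case analysis is a clean repackaging of the paper's vertex-by-vertex tracing from a vertex of $B$, both showing that an anti-directed walk from such a vertex is confined to $B\cup C\cup D$ or to $A\cup B\cup D$ according to whether that vertex is a source or a sink.
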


\begin{proof}
Suppose that $m$ is a positive integer and let $n \eqqcolon 8m+4$. We will define an oriented graph $G$ on $n$ vertices as follows. Let $A$ and $C$ be regular tournaments on $2m+1$ vertices and let $B$ and $D$ each be sets of $2m+1$ vertices. Then $G$ is the disjoint union of $A$, $B$, $C$ and $D$ together with:
\begin{itemize}
\item all edges from $A$ to $B$, $B$ to $C$, $C$ to $D$ and $D$ to $A$;
\item all edges between $B$ and $D$, oriented to form a bipartite graph which is as regular as possible, so that the indegree and outdegree of each vertex differ by at most one.
\end{itemize} 
Then $d^+(x) = d^-(x) = 3m+1$ for all $x \in V(A),V(C)$ and $d^+(x) = d^-(x)\geq 3m+1$ for all $x \in B,D$. So $\delta^0(G) = 3m+1 = (3n-4)/8$. We will show that this graph does not contain an anti-directed Hamilton cycle.

Choose any vertex $v$ in $B$. We will try to construct an anti-directed Hamilton cycle, starting from this vertex. First we suppose we follow a forward oriented edge from $v$. We see that this edge must enter either $C$ or $D$. Then, since we are constructing an anti-directed cycle, the next edge must go backwards. From $C$ we either enter $B$ or remain in $C$ and from $D$ we move to either $B$ or $C$. So in both cases, we enter either $B$ or $C$. If we then follow a forward oriented edge from $B$ the situation repeats. A forward oriented edge from $C$ enters $D$ or remains in $C$. So we again see that we can repeat the previous arguments. But none of the anti-directed paths we have considered visit $A$. If we follow a backward oriented edge from $v$ instead, a similar argument shows that all anti-directed paths starting from $v$ with a backward edge avoid $C$. So any anti-directed cycle in $G$ which visits the vertex $v$ will either avoid all vertices in $A$ or all vertices in $C$. Therefore, $G$ does not contain an anti-directed Hamilton cycle. 
\end{proof}

We will ultimately prove the following generalisation of \Cref{anyorient} for robust $(\nu, \tau)$-outexpanders. The proof will closely follow that of Kelly's result.

\begin{theorem}\label{cor:anyorient2}
Let $n_0$ be a positive integer and $\nu, \tau, \eta$ be positive constants such that $1/n_0 \ll \nu \leq \tau \ll \eta <1$. Let $G$ be a digraph on $n \geq n_0$ vertices with $\delta^0(G) \geq \eta n$ and suppose $G$ is a robust $(\nu, \tau)$-outexpander. Then $G$ contains every orientation of a Hamilton cycle.
\end{theorem}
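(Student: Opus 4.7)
The plan is to adapt the proof of Theorem \ref{expander} to an arbitrary orientation $C$ of a Hamilton cycle, following Kelly's strategy for Theorem \ref{anyorient}. Decompose $C$ into its maximal consistently-oriented subpaths (the runs of $C$); consecutive runs meet at a source or a sink of $C$, and the entire embedding will be built one run at a time.

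Apply the Diregularity Lemma (\ref{direg}) to $G$ with $\varepsilon \ll d \ll \nu$ to produce clusters $V_1,\ldots,V_k$ of size $m$, exceptional set $V_0$, pure digraph $G'$ and reduced digraph $R$. By Lemma \ref{Routexpander}, $R$ is a robust $(\nu/2,2\tau)$-outexpander with $\delta^0(R) \geq \eta k/2$, so the 1-factor claim from the proof of Theorem \ref{expander} yields a 1-factor $F$ in $R$; refining by Proposition \ref{superreg} we may assume that $(A,A^+)_{G'}$ is $(2\varepsilon,d-3\varepsilon)$-superregular (in the undirected sense) for every edge $AA^+$ of $F$. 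Then assign each exceptional vertex $a \in V_0$ to an insertion position in $C$: the local orientation of $C$ at that position dictates whether the neighbouring clusters of $a$ must be good in- or good out-clusters of $a$ in $G$, but since $\delta^0(G) \geq \eta n$ the counting argument of Theorem \ref{expander} still produces a balanced assignment in which no cluster is used more than $\sqrt{\varepsilon}m$ times.

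Link these insertions into a closed spanning walk $W$ on $V(R) \cup V_0$ that respects the orientation pattern of $C$: each transition between consecutive cycles of $F$ is handled by a shifted walk from Proposition \ref{shortwalk} if the corresponding segment of $C$ is forward, and by a reverse shifted walk (the analogue of Definition \ref{def:shifted} with outneighbourhoods replaced by inneighbourhoods) if it is backward. Fix a concrete oriented edge of $G$ for each transition of $W$; inside each pair $(A,A^+)$, the remainder stays superregular by Propositions \ref{removing}, \ref{prop:subsets} and \ref{adding}, and the portion of $C$ assigned to $(A,A^+)$ is a union of oriented paths of maximum degree two, so the Blow-up Lemma (\ref{blowup}) embeds it. A short-cutting step modelled on the closing argument of the proof of Theorem \ref{expander} fuses the local pieces into a single spanning cycle realising $C$. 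The main obstacle is the orientation-aware shifted-walk step: producing closed walks in $R$ that can be oriented consistently with $C$ requires both the forward version of Proposition \ref{shortwalk} and a symmetric backward counterpart, and hence a robust in-expansion statement for $R$, which follows from its robust out-expansion together with $\delta^0(R) \geq \eta k/2$ by a double-counting argument. Once this symmetric statement is in place, the remainder is an orientation-sensitive rerun of Theorem \ref{expander} combined with the exceptional-vertex absorption used in Theorem \ref{c6packing}.
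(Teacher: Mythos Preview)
Your sketch captures one genuine ingredient the paper also needs---robust in-expansion of the reduced graph, which the paper derives via Proposition~\ref{prop:inout} (DeBiasio) and then works throughout with a robust diexpander---but the overall architecture you propose does not match the paper's and has a real gap at the balancing step.

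The paper does \emph{not} adapt the $1$-factor/shifted-walk machinery of Theorem~\ref{expander}. Instead it takes a Hamilton cycle $F$ in the reduced graph (not a $1$-factor), randomly assigns long subpaths of $C$ around $F$ via Lemma~\ref{lemma16} to get an approximately balanced assignment, and then connects these subpaths by short paths of prescribed orientation using Lemma~\ref{lem:isopath}. The exceptional vertices are absorbed one at a time into this walk. The crucial step you are missing is how to turn an approximately balanced assignment into an exactly balanced one. This depends on the structure of $C$ and forces a genuine case split: if $C$ has many neutral pairs (changes of direction), one reroutes individual neutral pairs via \emph{skewed traverses} (Section~\ref{sub:skewed}); if $C$ has few neutral pairs, it must contain many long consistently oriented runs, and one replaces entire runs by carefully chosen shifted walks of the same length. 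Your proposal's ``orientation-aware shifted walks'' do not supply either mechanism, and without one of them there is no way to ensure each cluster is assigned exactly $m$ vertices.

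A second gap is the final embedding. The closing argument of Theorem~\ref{expander} rewires perfect matchings in each $(A,A^+)$ via an auxiliary superregular digraph $J$ to merge cycles; this relies on all edges in the $1$-factor pointing the same way and does not survive arbitrary orientations. The paper instead uses a tailored consequence of the Blow-up Lemma (Lemma~\ref{lemma10}, built from Lemmas~\ref{lemma11} and~\ref{lemma12}) that embeds a union of arbitrarily oriented paths with \emph{prescribed endpoints} into the superregular structure around $F$. Simply invoking Lemma~\ref{blowup} on ``a union of oriented paths'' is not enough, because you must hit the specific link vertices already fixed in Step~4$'$. Finally, the paper's two-phase partition of $V(G)$ into $A$ and $B$ (with a second application of the Diregularity Lemma at a much finer $\varepsilon$) is what keeps the exceptional set small relative to the cluster size; your single application of Lemma~\ref{direg} would leave $|V_0|$ comparable to $m$, which the absorption arguments cannot tolerate.
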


Recall from \Cref{lem:3/8} that any sufficiently large oriented graph $G$ of minimum semidegree at least $(3/8+\alpha)n$ is a robust outexpander. So we see that \Cref{cor:anyorient2} indeed implies \Cref{anyorient}. Similarly, we saw a degree sequence condition in \Cref{robdegseq} which implies that $G$ is a robust outexpander. So \Cref{cor:anyorient2} also implies the following result.

\begin{corollary}
Let $n_0$ be a positive integer and $\tau, \eta$ be constants such that
$$1/n_0 \ll \tau \ll \eta <1.$$ Suppose that $G$ is a digraph on $n \geq n_0$ vertices satisfying
	\begin{enumerate}[(i)]
	\item $d^+_i \geq i+ \eta n$ or $d^-_{n-i-\eta n} \geq n-i$ and
	\item $d^-_i \geq i+ \eta n$ or $d^+_{n-i-\eta n} \geq n-i$
	\end{enumerate}
for all $i<n/2$. Then $G$ contains every orientation of a Hamilton cycle.
\end{corollary}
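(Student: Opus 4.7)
The plan is to deduce this statement as a direct corollary of \Cref{cor:anyorient2} by verifying that the degree sequence hypotheses force $G$ to be a robust outexpander of linear minimum semidegree. Concretely, I would first choose an auxiliary constant $\nu$ with $1/n_0 \ll \nu \ll \tau$, and in particular take $\nu := \tau^2$, so that the chain $1/n_0 \ll \nu \leq \tau \ll \eta < 1$ demanded by \Cref{cor:anyorient2} holds.

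Next I would invoke \Cref{robdegseq} verbatim. The hypotheses of that lemma are precisely conditions (i) and (ii) of the present corollary (they are stated identically), so it applies and yields two conclusions: $\delta^0(G) \geq \eta n$, and $G$ is a robust $(\tau^2,\tau)$-outexpander. With $\nu = \tau^2$ this is exactly the robust outexpansion property assumed by \Cref{cor:anyorient2}, and the minimum semidegree bound is also exactly what is required there.

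Finally, I would apply \Cref{cor:anyorient2} to $G$ with parameters $\nu = \tau^2$, $\tau$, and $\eta$. The conclusion is that $G$ contains every orientation of a Hamilton cycle, which is precisely what we need.

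There is no real obstacle here: the work has already been done in \Cref{robdegseq} (which translates the Nash-Williams-style degree sequence hypothesis into robust outexpansion) and in \Cref{cor:anyorient2} (which extracts arbitrary Hamilton orientations from robust outexpansion). The only thing to check is that the constant hierarchy matches up, i.e.\ that $\nu := \tau^2$ satisfies $\nu \leq \tau$ and $1/n_0 \ll \nu$, both of which follow immediately from $1/n_0 \ll \tau \ll \eta < 1$. So the proof is essentially a two-line citation chain, exactly parallel to the corollary at the end of \Cref{chap:digraphs} that derived the approximate Nash-Williams conjecture for the directed Hamilton cycle from \Cref{expander}.
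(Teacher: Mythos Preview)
Your proposal is correct and matches the paper's approach exactly: the paper presents this corollary as an immediate consequence of \Cref{robdegseq} (which supplies $\delta^0(G)\geq\eta n$ and robust $(\tau^2,\tau)$-outexpansion) together with \Cref{cor:anyorient2}, and your choice $\nu=\tau^2$ is precisely what makes the constant hierarchy line up.
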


\section{Some Useful Results and Techniques}

DeBiasio observed (in private correspondence) that robust outexpansion implies robust inexpansion.

\begin{proposition}[DeBiasio]\label{prop:inout}
Let $n_0$ be a positive integer and $\nu, \tau, \eta$ be positive constants such that $1/n_0 \ll \nu \leq \tau \ll \eta <1$. Let $G$ be a digraph on $n \geq n_0$ vertices with $\delta^0(G) \geq \eta n$. If $G$ is a robust $(\nu, \tau)$-outexpander, then $G$ is also a robust $(\nu^3,2\tau)$-inexpander.
\end{proposition}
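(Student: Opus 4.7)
The plan is to argue by contradiction. Suppose there exists $S \subseteq V(G)$ with $2\tau n < |S| < (1-2\tau)n$ for which the conclusion fails, that is, the set $U := RN^-_{\nu^3, G}(S)$ satisfies $|U| < |S| + \nu^3 n$. Let $B := V(G) \setminus U$, so every $b \in B$ has $|N^+_G(b) \cap S| < \nu^3 n$ and in particular $e(B, S) < |B| \nu^3 n$. The strategy is to feed $B$ into the robust $(\nu, \tau)$-outexpansion hypothesis: the resulting robust out-neighbourhood $RN^+_{\nu, G}(B)$ is forced to be large by the hypothesis, but simultaneously forced to be small because very few edges leave $B$ in the direction of $S$.

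The first thing to check is that $|B|$ lies in the admissible range $(\tau n, (1-\tau)n)$ for the hypothesis. The lower bound is immediate: $|B| > n - |S| - \nu^3 n > 2\tau n - \nu^3 n > \tau n$, using $\nu \leq \tau$. For the upper bound I will use the minimum in-degree condition $\delta^-(G) \geq \eta n$, which gives
\[
|S| \eta n \leq e(V, S) = e(U, S) + e(B, S) \leq |U||S| + \nu^3 n^2,
\]
and hence $|U| \geq \eta n - \nu^3 n/(2\tau) \geq \eta n / 2 > \tau n$. Thus $|B| < (1-\tau)n$ as required.

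Now apply robust $(\nu,\tau)$-outexpansion to $B$, writing $R_B := RN^+_{\nu, G}(B)$; this yields $|R_B| \geq |B| + \nu n$. For each $v \in R_B \cap S$ we have $|N^-_G(v) \cap B| \geq \nu n$, so
\[
\nu n \cdot |R_B \cap S| \leq e(B, R_B \cap S) \leq e(B, S) < |B|\nu^3 n \leq \nu^3 n^2,
\]
and hence $|R_B \cap S| < \nu^2 n$. Combined with the trivial bound $|R_B \setminus S| \leq n - |S|$, this gives $|R_B| < \nu^2 n + (n - |S|)$. Putting the two bounds on $|R_B|$ together,
\[
(n - |S| - \nu^3 n) + \nu n < |B| + \nu n \leq |R_B| < \nu^2 n + (n - |S|),
\]
which simplifies to $\nu - \nu^3 < \nu^2$, contradicting $\nu \ll 1$.

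The main obstacle is conceptual rather than technical: one has to identify the right set on which to invoke the robust outexpansion hypothesis. The natural choice is the complement $B$ of the putative (too small) robust in-neighbourhood of $S$, because the failure assumption gives an automatic upper bound $e(B, S) < \nu^3 n^2$ that precisely cancels the lower bound on $|R_B|$ supplied by outexpansion. The verification that $|B|$ is in range is a routine consequence of $\delta^-(G) \geq \eta n$; the rest is a short double count.
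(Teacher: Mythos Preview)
Your proof is correct and follows essentially the same argument as the paper: you take the complement $B$ (the paper calls it $T$) of the too-small robust in-neighbourhood of $S$, use $\delta^-(G)\geq\eta n$ to verify that $|B|$ lies in the admissible range, and then derive a contradiction from the edge bound $e(B,S)<|B|\nu^3 n$ versus what robust $(\nu,\tau)$-outexpansion forces for $RN^+_{\nu,G}(B)$. The only cosmetic difference is that the paper concludes by directly showing $|RN^+_{\nu,G}(T)|<|T|+\nu n$, whereas you rearrange the same inequalities into the numerical contradiction $\nu-\nu^3<\nu^2$.
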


\begin{proof}
Suppose that $G$ is not a robust $(\nu^3,2\tau)$-inexpander. Then there is a set $S\subseteq V(G)$ with $2\tau n <|S|<(1-2\tau)n$ such that $|RN^-_{\nu^3,G}(S)|<|S|+\nu^3n$. Let $T:=V(G)\setminus RN^-_{\nu^3,G}(S)$. Observe that
$$|S|\eta n \leq e(V(G), S) \leq |RN^-_{\nu^3,G}(S)||S|+(n-|RN^-_{\nu^3,G}(S)|)\nu^3n,$$
so $|RN^-_{\nu^3,G}(S)|> \eta n/2$.
Therefore,
$$\tau n < n-(1-2\tau+\nu^3)n<|T|< (1-\eta/2)n<(1-\tau)n.$$
By the definition of $T$, we have that $e(T,S)< |T|\nu^3n$ and so
$|RN^+_{\nu,G}(T)\cap S|<|T|\nu^2< \nu^2n$.
Hence,
\begin{align*}
|RN^+_{\nu,G}(T)|&=|RN^+_{\nu,G}(T)\setminus S|+|RN^+_{\nu,G}(T)\cap S|\\
&< n-|S|+\nu^2n \leq n-(|S|+\nu^3n)+\nu n\\
&< n-|RN^-_{\nu^3,G}(S)|+\nu n = |T|+\nu n
\end{align*}
and so $G$ is not a robust $(\nu, \tau)$-outexpander.
\end{proof}

We will actually prove the following result. Note that \Cref{prop:inout} gives that any robust $(\nu, \tau)$-outexpander is also a robust $(\nu^3, 2\tau)$-diexpander, so \Cref{cor:anyorient2} and \Cref{anyorient2} are equivalent.

\begin{theorem}\label{anyorient2}
Let $n_0$ be a positive integer and $\nu, \tau, \eta$ be positive constants such that $1/n_0 \ll \nu \leq \tau \ll \eta <1$. Let $G$ be a digraph on $n \geq n_0$ vertices with $\delta^0(G) \geq \eta n$ and suppose $G$ is a robust $(\nu, \tau)$-diexpander. Then $G$ contains every orientation of a Hamilton cycle.
\end{theorem}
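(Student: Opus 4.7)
The plan is to generalise the proof of Theorem \ref{expander}, replacing the single directed Hamilton cycle by an arbitrary orientation $C$ of a cycle on $n$ vertices. First I would choose constants $1/n_0 \ll \varepsilon \ll d \ll \nu \leq \tau \ll \eta$ and apply the degree form of the Diregularity Lemma (Lemma \ref{direg}) to $G$, producing clusters $V_1,\ldots,V_k$, an exceptional set $V_0$, pure digraph $G'$ and reduced digraph $R$. By Lemma \ref{Routexpander} and Proposition \ref{prop:inout}, $R$ itself is a robust diexpander with $\delta^0(R)\geq \eta k/2$, so Theorem \ref{expander} applied to $R$ yields a directed Hamilton cycle $H=V_1V_2\cdots V_k V_1$. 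A directed analogue of Proposition \ref{superreg} then produces subclusters such that each consecutive directed pair $(V_i,V_{i+1})_{G'}$ along $H$ is superregular; the robust diexpansion of $R$ also guarantees, for each $V_i$, a short shifted walk returning to $V_i$ in the opposite direction, which will be exploited to carry ``reverse'' edges.

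Next I would analyse the target orientation $C$. Call a vertex a \emph{pivot} if it is a sink or a source of $C$; the non-pivots partition $C$ into maximal directed sub-paths (the \emph{runs}) separated by pivots. Between two consecutive pivots the embedding problem is essentially that of embedding a directed path through a sequence of directed superregular pairs of $G'$, which will be handled by the bipartite form of the Blow-up Lemma (Lemma \ref{blowup}) or its $r$-partite form (Lemma \ref{rpartblowup}). At each pivot the embedding requires simultaneously an edge of $G$ directed into, and one directed out of, the pivot's image; the robust diexpansion (outward \emph{and} inward, by Proposition \ref{prop:inout}) ensures such pairs of edges are abundant and that one can find short directed paths of either orientation joining the relevant clusters.

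To incorporate the vertices in $V_0$ I would, as in Theorem \ref{expander}, use $\delta^0(G)\geq\eta n$ to assign each exceptional vertex $a$ a \emph{good out-cluster} $T_a$ and \emph{good in-cluster} $U_a$ (clusters containing at least $\eta m/2$ out- respectively in-neighbours of $a$), distributed so that no cluster is used too often. Using shifted walks in $R$ controlled by Proposition \ref{shortwalk}, I would then build a closed template walk in $R$ that visits every cluster, passes through each $U_a$ and $T_a$ at designated positions, and contains built-in ``reverse detours'' at positions corresponding to the pivots of $C$. This template prescribes, for every vertex of $C$, which cluster (or exceptional vertex) it is to be mapped to, in such a way that each cluster is entered and exited at most $\sqrt\varepsilon m'$ times. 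Once the template is fixed, one pre-commits a small number of edges of $G$ at the pivots and at the exceptional vertices, removes their endpoints from the clusters, and applies the $r$-partite Blow-up Lemma to each remaining superregular pair to complete the embedding -- this is the same short-cutting strategy used at the end of the proof of Theorem \ref{expander}.

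The hard part will be the construction of the template in $R$. When $C$ has only a handful of pivots this is essentially straightforward, as the pivots can be treated in the same way as exceptional vertices of $V_0$: each is placed on its own short shifted detour. The genuine difficulty arises for orientations with very many pivots (the extreme case being the anti-directed cycle, where pivots are dense in $C$), since one cannot afford a separate shifted detour for each pivot without overloading clusters or violating the $\sqrt\varepsilon m'$ cap on entries and exits. Overcoming this will require exploiting robust diexpansion in both directions to alternate in- and out-shifted walks along $H$, and probably to pre-allocate, for every edge $V_iV_{i+1}$ of $H$, a modest number of ``reverse-edge tokens'' which can be spent whenever $C$ requires a direction-change at that boundary; controlling the total budget of such tokens so that each cluster remains evenly loaded is the technical heart of the argument.
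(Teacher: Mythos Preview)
Your outline captures the broad regularity--template--Blow-up framework, but the part you flag as ``the hard part'' is exactly where the proposal lacks a working mechanism, and the mechanism the paper uses is rather different from your ``reverse-edge tokens'' idea.

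The paper's key move is a dichotomy on the number $\lambda n$ of \emph{neutral pairs} (your pivots) in $C$. In Case~1 ($\lambda$ large), the neutral pairs are not obstacles but \emph{resources}: after an initial approximate assignment of $C$ around the Hamilton cycle $F$ in $R$ (obtained by a random-start embedding lemma, \Cref{lemma16}), each neutral pair assigned as $V_iV_{i+1}V_i$ can be reassigned to $V_iV_jV_i$ for any $V_j\in N^+_R(V_i)$. Chaining such reassignments along a \emph{skewed traverse} $T(V_{i-1},V_j)$ shifts one unit of load from $V_i$ to $V_j$ while leaving all other cluster loads unchanged; the same device absorbs exceptional vertices. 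In Case~2 ($\lambda$ small), $C$ necessarily contains many long consistently oriented runs of length $\Theta(M_B/\nu')$, and such a run wound around $F$ can be replaced wholesale by a walk $W(V_{i-1},V_j)W(V_j,V_{i+1})F\cdots F$ of the same length, again shifting load by exactly one. So the two cases use complementary structural features of $C$ itself to do the balancing, rather than pre-allocating tokens on edges of $F$; your token budget would have to scale with the number of pivots, and for the anti-directed cycle there is no evident way to keep it below $\sqrt{\varepsilon}m$ per cluster.

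A second, more technical, missing ingredient is the two-stage application of the Diregularity Lemma. The paper first splits $V(G)$ randomly into halves $A$ and $V(G)\setminus A$ (each inheriting diexpansion), applies the lemma to $G\setminus A$ with parameter $\varepsilon$, then applies it again to $A\cup V_0$ with a much smaller parameter $\varepsilon_A\ll 1/M_B$. This guarantees that the final exceptional set $V_0'$ satisfies $|V_0'|\leq \varepsilon_A n_B$, which is small compared to the cluster size $m_B$ in the \emph{first} partition; without this, the number of exceptional vertices could exceed the supply of neutral pairs (or long runs) available per cluster, and Step~2 of the argument would fail.
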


We will split the proof of \Cref{anyorient2} into two cases based on how close the orientation of the Hamilton cycle, $C$, we wish to find is to the standard orientation. The following definition, allows us to compare any cycle to the standard orientation  

\begin{definition}
Suppose that $G$ is an oriented graph. The subgraph induced by distinct vertices $x,y,z \in V(G)$ is called a \emph{neutral pair} if $xy, zy \in E(G)$. We write $n(G)$ for the number of neutral pairs in $G$.
\end{definition}

We observe that every neutral pair $x,y,z$ in an arbitrarily oriented cycle $C$ must also have a corresponding \textquoteleft inverse' neutral pair $x',y',z'$ whose edges have the opposite direction, that is, $y'x', y'z' \in E(C)$. Then $n(C)$ indicates the number of changes of direction on the cycle.

In our proof of \Cref{anyorient2} we will need to use the Diregularity Lemma, \Cref{direg}. The reduced digraph inherits many properties of $G$. Recall that in \Cref{Routexpander}, we were able to show that the reduced digraph will also be a robust outexpander. By considering also the robust inneighbourhoods, it is easy to adapt the proof of \Cref{Routexpander} and obtain the following result.

\begin{lemma}\label{diexpander}
Let $k_0,n_0$ be positive integers and $\varepsilon, d, \eta, \nu, \tau$ be positive constants such that $$1/n_0 \ll 1/k_0, \varepsilon \ll d \ll \nu, \tau, \eta < 1.$$ Suppose that $G$ is a digraph on $n \geq n_0$ vertices with $\delta^0(G) \geq \eta n$ such that $G$ is a robust $(\nu, \tau)$-diexpander. Let $R$ be the reduced digraph of $G$ with parameters $\varepsilon, d$ and $k_0$. Then $\delta^0(R) \geq \eta |R| /2$ and $R$ is a robust $(\nu/2, 2\tau)$-diexpander.
\end{lemma}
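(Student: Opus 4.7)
The plan is to mirror the proof of \Cref{Routexpander} with in- and out-neighbourhoods interchanged. I would begin by applying the Diregularity Lemma to $G$ with parameters $\varepsilon,d,k_0$, obtaining clusters $V_1,\ldots,V_k$ of common size $m$, an exceptional set $V_0$ of size at most $\varepsilon n$, a pure digraph $G'$ and the reduced digraph $R$ with $|R|=k$. The minimum semidegree bound $\delta^0(R)\geq \eta|R|/2$ is already established by the proof of \Cref{Routexpander}, since that argument uses both parts (iii) and (iv) of \Cref{direg} to bound $\delta^+(G')$ and $\delta^-(G')$ simultaneously, and hence controls the number of clusters that each vertex reaches by both out- and in-edges in $G'$. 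Likewise, the robust outexpansion of $R$ is precisely the conclusion of \Cref{Routexpander}, so the only new content to verify is that $R$ is a robust $(\nu/2,2\tau)$-inexpander.

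To show this, I would fix any $S\subseteq V(R)$ with $2\tau k\leq |S|\leq (1-2\tau)k$ and set $S':=\bigcup_{V_i\in S}V_i$, so that $\tau n\leq |S'|\leq (1-\tau)n$. Since $G$ is a robust $(\nu,\tau)$-inexpander we have $|RN^-_{\nu,G}(S')|\geq |S'|+\nu n$. Property~(iii) of \Cref{direg} then controls the loss when passing from $G$ to $G'$: for any $x\in RN^-_{\nu,G}(S')$,
$$|N^+_{G'}(x)\cap S'|\geq |N^+_G(x)\cap S'|-(d+\varepsilon)n\geq \nu n/2,$$
so $RN^-_{\nu,G}(S')\subseteq RN^-_{\nu/2,G'}(S')$. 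If $x\in RN^-_{\nu/2,G'}(S')\setminus V_0$, then $x$ has $G'$-outedges to vertices lying in at least $\nu n/(2m)\geq \nu k/2$ distinct clusters of $S$; by property~(vi) of \Cref{direg} each such cluster is a successor in $R$ of the cluster $V_j$ containing $x$, so $V_j\in RN^-_{\nu/2,R}(S)$. Counting clusters yields $|RN^-_{\nu/2,R}(S)|\geq (|S'|+\nu n-|V_0|)/m\geq |S|+\nu k/2$, as required.

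The only real obstacle is the bookkeeping of directions: a vertex lies in the robust \emph{in}-neighbourhood of $S$ exactly when it has many \emph{out}neighbours in $S$, so it is the out-degree statement~(iii) of \Cref{direg} that controls the loss from $G$ to $G'$ in this argument, mirroring the use of~(iv) in the outexpansion half. Beyond this sign-flip and the obvious relabelling of $N^-$ to $N^+$ and $RN^+$ to $RN^-$, no further modification of the proof of \Cref{Routexpander} is needed.
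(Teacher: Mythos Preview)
Your proposal is correct and follows exactly the approach the paper takes: the paper does not give a standalone proof of this lemma but simply remarks that it follows by adapting the proof of \Cref{Routexpander} to handle robust inneighbourhoods as well, and you have carried out precisely that adaptation. Your identification of the direction-flip---that the inexpansion half uses property~(iii) of \Cref{direg} rather than~(iv), because membership in $RN^-$ is governed by outdegree---is the only nontrivial point, and you have it right.
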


Recall that if $G$ is a digraph, we define $(A,B)_G$ to be the oriented graph with all edges from $A$ to $B$. We will say $(A,B)_G$ is $(\varepsilon,d)^*$-superregular if the underlying undirected graph is $(\varepsilon,d)$-superregular and, additionally, $\varepsilon$-regular. We now state a result which tells that by removing a small number of vertices, we are able to ensure that all pairs of clusters corresponding to edges of a subgraph of $R$ with bounded maximum degree are superregular and that the other pairs remain regular. It is very similar to \Cref{superreg} for undirected graphs where the subgraph of interest was a path.

\begin{lemma}\label{lem:subgraphsuperreg}
Let $0<\varepsilon \ll d, 1/\Delta$ and let $R$ be the reduced digraph of $G$ as given by \Cref{direg}. Suppose that $S \subseteq R$ with $\Delta(S) = \Delta$. Then we can move exactly $2 \Delta \varepsilon |V_i|$ vertices from each cluster $V_i$ into $V_0$ so that each pair $(V_i,V_j)$ corresponding to an edge of $S$ becomes $(2 \varepsilon, d/2)^*$-superregular and each pair of clusters corresponding to an edge of $R \setminus S$ is $2\varepsilon$-regular with density at least $d-\varepsilon$.
\end{lemma}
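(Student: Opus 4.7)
The plan is to mimic the approach of \Cref{superreg} but carried out in the directed setting. For each directed edge $V_iV_j \in S$, the pair $(V_i,V_j)_{G'}$ is $\varepsilon$-regular with density at least $d$, so an analogue of \Cref{prop:neighbours} for outneighbourhoods says that at most $\varepsilon m$ vertices of $V_i$ have fewer than $(d-\varepsilon)m$ outneighbours in $V_j$; similarly, at most $\varepsilon m$ vertices of $V_j$ have fewer than $(d-\varepsilon)m$ inneighbours in $V_i$. I will call a vertex of $V_i$ \emph{bad} if it fails the outdegree bound with respect to some out-edge of $S$ incident to $V_i$, or fails the indegree bound with respect to some in-edge of $S$ incident to $V_i$. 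Since $\Delta(S)=\Delta$ means at most $\Delta$ out-edges and at most $\Delta$ in-edges of $S$ are incident to $V_i$, the total number of bad vertices in $V_i$ is at most $2\Delta\varepsilon m$.

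The next step is simply to remove, from each cluster $V_i$, all bad vertices together with enough arbitrary additional vertices to obtain a sub-cluster $V_i'$ with $|V_i'|=m-2\Delta\varepsilon m=:m'$, placing the removed vertices into $V_0$. It then remains to check the two required properties. For an edge $V_iV_j\in S$, every vertex $v\in V_i'$ has, even after the deletions from $V_j$, more than $(d-\varepsilon)m-2\Delta\varepsilon m > (d/2)m'$ outneighbours in $V_j'$ (using $\varepsilon\ll d,1/\Delta$), and symmetrically for inneighbours from $V_i'$ at vertices of $V_j'$. The $2\varepsilon$-regularity of the restricted pair, as well as the lower bound $d/2$ on densities of large subsets, follow from \Cref{prop:subsets} applied with $\alpha=1-2\Delta\varepsilon$, which is essentially $1$: this gives $\varepsilon/\alpha\leq 2\varepsilon$-regularity and density at least $d-\varepsilon>d/2$. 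Hence $(V_i',V_j')_{G'}$ is $(2\varepsilon,d/2)^*$-superregular.

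For pairs $(V_i,V_j)_{G'}$ corresponding to edges of $R\setminus S$, no degree conditions are demanded; a direct application of \Cref{prop:subsets} with the same $\alpha=1-2\Delta\varepsilon$ shows that $(V_i',V_j')_{G'}$ is $2\varepsilon$-regular with density at least $d-\varepsilon$, as required. I do not expect any serious obstacle: the argument is essentially bookkeeping once the directed analogue of \Cref{prop:neighbours} is invoked in both the out- and in-directions. The only mild subtlety is making sure the factor of $2$ in $2\Delta\varepsilon|V_i|$ is large enough to absorb both the outdegree-bad and indegree-bad vertices simultaneously, which the count above ensures, and that the parameters $\varepsilon,d,\Delta$ satisfy $(d-\varepsilon)-2\Delta\varepsilon\geq d/2$, which is automatic under the hierarchy $\varepsilon\ll d,1/\Delta$.
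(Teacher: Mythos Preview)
Your proposal is correct and follows exactly the approach the paper intends: the paper does not actually give a proof of this lemma, but explicitly states that it ``is very similar to \Cref{superreg} for undirected graphs where the subgraph of interest was a path,'' and your argument is precisely the directed adaptation of that proof. The bookkeeping you describe---counting at most $2\Delta\varepsilon m$ bad vertices per cluster via the in- and out-direction analogues of \Cref{prop:neighbours}, then applying \Cref{prop:subsets} with $\alpha=1-2\Delta\varepsilon$---is the expected route and there are no gaps.
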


\subsection{Consequences of the Blow-up Lemma}

We will also require some consequences of the Blow-up Lemma which allow us to embed a series of paths into our graph. The next result follows directly from \Cref{rpartblowup}.

\begin{lemma}\label{lemma11}
For every $d \in (0,1)$ and $p \geq 4$, there exists $\varepsilon_0 >0$ such that the following holds for all $0< \varepsilon \leq \varepsilon_0$. Let $U_1, \ldots, U_p$ be pairwise disjoint sets of size $m$. Suppose that $G$ is a graph on $U_1 \cup \ldots \cup U_p$ such that, for each $1 \leq i <p$, the pair $(U_i, U_{i+1})$ is $(\varepsilon, d)^*$-superregular. Suppose that $f: U_1 \rightarrow U_p$ is a bijective map. Then there are $m$ vertex disjoint paths from $U_1$ to $U_p$ so that for every $x \in U_1$ the path starting at $x$ ends at $f(x) \in U_p$.
\end{lemma}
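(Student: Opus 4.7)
The plan is to invoke the $r$-partite Blow-up Lemma, \Cref{rpartblowup}, applied to the underlying undirected graph. Let $F$ be the path on $[p]$ with edges $\{i, i+1\}$ for $1 \le i < p$, and let $F'$ be its blow-up in which vertex $i$ is replaced by the set $V_i := U_i$ of size $m$ and every $V_i$-$V_{i+1}$ edge is added. Since each pair $(U_i, U_{i+1})_G$ is $(\varepsilon, d)^*$-superregular, its underlying undirected graph is $(\varepsilon, d)$-superregular, so the hypotheses of \Cref{rpartblowup} are met with $\Delta := 2$, $k := p$, and $\ell_1 = \cdots = \ell_p := m$.

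Now specify the target subgraph $H \subseteq F'$ so as to encode the bijection $f$. Enumerate $U_1 = \{x_1, \ldots, x_m\}$, and for each $2 \le i \le p-1$ fix an arbitrary bijection $\pi_i : [m] \to U_i$. Define $H$ to be the disjoint union of $m$ paths $P^1, \ldots, P^m$, where $P^j$ has vertices $x_j,\, \pi_2(j),\, \pi_3(j),\, \ldots,\, \pi_{p-1}(j),\, f(x_j)$ in this order. Then $\Delta(H) = 2$, and $H \subseteq F'$ because $F'$ is complete bipartite between consecutive clusters. Applying \Cref{rpartblowup} with these parameters produces a threshold $\varepsilon_0 = \varepsilon_0(d, 2, p)$; for every $\varepsilon \le \varepsilon_0$, $G$ contains a copy of $H$ respecting the partition. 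Since the edges of $(U_i, U_{i+1})_G$ are oriented from $U_i$ to $U_{i+1}$, each embedded $P^j$ is automatically a directed path from $U_1$ to $U_p$, supplying the required $m$ vertex-disjoint paths.

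The one substantive obstacle is ensuring that the embedded copy of $P^j$ starts at $x_j$ itself (rather than at some other vertex of $U_1$) and ends at $f(x_j)$. The statement of \Cref{rpartblowup} as recorded above preserves only cluster membership, not individual vertex identity. This is handled by the image-restriction strengthening of the Komlós-Sárközy-Szemerédi Blow-up Lemma in \cite{ksosz}, which allows the images of a linearly small set of vertices of $H$ to be prescribed in advance. Applying this feature to the $2m$ endpoints $\{x_1, \ldots, x_m\} \cup \{f(x_1), \ldots, f(x_m)\}$ (a $2/p$ fraction of $V(H)$, and hence well within the allowed range provided $\varepsilon_0$ is taken small enough) forces each $P^j$ to be embedded with exactly the prescribed endpoints. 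This is the only place where we use anything beyond the bare statement of \Cref{rpartblowup}, and it is a standard feature of the original Blow-up Lemma.
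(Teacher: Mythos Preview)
Your reduction to the Blow-up Lemma is natural, but the final paragraph contains a genuine gap. The image-restriction version of the Koml\'os--S\'ark\"ozy--Szemer\'edi Blow-up Lemma does not do what you need. That strengthening allows, for each cluster $V_i$, at most an $\alpha$-fraction of the vertices of $H$ lying in $V_i$ to have their images restricted to prescribed subsets of $V_i$ of size at least $c\ell_i$, where $\alpha$ and $c$ are small constants entering the hierarchy well below $d$. You are asking for something much stronger on two counts: you want to prescribe \emph{every} vertex of $H$ in $V_1$ and in $V_p$ (a full $100\%$ of those clusters, not an $\alpha$-fraction), and you want each target set to be a singleton rather than a set of linear size. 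Neither of these is within the scope of the lemma as proved in \cite{ksosz}, so the appeal to it here is not valid.

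The paper avoids this difficulty by a simple but decisive trick: it identifies each $x\in U_1$ with $f(x)\in U_p$ to form a single cluster $U^*=\{(x,f(x)):x\in U_1\}$, and works with the $(p-1)$-partite graph on $U^*,U_2,\ldots,U_{p-1}$ where $(U^*,U_2)$ inherits its edges from $(U_1,U_2)_G$ and $(U_{p-1},U^*)$ from $(U_{p-1},U_p)_G$. Both inherited pairs are $(\varepsilon,d)^*$-superregular because the identification is a bijection. Now $F$ is taken to be a cycle of length $p-1$ and $H$ is $m$ vertex-disjoint copies of $C_{p-1}$; the plain Blow-up Lemma (no image restrictions at all) embeds $H$, and each embedded cycle, when the identification is undone, becomes a path from some $x$ to $f(x)$. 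The bijection $f$ is thus encoded in the vertex set rather than imposed as a constraint on the embedding, which is exactly what lets the unadorned \Cref{rpartblowup} suffice.
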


\begin{proof}
Let $F$ be a cycle of length $p-1$. Apply the Blow-up Lemma, \Cref{rpartblowup}, with the parameters $d$, $\Delta = 2$ and $p-1$ to obtain $\varepsilon_0$. Suppose that $G$ is a graph as in the lemma and $f: U_1 \rightarrow U_p$ is a bijective map.

Merge the sets $U_1$ and $U_p$ to create a new set $U^* = \{(x,f(x)): x\in U_1\}$ where we associate each of the vertices $x \in U_1$ with $f(x) \in U_p$ to form a vertex $(x,f(x))$. Consider the graph $G'$ on $U^* \cup U_2 \cup \ldots \cup U_{p-1}$ such that
for each $i \geq 2$, $x \in U_i$ and $y\in U_{i+1}$, $xy \in E(G')$ if $xy \in E(G)$. We also add all edges $(x,f(x))y$ whenever $xy \in E(G)$ or $f(x)y \in E(G)$. Observe that the pairs $(U^*,U_2)_{G'}$, $(U_{p-1},U^*)_{G'}$ and $(U_i,U_{i+1})_{G'}$ for all $i\geq 2$ are	$(\varepsilon,d)^*$-superregular. 

Let $F' = F^m$, a blow-up of the cycle $F$. Then $G'$ is a spanning subgraph of $F'$. Let $H$ be a graph consisting of $m$ disjoint cycles of length $p-1$. Then $H$ is a subgraph of $F'$ with $\Delta(H)=2$, so we are able to apply \Cref{rpartblowup} to see that $G'$ contains a copy of $H$. This copy of $H$ corresponds to $m$ disjoint paths in $G$ such that the path starting at $x$ in $U_1$ finishes at $f(x)$.
\end{proof}

We will use \Cref{lemma11} to prove the following consequence of the Blow-up Lemma.

\begin{lemma}\label{lemma10}
Suppose that $$0<1/m \ll \varepsilon \ll d \ll 1$$ and the following hold:
\begin{itemize}
\item $G$ is a digraph on $U_1 \cup \ldots \cup U_k$, where $k \geq 6$ and $U_1, \ldots, U_k$ are pairwise disjoint sets of size $m$ such that each $(U_i, U_{i+1})_G$ is $(\varepsilon, d)^*$-superregular (by convention $U_{k+1}=U_1$);
\item $H$ is a vertex disjoint union of (not necessarily directed) paths of length at least $3$ on $A_1 \cup \ldots \cup A_k$, where $A_1, \ldots, A_k$ are pairwise disjoint sets of vertices with $m_i \coloneqq |A_i|$ satisfying $(1-\varepsilon)m \leq m_i \leq m$ and such that, for each $1 \leq i \leq k$, every edge in $H$ leaving $A_i$ ends in $A_{i+1}$;
\item $S_1 \subseteq U_1, \ldots, S_k \subseteq U_k$ are sets of size $|S_i|=m_i$;
\item For each path $P$ of $H$, we are given vertices $x_P, y_P \in V(G)$ such that if the initial vertex $a_P$ of $P$ lies in $A_i$ then $x_P \in S_i$ and if the final vertex $b_P$ of $P$ lies in $A_j$ then $y_P \in S_j$ and the vertices $x_P, y_P$ over all paths $P$ in $H$ are distinct.
\end{itemize}
Then there is an embedding of $H$ into $G_S \coloneqq G[\bigcup_{i=1}^k S_i]$ in which every path $P$ of $H$ is mapped to a path that starts at $x_P$ and ends at $y_P$.
\end{lemma}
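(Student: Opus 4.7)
The plan is to apply the $r$-partite Blow-up Lemma (\Cref{rpartblowup}) in its image-restricted strengthening, which permits pre-specifying the images of certain vertices of $H$ as lying in prescribed subsets of the target clusters. This handles the endpoint constraints $\phi(a_P) = x_P$ and $\phi(b_P) = y_P$ directly.

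First I would set up the blow-up structure. Let $F = C_k$ be the cycle on $[k]$. The blow-up $F'$ of $F$ with vertex classes $S_1, \ldots, S_k$ contains the underlying undirected graph of $G_S$ as a spanning subgraph, and the underlying undirected graph of $H$ sits in the analogous blow-up with classes $A_1, \ldots, A_k$ since every edge of $H$ lies between consecutive $A_i$'s and $\Delta(H) \leq 2$. By \Cref{removing} together with \Cref{prop:subsets} (valid because $|S_i| \geq (1-\varepsilon)|U_i|$), each restricted pair $(S_i, S_{i+1})_G$ is $(\sqrt{\varepsilon}, d-\sqrt{\varepsilon})^*$-superregular.

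Next I would invoke the image-restricted Blow-up Lemma (a standard strengthening of \Cref{rpartblowup} found in \cite{ksosz}) to handle the endpoint constraints. For each path $P$, we specify that $a_P$ must be mapped to $\{x_P\}$ and $b_P$ to $\{y_P\}$. The image-restricted Blow-up Lemma applies provided the specified singletons are ``typical'' in their clusters, which in our case is guaranteed by the superregularity: each $x_P$ has at least $(d - \sqrt\varepsilon)|S_{i_P \pm 1}|$ neighbours in each adjacent cluster $S_{i_P \pm 1}$, and similarly for $y_P$. The output is a cluster-preserving embedding $\phi \colon V(H) \to V(G_S)$ with $\phi(a_P) = x_P$ and $\phi(b_P) = y_P$ for every path $P$, and every edge of $H$ maps to an edge of $G_S$ with the matching orientation, since both lie in the forward direction between consecutive clusters.

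The main obstacle is the invocation of the image-restricted Blow-up Lemma, which goes beyond the form literally stated in \Cref{rpartblowup}. If one insists on using only the bare version, a greedy workaround pre-chooses the images of the immediate neighbours $a_P^+$ and $b_P^-$ of each endpoint to lie in $N_G(x_P) \cap S_{i_P \pm 1}$ and $N_G(y_P) \cap S_{j_P \mp 1}$; these choices can be made distinct across all $P$ because each $x_P, y_P$ has abundant neighbours by superregularity, and \Cref{prop:neighbours} controls the small set of ``bad'' vertices. For paths of length exactly $3$ the pre-choice of $a_P^+, b_P^-$ leaves no further interior to embed, so one must additionally verify that the edge $\phi(a_P^+)\phi(b_P^-) \in E(G)$ exists, which follows from the $\sqrt\varepsilon$-regularity of the pair $(S_{i_P+1}, S_{j_P-1})_G$ together with \Cref{prop:neighbours}; the longer paths then succumb to \Cref{rpartblowup} applied to the ``deep interior'' embedding on the subsets $S'_i \subseteq S_i$ obtained by deleting the pre-assigned images (which, by \Cref{removing}, remain $(\sqrt[4]{\varepsilon}, d/2)^*$-superregular).
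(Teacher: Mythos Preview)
Your proposal has a genuine gap in both its primary route and its workaround.

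The image-restricted form of the Blow-up Lemma from \cite{ksosz} allows you to prescribe, for a small proportion of the vertices of $H$, a \emph{target set} of size at least $c|V_i|$ into which that vertex must be mapped. It does not allow a linear number of vertices to have singleton targets. In the present setting the number of endpoints can be as large as the cluster size itself: if every path has length exactly $3$ and runs $A_1\to A_2\to A_3\to A_4$, then every vertex of $S_1$ is a prescribed $x_P$ and every vertex of $S_4$ is a prescribed $y_P$. No version of the Blow-up Lemma stated or cited in the paper handles this.

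Your workaround inherits the same problem. Pre-assigning $a_P^+$ to a neighbour of $x_P$ in $S_{i_P+1}$ for \emph{all} $P$ simultaneously is not a greedy choice but a perfect-matching problem, since in the example above all of $S_2$ must be used. Even granting that matching, you then need each chosen $a_P^+$ to be adjacent to the independently chosen $b_P^-$; this is again a system of constraints on the order of $m$ edges, not something \Cref{prop:neighbours} gives you after the fact. And for longer paths, removing the four pre-assigned vertices still leaves an interior path that must begin at a neighbour of $a_P^+$ and end at a neighbour of $b_P^-$: the bare \Cref{rpartblowup} gives no such endpoint control, so you have merely pushed the difficulty two steps inward.

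The paper avoids all of this by a different decomposition. It cuts every path of $H$ into overlapping subpaths of length $3$, $4$ or $5$, groups them by starting cluster and orientation pattern (at most $56k$ types), embeds the rare types greedily, and for each common type randomly allocates disjoint subclusters and applies \Cref{lemma11}. The point of \Cref{lemma11} is precisely to realise a prescribed bijection between the first and last clusters, and its proof does so by \emph{merging} those two clusters into one and then asking \Cref{rpartblowup} only for vertex-disjoint cycles, with no image restrictions at all. That merging trick is the idea your proposal is missing.
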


In order to prove this result, we will also require the following lemma. Suppose that $(A,B)$ is a superregular pair with $|A|=|B|=m$. Then this lemma tells us that, with high probability, all new pairs created by a random partition of $(A,B)$ are also superregular. By \emph{high probability}, we mean that the probability tends to $1$ as $m$ tends to infinity.

\begin{lemma}\label{lemma12}
Let $0<\varepsilon\ll \theta<d<1/2$ and $k \geq 2$. For $1\leq i \leq k$, suppose that $a_i,b_i>\theta$ are constants satisfying $\sum_{i=1}^k a_i = \sum_{i=1}^k b_i = 1$. Let $G=(A,B)$ be an $(\varepsilon, d)^*$-superregular pair with $|A|=|B|\eqqcolon m$ sufficiently large. If $$A=A_1 \cup \ldots \cup A_k \text{ and } B = B_1 \cup \ldots \cup B_k$$ are partitions chosen uniformly at random with $|A_i| = a_i m$ and $|B_i| = b_i m$ for $1 \leq i \leq r$ then with high probability $(A_i,B_i)$ is $(\theta^{-1}\varepsilon,d/2)^*$-superregular for every $1 \leq i,j \leq k$.
\end{lemma}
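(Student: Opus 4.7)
The plan is to verify the three requirements for $(A_i,B_j)$ to be $(\theta^{-1}\varepsilon,d/2)^*$-superregular: (i) $\theta^{-1}\varepsilon$-regularity, (ii) the density lower bound on large subsets, and (iii) the minimum degree condition. Conditions (i) and (ii) will follow deterministically from the hypotheses on $(A,B)$, while (iii) is the only place the randomness of the partition is actually needed.

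For (i) and (ii), I would argue as follows. Suppose $X\subseteq A_i$ with $|X|\geq \theta^{-1}\varepsilon|A_i|$ and $Y\subseteq B_j$ with $|Y|\geq \theta^{-1}\varepsilon|B_j|$. Since $|A_i|=a_im\geq \theta m$ and similarly $|B_j|\geq \theta m$, we have $|X|\geq \varepsilon m=\varepsilon|A|$ and $|Y|\geq \varepsilon|B|$. Applying the $\varepsilon$-regularity of $(A,B)$ to both the pair $(X,Y)$ and the pair $(A_i,B_j)$ gives $|d(X,Y)-d|<\varepsilon$ and $|d(A_i,B_j)-d|<\varepsilon$, so $|d(X,Y)-d(A_i,B_j)|<2\varepsilon<\theta^{-1}\varepsilon$ (using $\theta<1/2$), which is (i); and $d(X,Y)>d-\varepsilon>d/2$ (using $\varepsilon\ll d$), which is (ii).

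For (iii), I would use concentration. Fix $a\in A$ and $j\in[k]$. Since $(A,B)$ is $(\varepsilon,d)$-superregular, $|N(a)\cap B|=d_G(a)>dm$. Once the partition of $B$ is chosen, $B_j$ is a uniformly random size-$b_jm$ subset of $B$, so $|N(a)\cap B_j|$ follows a hypergeometric distribution with mean exceeding $b_jdm=d|B_j|$. A Chernoff--Hoeffding bound for sampling without replacement yields
\[\Pr\!\left[\,|N(a)\cap B_j|\leq (d/2)|B_j|\,\right]\leq e^{-cm}\]
for some constant $c=c(d,\theta)>0$. Taking a union bound over all $a\in A$, all $b\in B$, and the $k^2$ index pairs (at most $2mk^2$ events in total), and recalling that $k$ is fixed while $m$ is large, shows that the probability of any degree failure is at most $2mk^2e^{-cm}\to0$. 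The symmetric argument handles vertices in $B_j$ having enough neighbours in $A_i$.

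The sole real obstacle is selecting an appropriate concentration inequality for the hypergeometric distribution, since the indicator variables $\mathbf{1}[v\in B_j]$ are negatively correlated rather than independent; the classical Hoeffding inequality for sampling without replacement is exactly what is needed. Once (iii) is established, the three conditions combine to give that $(A_i,B_j)$ is $(\theta^{-1}\varepsilon,d/2)^*$-superregular for all $i,j$ simultaneously with high probability.
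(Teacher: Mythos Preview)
Your proposal is correct and follows essentially the same route as the paper's sketch: regularity and the density lower bound are obtained deterministically from the fact that large subsets of $A_i,B_j$ are already large in $A,B$ (the paper cites Proposition~\ref{prop:subsets} for this, while you write out the two-line argument directly), and the minimum-degree condition is handled via a hypergeometric Chernoff bound plus a union bound over all vertices and index pairs. Your remark that the hypergeometric tail bound is the one nontrivial ingredient matches the paper's emphasis exactly.
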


We give a short sketch of the proof of \Cref{lemma12}. Let us choose partitions of $A$ and $B$ uniformly at random with $|A_i| = a_im$ and $|B_i|=b_im$. By \Cref{prop:subsets}, all pairs $(A_i, B_i)$ will be $\theta^{-1}\varepsilon$-regular so we just need to check that with positive probability, each vertex in $A_i$ has at least $db_im/2$ neighbours in $B_i$ and each vertex in $B_i$ has at least $da_im/2$ neighbours in $A_i$. Fix $i$ and let $x \in A_i$. The variable $|N_G(x) \cap B_i|$ has hypergeometric distribution. Using that $G$ is $(\varepsilon, d)^*$-superregular and a type of Chernoff bound similar to those we will introduce in \Cref{sub:approx}, we can show that only with small probability does $x$ have significantly fewer than the expected number of neighbours in $B_i$. We do the same for each vertex in $B_i$. A union bound gives that the probability that each pair $(A_i,B_i)$ is not $(\theta^{-1}\varepsilon,d/2)^*$-superregular is strictly less than one. Hence we are able to find a partition which satisfies the desired properties.

We will now apply these results to prove \Cref{lemma10}.

\begin{proof}[Proof of \Cref{lemma10}.]
Label the paths of $H$ by $P_1, P_2, \ldots, P_p$. Then divide each path $P_i$ into subpaths $P_{i,j}$ of length $3$, $4$ or $5$ so that $$P_i = P_{i,1}P_{i,2}\ldots P_{i,q_i}.$$ For each $1\leq i \leq p$, $1\leq j \leq q_i$, let $a_{i,j}$ denote the initial vertex of the path $P_{i,j}$ and $b_{i,j}$ the final vertex. We note that $a_{i,j}=b_{i,j-1}$ for all $2 \leq j \leq q_i$. For each $1 \leq s \leq k$, let $E_s$ be the set of all $a_{i,j}$ in $A_s$ and $F_s$ be the set of all $b_{i,j}$ in $A_s$. Choose distinct vertices $x_{i,j} \in S_s$ for each $a_{i,j} \in E_s$ and choose also distinct $y_{i,j} \in S_s$ for each $b_{i,j} \in F_s$ so that $x_{i,1}=x_{P_i}$ and $y_{i,q_i} = y_{P_i}$ and whenever $a_{i,j}=b_{i,j-1}$ we have that $x_{i,j}=y_{i,j-1}$. We will look for an embedding of $H$ which maps each path $P_{i,j}$ to a path from $x_{i,j}$ to $y_{i,j}$ in $G_S$.

We will describe the direction of the edges which make up each $P_{i,j}$, writing:
$$\mathtt{f} \text{ for an edge from some } A_\ell \text{ to } A_{\ell+1},$$
$$\mathtt{b} \text{ for an edge from some } A_\ell \text{ to } A_{\ell-1}.$$ Then we can encode each path $P_{i,j}$ using the letters $\mathtt{f}$ and $\mathtt{b}$, for example see \Cref{fig:pathencoding}. There are $2^3$ possible encodings for a path of length $3$, $2^4$ for a path of length $4$ and $2^5$ for a path of length $5$. In total, we obtain at most $2^3+2^4+2^5 = 56$ different encodings.

\begin{figure}[h]
\centering
\includegraphics[scale=0.35]{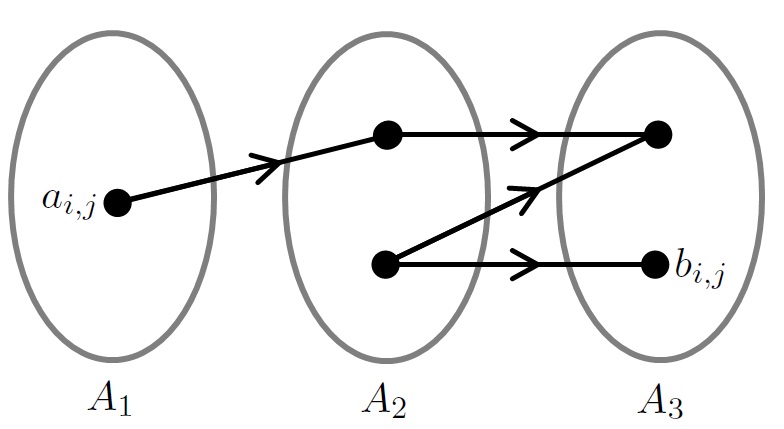}
\caption{We would encode the path $P_{i,j}$ shown by $\mathtt{ffbf}$.}\label{fig:pathencoding}
\end{figure}

For each $3 \leq \ell \leq 5$ we can describe each encoding of a path of length $\ell$ by a function $$t: \{0,1, \ldots, \ell\} \rightarrow \{-\ell, -\ell+1, \ldots, \ell\},$$ where $t(i)$ gives the index of the current cluster relative to the starting cluster. For example, in \Cref{fig:pathencoding}, we start in $A_1 = A_{1+0}$, then move to $A_{1+1}$, then to $A_{1+2}$, back to $A_{1+1}$ and finish in $A_{1+2}$. So we would encode this by $t:(0,1,2,3,4) \rightarrow (0,1,2,1,2)$. Then for each $1 \leq i \leq k$, $3 \leq \ell \leq 5$ and $t: \{0,1,\ldots, \ell\} \rightarrow \{-\ell, -\ell+1, \ldots, \ell\}$, we let $\mathcal{P}_{i,t}$ consist of all paths $P_{i,j}$ of length $\ell$, starting in $A_i$ and with encoding described by $t$. There are $56k$ sets $\mathcal{P}_{i,t}$ in total. Let
$$\mathcal{Q}_1 = \{\mathcal{P}_{i,t}: |\mathcal{P}_{i,t}|\leq d^2m\} \text{ and } \mathcal{Q}_2 = \{\mathcal{P}_{i,t}: |\mathcal{P}_{i,t}|> d^2m\}.$$

We will begin by greedily embedding each $\mathcal{P}_{i,t} \in \mathcal{Q}_1$. Note that  for all $1\leq i\leq k$, $|U_i \setminus S_i| \leq \varepsilon m$, so by \Cref{prop:subsets} and since $d-\varepsilon \geq d/2$, $(S_i,S_{i+1})$ is $(2\varepsilon, d/2)^*$-superregular. Choose each set $\mathcal{P}_{i,t} \in \mathcal{Q}_1$ in turn. Select any set of $|\mathcal{P}_{i,t}|$ vertices in $S_i$ as starting vertices. Next choose distinct neighbours of each of these vertices from either $S_{i-1}$ or $S_{i+1}$, as appropriate, and continue in this way until all paths in $\mathcal{P}_{i,t}$ have been constructed. Repeat this process for each $\mathcal{P}_{i,t} \in \mathcal{Q}_1$, each time selecting vertices that have not already been chosen. Now, the paths $P_{i,j}$ have length at most $5$ and all edges go from $S_i$ to $S_{i+1}$ so each $S_i$ can be used by at most $11 \times 56$ of the sets $\mathcal{P}_{i,t}$. Since each path uses at most $6$ vertices and we consider only sets $\mathcal{P}_{i,t}$ containing at most $d^2m$ paths, at any stage in this process, at most $6 \times 11 \times 56 \times d^2m \leq dm/4$ vertices in each $S_i$ have already been used. Then, by \Cref{prop:subsets} and noting that $d-\varepsilon -d/4\geq d/4$, the graphs $(S_i,S_{i+1})$ are still $(4 \varepsilon, d/4)^*$-superregular at any stage in this process, allowing us to construct the paths in this way.

We now consider each of the large sets $\mathcal{P}_{i,t} \in \mathcal{Q}_2$. Randomly split all of the remaining vertices so that we have sets $$S^0_{i,t} \subseteq S_{i+t(0) = i}, \; S^1_{i,t} \subseteq S_{i+t(1)},\; \ldots,\; S^\ell_{i,t} \subseteq S_{i+t(\ell)},$$ for each $\mathcal{P}_{i,t} \in \mathcal{Q}_2$, each of size $|\mathcal{P}_{i,t}|$. Provided that $m$ is sufficiently large, we can then apply \Cref{lemma12} to see that for all $\mathcal{P}_{i,t} \in \mathcal{Q}_2$ and for all $0 \leq r \leq \ell-1$, the pair $(S^r_{i,t}, S^{r+1}_{i,t})$ if $t(r+1) >t(r)$ or $(S^{r+1}_{i,t}, S^r_{i,t})$ if $t(r+1)<t(r)$ is $(4\varepsilon/d^2, d/8)^*$-superregular with high probability. So, we are able to choose a partition of the remaining vertices satisfying this property. Then, we are able to apply \Cref{lemma11} in order to embed each $\mathcal{P}_{i,t} \in \mathcal{Q}_2$ into the designated sets. Hence, we obtain the desired embedding of each of the paths of $H$ in $G$.
\end{proof} 

\subsection{An Approximate Embedding Lemma}\label{sub:approx}

In this section, we prove a result which will allow us to embed a collection of subpaths of the desired Hamilton cycle into the reduced digraph in such a way that each of the clusters receives a similar number of vertices. The proof will use probabilistic arguments, in particular, the following Chernoff-type bounds will be useful.

\begin{theorem}[Chernoff Bound 1]\label{chernoff1}
Suppose that $X_1, X_2, \ldots, X_n$ are independent, random variables with $\mathbb{P}(X_i = 1)=p$ and $\mathbb{P}(X_i = 0)=1-p$. Let $X= \sum_{i=1}^n X_i$. Then for any $0 \leq \lambda \leq np$,
$$\mathbb{P}(|X-\mathbb{E}(X)| > \lambda) < 2e^{-\lambda^2/3\mathbb{E}(X)}.$$
\end{theorem}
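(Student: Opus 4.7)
The plan is to apply the standard Cram\'er--Chernoff exponential moment method, handling the upper and lower tails separately. For the upper tail, I fix a parameter $t>0$ and apply Markov's inequality to $e^{tX}$: writing $\mu \coloneqq \mathbb{E}(X) = np$,
$$\mathbb{P}(X \geq \mu + \lambda) \leq e^{-t(\mu + \lambda)} \mathbb{E}(e^{tX}) = e^{-t(\mu + \lambda)}\bigl(1 - p + p e^t\bigr)^n,$$
using independence of the $X_i$ in the last equality. The elementary inequality $1+x \leq e^x$ then bounds the product by $\exp(np(e^t - 1)) = \exp(\mu(e^t - 1))$, so the whole right-hand side is at most $\exp\bigl(\mu(e^t - 1) - t(\mu + \lambda)\bigr)$.

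I would next optimise over $t$ by setting $t = \log(1 + \lambda/\mu)$, which produces
$$\mathbb{P}(X \geq \mu + \lambda) \leq \exp\bigl(-\mu\bigl((1+\delta)\log(1+\delta) - \delta\bigr)\bigr), \qquad \delta \coloneqq \lambda/\mu \in [0,1].$$
The key elementary fact I would verify is $(1+\delta)\log(1+\delta) - \delta \geq \delta^2/3$ on $[0,1]$. Both sides and their first derivatives agree at $\delta = 0$; the difference of their second derivatives equals $1/(1+\delta) - 2/3$, which is positive on $[0,1/2]$ and negative on $[1/2,1]$, so it suffices to check positivity of the first derivative of the difference at $\delta = 1$ (equivalently, check the inequality at $\delta = 1$: $2\log 2 - 1 \approx 0.386 \geq 1/3$). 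This yields $\mathbb{P}(X \geq \mu + \lambda) \leq \exp(-\lambda^2/(3\mu))$.

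For the lower tail, the same method with $t<0$, optimised at $-t = \log(1 - \lambda/\mu)^{-1}$, gives $\mathbb{P}(X \leq \mu - \lambda) \leq \exp\bigl(-\mu[\delta + (1-\delta)\log(1-\delta)]\bigr)$. The corresponding elementary inequality $\delta + (1-\delta)\log(1-\delta) \geq \delta^2/2$ is easier, since the difference has manifestly nonnegative second derivative $\delta/(1-\delta)$; this gives the even stronger bound $\exp(-\lambda^2/(2\mu)) \leq \exp(-\lambda^2/(3\mu))$. A union bound over the two tails then produces $\mathbb{P}(|X - \mu| > \lambda) < 2\exp(-\lambda^2/(3\mu))$, as required.

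The only nontrivial step is the elementary calculus inequality $(1+\delta)\log(1+\delta) - \delta \geq \delta^2/3$ on $[0,1]$; everything else is a mechanical application of Markov's inequality and independence. The hypothesis $\lambda \leq np$ is used precisely to force $\delta \leq 1$, which is the regime in which the constant $1/3$ is valid; outside this range the optimised Chernoff bound is qualitatively different, behaving like $\exp(-\Theta(\lambda))$ instead of $\exp(-\Theta(\lambda^2/\mu))$.
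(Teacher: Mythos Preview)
The paper does not prove this theorem at all: it is simply stated as a standard tool (alongside the Azuma-type bound \Cref{chernoff2}) and then applied. So there is no ``paper's own proof'' to compare against.

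Your argument is the standard Cram\'er--Chernoff derivation and is correct. A couple of minor remarks on presentation: in the upper-tail calculus step, the parenthetical ``equivalently, check the inequality at $\delta = 1$'' is not literally equivalent to checking $f'(1)\geq 0$; the clean argument is that since $f'(0)=0$ and $f'$ is first increasing then decreasing, the minimum of $f'$ on $[0,1]$ occurs at an endpoint, and $f'(1)=\log 2 - 2/3>0$ settles it. Also, the theorem claims a strict inequality; your union bound as written gives $\leq$, but strictness follows since $1+x<e^x$ for $x\neq 0$ (and the $\lambda=0$ case is trivial). These are cosmetic; the substance is fine.
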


\begin{theorem}[Chernoff Bound 2]\label{chernoff2}
Let $X$ be a random variable determined by $n$  independent trials $X_1, X_2, \ldots , X_n$ such that changing the outcome of any one trial can affect $X$ by at most $c$. Then for any $\lambda \geq 0$,
$$\mathbb{P}(|X-\mathbb{E}(X)| > \lambda) < 2e^{-\lambda^2/2c^2n}.$$
\end{theorem}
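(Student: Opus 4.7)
The plan is to prove this via the martingale method (the bounded-differences inequality of Azuma--Hoeffding). First I would set up the Doob martingale associated with sequentially exposing the trials: define
$$Y_i \coloneqq \mathbb{E}[X \mid X_1, \ldots, X_i] \quad \text{for } 0 \leq i \leq n,$$
so that $Y_0 = \mathbb{E}[X]$ and $Y_n = X$, and $(Y_i)$ is a martingale with respect to the filtration generated by the $X_i$. Writing $X - \mathbb{E}[X]$ as the telescoping sum $\sum_{i=1}^n (Y_i - Y_{i-1})$ reduces the problem to controlling the tail of a sum of martingale differences.

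The key structural step is to verify that $|Y_i - Y_{i-1}| \leq c$ almost surely. By independence of the trials, $Y_i - Y_{i-1}$ can be written as $\mathbb{E}[X(X_1,\dots,X_n) - X(X_1,\dots,X_{i-1}, X_i', X_{i+1}, \dots, X_n) \mid X_1,\dots,X_i]$, where $X_i'$ is an independent copy of $X_i$; since changing one trial alters $X$ by at most $c$, the difference inside the expectation lies in $[-c,c]$, and so does its conditional expectation.

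Next I would bound the moment generating function of each martingale difference. Writing $D_i \coloneqq Y_i - Y_{i-1}$, we have $\mathbb{E}[D_i \mid X_1,\dots,X_{i-1}] = 0$ and $|D_i| \leq c$. Hoeffding's lemma (which follows by a convexity argument: $e^{tx}$ on $[-c,c]$ lies below the affine interpolant, and one optimizes the resulting bound) yields
$$\mathbb{E}\!\left[e^{tD_i} \,\middle|\, X_1,\dots,X_{i-1}\right] \leq e^{t^2 c^2/2}.$$
Iterating this by conditioning step by step gives $\mathbb{E}[e^{t(X - \mathbb{E}X)}] \leq e^{nt^2 c^2/2}$.

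Finally I would apply Markov's inequality to $e^{t(X-\mathbb{E}X)}$ for $t > 0$ to obtain $\mathbb{P}(X - \mathbb{E}X > \lambda) \leq e^{-t\lambda + nt^2 c^2/2}$, and optimize by choosing $t = \lambda/(nc^2)$, which gives the one-sided bound $e^{-\lambda^2/(2c^2 n)}$. Repeating the argument for $-X$ and taking a union bound yields the two-sided inequality stated. The main conceptual obstacle is really just proving Hoeffding's lemma cleanly for the conditional differences; once that is in place the rest is standard exponential-Markov manipulation.
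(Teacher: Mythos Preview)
Your proof is correct and is the standard Azuma--Hoeffding/McDiarmid bounded-differences argument. The paper does not actually prove this result: it simply states it as a known concentration inequality to be quoted, so there is nothing to compare against beyond noting that your approach is the canonical one.
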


We use \Cref{chernoff2} to prove the following lemma, which will allow us to find such an embedding in the reduced digraph. 

\begin{lemma}\label{lemma16}
Let $R$ be a digraph on $k$ vertices and suppose that $F = V_1 \ldots V_k$ is a Hamilton cycle in $R$. Let $\mathcal{P} = \{P_1, P_2, \ldots, P_s\}$ be a collection of arbitrarily oriented paths on $t$ vertices and $\mathcal{Q}$ be a collection of pairwise disjoint oriented subpaths of the $P_i$. Then, for any $\gamma >0$ and sufficiently large $s$, there exists a map $\phi: [s] \rightarrow V(R)$ such that if the paths in $\mathcal{P}$ are greedily embedded around $F$ with the embedding of each $P_i$ starting at $\phi(i)$, the following hold. Define $a(i)$ to be the number of vertices in $\bigcup \mathcal{P}$ assigned to $V_i$ by this embedding and $n(i, \mathcal{Q})$ to be the number of subpaths in $\mathcal{Q}$ starting at $V_i$. Then for all $V_i \in V(R)$
\begin{equation}\label{eq:1lemma16}
\left|a(i) - \frac{st}{k}\right| \leq \gamma st
\end{equation}
and
\begin{equation}\label{eq:2lemma16}
\left|n(i, \mathcal{Q}) - \frac{|\mathcal{Q}|}{k}\right| \leq \gamma st.
\end{equation}
\end{lemma}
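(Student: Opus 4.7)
The plan is a standard probabilistic argument: choose $\phi(i)$ independently and uniformly at random from $V(R)$ for each $i \in [s]$, and show that the two concentration inequalities \eqref{eq:1lemma16} and \eqref{eq:2lemma16} hold with positive probability (in fact, high probability). So a valid $\phi$ must exist.

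First I would compute the expectations. For each path $P_i$, the orientation data determines, for each $1 \leq r \leq t$, a deterministic integer displacement $d_{i,r}$ (the net number of forward steps minus backward steps taken along $P_i$ up to its $r$-th vertex), so that the $r$-th vertex of $P_i$ is placed in cluster $V_{\phi(i)+d_{i,r}}$ with indices taken mod $k$. Since $\phi(i)$ is uniform on $V(R)$, so is $V_{\phi(i)+d_{i,r}}$, and each vertex of $P_i$ lies in any given cluster with probability $1/k$. Summing over $i$ and over the $t$ vertices of each $P_i$ gives $\mathbb{E}[a(j)] = st/k$ for every $j$. The same symmetry applies to each subpath $Q \in \mathcal{Q}$: its starting cluster is a deterministic shift of $\phi(i)$ (where $P_i$ is the path containing $Q$), so is uniform in $V(R)$, yielding $\mathbb{E}[n(j,\mathcal{Q})] = |\mathcal{Q}|/k$.

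For concentration I would apply Chernoff Bound 2 (\Cref{chernoff2}) with the $s$ independent trials $\phi(1),\dots,\phi(s)$. Changing a single $\phi(i)$ only affects the placement of $P_i$, so it changes $a(j)$ by at most $t$ (the number of vertices of $P_i$) and changes $n(j,\mathcal{Q})$ by at most the number of subpaths of $P_i$ in $\mathcal{Q}$, which is again at most $t$. Taking $c = t$, $n = s$ and $\lambda = \gamma st$ in \Cref{chernoff2} yields
\[
\mathbb{P}\bigl(|a(j) - st/k| > \gamma st\bigr) < 2 e^{-\gamma^2 s / 2} \quad \text{and} \quad \mathbb{P}\bigl(|n(j,\mathcal{Q}) - |\mathcal{Q}|/k| > \gamma st\bigr) < 2 e^{-\gamma^2 s / 2}.
\]
A union bound over the $2k$ events indexed by $j \in [k]$ and by which quantity we are controlling gives a total failure probability of at most $4k e^{-\gamma^2 s / 2}$, which is less than $1$ once $s$ is sufficiently large in terms of $\gamma$ and $k$. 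Any $\phi$ in the non-failure event then satisfies both conclusions.

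There is no real obstacle: the argument is essentially a textbook application of bounded-differences concentration. The only point that deserves a careful sentence is the Lipschitz constant -- one must note that modifying a single $\phi(i)$ translates only the contribution of $P_i$ (and its subpaths in $\mathcal{Q}$) while leaving every other $P_{i'}$ untouched, so the change in either $a(j)$ or $n(j, \mathcal{Q})$ is bounded by $t$ uniformly in $j$. Given this, the expectation computation and the Chernoff-plus-union-bound step are routine.
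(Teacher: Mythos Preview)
Your proposal is correct and matches the paper's proof essentially line for line: choose each $\phi(i)$ uniformly and independently, use that changing one $\phi(i)$ affects $a(j)$ and $n(j,\mathcal{Q})$ by at most $t$, apply \Cref{chernoff2} with $c=t$ and $\lambda=\gamma st$ to get failure probability $2e^{-\gamma^2 s/2}$ per event, and take a union bound over the clusters. Your write-up is in fact slightly more careful than the paper's in spelling out why the expectations are $st/k$ and $|\mathcal{Q}|/k$ and why the Lipschitz constant for $n(j,\mathcal{Q})$ is bounded by $t$.
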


In the statement of this lemma, \textquoteleft greedily embedding' a path $P$ around $F$ means that we start from the specified initial vertex $V_i$ and then choose the next vertex from $V_{i-1}$ or $V_{i+1}$ according to the orientation of the edge. We continue in this way until all vertices in the path have been embedded.

\begin{proof}
We will pick each $\phi(i)$ independently and uniformly at random. The paths in $\mathcal{P}$ all consist of $t$ vertices so each assignment of a path can affect the number of vertices assigned to any vertex in $R$ by at most $t$. We have that $\mathbb{E}(a(i))= st/k$ so we may apply \Cref{chernoff2} to see that
$$\mathbb{P}(|a(i)-st/k|>\gamma st) \leq 2e^{-(\gamma st)^2/2t^2s} = 2e^{-\gamma^2s/2}<1/2k$$
for $1/s \ll 1/k$. We also have that $\mathbb{E}(n(i, \mathcal{Q})) = |\mathcal{Q}|/k$ and so we may again apply \Cref{chernoff2} to obtain
$$\mathbb{P}(|n(i, \mathcal{Q})-|\mathcal{Q}|/k|>\gamma st) \leq 2e^{-(\gamma st)^2/2t^2s} = 2e^{-\gamma^2s/2}<1/2k$$
for $1/s \ll 1/k$.

We obtain that $$\mathbb{P}((|a(i)-st/k|>\gamma st)  \text{ \& } (|n(i, \mathcal{Q})-|\mathcal{Q}|/k|>\gamma s t)) <1/k$$ whenever $s$ is sufficiently large. Taking the sum of these probabilities over all clusters, we find that the probability that some $V_i \in E(R)$ has either $a(i)$ or $n(i,\mathcal{Q})$ far from the expected values is less than $1$. Then, with positive probability, the map $\phi$ constructed in this way satisfies properties \eqref{eq:1lemma16} and \eqref{eq:2lemma16} of the lemma and hence such a map exists.
\end{proof}

We will also frequently make use of the following lemma which will allow us to find a path isomorphic to any orientation of a short path between any given pair of vertices in the reduced graph.

\begin{lemma}\label{lem:isopath}
Let $1/n \ll \nu \leq \tau \ll \eta \ll 1$. Suppose that $G$ is a digraph on $n$ vertices and that $G$ is a robust $(\nu,\tau)$-diexpander with $\delta^0(G) \geq \eta n$.
Let $$\lceil 2/\nu \rceil \leq k \leq \nu n/4.$$ Let $x,y \in V(G)$ be distinct vertices. Then if $P$ is any orientation of a path of length $k$, there is a path in $G$ from $x$ to $y$ isomorphic to $P$.
\end{lemma}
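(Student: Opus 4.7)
The plan is to build the desired path one vertex at a time starting from $x$, using a family of backward sets $B_0,B_1,\ldots,B_k\subseteq V(G)$ defined by iterated robust neighbourhoods from $y$ as a navigation guide for the forward greedy construction. First I would encode $P$ by its orientation sequence $\sigma:[k]\to\{+,-\}$ (with $\sigma(i)=+$ iff the $i$-th edge of $P$ is forward) and set $B_0:=\{y\}$ and $B_1:=N^-_G(y)$ if $\sigma(k)=+$, $B_1:=N^+_G(y)$ if $\sigma(k)=-$, i.e., the set of correctly-oriented predecessors of $y$. For $j\geq 2$, put
$$B_j:=\begin{cases} RN^-_{\nu,G}(B_{j-1}) & \text{if } \sigma(k-j+1)=+,\\ RN^+_{\nu,G}(B_{j-1}) & \text{if } \sigma(k-j+1)=-.\end{cases}$$
By induction every $v\in B_j$ admits a correctly-oriented path of length $j$ from $v$ to $y$, and for $j\geq 2$ moreover has at least $\nu n$ correctly-oriented (with orientation $\sigma(k-j+1)$) neighbours in $B_{j-1}$.

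Next I would show the $B_j$ grow rapidly. Since $\delta^0(G)\geq\eta n$ we have $|B_1|\geq\eta n>\tau n$, and while $\tau n<|B_{j-1}|<(1-\tau)n$, robust $(\nu,\tau)$-diexpansion forces $|B_j|\geq|B_{j-1}|+\nu n$, so $|B_{j^*}|\geq(1-\tau)n$ for some $j^*\leq 1+1/\nu$. Once $|B_{j^*}|\geq(1-\tau)n$, every vertex has at least $\eta n-\tau n\geq\nu n$ correctly-oriented neighbours in $B_{j^*}$, so $B_{j^*+1}=V(G)$ and inductively $B_j=V(G)$ for all $j\geq j^*+1$. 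In particular, $k\geq\lceil 2/\nu\rceil\geq j^*+1$ gives $B_k=V(G)$, and more generally $x_{i-1}\in B_{k-i+1}$ will always have the local expansion property we need.

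Then I would construct the path greedily. Set $x_0:=x$ and $x_k:=y$. For $i=1,2,\ldots,k-1$ in turn, pick $x_i$ to be any vertex of $N^{\sigma(i)}_G(x_{i-1})\cap B_{k-i}$ outside $\{x_0,\ldots,x_{i-1},y\}$ (where $N^+_G$ is used when $\sigma(i)=+$ and $N^-_G$ when $\sigma(i)=-$). The inductive hypothesis $x_{i-1}\in B_{k-i+1}$ (valid for $i=1$ since $x_0=x\in V(G)=B_k$) combined with the local property of $B_{k-i+1}$ produces at least $\nu n$ candidates, and since $k\leq\nu n/4$ fewer than $\nu n/2$ are forbidden, so a valid $x_i\in B_{k-i}$ exists. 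Finally, $x_{k-1}\in B_1$ is a correctly-oriented predecessor of $y$, so the last edge $x_{k-1}x_k=x_{k-1}y$ has orientation $\sigma(k)$, and $x_0x_1\ldots x_k$ is a path from $x$ to $y$ in $G$ isomorphic to $P$.

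The main obstacle is devising the right $B_j$: they must simultaneously expand by $\nu n$ per step via robust diexpansion \emph{and} each $v\in B_j$ must have many correctly-oriented neighbours in $B_{j-1}$ (so that the forward step never gets stuck), and using robust neighbourhoods is exactly what delivers both properties. The two length hypotheses then play complementary roles: $k\geq\lceil 2/\nu\rceil$ gives the $B_j$ enough time to saturate to $V(G)$ before the forward construction reaches them, while $k\leq\nu n/4$ lets us absorb the at most $k+1$ previously used vertices at each greedy step.
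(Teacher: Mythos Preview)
Your argument is correct and takes a genuinely different route from the paper. The paper first proves the special case $k=\ell\coloneqq\lceil 1/\nu\rceil$ by growing sets $A_1,A_2,\ldots$ forward from $x$ via robust neighbourhoods, then counting that the number of correctly-oriented walks from $x$ to $y$ is at least $(\nu n)^{\ell-1}$ while the number of non-simple ones is at most $\ell^2 n^{\ell-2}$, so some walk is a path; for general $k$ it embeds the first $k-\ell$ edges greedily, deletes the used vertices (still a robust diexpander by \Cref{prop:removingrobexp}), and applies the short case. Your approach instead precomputes the backward robust-neighbourhood tower $B_j$ from $y$ and then runs a single forward greedy pass guided by the $B_j$, using directly that each $v\in B_j$ (for $j\ge 2$) has $\ge\nu n$ correctly-oriented neighbours in $B_{j-1}$ by the very definition of $RN^{\pm}_{\nu,G}$. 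This avoids the walk-counting step entirely and handles all $k$ in one stroke; the paper's two-phase argument, by contrast, isolates a self-contained short-path statement that can be quoted independently. One small remark: your parenthetical claim that every $v\in B_j$ ``admits a correctly-oriented path of length $j$'' to $y$ is only a walk a priori, but you never actually use it, so this does not affect the proof; likewise your bound $j^*\le 1+1/\nu$ is slightly loose, but is easily covered by $\nu\ll 1$ and the hypothesis $k\ge\lceil 2/\nu\rceil$.
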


\begin{proof}
Let us first show that we can find any (arbitrarily oriented) path $P$ of length $\ell \coloneqq \lceil 1/\nu \rceil$ between any two vertices $x,y \in V(G)$. If the first edge of $P$ is forward oriented then take $A_1 \coloneqq N^+(x)$ (if backward oriented then take $N^-(x)$ instead). We know that $|A_1|\geq \delta^0(G) \geq \eta n> \tau n$. Let $A_2=RN^+_{\nu,G}(A_1)$ or $RN^-_{\nu,G}(A_1)$ according to the orientation of the next edge. By the robust diexpansion property, we have that $|A_2| \geq |A_1| + \nu n \geq \eta n+\nu n$. Continue in this way, each time letting $A_i = RN^+_{\nu,G}(A_{i-1})$ or $RN^-_{\nu,G}(A_{i-1})$ as appropriate. (If at any stage $|A_i|>(1-\tau )n$, choose a subset of size $(1-\tau)n$ as $A_i$.)

Then, after $\ell-1$ steps, 
$$|A_{\ell-1}| \geq \eta n+(\ell-2)\nu n >(1-(\eta-\nu))n$$
and, since $\delta^0(G) \geq \eta n$, we have that $|N^+(y) \cap A_{\ell-1}| \geq \nu n$ and $|N^-(y) \cap A_{\ell-1}| \geq \nu n$. This means that from $y$ we have a choice of at least $\nu n$ neighbours in $A_{\ell-1}$ for the vertex in $P$ preceding $y$. Each of these vertices has at least $\nu n$ suitable neighbours in $A_{\ell-2}$ and we continue in this way to see that there are at least $(\nu n)^{\ell-1}$ walks from $x$ to $y$ with the same orientation as $P$. However, some of these walks may use some vertex more than once. At most $\ell^2n^{\ell-2}$ of these walks are not paths (consider all possible orderings of $\ell-1$ vertices, the middle portion of the walk, with at least one repeated vertex). But, since $n$ is sufficiently large, we have that
$$(\nu n)^{\ell-1}>\ell^2n^{\ell-2},$$
so at least one of the walks must be a path. Hence, we can find a path of length $\ell$ between $x$ and $y$ which is isomorphic to $P$.

Let us now suppose that $P$ is any orientation of a path of length $k$, where $\lceil 2/\nu \rceil \leq k \leq \nu n/4$. (The lower bound for $k$ is greater than $\ell$ to account for the change in parameters of the robust diexpander when we greedily embed the first portion of the path.) Starting at $x$, embed the first $k-\ell+1$ vertices greedily, letting $z$ denote the last vertex embedded, this is possible since $\delta^0(G) \geq \eta n$. Remove all of these vertices, except for $z$ from the graph. We have removed fewer than $\nu n/4$ vertices so, by \Cref{prop:removingrobexp}, the new graph $G'$ remains a robust $(\nu/2, 2\tau)$-diexpander with $\delta^0(G') \geq \eta n/2$. We can then apply the previous result to this graph to find a path in $G'$ from $z$ to $y$ which is isomorphic to the remainder of the path $P$.
\end{proof}

\subsection{Skewed Traverses and Shifted Walks}\label{sub:skewed}

We will introduce one final technique before commencing the proof of \Cref{anyorient2}. The two main problems that we will face when using an embedding in the reduced graph to find an embedding in $G$ will be incorporating the exceptional vertices and ensuring that each cluster has been assigned exactly $m$ vertices. This is when we will use skewed traverses and shifted walks.

Throughout this section we will assume that $F = V_1 V_2 \ldots V_k$ is a Hamilton cycle, with standard orientation, in the reduced digraph $R$ where each vertex corresponds to a cluster of size $m$. We define a further graph, $R^*$.  Let $c >0$. Then we obtain the graph $R^*$ by adding all exceptional vertices $v \in V_0$ to $V(R)$ and edges $vV_i$ if $|N^+_G(v) \cap V_i| \geq cm$ and $V_iv$ if $|N^-_G(v) \cap V_i| \geq cm$.

Suppose that $C$ is an arbitrarily oriented cycle and let $W$ be an assignment of $V(C)$ to $V(R^*)$ respecting edges. We will write $a(i)$ for the number of vertices assigned to the cluster $V_i$. 

\begin{definition}
Let $W$ be an assignment of $C$ to $R^*$. We say that $W$ is $\gamma$-\emph{balanced} if $\max_i|a(i)-m| \leq \gamma n$ and \emph{balanced} if $a(i) = m$ for all $1\leq i \leq k$.

We say that the assignment $(\gamma, \mu)$\emph{-corresponds} to $C$ if:
\begin{itemize}
\item $W$ is $\gamma$-balanced;
\item Each $v \in V_0$ has exactly one vertex of $C$ assigned to it;
\item In every $V_i\in V(R)$, at least $m-\mu n$ of the vertices of $C$ assigned to $V_i$ have both of their neighbours assigned to $V_{i-1} \cup V_{i+1}$. 
\end{itemize}
We will write that an assignment $\mu$\emph{-corresponds} to $C$ if it $(0, \mu)$-corresponds to $C$.
\end{definition}

Let us now define the skewed traverses and shifted walks which will help us adapt an assignment in order to find a closed walk in the graph $R^*$ which corresponds to the cycle $C$.

\begin{definition}
Let $A,B \in V(R)$. A \emph{skewed traverse}, $T(A,B)$, is a collection of edges of the form
$$T(A,B) = AV_{i_1}, V_{i_1-1}V_{i_2}, V_{i_2-1}V_{i_3}, \ldots, V_{i_t-1}B.$$ 
\end{definition}

The length of a skewed traverse is one less than the number of edges it contains, so in this case, $T(A,B)$ has length $t$. We will always assume that a skewed traverse has minimal length.

\begin{figure}[h]
\centering
\includegraphics[scale=0.36]{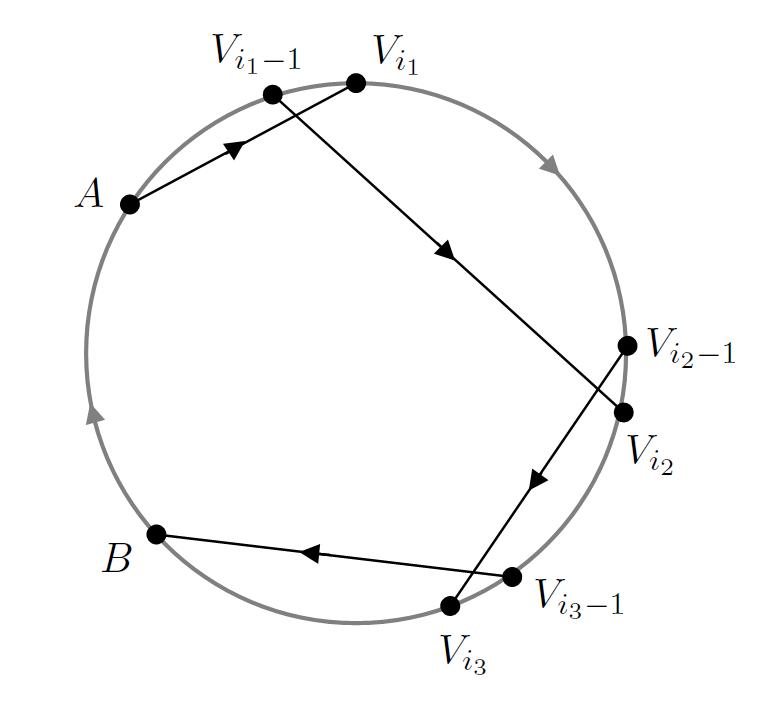}
\caption{A skewed traverse, $T(A,B)$, of length $3$.}\label{fig:skewed}
\end{figure}

Suppose that we have an assignment, $W,$ of $C$ to $R^*$, we can think of $W$ as a walk in $R^*$. Suppose that $W$ is not balanced. Then there are clusters $V_i$ and $V_j$ such that $a(i)>m$ and $a(j)<m$. Suppose further that each cluster $V_p$ has been assigned the initial vertex of many neutral pairs of $C$, and that many of these assignments have the form $V_pV_{p+1}V_p$.  If we have a skewed traverse
$$T(V_{i-1},V_j) \coloneqq V_{i-1}V_{i_1}, V_{i_1-1}V_{i_2}, V_{i_2-1}V_{i_3}, \ldots, V_{i_t-1}V_{j}$$
then we can replace sections of the walk $W$ which have been assigned neutral pairs and look like $$V_{i-1}V_iV_{i-1}, V_{i_1-1}V_{i_1}V_{i_1-1}, V_{i_2-1}V_{i_2}V_{i_2-1},\ldots, V_{i_t-1}V_{i_t}V_{i_t-1}$$ by the edges from $T(V_{i-1},V_j)$: $$V_{i-1}V_{i_1}V_{i-1}, V_{i_1-1}V_{i_2}V_{i_1-1}, V_{i_2-1}V_{i_3}V_{i_2-1}, \ldots, V_{i_t-1}V_{j}V_{i_t-1}.$$
This process:
\begin{itemize}
\item decreases $a(i)$ by one;
\item increases $a(j)$ by one and
\item does not change $a(p)$ for any $p \neq i,j$. 
\end{itemize}
So, if $C$ has many neutral pairs, we will be able to use this method to obtain a balanced assignment. 

We also define a shifted walk in the graph $R$. This definition is almost identical to the one given in \Cref{def:shifted}, but, to be consistent with our definition of a skewed traverse, we now include an additional initial edge.

\begin{definition}
Let $A, B \in V(R)$. A \emph{shifted walk}, $W(A,B)$, is a walk of the form
$$W(A,B) = AV_{i_1}FV_{i_1-1}V_{i_2}FV_{i_2-1} \ldots V_{i_t}F V_{i_t-1}B.$$
\end{definition}

We will say that $W(A,B)$ \emph{traverses} $F$ $t$ times. There is an obvious similarity between $T(A,B)$ and $W(A,B)$ and we observe that if we can find a skewed traverse $T(A,B)$ then we an also find a shifted walk from $A$ to $B$. We will again assume that $W(A,B)$ traverses $F$ as few times as possible which gives us that each vertex appears at most once as an entry cluster in $W(A,B)$. The main fact about shifted walks which we will use is that $W(A,B)\setminus \{A,B\}$ visits every vertex in $R$ the same number of times.

Let us again assume that we have an assignment, $W$, of $C$ to $R^*$. So there are clusters $V_i$ and $V_j$ such that $a(i)>m$ and $a(j)<m$. Suppose that most edges in $W$ correspond to an edge in $F$. If $C$ contains few neutral pairs then the assignment $W$ must contain some long, consistently oriented subwalks. Then we can replace a section of the assignment comprising of $\ell$ copies of $F$ by
$$W(V_{i-1}, V_j)W(V_j, V_{i+1})FV_{i-1}F \ldots FV_{i-1}$$
of the same length, $\ell k$, so that the total number of vertices assigned is the same. Noting that $W(V_{i-1}, V_j)\setminus \{V_{i-1}, V_j\}$ and $W(V_j,V_{i+1})\setminus \{V_j,V_{i+1}\}$ visit all clusters of $R$ the same number of times we see that the first section of this walk $W(V_{i-1}, V_j)W(V_j, V_{i+1})FV_{i-1}$ has length divisible by $k$. Adding as many further copies of $F$ as necessary, we see that we can indeed find a walk of the required form which starts and finishes at $V_{i-1}$. Making this substitution decreases $a(i)$ by one; increases $a(j)$ by one and leaves $a(p)$ unchanged for all $p\neq i, j$.

\begin{proposition}\label{prop:shortskewed}
Let $0<\nu\leq \tau \ll \eta <1$ and let $R$ be a digraph on $k$ vertices with $\delta^0(G)\geq \eta k$. Suppose that $R$ is a robust $(\nu, \tau)$-outexpander and $F = V_1V_2\ldots V_k$ is a directed Hamilton cycle in $R$. Define $r \coloneqq \lceil 1/\nu \rceil$. Then, for any pair of distinct vertices $A, B \in V(R)$ there exists:
\begin{itemize}
\item A skewed traverse $T(A,B)$ of length at most $r$ and
\item A shifted walk $W(A,B)$ traversing $F$ at most $r$ times.
\end{itemize}
\end{proposition}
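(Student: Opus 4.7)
The plan is to first produce a skewed traverse of length at most $r$ and then extract the required shifted walk from it. Indeed, given any skewed traverse $T(A,B) = AV_{i_1},\, V_{i_1-1}V_{i_2},\, \ldots,\, V_{i_t-1}B$ of length $t$, one obtains a shifted walk $W(A,B)$ traversing $F$ exactly $t$ times by inserting, between each waypoint $V_{i_\ell}$ and the following edge $V_{i_\ell-1}V_{i_{\ell+1}}$, the unique walk around $F$ from $V_{i_\ell}$ to its $F$-predecessor $V_{i_\ell-1}$. So it suffices to build a skewed traverse $T(A,B)$ of length at most $r \coloneqq \lceil 1/\nu \rceil$.

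For a set $X \subseteq V(R)$, write $X^- \coloneqq \{V_{j-1} : V_j \in X\}$ for the image of $X$ under the $F$-predecessor map (indices mod $k$); this map is a bijection, so $|X^-| = |X|$. Define $S_1 \coloneqq N^+_R(A)$ and, for $\ell \geq 2$, $S_\ell \coloneqq N^+_R(S_{\ell-1}^-)$. Unfolding the recursion, $S_\ell$ is precisely the set of vertices that can occur as $V_{i_\ell}$ in the first $\ell$ waypoints of a skewed traverse beginning at $A$. Since $\delta^+(R) \geq \eta k > \tau k$ we have $|S_1| \geq \eta k$, and since $RN^+_{\nu,R}(X) \subseteq N^+_R(X)$, the robust outexpander hypothesis applied to $S_{\ell-1}^-$ yields
$$|S_\ell| \;\geq\; |RN^+_{\nu,R}(S_{\ell-1}^-)| \;\geq\; |S_{\ell-1}^-| + \nu k \;=\; |S_{\ell-1}| + \nu k$$
whenever $\tau k < |S_{\ell-1}| < (1-\tau)k$.

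I then claim that some $t \leq r$ satisfies $|S_t| > (1-\eta)k$. If not, every $|S_\ell|$ with $\ell \leq r$ sits below $(1-\eta)k < (1-\tau)k$, so the displayed inequality applies at each of the first $r-1$ steps and, using $(r-1)\nu \geq 1 - \nu$, I obtain $|S_r| \geq \eta k + (r-1)\nu k \geq (\eta + 1 - \nu)k > k$, a contradiction. Now set $B^* \coloneqq \{V \in V(R) : V^- B \in E(R)\}$, which is the $F$-successor image of $N^-_R(B)$, so $|B^*| = |N^-_R(B)| \geq \eta k$. Combined with $|S_t| > (1-\eta)k$ this forces $S_t \cap B^* \neq \emptyset$. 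Picking any $V_{i_t} \in S_t \cap B^*$ and tracing back through the construction of the $S_\ell$ produces the desired skewed traverse $T(A,B)$ of length $t \leq r$, and the shifted walk built from it (as in the first paragraph) traverses $F$ at most $t \leq r$ times.

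The only real obstacle is a modest bookkeeping issue: ensuring the iterates $|S_\ell|$ stay in the range $(\tau k,(1-\tau)k)$ where robust outexpansion is in force. Under the hierarchy $\nu \leq \tau \ll \eta$ this is comfortable: the lower bound $|S_1| \geq \eta k > \tau k$ is automatic, and the threshold $(1-\eta)k$ at which the intersection argument succeeds is crossed well before the upper barrier $(1-\tau)k$ becomes an obstruction.
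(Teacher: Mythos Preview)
Your proof is correct and follows essentially the same approach as the paper: iterate the map $X \mapsto N^+_R(X^-)$ starting from $N^+_R(A)$, use robust outexpansion to gain $\nu k$ vertices at each step until the reachable set exceeds $(1-\eta)k$, then intersect with the $F$-successor image of $N^-_R(B)$ to close the traverse; the shifted walk is read off from the skewed traverse exactly as you describe. The only cosmetic differences are that the paper tracks the cumulative set of vertices reachable by a skewed traverse of length at most $\ell-1$ (rather than the exact $\ell$-th waypoint set) and states the size growth directly rather than via contradiction, but the content is identical.
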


\begin{proof}
Let $A=U_0$ and for each $i \geq 1$ let $U_i$ be the set of clusters that can be reached from $A$ by a skewed traverse of length at most ${i-1}$. We define 
$$U^-_i \coloneqq \{V_i: V_{i+1} \in U_i\},$$ the set of predecessors of the clusters in $U_i$. Note that $|U^-_i|= |U_i|$ for all $i\geq 1$.

We first note that $|U_1| \geq \delta^0(R) \geq \eta k$. If $|U^-_1|\leq (1-\tau)k$ then we can use that $R$ is a robust $(\nu, \tau)$-outexpander to see that $$|U_2| \geq |U_1|+\nu k \geq (\eta+\nu)k.$$ Continuing in this way we see that, as long as $|U^-_t| \leq (1-\tau)k$, we can reach $$|U_{t+1}| \geq (\eta + t\nu)k $$ clusters by  skewed traverse of length at most $t$.

Then $|U_{r}| > (1-\eta)k$ and, since $d^-_R(B) \geq \delta^0(R) \geq \eta k$, we have $N^-_R(B) \cap U^-_{r} \neq \emptyset$. Therefore there is a skewed traverse from $A$ to $B$ of length at most $r$. We can use this skewed traverse to find a shifted walk in $R$ of traversing at most $r$ cycles by going once around $F$ after each edge.
\end{proof}

\section{The Two Cases}

We will separate our argument into two cases. We first look at the case when our desired orientation of a Hamilton cycle $C$ is far from the standard orientation. In this case we have many changes of direction on the cycle and hence many neutral pairs. The second case deals with the situation when $C$ is close to the standard orientation, having long consistently oriented subpaths and few neutral pairs. Let us suppose that $C$ has $n(C) = \lambda n$ neutral pairs and let $\mathcal{Q}$ denote a maximal collection of neutral pairs all at a distance at least $3$ from each other. We will define a hierarchy of positive constants
$$0 < \varepsilon_1 \ll \varepsilon_2 \ll \varepsilon_3 \ll \varepsilon_4 \ll \varepsilon_5 \ll \varepsilon_6 \ll \varepsilon_7 \ll \nu.$$
Then, if:
\begin{itemize}
\item $\lambda > \varepsilon_4$, let $\varepsilon \coloneqq \varepsilon_3$, $\varepsilon_A \coloneqq \varepsilon_2$ and $\varepsilon^* \coloneqq \varepsilon_1$. \hfill (Case 1)
\item $\lambda \leq \varepsilon_4$, let $\varepsilon \coloneqq \varepsilon_7$, $\varepsilon_A \coloneqq \varepsilon_6$ and $\varepsilon^* \coloneqq \varepsilon_5$. \hfill (Case 2)
\end{itemize}

In either case, we must begin by partitioning the graph. We will  show that we can split our graph $G$ roughly in half, so that all vertices have around the expected number of neighbours in each half. We will use this partition to gain some control over the number of exceptional vertices obtained when we apply the Diregularity Lemma. If we were just to apply the lemma to the entire graph, we would obtain an exceptional set of size at most $\varepsilon n$	which could greatly exceed the cluster size. By splitting the graph first, we can apply the Diregularity Lemma to partition the vertices of the first subgraph $G_1$ and obtain an exceptional set. We then apply the lemma to the subgraph $G_2$ induced by the remaining vertices of the graph together with this exceptional set, this time with a much smaller \textquoteleft$\varepsilon$' in order to an exceptional set which is much smaller when compared to the size of the clusters in $G_1$.

We let $d, d', \varepsilon', \tau'$ and $\nu'$ be positive constants and $M'_A$, $M'_B$ and $k_0$ be positive integers satisfying
$$\varepsilon^* \ll \frac{1}{M'_A} \ll \varepsilon_A \ll \frac{1}{M'_B} \ll \varepsilon \ll d  \ll\nu'\ll \frac{1}{k_0} \ll \varepsilon' \ll d' \ll \nu \leq \tau \ll \tau' \ll \eta.$$
Note that in Case 1 we have that $\varepsilon \ll \lambda$ and in Case 2 we have $\lambda \ll \varepsilon^*$.

\begin{lemma}
There exists a subset $A \subseteq V(G)$ such that:
\begin{enumerate}[$(i)$]
\item $\left(\frac{1}{2}-\varepsilon\right)n \leq |A| \leq \left(\frac{1}{2}+\varepsilon\right)n$;
\item $\frac{d^+(x)}{n} - \frac{\eta}{10} \leq \frac{|N^+_A(x)|}{|A|} \leq \frac{d^+(x)}{n} + \frac{\eta}{10}$, for all $x \in V(G)$;
\item $\frac{d^-(x)}{n} - \frac{\eta}{10} \leq \frac{|N^-_A(x)|}{|A|} \leq \frac{d^-(x)}{n} + \frac{\eta}{10}$, for all $x \in V(G)$;
\item The graphs $G[A]$ and $G\setminus A$ are robust $(\nu',\tau')$-diexpanders.
\end{enumerate}
\end{lemma}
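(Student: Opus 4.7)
The plan is to construct $A$ by a random partition—include each vertex of $V(G)$ in $A$ independently with probability $1/2$—and show that each of (i)–(iv) holds with probability $1-o(1)$, so a suitable $A$ exists by a union bound. For (i), $|A|$ is binomial$(n,1/2)$ with mean $n/2$, and \Cref{chernoff1} with $\lambda=\varepsilon n/2$ gives $||A|-n/2|\leq \varepsilon n/2$ with failure probability $o(1)$. For (ii) and (iii), fix any vertex $x$: $|N^{\pm}_A(x)|$ is binomial$(d^{\pm}(x),1/2)$ with mean $d^{\pm}(x)/2$, so \Cref{chernoff1} with $\lambda=\eta n/40$ gives exponentially small per-vertex failure probability. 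A union bound over the $n$ vertices and over both in- and out-neighbourhoods gives $o(1)$ total failure, and combining with (i) yields the ratio bounds in (ii) and (iii).

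The main content is (iv). I will describe the argument for $G[A]$; the case $G\setminus A$ is entirely symmetric. Fix any $S\subseteq A$ with $\tau'|A|\leq |S|\leq (1-\tau')|A|$; by (i) and the hierarchy $\tau\ll \tau'$, we have $\tau n\leq |S|\leq (1-\tau)n$, so the robust diexpansion of $G$ yields $|RN^+_{\nu,G}(S)|\geq |S|+\nu n$. Since $S\subseteq A$ and $\nu'|A|\leq \nu n$, every vertex $v\in A\cap RN^+_{\nu,G}(S)$ satisfies $|N^-_{G[A]}(v)\cap S|=|N^-_G(v)\cap S|\geq \nu n\geq \nu'|A|$, so it lies in $RN^+_{\nu',G[A]}(S)$. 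Hence it suffices to show that $|A\cap RN^+_{\nu,G}(S)|\geq |S|+\nu'|A|$ holds simultaneously for every such $S$. The core concentration step is \Cref{chernoff2}: for each fixed $T\subseteq V(G)$, $|A\cap T|$ is a 1-Lipschitz function of the vertex-inclusion indicators with mean $|T|/2$, so it concentrates around this value with failure probability $\exp(-\Omega(n))$.

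The hard part will be converting per-$T$ concentration into a statement that holds simultaneously for all the relevant sets: a naive union bound over all subsets of $V(G)$ would involve $2^n$ terms and swamp the Chernoff bound. My plan is to handle this by exploiting the degree concentration already established in (ii)–(iii)—which guarantees that in $G[A]$ most local degree statistics faithfully reflect those in $G$—together with a reduction to a polynomial-size class of canonical test sets indexed by the size $|S|$, chosen so that failure of the inequality for any $S$ forces failure of the concentration for one of the canonical $T$'s. The analogous argument applied to $RN^-_{\nu,G}$ (to obtain in-expansion, using that $G$ is a robust diexpander, not merely an outexpander) and to the complement $G\setminus A$ completes the verification of (iv), and a final union bound over (i)–(iv) finishes the proof.
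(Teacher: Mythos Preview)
Your treatment of (i)--(iii) is fine and matches the paper. The gap is in (iv), and it is not just the union-bound issue you flag: the concentration inequality you intend to use is simply not strong enough, even in expectation. You reduce to showing $|A\cap RN^+_{\nu,G}(S)|\geq |S|+\nu'|A|$ for all relevant $S\subseteq A$, and then propose to control $|A\cap T|$ for $T=RN^+_{\nu,G}(S)$ via its concentration around $|T|/2$. But $|T|$ is only guaranteed to be at least $|S|+\nu n$, so $|T|/2\approx (|S|+\nu n)/2$, and you need this to exceed $|S|+\nu'|A|$. That forces $|S|\lesssim (\nu-\nu')n$, whereas $|S|$ ranges up to roughly $(1-\tau')n/2$. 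So for most of the range of $|S|$ the target inequality fails already at the level of the mean, and no amount of concentration or clever choice of canonical test sets can rescue it. Your sketch of ``canonical test sets indexed by $|S|$'' is also unworkable for a separate reason: $RN^+_{\nu,G}(S)$ depends on $S$ itself, not just on $|S|$, so there is no small family of $T$'s that captures all cases.

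The paper sidesteps both problems by a different mechanism: it applies the Diregularity Lemma to $G$ \emph{before} choosing $A$, obtaining a bounded number $k$ of clusters $V_1,\ldots,V_k$ and a reduced digraph $R$ which is itself a robust diexpander. The only random events that then need to be controlled are that each $|V_j\cap A|$ is close to $m/2$, a union bound over $k$ events. Once this holds, the robust diexpansion of $G[A]$ is derived \emph{deterministically}: for any $S\subseteq A$, one looks at the set $Q$ of clusters that $S$ intersects significantly, applies the diexpansion of $R$ to $Q$, and then uses $\varepsilon'$-regularity of the cluster pairs to pull the expansion back down to $G[A]$. The point is that the regularity partition provides exactly the bounded ``canonical'' family you were groping for, but at the level of clusters rather than vertex sets, and the deterministic regularity-based transfer replaces the failing concentration step.
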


\begin{proof}
Apply the Diregularity Lemma, \Cref{direg}, with parameters $\varepsilon'$, $d'$ and $k_0$ to the graph $G$, to obtain a partition into clusters $V_1, \ldots, V_k$ of size $m$ and an exceptional set $V_0$.

We will show that we can find a set $A$ which satisfies $(i)$ and such that $\left| |N^+_A(x)|-d^+(x)/2\right| \leq \eta n/20$  and $\left| |N^-_A(x)|-d^-(x)/2\right| \leq \eta n/20$ for all $x \in V(G)$. Then this set $A$ satisfies property $(ii)$ as
$$\frac{n}{(1/2+\varepsilon)n}\left(\frac{d^+(x)}{2n} - \frac{\eta}{20}\right) \leq \frac{|N^+_A(x)|}{|A|} \leq \frac{n}{(1/2-\varepsilon)n}\left(\frac{d^+(x)}{2n} + \frac{\eta}{20}\right).$$
Property $(iii)$ follows similarly.

Let $V(G) = \{x_1, \ldots x_n\}$. Let $A$ be obtained by including each $x_i$ with probability $1/2$, independently of all $x_j$. For $1\leq i \leq n$, let $X_i$ be the event that $x_i$ is in $A$. Then $\mathbb{P}(X_i =1) = 1/2$, $X = \sum_{i=1}^n X_i = |A|$ and $\mathbb{E}(X) = n/2$. We apply the Chernoff bound of \Cref{chernoff1} to see that
\begin{equation}\label{eq:A1}
\mathbb{P}(||A|-n/2| > \varepsilon n) < 2e^{-2\varepsilon^2n/3}\leq \frac{1}{4n}
\end{equation}
for sufficiently large $n$.

Now consider the degree of each vertex of $G$ inside $A$. Fix some vertex $x \in V(G)$. For each $x_i \in N^+(x)$, let $X_{x,i}$ be the event that $x_i$ is in $N^+_A(x)$. Let $X_x= \sum_{x_i \in N^+(x)} X_{x,i}$. Then $X_x=|N^+_A(x)|$ and $\mathbb{E}(X_x) = d^+(x)/2$. We apply \Cref{chernoff1} to get
$$\mathbb{P}\left(\left| |N^+_A(x)|-\frac{d^+(x)}{2}\right| > \frac{\eta n}{20}\right) < 2e^{-\frac{2\eta^2n}{1200(3/8+\eta)}}\leq 2e^{-n}.$$
We can then sum these probabilities over all $x \in V(G)$ to see that
\begin{equation}\label{eq:A2}
\mathbb{P}\left(\left| |N^+_A(x)|-\frac{d^+(x)}{2}\right| > \frac{\eta n}{20} \; \text{ for all } x \in V(G)\right) <2ne^{-n}\leq \frac{1}{4n}
\end{equation}
for sufficiently large $n$.
In the same way, we show that 
\begin{equation}\label{eq:A3}
\mathbb{P}\left(\left| |N^-_A(x)|-\frac{d^-(x)}{2}\right| > \frac{\eta n}{20} \; \text{ for all } x \in V(G)\right) <2ne^{-n} \leq \frac{1}{4n}.
\end{equation}

Finally, we consider the robust diexpansion property. Recall that we have chosen an $\varepsilon'$-regular partition of $V(G)$ into $k$ clusters $V_1, \ldots V_k$ of size $m$ and exceptional set $V_0$. For each cluster $V_j$, let $V'_j = V_j \cap A$. We let $X_i$ be defined as before, then $|V'_j| = \sum_{x_i \in V_j}X_i$ and $\mathbb{E}(|V'_j|) = m/2$. We again use the Chernoff bound to see that
\begin{equation}\label{eq:A4}
\mathbb{P}\left(\left||V'_j|-\frac{m}{2}\right|>\varepsilon'm\right)<2e^{-2\varepsilon'^2m/3}<\frac{1}{4n}.
\end{equation}

Suppose that $A$ is a subset of $V(G)$ satisfying $(1/2-\varepsilon')m \leq |V'_j| \leq (1/2-\varepsilon')m$ for all clusters $V_j$. We will show that $A$ satisfies $(iv)$. Move at most $\varepsilon'm$ vertices from each cluster $V'_i$ into the exceptional set $V_0'$ so that each cluster has size $m'\coloneqq (1/2-\varepsilon')m$. Then $|V_0'|\leq 2\varepsilon' n$. Consider any set $S \subseteq A$ such that $\tau'|A| \leq |S| \leq (1-\tau')|A|$. We will say that $S$ \emph{intersects} $V'_i$ \emph{significantly} if
$$|V'_i \cap S| > \nu^2m.$$ Write $k^*$ for the number of clusters $S$ intersects significantly. Then $k^*$ satisfies
$$k^*m' + (k-k^*)\nu^2 m +2\varepsilon' n \geq |S|$$ which implies
$$k^* \geq |S|/m'-3\nu^2k \eqqcolon q.$$
Let $Q$ be a set of $q$ clusters $V_i$ such that $V'_i$ is intersected significantly by $S$. In particular, note that 
$$\tau'|A|/m' - 3\nu^2k \leq q \leq (1-\tau')|A|/m' - 3\nu^2k$$
and so
$$2\tau k \leq q \leq (1-2\tau)k.$$
We now recall that $G$ is a robust $(v,\tau)$-diexpander, and so $R$, the reduced digraph, is a robust $(v/2,2\tau)$-diexpander, by \Cref{diexpander}. Then $|N^+_R(Q)| \geq q+\nu k/2$ and $|N^-_R(Q)| \geq q+\nu k/2$. Now, each edge in $R$ corresponds to an $\varepsilon'$-regular pair of clusters of density at least $d'$. Suppose $V_j \in N^+_R(Q)$, then $V_iV_j \in E(R)$ for some $V_i \in Q$.
We can apply \Cref{prop:neighbours} to see that all but at most $\varepsilon'm$ vertices in $V_j$ (and hence all but at most $\varepsilon'm$ vertices in $V'_j$) have at least
$$(d'-\varepsilon')\nu^2m \geq \nu'|A|$$ inneighbours in $S$. Thus for each $V_j \in N_R^+(Q$, all but at most $\varepsilon'm$ vertices in $V_j$ lie in $RN^+_{\nu',G[A]}(S)$. Therefore,
\begin{equation*}
\begin{split}
|RN^+_{\nu',G[A]}(S)| &\geq \left( q +\nu k/2\right)(m'-\varepsilon'm) \\
&= \left( |S|/m' - 3\nu^2k +\nu k/2\right)(m'-\varepsilon'm) \\
&\geq |S| - |S|\varepsilon'm/m' - 3\nu^2km' +\nu k(m'-\varepsilon' m)/2 \\
&\geq |S|+\left(\nu/4 - 3\nu^2-3\varepsilon'\right)|A|\\
&\geq |S|+\nu'|A|
\end{split}
\end{equation*}
In a similar way, we see that $|RN^-_{\nu',G[A]}(S)|\geq |S|+\nu'|A|$ and so $G[A]$ is a robust $(\nu',\tau')$-diexpander. The same reasoning applied to the graph $G\setminus A$ allows us to conclude that $G\setminus A$ is also a robust $(\nu',\tau')$-diexpander. So we have satisfied property $(iv)$.

Together, equations \eqref{eq:A1}--\eqref{eq:A4} imply that with probability at most $1/n$ our randomly chosen set $A$ does not satisfy properties $(i)$--$(iv)$. Therefore, we will choose a set $A$ satisfying the properties with positive probability and hence such a set exists.
\end{proof}

Let us choose such a subset $A \subseteq V(G)$. We observe that
\begin{equation}\label{eq:g[a]mindeg}
\delta^0(G[A]) \geq \left( \frac{\delta^0(G)}{n} - \frac{\eta}{10}\right)|A| \geq \frac{9\eta}{10}|A|.
\end{equation}
We also note that
\begin{equation*}
\begin{split}
\frac{\delta^+(G\setminus A)}{|G\setminus A|}&\geq \frac{\delta^+(G) - (\delta^+(G)/n +\eta/10)(1/2+\varepsilon)n}{(1/2-\varepsilon)n} \\
&\geq \frac{\delta^+(G)}{n} -\frac{\eta}{9} \geq \frac{2\eta}{3}.
\end{split}
\end{equation*}
We get a similar bound for the minimum indegree and hence 
\begin{equation}\label{eq:g-amindeg}
\delta^0(G\setminus A)\geq \frac{2\eta}{3}|G\setminus A|.
\end{equation}

We now apply the Diregularity Lemma with parameters $\varepsilon^2$, $2d$ and $M'_B$	to the graph $G \setminus A$. We obtain a partitition of the vertices into $M_B \geq M'_B$ clusters,  $V_1, \ldots, V_{M_B}$, with $|V_1|=\ldots=|V_{M_B}| \eqqcolon m'_B$, and an exceptional set $V_0$. Let $R_B$ be the reduced  graph and $G'_B$ be the pure digraph. Then we use \Cref{Routexpander}, \eqref{eq:g-amindeg} and \Cref{diexpander} to see that:
\begin{itemize}
\item $R_B$ is a $(\nu'/2, 2\tau')$-diexpander and
\item $\delta^0(R_B) \geq \eta M_B/3$.
\end{itemize}
Therefore, $R_B$ contains a Hamilton cycle $F_B$, by \Cref{thm:stdhamcycle}. Relabel the vertices of $R_B$ if necessary so that $F_B= V_1V_2 \ldots V_{M_B}V_1$. We can then apply \Cref{lem:subgraphsuperreg} to show that by moving $4\varepsilon^2m'_B$ from each cluster into the exceptional set $V_0$ we may assume that each edge in $F_B$ corresponds to an $(\varepsilon,d)^*$-superregular pair and each edge of $R_B$ corresponds to an $\varepsilon$-regular pair of density at least $d$ (in $G'_B$). We continue to denote the clusters by $V_1, \ldots , V_{M_B}$ and let $m_B$ denote the new cluster size. The resulting exceptional set, which we will continue to call $V_0$, satisfies $$|V_0| \leq \varepsilon^2n+4\varepsilon^2m'_BM_B \leq \varepsilon n.$$ Let  $B \coloneqq \bigcup_{i=1}^{M_B} V_i$ and write $G^*_B$ for the graph $G'_B[B]$.

Let us now consider the graph $G[A \cup V_0]$. By our choice of $A$, all vertices $x \in V_0$ satisfy $|N^+_A(x)| \geq 9 \eta|A|/10$ and $|N^-_A(x)| \geq 9 \eta|A|/10$. Combining this with \eqref{eq:g[a]mindeg}, we get that $$\delta^0(G[A \cup V_0]) \geq \frac{2\eta}{3}|A \cup V_0|.$$ We have added at most $\varepsilon n \leq (\nu')^2 |A|$ vertices to $G[A]$, so we can apply \Cref{prop:addingrobexp} to see that $G[A \cup V_0]$ is still a robust $(\nu'/2,2\tau')$-diexpander. We now carry out a similar process again, this time applying the Diregularity Lemma with parameters $\varepsilon_A^2/4$, $2d$ and $M'_A$ to $G[A \cup V_0]$. We obtain a partition into $M_A \geq M'_A$ clusters, $V'_1, \ldots, V'_{M_A}$, with $|V'_1|=  \ldots = |V'_{M_A}|$ and an exceptional set $V'_0$. Let $A' \coloneqq \bigcup_{i=1}^{M_A} V'_i$, $R_A$ be the reduced digraph and $G'_A$ the pure digraph. As previously, we can show that:
\begin{itemize}
\item $R_A$ is a robust $(\nu'/4, 4\tau')$-diexpander and
\item $\delta^0(R_A) \geq \eta M_A/3.$
\end{itemize}
Then apply \Cref{thm:stdhamcycle} to find a Hamilton cycle $F_A$ in $R_A$. Again, we apply \Cref{lem:subgraphsuperreg} to show that by moving at most $\varepsilon_A^2|A\cup V_0|$ vertices into the exceptional set $V'_0$ we may assume that each edge in $F_A$ corresponds to an $(\varepsilon_A,d)^*$-superregular pair and each edge of $R_A$ corresponds to an $\varepsilon_A$-regular pair of density at least $d$ (in $G'_A$). Let $m_A$ denote the resulting cluster size, $G_B$ denote the graph $G[B\cup V'_0]$ and $n_B\coloneqq |G_B|$. The new exceptional set satisfies
\begin{equation}\label{eq:except1}
|V'_0| \leq (\varepsilon_A^2/4 + \varepsilon_A^2)|A \cup V_0|\leq \varepsilon_A|A \cup V_0|/2 < \varepsilon_An_B.
\end{equation}
The constants $M_A$ and $M_B$ satisfy
$$0<\varepsilon^* \ll 1/M_A \ll \varepsilon_A \ll 1/M_B \ll \varepsilon \ll d \ll \nu'.$$

\section{Case 1: $C$ Contains Many Neutral Pairs}

\subsection*{Step 1: Splitting up the Cycle}

Now that we have two subgraphs, $G[A']$ and $G_B$ which partition the vertex set of $G$, we want to also split the cycle $C$ into two subpaths and then embed one of the subpaths into each graph. Recall that $n(C) = \lambda n$ denotes the number of neutral pairs in $C$. In this section we will assume that $C$ contains many neutral pairs, i.e, that $\lambda>\varepsilon_4$, although the process will be very similar when we consider the case when $C$ is close to the standard orientation. We will begin by assigning vertices to the clusters in the reduced  graphs.

Recall that we defined $\mathcal{Q}$ to be a maximal collection of neutral pairs in $C$, all at a distance at least $3$ from each other. Let $v^*$ be a vertex such that both subpaths of $C$ of length $n/2$ which have $v^*$ as an endvertex contain at least $2/5$ of the elements of $\mathcal{Q}$.

We set $r \coloneqq \lceil 8/\nu'\rceil$. Recall that the reduced digraphs $R_A$ or $R_B$ are both robust $(\nu'/4,4\tau')$-diexpanders. Then, by \Cref{lem:isopath}, given any pair of vertices in $R_A$ or any pair in $R_B$, we can find any orientation of a path of length $r$ between them. Let
$$s \coloneqq \lfloor(\log n)^2\rfloor \;\text{ and } \;t\coloneqq \left\lfloor \frac{n-r(s+1)}{s+2}\right\rfloor.$$
We will divide $C$ into overlapping subpaths, sharing endvertices, so that
$$C = Q_1P_1Q_2P_2\ldots Q_sP_sQ^*P^*$$
where $Q_1$ starts at the vertex $v^*$ and $\ell(P_i) = t$, $\ell(Q_i) = \ell(Q^*) = r$ and $2t\leq \ell(P^*)<3t$.

Let $s_B$ be a positive integer such that
$$1<n_B - s_B(r+t)< \ell(P^*).$$
Then define the path
$$P_B \coloneqq P^*_BQ_1P_1\ldots Q_{s_B}P_{s_B}$$
where $P^*_B$ is an end segment of $P^*$ chosen so that $|P_B| = n_B$. Then
\begin{equation}\label{eq:nb}
n_B=s_B(r+t)+\ell(P^*_B)+1.
\end{equation}
Let $P_A$ be the remainder of $C$, that is
$$P_A \coloneqq Q'_1P'_1 \ldots Q'_{s_A}P'_{s_A} Q^*P^*_A$$
where $s_A\coloneqq s-s_B$, $Q'_i \coloneqq Q_{s_B+i}$, $P'_i \coloneqq P_{s_B+i}$ and $P^*_A$ is an initial segment of $P^*$ overlapping $P^*_B$ in exactly one place.

We will begin by assigning the vertices of $P_B$ to the vertices of $R_B$. Ideally, we would like each cluster to be assigned a similar number of vertices and a similar number of neutral pairs. Define a subset $\mathcal{Q}_B \subseteq \mathcal{Q}$ be the set of all neutral pairs which are contained in a $P_i$ for some $i$ and are at a distance at least $3$ from its ends. We can apply \Cref{lemma16} with $\gamma = \varepsilon^*/2$	to obtain an embedding of $P_1, \ldots, P_{s_B}$  in $R_B$ such that, for all $1\leq i \leq M_B$:
$$\left|a(i)-\frac{s_B(t+1)}{M_B}\right|\leq\frac{\varepsilon^*s_B(t+1)}{2} \;\text{ and }\; \left|n(i,\mathcal{Q}_B) - \frac{|\mathcal{Q}_B|}{M_B}\right| \leq \frac{\varepsilon^*s_B(t+1)}{2}$$
and hence
\begin{equation}\label{eq:rbassignment}
\left|a(i)-\frac{s_Bt}{M_B}\right|\leq\varepsilon^*s_Bt \;\text{ and }\; \left|n(i,\mathcal{Q}_B) - \frac{|\mathcal{Q}_B|}{M_B}\right| \leq \varepsilon^*s_Bt.
\end{equation}
Then
\begin{eqnarray}\label{eq:7}
|a(i)-m_B| &=& \left|a(i)-\frac{n_B-|V'_0|}{M_B}\right| \nonumber \\
&\stackrel{\eqref{eq:except1}}{\leq}& \left|a(i)-\frac{n_B}{M_B}\right|+2\varepsilon_Am_B \nonumber \\
&\stackrel{\eqref{eq:nb}}{\leq}& \left|a(i)-\frac{s_Bt}{M_B}\right|+ \left|\frac{s_Br+3t}{M_B}\right| +2\varepsilon_Am_B  \nonumber \\
& \stackrel{\eqref{eq:rbassignment}}{\leq}& \varepsilon^*s_Bt+ \varepsilon^*m_B+2\varepsilon_Am_B .
\end{eqnarray}

Let us now estimate $|\mathcal{Q}_B|$. Recall that $\mathcal{Q}$ is a maximal set of neutral pairs all at a distance at least $3$ from each other, so we note that $|\mathcal{Q}| \geq \lambda n/4$. We selected $P_B$ to contain at least $2|\mathcal{Q}|/5 \geq \lambda n/10$ neutral pairs. At most $s_Br+3t$ neutral pairs can be contained in the paths $Q_i$ and $P^*_B$. We will lose at most $4s_B$ neutral pairs which are in a $P_i$ but too close to its ends. Therefore,
$$|\mathcal{Q}_B| \geq \lambda n/10 - (s_Br+3t+4s_B)$$
and so we use \eqref{eq:rbassignment} to see that
\begin{equation}\label{eq:neutralpairs}
\begin{split}
n(i,\mathcal{Q}_B) &\geq \left(\frac{\lambda n}{10} - (s_Br+3t+4s_B)\right)\frac{1}{M_B} - \varepsilon^*s_Bt \\
& \geq \frac{\lambda n_B}{6M_B} - 2\varepsilon^*n_B \geq \frac{\lambda m_B}{7}.
\end{split}
\end{equation}

We now use \Cref{lem:isopath} to connect each pair $P_{i-1},P_i$ by a path in $R_B$ which is isomorphic to $Q_i$. We also greedily extend $P_1$ backwards by a path isomorphic to $P^*_BQ_1$. This will assign at most
\begin{equation}\label{eq:addingsome1}
s_Br+3t\leq \varepsilon^*m_B
\end{equation}
additional vertices to each $V_i$. Let us denote this assignment of $P_B$ to $R_B$ (which can be thought of as a walk in $R_B$) by $W_B$.

\subsection*{Step 2: Incorporating the Exceptional Vertices}

In this section, we will show that we can modify the walk $W_B$ so as to include all of the vertices in $V'_0$. Let us define an extended reduced graph $R^*_B$ to be the graph formed by the union of $R_B$ and the vertices in $V'_0$. For each $v\in V'_0$ and each $V_i \in V(R_B)$, we add the edge:
\begin{itemize}
\item $vV_i$ if $|N^+_G(V) \cap V_i| > \eta m_B/10$.
\item $V_iv$ if $|N^-_G(V) \cap V_i| > \eta m_B/10$
\end{itemize}
By our choice of $A$, we have that $v$ has at least $\eta n_B/3$ inneighbours in $B$ and so $v$ must have at least one inneighbour in $R^*_B$.  For each $v \in V'_0$, choose an inneighbour $V_i \in V(R_B)$ and change the assignment of one neutral pair mapped to $V_iV_{i+1}V_i$ to $V_ivV_i$. This process reduces $a(i+1)$ and $n(i, \mathcal{Q}_B)$ by one. We are able to choose a distinct neutral pair for each vertex in $V'_0$ since \eqref{eq:neutralpairs} implies that $n(i, \mathcal{Q}_B) \geq \lambda m_B/7 > |V'_0|$. After carrying out this process for all exceptional vertices we reduced $a(i)$ by  at most $|V'_0| < \varepsilon_An_B$ for each cluster and we use this together with \eqref{eq:7} and \eqref{eq:addingsome1}, to see that for all $V_i \in V(R_B)$ the modified $a(i)$ now satisfies
\begin{equation}\label{eq:ai}
|a(i)-m_B| \leq \varepsilon^*s_Bt + \varepsilon^*m_B +2\varepsilon_Am_B + \varepsilon^*m_B + |V'_0| < 4\varepsilon_An_B.
\end{equation}

We also note that for each $V_i$ we still have
$$n(i,\mathcal{Q}_B) \geq \frac{\lambda m_B}{7} - |V'_0| \geq \frac{\lambda m_B}{8}.$$
Each vertex assigned to a $V_i \in V(R_B)$ must have both of its neighbours in $V_{i-1} \cup V_{i+1}$, unless it is the assignment of a vertex in a $Q_i$ or was part of a neutral pair used to incorporate an exceptional vertex. So there can be at most $\varepsilon_An_B + 2|V'_0| \leq 3 \varepsilon_A n_B$ vertices assigned to some $V_i$ which do not have their neighbours in $V_{i-1} \cup V_{i+1}$. Therefore, we have found a $(4\varepsilon_A, 3\varepsilon_A)$-corresponding assignment of $P_B$ to $R^*_B$.

\subsection*{Step 3: Obtaining a Balanced Assignment}

We will now show that we can use skewed traverses to modify the assignment so that it becomes balanced, that is, every cluster is assigned exactly $m_B$ vertices. For each cluster $V_i \in V(R_B)$ with $a(i) > m_B$, choose a cluster $V_j$ with $a(j)<m_B$. By \Cref{prop:shortskewed}, we can find a skewed traverse $T(V_{i-1},V_j)$ of length $\ell \leq \lceil 4/\nu' \rceil$ in $R_B$, say, 
$$T(V_{i-1},V_j) = V_{i-1}V_{i_1}, V_{i_1-1}V_{i_2},V_{i_2-1}V_{i_3}, \ldots, V_{i_\ell-1}V_j.$$
As detailed in \Cref{sub:skewed}, we can use this skewed traverse to reduce $a(i)$ by one and increase $a(j)$ by one. We do this by replacing sections of $W_B$ which correspond to the assignment of a neutral pair in $C$ and are of the form $$V_{i-1}V_iV_{i-1}, V_{i_1-1}V_{i_1}V_{i_1-1}, V_{i_2-1}V_{i_2}V_{i_2-1},\ldots, V_{i_\ell-1}V_{i_\ell}V_{i_\ell-1}$$ by edges from $T(V_{i-1},V_j)$: $$V_{i-1}V_{i_1}V_{i-1}, V_{i_1-1}V_{i_2}V_{i_1-1}, V_{i_2-1}V_{i_3}V_{i_2-1}, \ldots, V_{i_\ell-1}V_{j}V_{i_\ell-1}.$$   The number of vertices assigned to all other clusters in $R_B$ remains unchanged. Since $$n(i, \mathcal{Q}_B) \geq \lambda m_B/8 > 4\varepsilon_AM_Bn_B \geq \sum_{i=1}^{M_B}|a(i)-m_B|$$ we are able to carry out this process for all clusters $V_i$ with $a(i) > m_B$ to obtain a balanced embedding which we continue to call $W_B$.

We can check that each cluster $V_i \in V(R_B)$ has now been assigned at most $3 \varepsilon_An_B+ 8\varepsilon_AM_Bn_B< 9\varepsilon_AM_Bn_B$ vertices which do not have both of their neighbours in $V_{i-1} \cup V_{i+1}$. So we now have that $W_B$ is a $9\varepsilon_AM_B$-corresponding embedding of $P_B$ into $R^*_B$.

\subsection*{Step 4: Finding a Copy of $P_B$ in $G_B$}

Now that we have found a $9\varepsilon_AM_B$-corresponding assignment of $P_B$ to $R^*_B$, we will show that we can use this to find an embedding, $W'_B$, of $P_B$ in the graph $G_B$. This embedding will have the following properties:

\begin{itemize}
\item Each vertex of $P_B$ assigned to some $v \in V'_0$ by $W_B$ is also assigned to $v$ by $W'_B$;
\item For each $V_i \in V(R_B)$, each occurrence of $V_i$ in $W_B$ is replaced by a unique vertex in $V_i$;
\item Each edge of $W_B$ which does not lie on an edge of $F_B$ is mapped to an edge in $G_B$.
\end{itemize}

Define $W_{B,1}$ to be the digraph consisting of all maximal walks, $u_{i,1}u_{i,2}\ldots u_{i, \ell_i}$,
in $W_B$ of length at least $3$ and with all their edges lying on $F_B$. Let $W_{B,2}$ be the digraph $W_B \setminus W_{B,1}$. Then $W_{B,2}$ is a union of walks of the form $v_{i,1}v_{i,2}\ldots v_{i,k_i}$ where $u_{i,1} = v_{i-1,k_{i-1}}$ and $u_{i,\ell_i} = v_{i,1}$.

A walk might be in $W_{B,2}$ because of any of the following three of our earlier techniques:
\begin{enumerate}
\item Incorporating an exceptional vertex; \label{1}
\item Embedding the paths $Q_i$ and $P^*_B$; \label{2}
\item Using skewed traverses to correct imbalances. \label{3}
\end{enumerate}
These are the only situations in which a walk in $W_{B,2}$ can arise, since we asked that all neutral pairs in $\mathcal{Q}$ were separated by at least $3$ edges.

Let us embed the paths of $W_{B,2}$ greedily in the following way. If the walk is of type \ref{1}, then let $v \in V'_0$ be the exceptional vertex and $V_i$ the cluster adjacent to $v$ in $W_B$. Recall that we chose $V_i$ so that $|N^-_G(v) \cap V_i|>\eta m_B/10$. Since $|V'_0| < \varepsilon_An_B \leq \varepsilon m_B/3 $, we can choose distinct vertices $x,y \in N^-_G(v) \cap V_i$ for each such path. If the walk is of type \ref{2}, then we will again embed the walk greedily so that its image is a path, this time in the graph $G^*_B \subseteq G_B$. We use  that each edge of $R_B$ corresponds to an $\varepsilon$-regular pair of density at least $d$ and that the total length of paths of type \ref{2} is at most $s_Br+3t \leq \varepsilon m_B/3$ (together with \Cref{prop:neighbours} and \Cref{prop:subsets}). The final type, type \ref{3}, comes from a skewed traverse and will be of the form $V_iV_jV_i$. We must embed at most $3(9\varepsilon_AM_Bn_B)\leq \varepsilon m_B/3$ vertices into paths of this type, so we can again assign these vertices greedily. We have now embedded all walks in $W_{B,2}$ so that the image of each of the walks is a path.

For each $V_i$ let $S_i \subseteq V_i$ be the set consisting of all vertices which have not yet been assigned a vertex or have been assigned an endvertex in $W_{B,2}$. We then apply \Cref{lemma10} to the graph $G^*_B$ with $H \coloneqq W_{B,1}$ and with  $x_{P_i}$ defined to be the vertex in $G^*_B$ assigned $u_{i,1}$ and $y_{P_i}$ the vertex assigned $u_{i,\ell_i}$. We obtain an embedding of $W_{B,1}$ into $G^*_B[\bigcup S_i]$. Together with the embedding of $W_{B,2}$, we obtain an embedding of $P_B$ in $G_B$ which satisfies all of the desired properties. 

\subsection*{Step 5: Finding a Copy of $C$ in $G$}
Let $u, v \in V(G)$ be the vertices to which we assigned the endvertices of $P_B$. We must now find an embedding of 
$$P_A = Q'_1P'_1 \ldots Q'_{s_A}P'_{s_A} Q^*P^*_A$$
in the graph $G_A \coloneqq G[A' \cup \{u,v\}]$ which starts at $v$ and ends at $u$. We will follow an almost identical method to that for embedding $P_B$, the main difference will be that this time we only have two exceptional vertices, $u$ and $v$.

Define $\mathcal{Q}_A \subseteq \mathcal{Q}$ to be the set of all neutral pairs which are contained in a $P'_i$ and are at a distance at least $3$ from its ends. Then embed the $P'_i$ using \Cref{lemma16} and use \Cref{lem:isopath} to embed $Q'_i$ for each $i>1$, as in Step 1. We must also greedily embed $P^*_A$ and again use \Cref{lem:isopath} connect this path to $P'_{s_A}$ by a path isomorphic to $Q^*$. In Step 2, we only have to consider the exceptional set $\{u,v\}$ and then we repeat Steps 3 and 4 to correct imbalances and then find a copy of $P_A$ in $G_A$ from $v$ to $u$. All of the equations follow through after replacing $m_B, M_B, s_B$ by $m_A, M_A, s_A$. We combine these embeddings to obtain a copy of $C$ in $G$.

\section{Case 2: $C$ Contains Few Neutral Pairs}
In this case, the cycle $C$ is close to the standard orientation. A lot of the methods used will be similar to those in Case 1 but, since $C$ now contains few neutral pairs, we will need a new way to incorporate exceptional vertices and correct imbalances. Unless otherwise stated, the notation used in this section is as defined for Case 1.

\subsection*{Step 1$'$: Splitting up the Cycle}
Let us define 
$$\ell_B \coloneqq \left\lceil \frac{4}{\nu'} \right\rceil M_B.$$ Since $R_B$ is a robust $(\nu'/4, 4\tau')$-outexpander, \Cref{prop:shortskewed} implies that $\ell_B$ is the maximum number of cycles that must be traversed by a shifted walk between any two vertices in $R_B$. Split the cycle into subpaths $P_A$ and $P_B$ as in Step 1 (this time $v^*$ can be taken to be any vertex in $C$). We define a  \emph{long run} to be a directed subpath of $C$ of length $3\ell_B$. Let $\mathcal{Q}'_B$ be a maximal collection of long runs in $P_B$ all oriented in the same direction and at a distance at least $3$ from each other. We observe that
$$|\mathcal{Q}'_B| \geq \left\lfloor \frac{n_B-1}{3\ell_B +3} \right\rfloor - 2\lambda n \geq \frac{\nu' n_B}{50M_B}.$$ 
We have subtracted $2\lambda n$ since a neutral pair or an inverse neutral pair can both prevent a long run.

We let $\mathcal{Q}_B \subseteq \mathcal{Q}'_B$ be a maximal subset containing long runs which are contained in a $P_i$, and at a distance at least $4$ from its ends, and are all oriented in the same direction. We will assume that all of the long runs in $\mathcal{Q}_B$ are oriented in the same direction as $F_B$. Since $\ell(Q_i)+8 = r+8 < 3\ell_B$, the path formed by $Q_i$ together with the last $4$ edges of $P_{i-1}$ and the initial $4$ edges of $P_i$ can intersect at most $2$ long runs in $Q'_B$. The path $Q^*P^*_B$ extended backwards by $4$ edges has length at most $r+3t+4$ so it can also intersect at most $4t/3\ell_B$ long runs. So we lose at most $2s_B+4t/3\ell_B$ long runs in $\mathcal{Q}'$ because they are intersected by a $Q_i$ or $Q^*P^*_B$ or are too close to a $Q_i$.  We lose at most half of the remaining long runs by selecting a maximal set oriented in the same direction. Hence
$$|\mathcal{Q}_B| \geq \frac{1}{2}\left(\frac{\nu' n_B}{50M_B} - (2s_B+4t/3\ell_B) \right) \geq \frac{\nu' n_B}{110M_B}.$$

We apply \Cref{lemma16} to $R_B$ with $P_1, \ldots, P_{s_B}$, $\mathcal{Q}_B$ and $\varepsilon^*/2$ to obtain an embedding of the $P_i$ with
\begin{equation*}
\left|a(i)-\frac{s_Bt}{M_B}\right|\leq\varepsilon^*s_Bt \;\text{ and }\; n(i,\mathcal{Q}_B) \geq \frac{|\mathcal{Q}_B|}{M_B} - \varepsilon^*s_Bt \geq \frac{\nu'n_B}{120M_B^2}.
\end{equation*}

Exactly as in Step 1, we use \Cref{lem:isopath} to connect each pair $P_{i-1},P_i$ by a path isomorphic to $Q_i$. We also greedily extend $P_1$ backwards by a path isomorphic to $P^*_BQ_1$. This process assigns at most $s_Br+3t\leq \varepsilon^*m_B$ additional vertices to each $V_i$. Let us denote this assignment of $P_B$ to $R_B$ by $W_B$. As before, we will view $W_B$ as a walk in $R_B$.

\subsection*{Step 2$'$: Incorporating the Exceptional Vertices}

Suppose that $v \in V'_0$. Then let $V_i, V_j \in V(R_B)$ be such that $V_iv, vV_j \in E(R_B^*)$, we can find such clusters by the definition of $R_B^*$ and our choice of $A$. Choose a long run in $\mathcal{Q}_B$ whose initial vertex is assigned to $V_i$. Note that since $M_B$ divides $3\ell_B$, the subwalk of $W_B$ corresponding to this long run also ends at $V_i$. We will show that we can replace the assignment of this long run in $R_B$ by an extended shifted walk $W'$ in order to incorporate $v$ without significantly changing the number of vertices assigned to any cluster. Let $W'$ be the following walk of length $3\ell_B$
$$W' \coloneqq V_ivW(V_j, V_{i+3})F_BF_B\ldots F_BV_i.$$
Notice that, since the walk $W(V_j, V_{i+3}) \setminus \{V_j,V_{i+3}\}$ visits all clusters in $R_B$ the same number of times, we have that $M_B$ divides $V_ivW(V_j,V_{i+3})F_BV_i$. As $M_B$ divides $3\ell_B$, by adding as many extra copies of $F_B$ as necessary, we can indeed find a walk of the form $W'$  which finishes at $V_i$.
This replacement causes:
\begin{itemize}
\item $a(i+1)$ and $a(i+2)$ to decrease by one;
\item $a(j)$ to increase by one.
\end{itemize}
If we carry out this process for all exceptional vertices then, as in Case 1, we have that for all $V_i\in V(R_B)$
$$|a(i)-m_B| \leq \varepsilon^*s_Bt + \varepsilon^*m_B +2\varepsilon_Am_B + \varepsilon^*m_B + |V'_0| < 4\varepsilon_An_B.$$
We observe that for each $V_i$
$$n(i,\mathcal{Q}_B) \geq \frac{\nu'n_B}{120M_B^2} - |V'_0| \geq \frac{\nu'n_B}{150M_B^2}.$$
We also have at most 
$\varepsilon_An_B + 4|V'_0| \leq 5\varepsilon_An_B$
vertices assigned to $V_i$ which do not have their neighbours assigned to $V_{i-1} \cup V_{i+1}$. These vertices arose when we embedded the $Q_i$ (at most $\varepsilon_An_B$ vertices) and when we replaced the assignment of a long run in order the incorporate an exceptional vertex. Each of the replacement walks creates at most $4$ of these vertices - the clusters adjacent to the exceptional vertices and at either end of the shifted walk.
So our current assignment of $P_B$ in $R_B^*$ is a $(4\varepsilon_A, 5\varepsilon_A)$-corresponding assignment.

\subsection*{Step 3$'$: Obtaining a Balanced Assignment}
We must now modify our assignment to obtain a balanced assignment. We will do this using shifted walks. Suppose that $V_i \in V(R_B)$ with $a(i)>m_B$, then we can find $V_j \in V(R_B)$ such that $a(j)<m_B$. We replace a subwalk in $W_B$ corresponding to the assignment of a long run which starts at $V_{i-1}$ by a walk
$$W(V_{i-1},V_j)W(V_j,V_{i+1})F_B \ldots F_BV_{i-1}$$ of length $3\ell_B$. We are able to find a walk of this length since $M_B$ divides $3\ell_B$. As discussed in \Cref{sub:skewed}, this decreases $a(i)$ by one, increases $a(j)$ by one and leaves $a(p)$ unchanged for all $p \neq i,j$. Since
$$n(i,\mathcal{Q}_B) \geq \frac{\nu'n_B}{150M_B^2} > 4\varepsilon_An_B,$$
for all $V_i \in V(R_B)$, we can obtain a balanced assignment by repeating this process as many times as necessary.

There are now at most 
$$5\varepsilon_An_B+4(4\varepsilon_An_B) = 21 \varepsilon_An_B$$
vertices assigned to each $V_i$ which do not have their neighbours assigned to $V_{i-1} \cup V_{i+1}$. So we currently have a $21\varepsilon_A$-corresponding assignment of $P_B$ to $R^*_B$.

\subsection*{Step 4$'$: Finding a Copy of $P_B$ in $G_B$}
We define $W_{B,1}$ and $W_{B,2}$ exactly as in Step 4. Since the total length of the paths in $W_{B,2}$ is at most $21\varepsilon_An_BM_B< \varepsilon m_B$, we can follow the same process as in Step 4 to find a copy of $P_B$ in $G_B$. 

\subsection*{Step 5$'$: Finding a Copy of $C$ in $G$}
As in Step 5, we repeat Steps 1$'$ to 4$'$ to find a copy of $P_A$ in $G_A$, starting and finishing at the required vertices. This embedding, together with the embedding of $P_B$ in $G_B$, gives the desired cycle $C$ in $G$. This completes the proof of \Cref{anyorient2}.

\chapter{Conclusion}
\label{chap:conclusion}

Szemer\'edi's Regularity Lemma, \Cref{reglemma}, has been fundamental throughout this project. Roughly speaking, it tells us that we can approximate any sufficiently large graph by a random-like graph. It allows us to partition the vertices of the graph into a bounded number of clusters so that most of the clusters induce $\varepsilon$-regular pairs. Whilst it would be very difficult to embed straight into the graph $G$, it is a much more manageable task to begin by embedding an often simpler structure into the reduced graph. Then, using the Key Lemma, \Cref{keylem}, or some form of the Blow-up Lemma of Koml\'os, S\'ark\"ozy and Szemer\'edi, we find an embedding of the desired subgraph in $G$. This method works for digraphs as well as undirected graphs since there is analogue of the Regularity Lemma for digraphs due to Alon and Shapira, the Diregularity Lemma, \Cref{direg}.

Using the regularity method, we were able to prove some well-known results in extremal graph theory including the Erd\H{o}s-Stone theorem. The Erd\H{o}s-Stone theorem has a famous corollary which determines asymptotically the number of edges required in a graph $G$ to force a copy of any non-bipartite graph $H$ as a subgraph. We also considered an application to Ramsey theory. Recall that the Ramsey number $R(H)$ is defined to be the smallest natural number such that any colouring of the edges of a complete graph on $R(H)$ vertices using two colours yields a monochromatic copy of $H$. In general these numbers are very difficult to calculate. Using the Regularity Lemma we proved \Cref{ramseyresult}, giving a bound on $R(H)$ which is linear in $|H|$ for graphs $H$ of bounded maximum degree. This is a significant improvement on the usual exponential bound if the maximum degree of $H$ is not bounded. 

A key aim in extremal graph theory is often to find a spanning structure in a graph. As we saw, this presents a new problem, it is relatively easy to find a structure in the reduced graph but we are now forced to find a way to incorporate the exceptional vertices as well. For instance, the third application of the Regularity Lemma which we met involved finding perfect $F$-packings. Recall that a perfect $F$-packing in a graph $G$ is a spanning subgraph of $G$ consisting of vertex disjoint copies of $F$. Tutte's theorem completely characterises those graphs which contain a perfect matching but Hell and Kirkpatrick \cite{hell} showed that the decision problem of whether a graph $G$ contains a perfect $F$-packing is NP-complete if $F$ contains a component on at least three vertices. So instead of aiming for a characterisation of all graphs containing a perfect $F$-packing, it makes sense to look for sufficient minimum degree conditions. In particular, we used the regularity method to find a minimum degree condition for a sufficiently large graph to contain a perfect $C_6$-packing. In \cite{ksosz2}, Koml\'os, S\'ark\"ozy and Szemer\'edi, give a minimum degree condition for general graphs $F$ based on the chromatic number. K\"uhn and Osthus, in \cite{ko-packing}, improved this to a result which is best possible up to a constant by using a refinement of the chromatic number.

Hamilton cycles were the main focus for the remainder of this project. Again, since the Hamilton cycle problem in NP-complete, it is unlikely that we can completely describe all graphs which have a Hamilton cycle so instead we look for sufficient conditions. These conditions will often involve the minimum (semi)degree, for example Dirac's theorem  for graphs \cite{dirac} or Ghouila-Houri's theorem for digraphs \cite{ghouila}, or the degree sequence of the graph or digraph. If $G$ is an undirected graph then Chv\'atal's theorem, \Cref{chvatal}, describes all degree sequences which guarantee a Hamilton cycle. Nash-Williams conjectured that an analogue of this result holds for digraphs. This conjecture remains an open problem, in fact, it is still unknown whether the degree sequence conditions of the conjecture even guarantee that every pair of vertices lie on a cycle. 

\newtheorem*{again}{Conjecture \ref{nashconjecture}}

\begin{again}[Nash-Williams, \cite{nash}]
Suppose that $G$ is a strongly connected digraph on $n \geq 3$ vertices such that 
\begin{enumerate}[(i)]
	\item $d^+_i \geq i+1$ or $d^-_{n-i} \geq n-i$ and
	\item $d^-_i \geq i+1$ or $d^+_{n-i} \geq n-i$
	\end{enumerate}
for all $i<n/2$. Then $G$ contains a Hamilton cycle.
\end{again}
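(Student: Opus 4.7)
The plan is to attack Nash-Williams' conjecture by bootstrapping from the approximate version already established in the excerpt (the corollary of Theorem \ref{expander}), since that result handles precisely those digraphs in which the minimum semidegree is linear in $n$ and the robust outexpansion is available. The main difficulty is that the hypotheses of the conjecture do \emph{not} imply $\delta^0(G) \geq \eta n$ for any positive constant $\eta$: one may have $d_1^+$ as small as $1$, provided there are correspondingly many vertices of very large indegree to compensate via condition $(i)$. Consequently the Diregularity Lemma machinery cannot be applied na\"{\i}vely, and the central obstacle is to integrate a small set of ``poor'' vertices (those of semidegree $\ll n$) into a Hamilton cycle built on a robust outexpander structure among the remaining ``rich'' vertices.

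First, I would fix a small constant $\eta$ and partition $V(G)$ as $V(G) = L^+ \cup L^- \cup H$, where $L^+$ (resp.\ $L^-$) is the set of vertices of outdegree (resp.\ indegree) less than $\eta n$, and $H$ is everything else. The degree sequence conditions $(i)$ and $(ii)$ can be used to show that $|L^+|, |L^-| < n/2$ and, crucially, that there are many vertices of indegree (resp.\ outdegree) at least $n - |L^+|$ (resp.\ $n - |L^-|$); these ``rich'' vertices essentially dominate $V(G)$, so every vertex of $L^+$ has many outneighbours among them and symmetrically for $L^-$. The next step is to extract an absorbing structure: for each $v \in L^+ \cup L^-$, prescribe a short path through $v$ using its rich neighbours, and reserve these paths. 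Strong connectivity guarantees that such absorbing paths exist, and a greedy/probabilistic selection should make them pairwise disjoint.

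Second, delete the interiors of the absorbing paths and consider the remaining digraph $G'$ on the rich vertices together with the endpoints of these paths. One should argue that $G'$ inherits a degree sequence satisfying a slightly weaker version of $(i)$ and $(ii)$, and in particular that $\delta^0(G') \geq \eta' n$ for some positive $\eta'$. By Lemma \ref{robdegseq}, $G'$ is a robust $(\nu, \tau)$-diexpander; one then wants to apply the corollary of Theorem \ref{expander} to find a Hamilton path in $G'$ respecting the prescribed endpoints of all absorbing paths (this requires a ``connecting'' variant of the Hamilton cycle theorem, which can be derived from \Cref{lem:isopath} combined with the absorbing technique). Concatenating the Hamilton path in $G'$ with the absorbing paths then yields a Hamilton cycle in $G$.

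The hard part will be verifying that the degree sequence of $G'$ after deletion still meets the hypotheses of the diexpansion lemma quantitatively, and in particular ensuring that no collective removal of rich vertices used in absorbing paths destroys condition $(i)$ or $(ii)$ for many $i$. A secondary obstacle is the connecting version of Theorem \ref{expander}: one needs a Hamilton path between prescribed endpoints in a robust diexpander, and while \Cref{lem:isopath} handles short prescribed paths, extending the shifted-walk proof of Theorem \ref{expander} to respect prescribed endpoints requires delicately modifying the closed-spanning-walk construction so that it begins and ends at the chosen clusters. I expect that this combination of absorption with the expander Hamilton cycle theorem is essentially what would be required, and indeed this is why the conjecture remains open: controlling the extremal cases where $|L^+| + |L^-|$ is close to $n/2$, and where the rich vertices form a structured (near-bipartite or near-disconnected) digraph, is likely to require a stability-type analysis beyond the scope of what the regularity/expansion toolkit alone provides.
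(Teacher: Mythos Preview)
This statement is Nash-Williams' \emph{conjecture}, not a theorem: the paper does not prove it, and in fact explicitly states (in the Conclusion) that ``this conjecture remains an open problem, in fact, it is still unknown whether the degree sequence conditions of the conjecture even guarantee that every pair of vertices lie on a cycle.'' What the paper does prove is only the \emph{approximate} version (the corollary to Theorem~\ref{expander}), in which each degree condition is strengthened by an additive $\eta n$; that approximate version follows immediately from Lemma~\ref{robdegseq} together with Theorem~\ref{expander}, with no absorption or stability analysis needed.

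Your proposal is therefore not a proof to be compared against the paper's proof---there is no proof in the paper---but rather a sketch of a possible attack on an open problem. You are candid about this in your final paragraph, correctly identifying the genuine obstructions: the hypotheses do not give linear minimum semidegree, so the robust-outexpander machinery cannot be applied directly, and handling the low-degree vertices via absorption would require both a prescribed-endpoints version of Theorem~\ref{expander} and a stability analysis for the near-extremal cases. These are real difficulties, and your outline does not resolve them. In particular, the step ``one should argue that $G'$ inherits a degree sequence satisfying a slightly weaker version of $(i)$ and $(ii)$, and in particular that $\delta^0(G') \geq \eta' n$'' is precisely where the argument breaks: the sets $L^+$, $L^-$ can have size close to $n/2$, so removing absorbing paths through them may consume a linear fraction of the rich vertices, and there is no reason the residual digraph retains the required degree-sequence conditions. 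This is not a gap you can patch; it is the heart of why the conjecture is open.
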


In \Cref{chap:digraphs} we defined a robust outexpander, originally introduced by K\"uhn, Osthus and Treglown in \cite{kot}. Robust outexpanders have proved to be very useful in the study of Hamilton cycles. Informally, a graph is said to be a robust $(\nu, \tau)$-outexpander if when we consider any subset $S \subseteq V(G)$ which is neither too small nor too large, the set of vertices having at least $\nu n$ inneighbours in $S$ is slightly larger than $S$. So the expansion property of $G$ is resilient in that it cannot be destroyed by deleting a small number of vertices or edges. We are interested in such graphs because they are fairly common, for example, any sufficiently large oriented graph with minimum semidegree at least $(3/8+\alpha)n$ is a robust outexpander. We also showed that satisfying degree sequence conditions which are slightly stronger than those in Conjecture \ref{nashconjecture} implies robust outexpansion in \Cref{robdegseq}. In \Cref{expander} we showed that we can find a Hamilton cycle in a robust outexpander of linear minimum degree and hence we were able to prove an approximate version of Nash-Williams' conjecture.

In \Cref{thm:stdhamcycle}, Keevash, K\"uhn and Osthus prove that a minimum semidegree of $(3n-4)/8$ guarantees a Hamilton cycle in a sufficiently large oriented graph. Interestingly, we saw that this bound no longer suffices when we instead ask for Hamilton cycles of all possible orientations. In \Cref{anyorient}, Kelly showed that a minimum semidegree of $(3/8+\alpha)n$ guarantees any orientation of a Hamilton cycle in any sufficiently large oriented graph.
\newtheorem*{again1}{\Cref{anyorient}}

\begin{again1}[Kelly, \cite{kelly}]
For every $\alpha>0$ there exists an integer $n_0=n_0(\alpha)$ such that every oriented graph on $n\geq n_0$ vertices with $\delta^0(G) \geq (3/8+\alpha)n$ contains every orientation of a Hamilton cycle.
\end{again1}

\noindent Whether we can improve on this result to obtain an exact bound is an open problem. To attempt to answer this question for all possible orientations of a Hamilton cycle might be unrealistic but it would become more approachable if we were to restrict ourselves to finding, say, anti-directed Hamilton cycles.

Since we know that a graph satisfying the conditions of \Cref{anyorient} is a robust outexpander, it seemed natural to try to generalise this result for robust outexpanders. We obtained \Cref{cor:anyorient2}, proving that every sufficiently large robust outexpander of linear minimum semidegree contains any orientation of a Hamilton cycle. 

\newtheorem*{again2}{\Cref{cor:anyorient2}}

\begin{again2}
Let $n_0$ be a positive integer and $\nu, \tau, \eta$ be positive constants such that $1/n_0 \ll \nu \leq \tau \ll \eta <1$. Let $G$ be a digraph on $n \geq n_0$ vertices with $\delta^0(G) \geq \eta n$ and suppose $G$ is a robust $(\nu, \tau)$-outexpander. Then $G$ contains every orientation of a Hamilton cycle.
\end{again2}

\noindent We note that \Cref{anyorient} follows from \Cref{cor:anyorient2} and we also obtain a degree sequence condition guaranteeing any orientation of a Hamilton cycle as a corollary.

\end{document}